\documentclass[reqno,12pt,letterpaper]{amsart}
\usepackage{amsmath,amssymb,amsthm,graphicx,mathrsfs,url} 
\usepackage[usenames,dvipsnames]{color}
\usepackage[colorlinks=true,linkcolor=Red,citecolor=Green]{hyperref}
\usepackage{amsxtra}
\usepackage{epstopdf}

\setlength{\marginparwidth}{0.6in}

\def\?[#1]{\textbf{[#1]}\marginpar{\Large{\textbf{??}}}}

\def\smallsection#1{\smallskip\noindent\textbf{#1}.}
\let\epsilon=\varepsilon 

\setlength{\textheight}{8.50in} \setlength{\oddsidemargin}{0.00in}
\setlength{\evensidemargin}{0.00in} \setlength{\textwidth}{6.08in}
\setlength{\topmargin}{0.00in} \setlength{\headheight}{0.18in}
\setlength{\marginparwidth}{1.0in}
\setlength{\abovedisplayskip}{0.2in}
\setlength{\belowdisplayskip}{0.2in}
\setlength{\parskip}{0.05in}

\newcommand{\tbe}{\tilde{\beta}}
\newcommand{\Z}{\mathbb{Z}}
\newcommand{\T}{\mathbb{T}}
\newcommand{\Q}{\mathbb{Q}}
\newcommand{\R}{\mathbb{R}}
\newcommand{\C}{\mathbb{C}}
\newcommand{\N}{\mathbb{N}}
\newcommand{\ap}{\alpha^\prime}
\DeclareGraphicsRule{*}{mps}{*}{}

\newtheorem{theo}{Theorem}
\newtheorem{prop}{Proposition}[section]	
\newtheorem{defi}[prop]{Definition}
\newtheorem{Assumption}{Assumption}

\newtheorem{lemm}[prop]{Lemma}
\newtheorem{corr}[prop]{Corollary}
\newtheorem{rem}{Remark}
\newtheorem{ex}{Example}
\numberwithin{equation}{section}

\newtheorem{obs}{Observation}

\DeclareMathOperator{\supp}{supp}

\DeclareMathOperator{\tr}{tr}

\title[Cantor spectrum of graphene in magnetic fields]{Cantor spectrum of graphene in magnetic fields}

\author{Simon Becker}
\email{simon.becker@damtp.cam.ac.uk}

\address{DAMTP, University of Cambridge \\ Wilbeforce Rd, Cambridge \\ CB3 0WA (UK).}

\author{Rui Han}
\email{rhan@ias.edu}
\address{School of Math, Institute for Advanced Study (USA)}

\author{Svetlana Jitomirskaya}
\email{szhitomi@math.uci.edu}
\address{Department of Mathematics, University of California, Irvine (USA).}

\begin{document}

\begin{abstract}
We consider a quantum graph as a model of graphene in magnetic fields and give a complete analysis of the spectrum, for all constant fluxes. In particular, we show that if the reduced magnetic flux $\Phi / 2\pi$ through a honeycomb is irrational, the continuous spectrum is an unbounded Cantor set of Lebesgue measure zero.
\end{abstract}

\maketitle
\section{Introduction}\label{s:intr}
Graphene is a two-dimensional material that consists of carbon atoms
at the vertices of a hexagonal lattice. 
 Its experimental
discovery, unusual properties, and 
 applications led to  a
lot of attention in physics, see e.g. \cite{16FW}.  Electronic
properties of graphene have been extensively studied rigorously in the absence of magnetic fields \cite{FW,KP}.

Magnetic properties of graphene have also become a major research direction
in physics that has been kindled recently by the observation of the
quantum Hall effect \cite{nature} and strain-induced pseudo-magnetic
fields \cite{Gu} in graphene.  The purpose of this paper is to provide
for the first time an analysis of the spectrum of graphene in
magnetic fields with constant flux.


The fact that magnetic electron spectra have fractal structures was
first predicted by Azbel \cite{Az} and then numerically observed by
Hofstadter \cite{Ho} for the Harper's model. The scattering plot of the electron spectrum as a function of the magnetic flux is nowadays known as Hofstadter's butterfly. Verifying such results experimentally has been restricted for a long time due to the extraordinarily strong magnetic fields required. Only recently, self-similar structures in the electron spectrum in graphene have been observed \cite{Ch}, \cite{De}, \cite{Ga}, and \cite{Gor}. 

With this work, we provide a rigorous foundation for self-similarity
by showing that for irrational flux quanta, the electron spectrum of
graphene is a Cantor set. We say $A$ is a Cantor set if it is closed,
nowhere dense and has no isolated points (so compactness not
required). Let $\sigma_\Phi,\sigma_{cont}^\Phi,\sigma_{ess}^\Phi$  be
the (continuous, essential) spectra of $H^B$, the Hamiltonian
of the quantum graph
graphene in a magnetic field with constant flux $\Phi$, as defined in
\eqref{magop} \eqref{magdom} \eqref{defAe} \eqref{defPhi}, with some Kato-Rellich potential $V_{\vec{e}}\in L^2(\vec{e}).$ Let $H^D$ be the Dirichlet operator (no
magnetic field)  defined in
\eqref{Dirichlet} \eqref{maximaloperator}, and $\sigma(H^D)$ its spectrum.  Let
$\sigma_p^\Phi$ be the collection of eigenvalues of $H^B.$ Then we
have the following description of the topological structure and
point/continuous decomposition of the spectrum

\begin{theo}\label{T1}For any symmetric Kato-Rellich potential
  $V_{\vec{e}}\in L^2(\vec{e})$ 
we have
\begin{enumerate}
\item $\sigma^\Phi = \sigma_{ess}^\Phi,$
\item $\sigma_p^\Phi = \sigma(H^D),$
\item $\sigma_{cont}^\Phi$ is 
\begin{itemize}
\item a Cantor set of measure zero for $\frac{\Phi}{2\pi}\in \R\setminus \Q,$
\item a countable union of disjoint intervals for $\frac{\Phi}{2\pi} \in\Q,$
\end{itemize}
\item $\sigma_p^\Phi \cap \sigma_{cont}^\Phi =\emptyset$ for $\frac{\Phi}{2\pi} \notin \Z$,
\item the Hausdorff dimension $\dim_H (\sigma^{\Phi})\leq 1/2$ for generic $\Phi$.
\end{enumerate}
\end{theo}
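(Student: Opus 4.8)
The plan is to transport statement (5) across the spectral reduction used for parts (1)--(4) and reduce it to a Hausdorff-dimension bound for the self-dual almost Mathieu operator. By part (2) the point spectrum $\sigma_p^\Phi=\sigma(H^D)$ is countable, so $\dim_H\sigma^\Phi=\dim_H\sigma_{cont}^\Phi$ and it suffices to bound the latter. The reduction presents $\sigma^\Phi\setminus\sigma(H^D)$ as $h^{-1}\big(\tfrac13\,\sigma(\mathcal L_\alpha)\big)$, where $\alpha=\Phi/2\pi$ (up to a fixed rational factor), $h$ is the real-analytic Hill-type discriminant of the edge operator $-\partial_x^2+V_{\vec e}$, and $\mathcal L_\alpha$ is the discrete magnetic adjacency operator on the hexagonal lattice. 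The structural facts I would quote from the body of the paper are: (i) $h'(E)$ vanishes precisely on $\sigma(H^D)$, so $h$ is a local real-analytic diffeomorphism on $\R\setminus\sigma(H^D)$, whence $\dim_H\sigma_{cont}^\Phi=\dim_H\sigma(\mathcal L_\alpha)$; (ii) $\mathcal L_\alpha^2=3+T_\alpha$ with $T_\alpha$ the magnetic adjacency on the triangular sublattice, and for $\alpha\notin\Z$ one has $0\notin\sigma(\mathcal L_\alpha)$ (the same fact underlying part (4)), so $t\mapsto\sqrt{3+t}$ is a diffeomorphism on a neighbourhood of $\sigma(T_\alpha)$; and (iii) $\sigma(T_\alpha)$ is an affine image of the spectrum $\Sigma_\alpha$ of the self-dual (critical) almost Mathieu operator --- this identification being the analytic heart of the reduction. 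Composing these diffeomorphisms, $\dim_H\sigma^\Phi=\dim_H\Sigma_\alpha$, so (5) is equivalent to $\dim_H\Sigma_\alpha\le\tfrac12$ for generic $\Phi$.

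For this I would run a bandwidth covering argument. At a rational frequency $p/q$ the self-dual operator has at most $q$ spectral bands whose total length $|\Sigma_{p/q}|$ is $O(1/q)$; taking the convergents $p_n/q_n\to\alpha$, upper semicontinuity of the spectrum together with $|\alpha-p_n/q_n|\le1/q_n^2$ places $\Sigma_\alpha$ inside an $O(1/q_n^2)$-neighbourhood of $\Sigma_{p_n/q_n}$, hence inside a union of at most $q_n$ intervals of total length $\le C/q_n$. For $s\in(0,1)$, concavity of $x\mapsto x^s$ gives $\sum_i|I_i|^s\le q_n\big(C/q_n^2\big)^s=C^s q_n^{\,1-2s}\to0$ as $n\to\infty$ whenever $s>\tfrac12$, so $\mathcal H^s(\Sigma_\alpha)=0$ for all $s>\tfrac12$, i.e. $\dim_H\Sigma_\alpha\le\tfrac12$. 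The genericity of $\Phi$ is what makes the requisite bandwidth/semicontinuity input available in sufficiently clean form; for a dense $G_\delta$ of frequencies one can in fact push the spectrum of the critical operator to zero Hausdorff dimension, which would upgrade the conclusion to $\dim_H\sigma^\Phi=0$.

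The main obstacle is the quantitative bandwidth estimate at the self-dual coupling: it is strictly stronger than the zero-measure statement used in part (3), since one needs not only $|\Sigma_{p/q}|\to0$ but a definite rate together with the $O(q)$ bound on the number of bands, and it is precisely this input --- not the diffeomorphic transfer, which is routine once the reduction behind parts (1)--(4) is in hand --- that forces the restriction to generic flux. A secondary point to check is that the composite change of variables has no critical point landing in $\Sigma_\alpha$ along $\sigma_{cont}^\Phi$, so that the dimension transfers without a Hölder loss: its critical points off $\sigma(H^D)$ reduce to the folding point $\lambda=0$, which for $\alpha\notin\Z$ is excluded by $0\notin\sigma(\mathcal L_\alpha)$, so that the genericity hypothesis is ultimately absorbed entirely into the bandwidth step.
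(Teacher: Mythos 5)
Your proposal only addresses item (5), taking the reduction behind (1)--(4) as given; within that scope the architecture (push the dimension bound through the discriminant to a 1D quasiperiodic operator, then combine a bandwidth estimate at rationals with continuity in the frequency and a Last-type covering lemma) is indeed the paper's. But two of your inputs are wrong in ways that break the argument.

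First, the continuity step. You claim that $|\alpha-p_n/q_n|\le 1/q_n^2$ places $\Sigma_\alpha$ inside an $O(1/q_n^2)$-neighbourhood of $\Sigma_{p_n/q_n}$, i.e.\ Lipschitz dependence of the spectrum on the frequency. That is not available: the Avron--van Mouche--Simon argument (which the paper adapts to Jacobi matrices in its appendix) gives only $\tfrac12$-H\"older continuity, so an $O(1/q_n^2)$ change of frequency fattens each band by $O(1/q_n)$, and $q_n$ intervals of length $O(1/q_n)$ give $\sum_i|I_i|^s\approx q_n^{1-s}\not\to 0$ for $s<1$ --- the covering yields nothing. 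This is exactly where genericity must enter: the paper restricts to $\Phi$ with $q_n^4\,|\Phi/2\pi-p_n/q_n|\le C$ along a subsequence (a dense $G_\delta$), so that $q_n|\Phi/2\pi-p_n/q_n|^{1/2}=O(1/q_n)$ and the cover again has total length $O(1/q_n)$ over $q_n$ intervals, whence $\dim_H\le 1/2$ by Last's lemma. Your version, if correct, would prove the bound for \emph{every} irrational flux, which should have been a warning sign; instead you attribute the genericity to the bandwidth estimate, which in fact holds for all reduced $p/q$.

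Second, the identification of the reduced 1D operator. The supersymmetric reduction of $Q_\Lambda(\Phi)^2$ does not produce the self-dual almost Mathieu operator: it produces the singular Jacobi matrix $(Hu)_m=c(\theta+m\tfrac{\Phi}{2\pi})u_{m+1}+\overline{c(\theta+(m-1)\tfrac{\Phi}{2\pi})}u_{m-1}+v(\theta+m\tfrac{\Phi}{2\pi})u_m$ with $c(\theta)=1+e^{-2\pi i\theta}$ and $v(\theta)=2\cos 2\pi\theta$, whose off-diagonal vanishes at $\theta=\tfrac12$. So you cannot import AMO bandwidth results; the estimate ``$q$ bands of total length $O(1/q)$'' has to be proved for this operator, which the paper does via a Chambers-type formula for $\tr D^\lambda_q$ together with Lidskii inequalities. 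Relatedly, your ``secondary point'' is backwards: $0\in\sigma(Q_\Lambda(\Phi))$ for \emph{all} $\Phi$ (the Dirac points persist), while what holds for $\Phi\notin 2\pi\Z$ is $\|Q_\Lambda(\Phi)\|<1$; the folding point of $t\mapsto\sqrt{t+3}$ therefore does lie in the spectrum and must be handled (the paper does so by exhausting with sets bounded away from $0$, on which the square root is bi-Lipschitz, which costs nothing in Hausdorff dimension).
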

Thus for irrational flux, the spectrum is a zero measure Cantor set 
plus a countable collection of flux-independent isolated eigenvalues, each of infinite
multiplicity, while for the rational flux the Cantor set is replaced by
a countable union of intervals.



Furthermore, we can also describe the spectral
decomposition of $H^B$.

\begin{theo}\label{T2}For any symmetric Kato-Rellich potential
  $V_{\vec{e}}\in L^2(\vec{e})$ 
we have
\begin{enumerate}
\item For $\frac{\Phi}{2\pi}\in \R\setminus \Q$ the spectrum on $\sigma_{cont}^\Phi$ is purely
  singular continuous.
\item For $\frac{\Phi}{2\pi}\in\Q,$ the spectrum on $\sigma_{cont}^\Phi$ is 
absolutely continuous.
\end{enumerate}
\end{theo}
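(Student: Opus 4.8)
The plan is to treat the two parts separately; part (1) should be essentially a corollary of Theorem~\ref{T1}, whereas all the work is in part (2).

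\emph{Part (1), irrational flux.} By part (3) of Theorem~\ref{T1}, $\sigma_{cont}^\Phi$ has Lebesgue measure zero when $\Phi/2\pi\in\R\setminus\Q$. Since $\sigma_{ac}^\Phi=\Spec\big(H^B|_{\mathcal H_{ac}}\big)\subseteq\sigma_{cont}^\Phi$, it is then itself a Lebesgue‑null set; but the support of a nonzero absolutely continuous measure always has positive measure, so $\mathcal H_{ac}=\{0\}$. Hence the continuous subspace of $H^B$ reduces to $\mathcal H_{sc}$, on which $H^B$ is purely singular continuous, and it is nontrivial because $\sigma_{cont}^\Phi$ is a (nonempty) Cantor set. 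Parts (2) and (4) of Theorem~\ref{T1} moreover show that the eigenvalues of $H^B$, which coincide with $\sigma(H^D)$, lie off $\sigma_{cont}^\Phi$, so the singular continuous component is genuinely supported on that Cantor set. This settles~(1).

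\emph{Part (2), rational flux.} Write $\Phi/2\pi=p/q$ in lowest terms. Here I would run a Bloch--Floquet analysis. After a gauge transformation the magnetic quantum graph is $\Z^2$‑periodic with a fundamental domain $\Gamma_q$ consisting of $q$ hexagons, and the (magnetic) Floquet transform gives a unitary equivalence $H^B\cong\int_{\T^2}^{\oplus}H^B(\theta)\,d\theta$, where each fiber $H^B(\theta)$ is a Schrödinger operator on the compact graph $\Gamma_q$ with $\theta$‑quasiperiodic vertex conditions, hence has discrete spectrum $\lambda_1(\theta)\le\lambda_2(\theta)\le\cdots$. By analytic perturbation theory each band function $\lambda_j$ is real-analytic in $\theta$ off a codimension‑one (hence null) set of band crossings, and $\sigma^\Phi=\bigcup_j\lambda_j(\T^2)$. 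A \emph{flat} band ($\lambda_j\equiv\mathrm{const}$) contributes an eigenvalue of infinite multiplicity with compactly supported eigenfunctions; by part (2) of Theorem~\ref{T1} the collection of such values is precisely $\sigma(H^D)=\sigma_p^\Phi$, and by part (3) the set $\sigma_{cont}^\Phi$ is a union of genuine intervals, hence is covered only by non-flat bands. For a non-flat band, $\lambda_j$ is real-analytic and non-constant on each connected component of the complement of the crossing set, so its critical set has measure zero, and the standard co-area argument (Reed--Simon~IV, or Kuchment's quantum-graph monograph) shows that such a band contributes a purely absolutely continuous piece to the spectral measure of $H^B$. Summing over $j$, the spectrum on $\sigma_{cont}^\Phi$ is purely absolutely continuous. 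Alternatively, one may invoke the general principle that a $\Z^2$‑periodic quantum graph operator has no singular continuous spectrum and then locate the point spectrum off $\sigma_{cont}^\Phi$ via Theorem~\ref{T1}.

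\emph{The main obstacle} lies entirely within part (2): setting up the Floquet--Bloch machinery for a \emph{magnetic} periodic metric graph (gauge choice, $\theta$‑dependent vertex conditions, analyticity of the fibers) and, above all, excluding flat bands inside $\sigma_{cont}^\Phi$. The cleanest way to handle the flatness issue is to pass through the discrete magnetic adjacency operator $\Delta_B$ on the hexagonal lattice to which $H^B$ reduces via the secular equation---the per-edge Dirichlet-to-Neumann (Weyl) function being a piecewise-analytic, strictly monotone map between consecutive Dirichlet eigenvalues, hence spectral-type preserving---and to establish a Chambers-type dispersion relation of the form $\det\big(\Delta_B(\theta)-E\big)=\mathcal P(E)-c_1\cos(q\theta_1)-c_2\cos(q\theta_2)$ with $c_1c_2\neq0$; this forces every band of $\Delta_B$, and hence of $H^B$ away from $\sigma(H^D)$, to be non-degenerate. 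If this discrete reduction is already in place from the proof of Theorem~\ref{T1} (as the description of $\sigma_{cont}^\Phi$ there suggests), part~(2) reduces to the band-function argument above.
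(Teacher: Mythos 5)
Your proposal is correct and follows essentially the same route as the paper: part (1) is the argument of Section \ref{scsec} (zero Lebesgue measure of $\sigma_{cont}^\Phi$ forces the absolutely continuous subspace to be trivial, and Theorem \ref{T1}(4) keeps the eigenvalues off the Cantor set), while part (2) is the argument of Lemma \ref{rationalcase}, where the commuting magnetic translations $T^B_{(q,0)},T^B_{(0,q)}$ give an analytic Floquet--Bloch fibering that excludes singular continuous spectrum and embedded point spectrum (your ``flat band'' issue) is ruled out through the reduction to $Q_{\Lambda}$ in Lemmas \ref{simpleproperties} and \ref{spectrumQlambda}. The only minor divergence is that the paper excludes flat bands via a jump-of-the-integrated-density-of-states argument for the one-dimensional Jacobi fibers rather than your proposed Chambers-type non-degeneracy computation, but your primary route of invoking Theorem \ref{T1}(2) amounts to the same thing.
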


Since molecular bonds in graphene are equivalent and delocalized, we
use an effective one-particle electron model 
 \cite{KP} on
a hexagonal graph with magnetic field \cite{KS}. While closely
related to the commonly used tight-binding model \cite{AEG}, we note
that unlike the latter, our model starts from actual differential
operator and is exact in every step, so does not involve any approximation. 
Moreover, while the density of states of the tight-binding model is symmetric around the Dirac point \cite[Fig. $5$]{C} and \cite[Fig. $3$]{HKL}, it is not what is found in experiments \cite{Go}, while the density of states for the quantum graph model \cite[Fig. $7$(A)]{BZ} has a significant similarity with the one observed experimentally
\cite{ourprl}. We note though that isolated eigenvalues are likely an
artifact of the graph model which does not allow something
similar to actual Coulomb potentials close to the carbon atoms or dissolving of
eigenstates supported on edges in the bulk. Thus while the isolated
eigenvalues are probably unphysical, there are reasons to expect that {\it continuous}
spectrum of the quantum graph operator described in this paper does
adequately capture the experimental properties
of graphene in the magnetic field. Finally, our analysis provides
full description of the spectrum of the tight-binding Hamiltonian as
well. Moreover, the applicability of our model is certainly not limited to graphene. Many atoms and even particles confined to the same lattice structure show similar physical properties \cite{Go} that are described well by this model (compare \cite[Fig. 7(A)]{BZ} with \cite[Fig. 2c]{Go}).

Earlier work showing Cantor spectrum on
quantum graphs with magnetic fields, e.g. for the square lattice
\cite{BGP} and magnetic chains studied in \cite{EV}, has been mostly
limited to applications of the Cantor spectrum of the almost Mathieu operator \cite{AJ,puig}.
In the case of graphene, we can no longer resort to this operator. The
discrete operator is then matrix-valued and can
be further reduced to a one-dimensional discrete quasiperiodic operator using
supersymmetry.  The resulting discrete operator is a singular Jacobi
matrix. Cantor spectrum (in fact, a stronger, dry ten martini type
statement) for Jacobi matrices of this type has been
studied in the framework of the extended Harper's model
\cite{han}. However, the method of \cite{han} that goes back to that
of \cite{aj2} relies on (almost) reducibility, and thus in particular  is not
applicable in absence of (dual) absolutely continuous spectrum which is
prevented by singularity. Similarly, the method of
\cite{AJ}  breaks down in presence of singularity in the Jacobi matrix
as well. Instead, we
present a new way that {\it exploits} singularity rather than
circumvent it by showing that the singularity leads to vanishing of the measure of
the spectrum, thus Cantor structure and singular continuity, once 4 of
Theorem \ref{T1} is established. \footnote{We note that singular continuity of
the spectrum of critical extended Harper's model (including for parameters leading
to singularity) has been proved recently in \cite{ajm,han2}
without establishing the Cantor nature.} Our method applies to also prove
zero measure Cantor spectrum of the extended Harper's model  whenever
the corresponding Jacobi matrix 
is singular.

As mentioned, our first step is a reduction to a matrix-valued
tight-binding hexagonal model. This leads to an operator $Q_\Lambda$
defined in \eqref{Q-op}. This operator has been studied
before for the case of rational magnetic flux (see \cite{HKL} and
references therein). Our analysis gives complete spectral description
for this operator as well. 

\begin{theo}\label{T3}
The spectrum of $Q_\Lambda(\Phi)$ is 
\begin{itemize} 
\item a finite union of intervals and purely
absolutely continuous for $\frac{\Phi}{2\pi}=\frac{p}{q}\in\Q,$ with the following measure estimate
\begin{align*}
|\sigma(Q_{\Lambda}(\Phi))|\leq \frac{C}{\sqrt{q}},
\end{align*}
\item singular continuous and a zero measure Cantor set for
$\frac{\Phi}{2\pi}\in \R\setminus \Q$,
\item a set of Hausdorff dimension $\dim_H(\sigma(Q_{\Lambda}(\Phi))) \leq 1/2$ for generic $\Phi$.
\end{itemize}
\end{theo}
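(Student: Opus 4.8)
\emph{Proof strategy.} The plan is to transfer the whole question to a scalar one-dimensional quasiperiodic Jacobi matrix, dispose of the rational case with Floquet--Bloch theory plus a $q$-uniform bandwidth bound, and then reach irrational flux by rational approximation, the new point being that the \emph{singularity} of the Jacobi matrix forces the measure of the spectrum to collapse. Concretely, the bipartiteness of the honeycomb lattice puts $Q_\Lambda(\Phi)$ in off-diagonal block form with a single off-diagonal block $L_\Phi$, so $\sigma(Q_\Lambda(\Phi))=\{\pm\sqrt{\mu}:\mu\in\sigma(L_\Phi L_\Phi^{*})\}$, and $L_\Phi L_\Phi^{*}=3+\mathcal{T}_\Phi$ with $\mathcal{T}_\Phi\geq-3$ the magnetic hopping operator on one triangular sublattice. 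Fibering $\mathcal{T}_\Phi$ along a magnetic translation, as in the reduction to \eqref{Q-op}, produces the singular Jacobi matrix $\mathcal{J}_\Phi$ on $\ell^2(\Z)$ with real-analytic diagonal samples $v(\theta+n\Phi)$ and off-diagonal samples $c(\theta+n\Phi)$, where $c$ is (up to a unimodular factor) a sum $1+e^{i(\cdot)}$ of edge phases and hence \emph{vanishes} at some $\theta_0$. Thus $\sigma(Q_\Lambda(\Phi))=g(\sigma(\mathcal{J}_\Phi))$ through the proper, finite-to-one map $g(E')=\pm\sqrt{3+E'}$, whose only critical point is the Dirac energy $E'=-3$, where $g$ degenerates quadratically; all statements will be moved between $\mathcal{J}_\Phi$ and $Q_\Lambda(\Phi)$ along $g$, which is harmless since $g$ and $g^{-1}$ preserve ``finite union of intervals'', ``Cantor set'', and ``pure a.c./s.c.''.

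\emph{Rational flux.} For $\tfrac{\Phi}{2\pi}=\tfrac{p}{q}$ the matrix $\mathcal{J}_{p/q}$ is $q$-periodic, so its spectrum is a union of at most $Cq$ nondegenerate compact bands with purely absolutely continuous spectral measure (band edges isolated since the discriminant is real-analytic and nonconstant); pulling back through the proper smooth map $g$ keeps this, giving the ``finite union of intervals'' and ``purely absolutely continuous'' assertions. For the measure, a Chambers-type formula writes $\sigma(\mathcal{J}_{p/q})$ as the preimage of a bounded interval under a real degree-$q$ polynomial $P_q$ whose leading coefficient is $\bigl(\prod_{n=0}^{q-1}c(\theta+2\pi np/q)\bigr)^{-1}$; the product is evaluated explicitly and stays bounded above uniformly in $q$ precisely because $c$ has a genuine zero (this is what makes the operator ``critical''). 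A bandwidth analysis of this picture, patterned on the critical almost Mathieu operator, then yields $|\sigma(\mathcal{J}_{p/q})|\leq C/q$ with $C$ independent of $q$. Finally, since $g$ is Lipschitz away from the Dirac energy, one has for every Borel $S$ and every $\delta>0$ that $|g(S)|\leq|g(S\cap[-3,-3+\delta])|+|g(S\setminus[-3,-3+\delta])|\leq \sqrt{\delta}+\tfrac{1}{2\sqrt{\delta}}|S|$; taking $S=\sigma(\mathcal{J}_{p/q})$ and $\delta=C/q$ gives $|\sigma(Q_\Lambda(p/q))|\leq C/\sqrt{q}$.

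\emph{Irrational flux.} Let $\tfrac{\Phi}{2\pi}$ be irrational with convergents $p_n/q_n$. The spectrum of $Q_\Lambda(\cdot)$ is continuous in the flux, and a quantitative comparison of $Q_\Lambda(\Phi)$ and $Q_\Lambda(2\pi p_n/q_n)$ on a window of length $\sim q_n$, using $|\tfrac{\Phi}{2\pi}-\tfrac{p_n}{q_n}|<1/(q_nq_{n+1})$, gives $\sigma(Q_\Lambda(\Phi))\subseteq\mathcal{N}_{\varepsilon_n}(\sigma(Q_\Lambda(2\pi p_n/q_n)))$ with $\varepsilon_n\to0$, and in fact with $q_n\varepsilon_n\to0$ along a subsequence when the partial quotients of $\tfrac{\Phi}{2\pi}$ are unbounded; combined with the $\leq Cq_n$ bands of total length $\leq C/\sqrt{q_n}$ this already forces $|\sigma(Q_\Lambda(\Phi))|=0$ for such fluxes, while the remaining fluxes (and, indeed, a uniform treatment) are handled by exploiting the vanishing of $c$ directly --- the genuinely new ingredient. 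Once $|\sigma(Q_\Lambda(\Phi))|=0$ the spectrum is closed and nowhere dense, and it has no isolated points because the integrated density of states of the ergodic family $Q_\Lambda(\Phi)$ is continuous, so its derivative measure is atomless with support equal to the spectrum; hence $\sigma(Q_\Lambda(\Phi))$ is a Cantor set. For the spectral type, zero Lebesgue measure kills any absolutely continuous part, and the absence of eigenvalues is again a consequence of the singularity: by minimality $c(\theta+n\Phi)$ returns arbitrarily close to $0$, which is incompatible with an $\ell^2$ eigensolution of $\mathcal{J}_\Phi$, exactly as for the singular extended Harper matrices treated in \cite{ajm,han2}; so the spectrum on $\sigma(Q_\Lambda(\Phi))$ is purely singular continuous.

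\emph{Hausdorff dimension and the main obstacle.} For the dimension bound one restricts to the dense $G_\delta$ set of $\Phi$ with $\limsup_n q_{n+1}/q_n=\infty$ and, along that subsequence, covers $\sigma(Q_\Lambda(\Phi))$ by the $\leq Cq_n$ thickened bands of $\sigma(Q_\Lambda(2\pi p_n/q_n))$, split according to distance to the Dirac energy; feeding the bandwidth bound and the square-root ($\tfrac12$-H\"older) behaviour of the density of states at the band edges into a concavity estimate for $\sum(\mathrm{diam})^s$, in the spirit of Last's estimates for the almost Mathieu operator, yields $\dim_H(\sigma(Q_\Lambda(\Phi)))\leq\tfrac12$. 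The step I expect to be the genuine obstacle is the one in which the singularity is turned into an asset: establishing $|\sigma(Q_\Lambda(p/q))|\leq C/\sqrt{q}$ with $C$ independent of $q$ (equivalently $|\sigma(\mathcal{J}_{p/q})|\leq C/q$), which needs both the supersymmetric reduction and a delicate bandwidth analysis and which fails outright in the non-singular (sub- or supercritical) regime; and, hand in hand with it, ruling out point spectrum at irrational flux, for which the customary (almost) reducibility/duality arguments are unavailable precisely because the Jacobi matrix is singular.
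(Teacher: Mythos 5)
Your overall architecture matches the paper's: the supersymmetric reduction of $Q_\Lambda(\Phi)$ to a singular quasiperiodic Jacobi matrix, a Chambers-type formula plus a bandwidth estimate at rational flux, the $\sqrt{\delta}$-optimization converting $|\Sigma_{2\pi p/q}|\leq C/q$ into $|\sigma(Q_\Lambda(\Phi))|\leq C/\sqrt{q}$, and a Last-type covering argument for the Hausdorff bound. But the central claim of the theorem --- zero Lebesgue measure of the spectrum for \emph{every} irrational flux --- is never actually proved. You correctly observe that rational approximation via $\tfrac12$-H\"older continuity of the spectrum in the flux (Lemma \ref{continuity}) only reaches fluxes with sufficiently fast-growing partial quotients, and then assert that the remaining fluxes are ``handled by exploiting the vanishing of $c$ directly,'' with no argument. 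That missing argument is the heart of the paper (proof of Theorem \ref{measure0thm}): (a) the Lyapunov exponent vanishes identically on $\Sigma_\Phi$ (Proposition \ref{LE=0}), proved by complexifying $\theta$, computing the cocycle asymptotics as $|\epsilon|\to\infty$, and invoking convexity together with Avila's quantization of acceleration; (b) the absolutely continuous spectrum of $H_{\Phi,\theta}$ is empty for a.e.\ $\theta$ because $c$ has a real zero (Proposition \ref{sing}); (c) Kotani theory, extended to Jacobi matrices with $\log|c|\in L^1$ (Theorem \ref{Kotani}), identifies $\overline{\{L=0\}}^{ess}$ with the a.c.\ spectrum, whence $\Sigma_\Phi\subseteq\{L=0\}$ has zero measure. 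None of (a)--(c), nor any substitute, appears in your proposal, so the second bullet is unestablished.

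A second genuine gap is your route to ``no point spectrum'': the claim that $c(\theta+n\Phi)$ returning arbitrarily close to $0$ is ``incompatible with an $\ell^2$ eigensolution'' is not a valid principle (singular Jacobi matrices can have eigenvalues; absence of point spectrum for the self-dual extended Harper model was itself a hard recent theorem), and it targets the wrong operator anyway. What is needed is absence of point spectrum for the two-dimensional operator $Q_\Lambda(\Phi)$, which the paper obtains cheaply in Lemma \ref{spectrumQlambda}: an eigenvalue of $H_\Phi$ would be a common eigenvalue of $H_{\Phi,\theta}$ for a.e.\ $\theta$, producing a jump of the integrated density of states, which is impossible. Two smaller points: for the rational bandwidth bound, the singularity enters not through the leading coefficient of the discriminant but through the Lidskii-inequality estimate $|\Sigma_{2\pi p/q,\theta}|\leq 4|c(\theta)|$ (Lemma \ref{estlemma}) combined with the fact that the extremizing phases of the Chambers formula can be taken within $O(1/q)$ of the zero of $c$; and for the Hausdorff bound, the genericity condition $\limsup_n q_{n+1}/q_n=\infty$ is too weak --- Last's lemma (Lemma \ref{Last:lemma}) with $\beta=1$ requires covers of total length $\lesssim 1/q_n$, which after thickening each of the $q_n$ bands by $C|\tfrac{\Phi}{2\pi}-\tfrac{p_n}{q_n}|^{1/2}$ forces $q_n^4|\tfrac{\Phi}{2\pi}-\tfrac{p_n}{q_n}|=O(1)$, i.e.\ $q_{n+1}\gtrsim q_n^3$, which is still generic but strictly stronger than your hypothesis.
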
 

\begin{rem} We will show that the constant $C$ in the first item can be 
bounded by  $\frac{8\sqrt{6 \pi}}{9}$.\end{rem}
The theory of magnetic Schr\"odinger operators on graphs can be found
in \cite{KS}. The effective one-particle graph model for graphene
without magnetic fields was introduced in \cite{KP}. After
incorporating a magnetic field according to \cite{KS} in the model of
\cite{KP}, the reduction of differential operators on the graph to a
discrete tight-binding operator can be done using Krein's extension
theory for general self-adjoint operators on Hilbert spaces. This
technique has been introduced in \cite{P} for magnetic quantum graphs
on the square lattice. The quantum graph nature of the differential
operators causes, besides the contribution of the tight-binding operator to the continuous spectrum, a contribution to the point spectrum that consists of \emph{Dirichlet eigenfunctions} vanishing at every vertex. 

In this paper we develop the corresponding reduction for the hexagonal
structure and derive spectral conclusions in a way that allows easy
generalization to other planar graphs spanned by two basis vectors as
well.  In particular, our techniques should be applicable to study
quantum graphs on the
triangular lattice, which will be pursued elsewhere.


One of the striking properties of graphene is the presence
 of a linear dispersion relation which leads to the formation of
  conical structures in the Brillouin zone. The points where the cones
 match are called \emph{Dirac points} to account for the special
  dispersion relation.
Using magnetic translations introduced by \cite{Zak} we establish a
one-to-one correspondence between bands of the magnetic Schr\"odinger
operators on the graph and of the tight-binding operators for rational
flux quanta that only relies on Krein's theory. In particular, the
bands of the graph model always touch at the Dirac points and are
shown to have open gaps at the band edges of the associated Hill
operator if the magnetic flux is non-trivial. This way, the conical
Dirac points are preserved in rational magnetic fields. We obtain the
preceding results by first proving a bound on the operator norm of the
tight-binding operator and analytic perturbation theory. 

In \cite{KP} it was shown that the Dirichlet contribution to the
spectrum in the non-magnetic case is generated by compactly supported
eigenfunctions and that this is the only contribution to the point
spectrum of the Schr\"odinger operator on the graph. We extend this
result to magnetic Schr\"odinger operators on hexagonal graphs. Let
$H_{pp}$ be the pure point subspace accociated with $H^B.$ Then

\begin{theo}\label{T4}
For any $\Phi$, $H_{pp}$ is spanned by compactly supported 
eigenfunctions (in fact, by double hexagonal states).
\end{theo}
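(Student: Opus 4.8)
The plan is to characterize the pure point subspace $H_{pp}$ of $H^B$ explicitly via the magnetic quantum graph structure. First I would recall from the Krein extension theory reduction (as developed earlier in the paper for the hexagonal graph, following \cite{KP,P}) that an energy $\lambda$ lies in the point spectrum of $H^B$ precisely when it is a Dirichlet eigenvalue of the edge Hill operator, i.e. $\lambda\in\sigma(H^D)$, as already asserted in Theorem \ref{T1}(2). The key structural fact is that the associated eigenfunctions of $H^B$ must vanish at every vertex of the hexagonal graph: on each edge $\vec{e}$, such an eigenfunction restricts to a solution of $(-\mathrm{i}\,d/dx - A_{\vec{e}})^2 u + V_{\vec{e}} u = \lambda u$ with Dirichlet boundary conditions at both endpoints. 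So the first step is to show that $H_{pp}$ is exactly the closed span of these vertex-vanishing (Dirichlet) eigenfunctions.

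Next I would analyze the geometry of supports. Fix a Dirichlet eigenvalue $\lambda$ and let $\psi$ be an eigenfunction of $H^B$ at energy $\lambda$. On each edge $\psi$ is (a magnetic phase times) a Dirichlet eigenfunction of the 1D Hill operator, hence determined up to a scalar $c_{\vec{e}}$. The vertex conditions of the magnetic quantum graph (continuity is automatic since all values are zero; the Kirchhoff-type derivative condition becomes a linear relation among the $c_{\vec{e}}$ with magnetic phase factors $e^{\mathrm{i}\int_{\vec{e}}A}$) impose at each vertex a single linear constraint on the three incident coefficients. This is formally the same as requiring the coefficient vector $(c_{\vec{e}})$ to lie in the kernel of a weighted signed incidence-type operator on the hexagonal graph — essentially the kernel of the magnetic analog of $Q_\Lambda$ at the relevant boundary value. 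The task is then purely combinatorial/homological: show that the kernel of this operator on $\ell^2$ of the edges is spanned by finitely supported vectors, and moreover by the "elementary" ones supported on a single hexagon — or, accounting for the magnetic phases and the possibility that single-hexagon states fail to close up, on a pair of adjacent hexagons ("double hexagonal states"). I would exhibit these double hexagonal states explicitly: choose alternating coefficients $\pm$ around two glued hexagons with the magnetic phases absorbed, check the vertex condition holds at every vertex (the two "shared" vertices cancel, the outer vertices have only one nonzero incident coefficient forcing it to vanish — wait, rather the outer vertices have two incident nonzero edges whose contributions cancel by the alternating sign choice), and verify these lie in $H^B$'s domain \eqref{magdom}.

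Finally I would prove completeness: every $\ell^2$ element of the kernel is a (closure of a) linear combination of double hexagonal states. The cleanest route is a density/exhaustion argument — take any kernel element $c$, and at the boundary of its essential support derive, from the vertex constraints at "corner" vertices, that $c$ must be supported on full hexagons, then peel off double-hexagon states one at a time; since $c\in\ell^2$ this process exhausts $c$ in norm. This completeness step is the main obstacle: one must rule out infinitely-supported kernel elements that are not approximable by compactly supported ones, which is where the specific bipartite/planar structure of the honeycomb lattice and the constancy of the flux enter, guaranteeing that the "cycle space" of finitely supported solutions is dense. I would handle this by noting that the hexagonal graph's cycle space is generated by hexagonal faces and that the magnetic weights, having constant flux per hexagon, are cohomologically trivial enough that each face cycle supports an actual solution (possibly after doubling to fix the phase), together with an $\ell^2$ Fourier/exhaustion argument to pass from the algebraic span to its closure. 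The conclusion is that $H_{pp}$, which decomposes over Dirichlet eigenvalues $\lambda\in\sigma(H^D)$, is for each such $\lambda$ spanned by double hexagonal states, and hence so is $H_{pp}$ itself — with each eigenvalue having infinite multiplicity since there are infinitely many disjoint double hexagons in the lattice.
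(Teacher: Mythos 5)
Your skeleton matches the paper's up to the crucial last step: eigenfunctions of $H^B$ at a Dirichlet eigenvalue vanish at every vertex, reduce on each edge to a multiple of $s_{\lambda,\vec e}$, and the coefficient vector must lie in the kernel of a discrete magnetic operator on the edges; explicit states supported on a double hexagon exist for every $\Phi$ (in the paper via the matrix $T_\Phi(10)$ — note that for $2\Phi\notin 2\pi\Z$ the construction is not an alternating-sign choice on the outer loop but requires a nonzero coefficient on the slicing edge $\vec e$ and solving $T_\Phi(10)a=y$, since the outer loop alone supports no state when $q\Phi\notin2\pi\Z$). Your peeling argument for compactly supported kernel elements is also essentially the paper's.

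The genuine gap is completeness in $\ell^2$ for irrational flux. Your proposed resolutions — "the cycle space is generated by hexagonal faces" plus "an $\ell^2$ Fourier/exhaustion argument" — do not close it. The cycle-space statement is a purely algebraic fact about finitely supported chains and says nothing about whether an infinitely supported $\ell^2$ kernel element lies in the \emph{closed} span of double hexagonal states; and there is no "boundary of the essential support" from which to start peeling when the support is infinite. The Fourier route is exactly what works for rational flux (periodicity plus Theorem 8 of \cite{K2} shows the kernel of the periodic difference operator is generated by $c_{00}$ sequences), but for irrational flux there is no periodicity and the paper explicitly states that the arguments of \cite{K2} are unavailable. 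The paper's actual mechanism is different and is the main new idea here: it defines an auxiliary operator $A(\Phi)$ on $l^2(\mathcal E(\Lambda))$ whose kernel is isometrically isomorphic to $\ker(H^B-\lambda)\cap DH_{\mathcal E(\Lambda)}(\Phi)^{\perp}$, proves $A(\Phi)$ is surjective for all $\Phi$ with the quantitative bound $\|A(\Phi)^{-1}\|\le C/|1-e^{-i\Phi}|$ at rational $\Phi$ (where injectivity is known), and then uses analyticity of $\Phi\mapsto A(\Phi)$ and a Neumann-series perturbation from nearby rationals to conclude invertibility — hence triviality of the orthogonal complement — at irrational $\Phi$. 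Without this (or some substitute bridging $\ell^1$-type density to genuine $\ell^2$ completeness), your argument establishes only that compactly supported eigenfunctions are spanned by double hexagonal states, not that they are dense in $H_{pp}$.
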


While
for rational $\Phi$ the proof is based on ideas similar to those of \cite{KP}, for
irrational $\Phi$ we no longer have an underlying periodicity thus
cannot use the arguments of \cite{K2}. After showing that there are double
hexagonal state eigenfunctions for each Dirichlet eigenvalue, it
remains to show their completeness. While there are various ways to show
that all $\ell^1$ (in a suitable sense) eigenfunctions are in the closure of the span of double
hexagonal states,  the $\ell^2$ condition is more elusive. Bridging the gap between $\ell^1$  and $\ell^2$  has been a
known difficult problem in several other scenarios
\cite{avilapreprint,ajm,aizenmanwarzel,jaksiklast}. Here we achieve
this by constructing, for each $\Phi$, an operator that would have all slowly decaying
$\ell^2$ eigenfunctions in its kernel and showing its
invertibility. This is done using constructive arguments and properties of holomorphic families of operators.

This paper is organized as follows. 
Section \ref{prelimsec} serves as background.
In Section \ref{secwithmagnetic}, we introduce the magnetic Schr\"odinger operator $H^B$ and (modified) Peierls' substitution, which enables us to reduce $H^B$ to a non-magnetic Schr\"odinger operator $\Lambda^B$ with magnetic contributions moved into the boundary conditions.
In Section \ref{mainlemmassec}, we present several key ingredients of the proofs of the main theorems: Lemmas \ref{HBQLam} and \ref{measure0cor} - \ref{Hausdorff}.
Lemma \ref{HBQLam} involves a further reduction from $\Lambda^B$ to a two-dimensional tight-binding Hamiltonian $Q_{\Lambda}(\Phi)$, and 
Lemmas \ref{measure0cor} - \ref{Hausdorff} reveal the topological structure of
$\sigma(Q_{\Lambda}(\Phi))$ (thus proving the topological part of Theorem \ref{T3}).
The proofs of Lemmas \ref{HBQLam}, \ref{measure0cor}, \ref{measureestlem} and \ref{Hausdorff} will be given is Sections \ref{KreinRed}, \ref{secproofofLemmakey2} and \ref{secHausdorff} respectively.
Section \ref{speanasec} is devoted to a complete spectral analysis of $H^B$, thus proving Theorem \ref{T1}, 
with the analysis of Dirichlet spectrum in Section
\ref{Dirichsec}, where in particular we prove Theorem \ref{T4};
absolutely continuous spectrum for rational flux in Section
\ref{acsec}, singular continuous spectrum for irrational flux in
Section \ref{scsec} (thus proving Theorem \ref{T2}).

\section{Preliminaries}\label{prelimsec}
Given a graph $G$, we denote the set of edges of $G$ by $\mathcal{E}(G),$ the set of vertices by $\mathcal{V}(G)$, and the set of edges adjacent to a vertex $v \in \mathcal{V}(G)$ by $\mathcal{E}_v(G).$  

For an operator $H$, let $\sigma(H)$ be its spectrum and $\rho(H)$ be the resolvent set.

The space $c_{00}$ is the space of all infinite sequences with only finitely many non-zero terms (finitely supported sequences). We denote by $\Omega^i(\R^2)$ the vector space of all $i$-covectors or differential forms of degree $i$ on $\R^2.$

For a set $U\subseteq \R$, let $|U|$ be its Lebesgue measure.

\subsection{Hexagonal quantum graphs}
This subsection is devoted to reviewing hexagonal quantum graphs without magnetic fields. 
The readers could refer to \cite{KP} for details. 
We include some material here that serves as a preparation for the study of quantum graphs with magnetic fields in Section \ref{secwithmagnetic}.

The effective one electron behavior in graphene can be described by a hexagonal graph with Schr\"odinger operators defined on each edge \cite{KP}.
The hexagonal graph $\Lambda$ is obtained by translating its fundamental cell $W_{\Lambda}$ shown in Figure \ref{Fig:Fcell}, consisting of vertices
\begin{equation}
r_0:=(0,0) \text{ and } r_1:=\left(\frac{1}{2}, \frac{\sqrt{3}}{2}\right)
\end{equation}
and edges
\begin{equation}
\begin{split}
\vec{f}&:=\operatorname{conv}\left(\{r_0,r_1 \}\right) \ \backslash \ \{r_0,r_1 \}, \\
\vec{g}&:=\operatorname{conv}\left(\{r_0,\left(-1,0\right) \}\right) \ \backslash \{r_0,\left(-1,0\right) \} , \text{ and } \\
\vec{h}&:=\operatorname{conv}\left( \left\{r_0, \left( \frac{1}{2}, -\frac{\sqrt{3}}{2} \right) \right\} \right) \ \backslash
 \ \left\{r_0, \left( \frac{1}{2}, -\frac{\sqrt{3}}{2} \right)\right\},
\end{split}
\end{equation}
along the basis vectors of the lattice.
The basis vectors are
\begin{equation}
\vec{b}_1:= \left(\frac{3}{2}, \frac{\sqrt{3}}{2} \right) \text{ and } 
\vec{b}_2:= \left(0,\sqrt{3}\right)
\end{equation}
and so the hexagonal graph $\Lambda \subset \mathbb{R}^2$ is given by the range of a $\mathbb{Z}^2$-action on the fundamental domain $W_{\Lambda}$
\begin{equation}
\label{lattice}
\Lambda:= \left\{ x \in \mathbb{R}^2:\ x = \gamma_1 \vec{b}_1+\gamma_2 \vec{b}_2+y \text{ for } \gamma \in \mathbb{Z}^2 \text{ and } y \in W_{\Lambda} \right\}.
\end{equation}

The fundamental domain of the dual lattice can be identified with the dual $2$-torus where the dual tori are defined as
\begin{equation}
\mathbb{T}_k^*:= \mathbb{R}^k / (2 \pi \mathbb{Z})^k.
\end{equation}

For any vertex $v\in \mathcal{V}(\Lambda)$, we denote by $[v]\in \mathcal{V}(W_{\Lambda})$ the unique vertex, $r_0$ or $r_1$, for which there is $\gamma\in \mathbb{Z}^2$ such that 
\begin{align}
v=\gamma_1 \vec{b}_1+\gamma_2 \vec{b}_2+[v].
\end{align}
We will sometimes denote $v$ by $(\gamma_1, \gamma_2, [v])$ to emphasize the location of $v$.
We also introduce a similar notation for edges. 
For an edge $\vec{e} \in \mathcal{E}(\Lambda)$, we will sometimes denote it by $\gamma_1, \gamma_2, \vec{[e]}$.
Finally, for any $x\in \Lambda$, we will also denote its unique preimage in $W_{\Lambda}$ by $[x]$ \footnote{so that $y$ in \eqref{lattice}=$[x]$}.

\begin{center}
\begin{figure}
\centerline{\includegraphics[height=7cm]{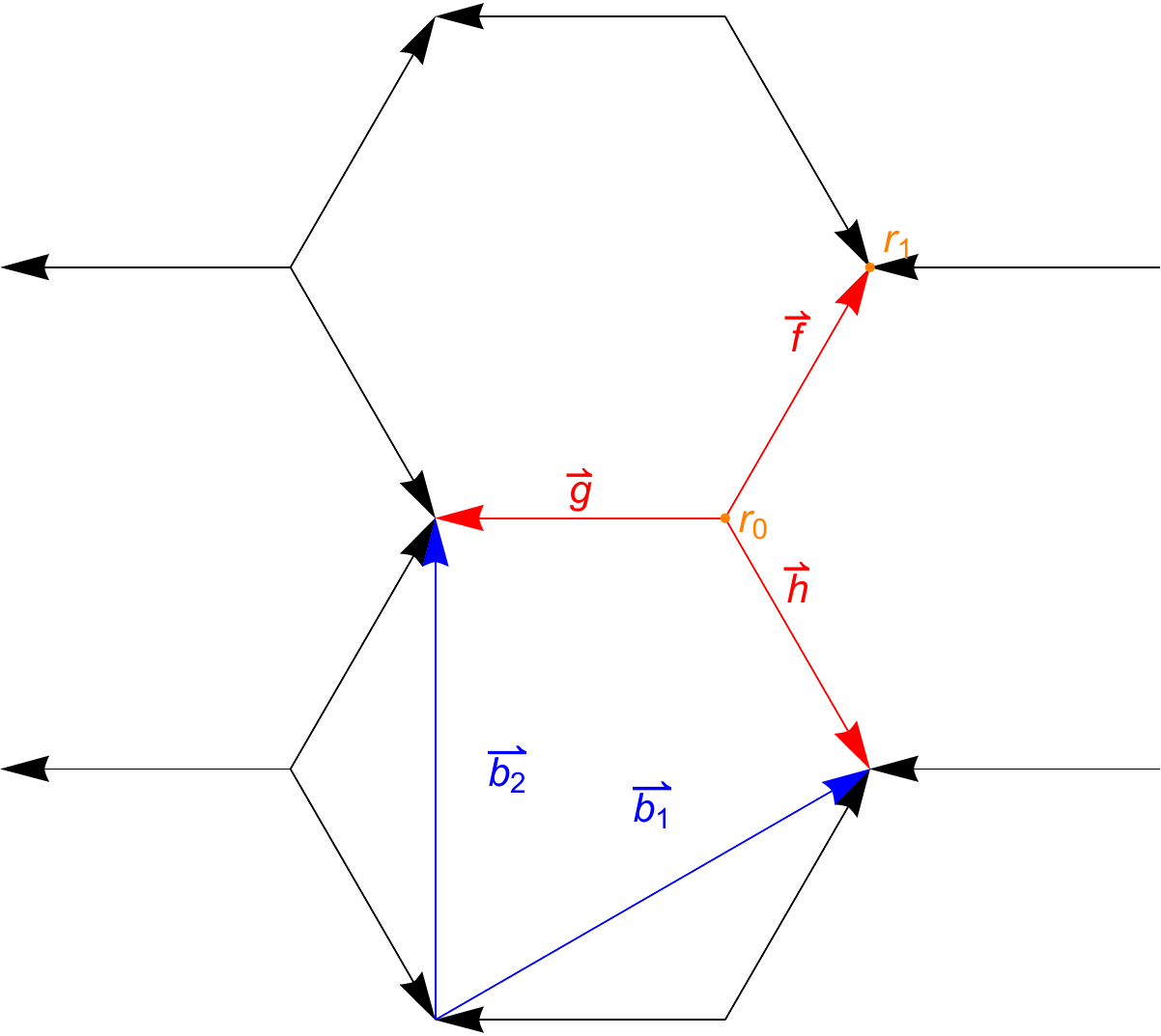}} 
\caption{The fundamental cell and lattice basis vectors of $\Lambda$ \label{Fig:Fcell}.}
\label{Figure1}
\end{figure}
\end{center}

We can orient the edges in terms of initial and terminal maps
\begin{align}
i&:\mathcal{E}(\Lambda) \rightarrow \mathcal{V}(\Lambda) \text{ and }
t:\mathcal{E}(\Lambda) \rightarrow \mathcal{V}(\Lambda)
\end{align}
where $i$ and $t$ map edges to their initial and terminal ends respectively.
It suffices to specify the orientation on the edges of the fundamental domain $W_{\Lambda}$ to obtain an oriented graph $\Lambda$
\begin{equation}
\begin{split}
i(\vec{f})&=i(\vec{g})=i(\vec{h})=r_0, \\
t(\vec{f})&=r_1, \ t(\vec{g})=r_1-\vec{b}_1,\text{ and } t(\vec{h})=r_1-\vec{b}_2.
\end{split}
\end{equation}
For arbitrary $\vec{e} \in \mathcal{E}(\Lambda)$, we then just extend those maps by
\begin{align}
i(\vec{e}):=\gamma_1 \vec{b}_1+\gamma_2 \vec{b}_2+i(\vec{[e]}) \text{ and }
t(\vec{e}):=\gamma_1 \vec{b}_1+\gamma_2 \vec{b}_2+t(\vec{[e]}).
\end{align}

Let $i(\Lambda)=\{v\in \mathcal{V}(\Lambda): v=i(\vec{e}) \text{ for some }\vec{e}\in \mathcal{E}(\Lambda)\}$ be the collection of initial vertices, and 
$t(\Lambda)=\{v\in \mathcal{V}(\Lambda): v=t(\vec{e}) \text{ for some }\vec{e}\in \mathcal{E}(\Lambda)\}$ be the collection of terminal ones.
It should be noted that based on our orientation, $\mathcal{V}(\Lambda)$ is a disjoint union of $i(\Lambda)$ and $t(\Lambda)$.

Every edge $\vec{e} \in \mathcal{E}(\Lambda)$ is of length one and thus has a canonical chart
\begin{equation}
\label{chart}
\kappa_{\vec{e}}: \vec{e} \rightarrow (0,1), (i(\vec{e})x+t(\vec{e})(1-x))\mapsto x
\end{equation}    
that allows us to define function spaces and operators on $\vec{e}$ and finally on the entire graph.
For $n \in \mathbb{N}_0$, the Sobolev space $\mathcal{H}^n(\mathcal{E}\left(\Lambda\right))$ on $\Lambda$ is the Hilbert space direct sum
\begin{equation}
\mathcal{H}^n(\mathcal{E}\left(\Lambda\right)):= \bigoplus_{\vec{e} \in \mathcal{E}(\Lambda)} \mathcal{H}^n(\vec{e}).
\end{equation}

On every edge $\vec{e} \in \mathcal{E}(\Lambda)$ we define the maximal Schr\"odinger operator 
\begin{equation}
\begin{split}
\label{maximaloperator}
H_{\vec{e}}&: \mathcal{H}^2(\vec{e}) \subset L^2(\vec{e}) \rightarrow L^2(\vec{e})  \\
H_{\vec{e}}&\psi_{\vec{e}} := -\psi''_{\vec{e}}+V_{\vec{e}} \psi_{\vec{e}}
\end{split}
\end{equation}
with Kato-Rellich potential $V_{\vec{e}} \in L^2(\vec{e})$ that is the same on every edge and {\it even} with respect to the center of the edge. 
Let 
\begin{align}\label{V01}
V(t)=V_{\vec{e}}((\kappa_{\vec{e}})^{-1}(t)).
\end{align}
Then 
\begin{align}\label{Veven}
V(t)=V(1-t).
\end{align}

One self-adjoint restriction of \eqref{maximaloperator} is the Dirichlet operator 
\begin{equation}
\begin{split}
\label{Dirichlet} 
&H^D:=\bigoplus_{\vec{e} \in \mathcal{E}(\Lambda)}\left(\mathcal{H}_0^1(\vec{e})\cap \mathcal{H}^2(\vec{e}) \right) \subset L^2(\mathcal{E}\left(\Lambda\right)) \rightarrow L^2(\mathcal{E}\left(\Lambda\right))  \\
&(H^D\psi)_{\vec{e}}:=H_{\vec{e}}\psi_{\vec{e}},
\end{split}
\end{equation}
where $\mathcal{H}_0^1(\vec{e})$ is the closure of compactly supported smooth functions in $\mathcal{H}^1(\vec{e})$.
The Hamiltonian we will use to model the graphene without magnetic fields is the self-adjoint \cite{K2} operator $H$ on $\Lambda$ with Neumann type boundary conditions
\begin{equation}
\begin{split}
\label{domnon-magnHam}
D(H):= \biggl\{ \psi=(\psi_{\vec{e}}) \in \mathcal{H}^2(\mathcal{E}(\Lambda)): 
&\text{ for all }v \in \mathcal{V}(\Lambda), \psi_{\vec{e}_1}(v)=\psi_{\vec{e}_2}(v) \text{ if }\vec{e}_1,\vec{e}_2 \in \mathcal{E}_v(\Lambda)  \\
&\text{ and } \sum_{\vec{e} \in \mathcal{E}_v(\Lambda)} \psi_{\vec{e}}^\prime(v)=0\biggr\}
\end{split}
\end{equation}
and defined by
\begin{equation}
\begin{split}
\label{non-magnHam}
&\ H: D(H) \subset L^2(\mathcal{E}(\Lambda)) \rightarrow L^2(\mathcal{E}(\Lambda)) \\
&(H\psi)_{\vec{e}}:=(H_{\vec{e}}\psi_{\vec{e}}).
\end{split}
\end{equation}
\begin{rem}
The self-adjointness of $H$ will also follow from the self-adjointness of the more general family of magnetic Schr\"odinger operators that is obtained in Sec. \ref{KreinRed}. 
\end{rem}
\begin{rem}
The orientation is chosen so that all edges at any vertex are either all incoming or outgoing. Thus, there is no need to distinguish those situations in terms of a directional derivative in the boundary conditions \eqref{domnon-magnHam}.
\end{rem}

\subsubsection{Floquet-Bloch decomposition}
Operator $H$ commutes with the standard lattice translations
\begin{equation}
\begin{split}
\label{standardtran}
T^{\text{st}}_{\gamma}
: &L^2(\mathcal{E}(\Lambda)) \rightarrow L^2(\mathcal{E}(\Lambda))\\
  &f \mapsto f(\cdot-\gamma_1 \vec{b}_1-\gamma_2 \vec{b}_2)
\end{split}
\end{equation}
for any $\gamma \in \mathbb{Z}^2.$
In terms of those, we define the Floquet-Bloch transform for $x \in \mathcal{E}(W_{\Lambda})$ and $k \in \mathbb{T}_2^*$ first on function $f \in C_c(\mathcal{E}(\Lambda))$ 
\begin{equation}
\label{Gelfand}
(Uf)(k,x):= \sum_{\gamma \in \mathbb{Z}^2} (T^{\text{st}}_{\gamma}f)(x)e^{i\langle k,\gamma \rangle}
\end{equation}
and then extend it to a unitary map $U \in \mathcal{L}(L^2(\mathcal{E}(\Lambda)),L^2(\mathbb{T}_2^* \times \mathcal{E}(W_{\Lambda}))$
with inverse
\begin{equation}
(U^{-1}\varphi)(x)= \int_{\mathbb{T}_2^*} \varphi(k,[x]) e^{-i \langle \gamma,k \rangle} \frac{dk}{(2 \pi)^2},
\end{equation}
where $[x] \in \mathcal{E}\left(W_{\Lambda}\right)$ is the unique pre-image of $x$ in $W_{\Lambda}$, and $\gamma\in \Z^2$ is defined by $x=\gamma_1 \vec{b}_1+\gamma_2 \vec{b}_2+[x]$.

Then standard Floquet-Bloch theory implies that there is a direct integral representation of $H$
\begin{equation}
\label{directint}
UHU^{-1}= \int_{\mathbb{T}_2^{*}}^{\oplus} H^k \frac{dk}{(2\pi)^2}
\end{equation}
in terms of self-adjoint operators $H^k$ 
\begin{equation}
\begin{split}
&\ H^k:D(H^k) \subset L^2(\mathcal{E}\left(W_{\Lambda}\right)) \rightarrow L^2(\mathcal{E}\left(W_{\Lambda}\right)) \\
&(H^k\psi)_{\vec{e}}:=(H_{\vec{e}}\psi_{\vec{e}})
\end{split}
\end{equation}
on the fundamental domain $W_{\Lambda}$ with Floquet boundary conditions
\begin{equation}
\begin{split}
\label{DomainHk}
D(H^k):=\biggl\{&\psi \in \mathcal{H}^2(\mathcal{E}\left(W_{\Lambda}\right)): \psi_{\vec{f}}(r_0)=\psi_{\vec{g}}(r_0)=\psi_{\vec{h}}(r_0) \text{ and } \sum_{\vec{e} \in \mathcal{E}_{r_0}(\Lambda)} \psi_{\vec{e}}^\prime (r_0)=0, \\
&\text{ as well as }\psi_{\vec{f}}(r_1)=e^{ik_1}\psi_{\vec{g}}(r_1-\vec{b}_1)=e^{ik_2}\psi_{\vec{h}}(r_1-\vec{b}_2)\\
&\ \text{and }\psi_{\vec{f}}^\prime (r_1)+e^{ik_1}\psi_{\vec{g}}^\prime (r_1-\vec{b}_1)+e^{ik_2}\psi_{\vec{h}}^\prime (r_1-\vec{b}_2)=0
\biggr\}.
\end{split}
\end{equation}

Fix an edge $\vec{e} \in \mathcal{E}(\Lambda)$ and $\lambda \notin \sigma(H^D)$. There are linearly independent $\mathcal{H}^2(\vec{e})$-solutions $\psi_{\lambda,1,\vec{e}}$ and $\psi_{\lambda,2,\vec{e}}$ to the equation 
$H_{\vec{e}}\psi_{\vec{e}}=\lambda \psi_{\vec{e}}$ with the following boundary condition
\begin{equation}
\psi_{\lambda,1,\vec{e}}(i(\vec{e}))=1, \quad \psi_{\lambda,1,\vec{e}}(t(\vec{e}))=0, \quad \psi_{\lambda,2,\vec{e}}(i(\vec{e}))=0, \text{ and }  \psi_{\lambda,2,\vec{e}}(t(\vec{e}))=1.
\end{equation}
Any eigenfunction to operators $H^k$, with eigenvalues away from $\sigma(H^D)$, can therefore be written in terms of those functions for constants $a,b \in \mathbb{C}$
\begin{equation}
\psi:= \begin{cases} a \ \psi_{\lambda,1,\vec{f}}+ b \ \psi_{\lambda,2,\vec{f}} &\mbox{along edge } \vec{f}  \\
a \ \psi_{\lambda,1,\vec{g}} + e^{-ik_1}b \ \psi_{\lambda,2,\vec{g}} & \mbox{along edge } \vec{g} \\
a \ \psi_{\lambda,1,\vec{h}} + e^{-ik_2}b \ \psi_{\lambda,2,\vec{h}} & \mbox{along edge } \vec{h} \end{cases} 
\end{equation}
with the continuity conditions of \eqref{DomainHk} being already incorporated in the representation of $\psi.$
Imposing the conditions stated on the derivatives in \eqref{DomainHk} shows that $\psi$ is non-trivial ($a,b$ not both equal to zero) and therefore an eigenfunction with eigenvalue $\lambda \in \mathbb{R}$ to $H^k$  iff
\begin{equation}
\label{condition}
\eta(\lambda)^2 = \frac{\left\vert 1+ e^{ik_1}+ e^{ik_2} \right \vert^2}{9}
\end{equation}
with $\eta(\lambda):=\frac{\psi_{\lambda,2,\vec{e}}'(t(\vec{e}))}{\psi_{\lambda,2,\vec{e}}'(i(\vec{e}))}$ well-defined away from the Dirichlet spectrum.

By noticing that the range of the function on the right-hand side of \eqref{condition} is $[0,1],$ the following spectral characterization is obtained \cite[Theorem $3.6$]{KP}.
\begin{theo}
As a set, the spectrum of $H$ away from the Dirichlet spectrum is given by
\begin{equation}
\sigma(H)\backslash \sigma(H^D) = \left\{ \lambda \in \mathbb{R}:\ \left\lvert \eta(\lambda) \right\rvert \le 1  \right\}\backslash \sigma(H^D).
\end{equation}
\end{theo}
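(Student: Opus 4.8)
The plan is to read off the full spectrum from the Floquet--Bloch fibration \eqref{directint} together with the fiberwise condition \eqref{condition}. Since $UHU^{-1}=\int_{\mathbb{T}_2^*}^{\oplus}H^k\,\tfrac{dk}{(2\pi)^2}$ with each $H^k$ self-adjoint and the fibers depending nicely (analytically in $k$ away from $\sigma(H^D)$) on $k$, standard direct-integral theory gives $\sigma(H)=\overline{\bigcup_{k\in\mathbb{T}_2^*}\sigma(H^k)}$. Fix $\lambda\notin\sigma(H^D)$. Because $\lambda$ is not a Dirichlet eigenvalue, the two-point boundary value problem defining $\psi_{\lambda,2,\vec{e}}$ is uniquely solvable, the solution is real-valued (real $V$, real $\lambda$, real boundary data), depends analytically on $\lambda$, and satisfies $\psi_{\lambda,2,\vec{e}}'(i(\vec{e}))\neq 0$ (otherwise $\psi_{\lambda,2,\vec{e}}\equiv 0$ by uniqueness for the ODE, contradicting $\psi_{\lambda,2,\vec{e}}(t(\vec{e}))=1$); hence $\eta$ is real-valued and continuous on the open set $\mathbb{R}\setminus\sigma(H^D)$. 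With this, \eqref{condition} says precisely: $\lambda\in\sigma(H^k)$ iff $\eta(\lambda)^2=\tfrac{1}{9}\lvert 1+e^{ik_1}+e^{ik_2}\rvert^2$.

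Next I would use that $k\mapsto\tfrac{1}{9}\lvert 1+e^{ik_1}+e^{ik_2}\rvert^2$ is continuous on the connected torus $\mathbb{T}_2^*$ and surjects onto $[0,1]$ — it equals $1$ at $k=0$ and $0$ at $k=(\tfrac{2\pi}{3},-\tfrac{2\pi}{3})$, and all intermediate values occur by the intermediate value theorem — as already observed above. Consequently, for $\lambda\notin\sigma(H^D)$ one has $\lambda\in\bigcup_{k}\sigma(H^k)$ iff $\eta(\lambda)^2\in[0,1]$ iff $\lvert\eta(\lambda)\rvert\le 1$ (here the reality of $\eta(\lambda)$, hence $\eta(\lambda)^2\ge 0$, is used). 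This already yields $\{\lambda\in\mathbb{R}:\lvert\eta(\lambda)\rvert\le 1\}\setminus\sigma(H^D)=\big(\bigcup_k\sigma(H^k)\big)\setminus\sigma(H^D)\subseteq\sigma(H)\setminus\sigma(H^D)$.

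It remains to rule out that the closure in $\sigma(H)=\overline{\bigcup_k\sigma(H^k)}$ creates spurious spectrum inside the complement of the Dirichlet spectrum. If $\lambda\in\sigma(H)\setminus\sigma(H^D)$, choose $\lambda_n\to\lambda$ with each $\lambda_n\in\sigma(H^{k_n})$ for some $k_n$; since $\sigma(H^D)$ is closed and $\lambda\notin\sigma(H^D)$, we may assume $\lambda_n\notin\sigma(H^D)$ for all $n$, so $\eta(\lambda_n)^2\in[0,1]$, and continuity of $\eta$ at $\lambda$ forces $\eta(\lambda)^2\in[0,1]$, i.e. $\lvert\eta(\lambda)\rvert\le 1$. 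Combined with the preceding paragraph this gives the claimed set equality. The only genuinely delicate point is this last continuity / relative-closedness argument; the computational heart of the statement — deriving \eqref{condition} by imposing the derivative-matching conditions of \eqref{DomainHk} on the explicit ansatz for $\psi$, a $2\times 2$ linear-algebra computation in which the evenness \eqref{Veven} of $V$ is what collapses the fiber condition to a single scalar $\eta(\lambda)$ — has already been carried out in the discussion preceding the theorem.
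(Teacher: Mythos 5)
Your proposal is correct and follows essentially the same route as the paper, which derives the fiber condition \eqref{condition} via the Floquet--Bloch decomposition and then observes that the range of $k\mapsto\frac{1}{9}\lvert 1+e^{ik_1}+e^{ik_2}\rvert^2$ is $[0,1]$ (the theorem itself is quoted from \cite{KP}). The only addition is your explicit treatment of the closure in $\sigma(H)=\overline{\bigcup_k\sigma(H^k)}$ via continuity of $\eta$ off the closed set $\sigma(H^D)$, a detail the paper leaves implicit (note also that one tacitly uses that each $H^k$, being a Schr\"odinger operator on a compact metric graph, has purely discrete spectrum, so membership in $\sigma(H^k)$ is equivalent to being an eigenvalue).
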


\subsubsection{Dirichlet-to-Neumann map}
Fix an edge $\vec{e}\in \mathcal{E}(\Lambda)$. Let $c_{\lambda, \vec{e}}, s_{\lambda, \vec{e}}$ be solutions to $H_{\vec{e}}\psi_{\vec{e}}=\lambda \psi_{\vec{e}}$ with the following boundary condition
\begin{equation}\label{defcs}
\left(
\begin{matrix}
c_{\lambda, \vec{e}}(i(\vec{e}))\ &s_{\lambda, \vec{e}}(i(\vec{e}))\\
c^{\prime}_{\lambda, \vec{e}}(i(\vec{e}))\ &s^{\prime}_{\lambda, \vec{e}}(i(\vec{e}))
\end{matrix}
\right)=
\left(
\begin{matrix}
1\ \ &0\\
0\ \ &1
\end{matrix}
\right).
\end{equation}

We point out that $c_{\lambda}(t):=c_{\lambda, \vec{e}}(\kappa_{\vec{e}}^{-1}(t))$ and 
$s_{\lambda}(t):=s_{\lambda, \vec{e}}(\kappa_{\vec{e}}^{-1}(t))$ are independent of $\vec{e}$. 
They are clearly solutions to $-\psi^{\prime\prime}+V\psi=\lambda \psi$ on $(0,1)$, with 
$c_{\lambda}(0)=1, c^{\prime}_{\lambda}(0)=0, s_{\lambda}(0)=0, s^{\prime}_{\lambda}(0)=1$, where $V$ is defined in (\ref{V01}).

Then for $\lambda\notin \sigma(H^D)$, namely when $s_{\lambda}(1)\neq 0$, any $\mathcal{H}^2(\vec{e})$-solution $\psi_{\lambda, \vec{e}}$ can be written as a linear combination of $c_{\lambda, \vec{e}}, s_{\lambda, \vec{e}}$
\begin{equation}
\label{representsol}
\psi_{\lambda, \vec{e}} (x) = \frac{\psi_{\lambda, \vec{e}}(t(\vec{e}))-\psi_{\lambda, \vec{e}}(i(\vec{e}))c_{\lambda}(1)}{s_{\lambda}(1)}s_{\lambda, \vec{e}}(x)+\psi_{\lambda, \vec{e}}(i(\vec{e})) c_{\lambda, \vec{e}}(x).
\end{equation}
The Dirichlet-to-Neumann map is defined by
\begin{equation}
\label{DtN}
m(\lambda) := \frac{1}{s_{\lambda}(1)} \left( \begin{matrix} -c_{\lambda}(1) & 1 \\ 1 & -s'_{\lambda}(1)  \end{matrix} \right)
\end{equation}
with the property that for $\psi_{\lambda, \vec{e}}$ as in \eqref{representsol}
\begin{equation}
\left( \begin{matrix} \psi^{\prime}_{\lambda, \vec{e}}(i(\vec{e})) \\ -\psi^{\prime}_{\lambda, \vec{e}}(t(\vec{e})) \end{matrix} \right) = m(\lambda) \left( \begin{matrix} \psi_{\lambda, \vec{e}}(i(\vec{e})) \\ \psi_{\lambda, \vec{e}}(t(\vec{e})) \end{matrix} \right).
\end{equation}
For the second component, the constancy of the Wronskian is used.
Since $V(t)$ is assumed to be even, the intuitive relation 
\begin{equation}
\label{cossinrel}
c_{\lambda}(1)=s'_{\lambda}(1)
\end{equation} 
remains also true for non-zero potentials. 

For $\lambda\notin \sigma(H^D)$, by expressing $c_{\lambda}(1)$ in terms of $\psi_{\lambda,1, \vec{e}}$ and $\psi_{\lambda,2, \vec{e}}$, it follows immediately that
\begin{equation}\label{eta=c}
\eta(\lambda)=s^\prime_{\lambda}(1).
\end{equation}

\subsubsection{Relation to Hill operators}\label{Hillsec}
Using the potential $V(t)$ \eqref{V01}, we define the $\mathbb{Z}$-periodic Hill potential $V_{\text{Hill}} \in L^2_{\text{loc}}(\mathbb{R}).$
\begin{equation}
V_{\text{Hill}}(t):=V(t\ (\operatorname{mod}1)),
\end{equation}
for $t \in \mathbb{R}.$ The associated self-adjoint Hill operator on the real line is given by
\begin{equation}
\begin{split}
H_{\text{Hill}}&: \mathcal{H}^2(\mathbb{R}) \subset L^2(\mathbb{R}) \rightarrow L^2(\mathbb{R})  \\
H_{\text{Hill}}&\psi:=-\psi''+ V_{\text{Hill}} \psi.
\end{split}
\end{equation}
Then $c_{\lambda},s_{\lambda}\in \mathcal{H}^2(0,1)$, extending naturally to $\mathcal{H}^2_{\text{loc}}(\mathbb{R})$, become solutions to
\begin{equation}
\label{classicalsol}
H_{\text{Hill}}\psi= \lambda \psi.
\end{equation}

The monodromy matrix associated with $H_{\text{Hill}}$ is the matrix valued function 
\begin{equation}
Q(\lambda):= \left( 
\begin{matrix}
  c_{\lambda}(1) & s_{\lambda}(1)  \\
  c_{\lambda}'(1) & s_{\lambda}'(1)  
 \end{matrix}\right)
 \end{equation}
 and depends by standard ODE theory holomorphically on $\lambda.$
Its normalized trace 
\begin{equation}\label{defDelta}
\Delta({\lambda}):= \frac{\operatorname{tr}(Q(\lambda))}{2}=s_{\lambda}'(1)
\end{equation}
is called the Floquet discriminant. In the simplest case when $V_{\text{Hill}}=0,$ the Floquet discriminant is just $\Delta(\lambda)=\cos\left(\sqrt{\lambda}\right)$ for $\lambda \ge 0.$ 

By the well-known spectral decomposition of periodic differential operators on
the line \cite{RS4}, the spectrum of the Hill operator is purely absolutely continuous and satisfies
\begin{equation}
\label{Hillspectrum}
\sigma(H_{\text{Hill}})= \left\{\lambda \in \mathbb{R}: \left\lvert \Delta(\lambda) \right\rvert \le 1 \right\}=\bigcup_{n=1}^{\infty} [\alpha_n,\beta_n]
\end{equation}
where $B_n:=[\alpha_n,\beta_n]$ denotes the $n$-th Hill band with $\beta_n \le \alpha_{n+1}$. We have $\Delta\vert_{\operatorname{int}(B_n)}'(\lambda) \neq 0.$

Putting (\ref{eta=c}) and (\ref{defDelta}) together, we get the following relation
\begin{align}
\Delta(\lambda)=\eta(\lambda), \text{ for }\lambda \notin \sigma(H^D),
\end{align}
that connects the Hill spectrum with the spectrum of the graphene Hamiltonian.

Also, if $\lambda\in \sigma (H^D)$, then by the symmetry of the potential, the Dirichlet eigenfunction are either even or odd with respect to $\frac{1}{2}$. Thus, Dirichlet eigenvalues can only be located at the edges of the Hill bands. Namely,
\begin{align}\label{DirichletHilledge}
\Delta(\lambda)=\pm 1, \text{  for  }\lambda\in \sigma(H^D).
\end{align}

\subsubsection{Spectral decomposition}
The singular continuous spectrum of $H$ is empty by the direct integral decomposition \eqref{directint} \cite{GeNi}. 
Due to Thomas \cite{T} there is the characterization, stated also in \cite[Corollary $6.11$]{K}, of the pure point spectrum of fibered operators: 
$\lambda$ is in the pure point spectrum iff the set $\{ k \in \mathbb{T}^{*}_2; \lambda_j(k)=\lambda \}$ has positive measure where $\lambda_j(k)$ is the $j$-th eigenvalue of $H^k$.
Away from the Dirichlet spectrum, the condition $\mathbb{R} \ni \lambda=\lambda_j(k)$ is by \eqref{condition} equivalent to $\Delta(\lambda)^2 = \frac{\left\lvert 1+e^{ik_1}+e^{ik_2} \right\rvert^2}{9}.$ Yet, the level-sets of this function are of measure zero. The spectrum of $H$ away from the Dirichlet spectrum is therefore purely absolutely continuous.
The Dirichlet spectrum coincides with the point spectrum of $H$ and is spanned by so-called loop states that consist of six Dirichlet eigenfunctions wrapped around each hexagon of the lattice \cite[Theorem $3.6$(v)]{KP}.
Hence, the spectral decomposition in the case without magnetic field is given by
\begin{theo}
\label{zero magnetic potential}
The spectra of $\sigma(H)$ and $\sigma(H_{\text{Hill}})$ coincide as sets. Aside from the Dirichlet contribution to the spectrum, $H$ has absolutely continuous spectrum as in Fig.\ref{Fig:firstbands} with conical cusps at the points (Dirac points) where two bands on each Hill band meet. The Dirichlet spectrum is contained in the spectrum of $H$, is spanned by loop states supported on single hexagons, and is thus infinitely degenerated. 
\end{theo}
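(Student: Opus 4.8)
The plan is to assemble the statement from the characterizations established above, handling its three assertions in turn. For the equality $\sigma(H)=\sigma(H_{\text{Hill}})$, I would first work away from the Dirichlet spectrum: combining the set description $\sigma(H)\setminus\sigma(H^D)=\{\lambda:|\eta(\lambda)|\le1\}\setminus\sigma(H^D)$ with the identities $\eta(\lambda)=s_\lambda'(1)=\Delta(\lambda)$ valid there, and with $\sigma(H_{\text{Hill}})=\{\lambda:|\Delta(\lambda)|\le1\}$, gives $\sigma(H)\setminus\sigma(H^D)=\sigma(H_{\text{Hill}})\setminus\sigma(H^D)$. It then remains to check that every $\lambda\in\sigma(H^D)$ lies in both spectra: it lies in $\sigma(H_{\text{Hill}})$ because $\Delta(\lambda)=\pm1$ there by \eqref{DirichletHilledge} (so $|\Delta(\lambda)|\le1$), and it lies in $\sigma(H)$ because it is an eigenvalue, witnessed by the loop states discussed below. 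Since $\sigma(H^D)\subseteq\sigma(H)$ and $\sigma(H^D)\subseteq\sigma(H_{\text{Hill}})$, we conclude $\sigma(H)=\sigma(H_{\text{Hill}})$ as sets.

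Next, for the spectral type and band picture off $\sigma(H^D)$: the direct integral decomposition \eqref{directint} rules out singular continuous spectrum (cf. \cite{GeNi}), and by Thomas's criterion \cite{T} (see also \cite{K}) a value $\lambda\notin\sigma(H^D)$ is an eigenvalue only if $\{k\in\mathbb{T}_2^*:\lambda=\lambda_j(k)\text{ for some }j\}$ has positive Lebesgue measure; by \eqref{condition} this set sits inside a level set of the real-analytic, non-constant function $k\mapsto|1+e^{ik_1}+e^{ik_2}|^2/9$, hence is null. Thus $H$ is purely absolutely continuous off $\sigma(H^D)$. For the band structure, on each Hill band $B_n=[\alpha_n,\beta_n]$ the discriminant $\Delta$ is strictly monotone on $\operatorname{int}(B_n)$ with $\Delta(\alpha_n),\Delta(\beta_n)\in\{\pm1\}$, so for each $r\in(0,1)$ the equation $\Delta(\lambda)^2=r$ has exactly two solutions in $B_n$; as $r=|1+e^{ik_1}+e^{ik_2}|^2/9$ ranges over $[0,1]$ these trace out two Floquet bands that coincide only at the unique zero $\lambda_n^*\in\operatorname{int}(B_n)$ of $\Delta$. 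Finally, the right-hand side of \eqref{condition} attains its minimum $0$ precisely at $k_\pm=(\pm\tfrac{2\pi}{3},\mp\tfrac{2\pi}{3})$, where $1+e^{ik_1}+e^{ik_2}=0$; a Taylor expansion shows this vanishing is of first order, $|1+e^{ik_1}+e^{ik_2}|=c\,|k-k_\pm|+O(|k-k_\pm|^2)$ with $c>0$, and combining with $\Delta(\lambda)=\Delta'(\lambda_n^*)(\lambda-\lambda_n^*)+O((\lambda-\lambda_n^*)^2)$ and $\Delta'(\lambda_n^*)\ne0$ yields $|\lambda-\lambda_n^*|\sim\tfrac{c}{3|\Delta'(\lambda_n^*)|}|k-k_\pm|$ near the Dirac point, i.e. the two Floquet bands on $B_n$ touch conically there.

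For the Dirichlet part: given $\lambda\in\sigma(H^D)$, a Dirichlet eigenfunction on $(0,1)$ is, by the even symmetry of $V$, even or odd about $\tfrac12$; transplanting copies of it onto the six edges bounding a fixed hexagon, with orientations and signs chosen so that the derivative contributions cancel in pairs at each of the six vertices, produces a function that vanishes at every vertex — hence satisfies all continuity conditions of \eqref{domnon-magnHam} trivially — and whose outgoing derivatives sum to zero at each vertex, so it belongs to $D(H)$ and is an eigenfunction with eigenvalue $\lambda$. Distinct hexagons have disjoint edge sets, so these loop states are linearly independent and the eigenspace is infinite-dimensional. That loop states exhaust the point spectrum, giving $\sigma_p(H)=\sigma(H^D)$, is \cite[Theorem 3.6(v)]{KP}.

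I expect the conical analysis to be the step demanding the most care: one must confirm the first-order (rather than higher-order) vanishing of $|1+e^{ik_1}+e^{ik_2}|$ at $k_\pm$ and splice it cleanly to the one-variable expansion of $\Delta$, which in turn rests on $\Delta'(\lambda_n^*)\ne0$ from \eqref{Hillspectrum}. The completeness of the loop states is the other delicate input, but it is imported wholesale from \cite{KP}, and all remaining pieces are bookkeeping built on the spectral characterization already recorded above.
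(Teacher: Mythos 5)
Your proposal is correct and follows essentially the same route as the paper: the set identity via $\eta=\Delta$ and \eqref{DirichletHilledge}, absence of singular continuous spectrum from the fibered decomposition \cite{GeNi}, absence of point spectrum off $\sigma(H^D)$ via Thomas's criterion and the measure-zero level sets of $k\mapsto|1+e^{ik_1}+e^{ik_2}|^2/9$, and loop states for the Dirichlet part with completeness imported from \cite[Theorem 3.6(v)]{KP}. Your explicit Taylor expansion at the Dirac points is more detail than the paper supplies (it defers the conical picture to \cite{KP}); the only slight imprecision is that $|1+e^{ik_1}+e^{ik_2}|$ behaves like $|L(k-k_\pm)|$ for an invertible linear map $L$ rather than $c\,|k-k_\pm|$, i.e.\ an elliptic rather than circular cone, which does not affect the conclusion.
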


\begin{figure}
\centerline{\includegraphics[height=9cm]{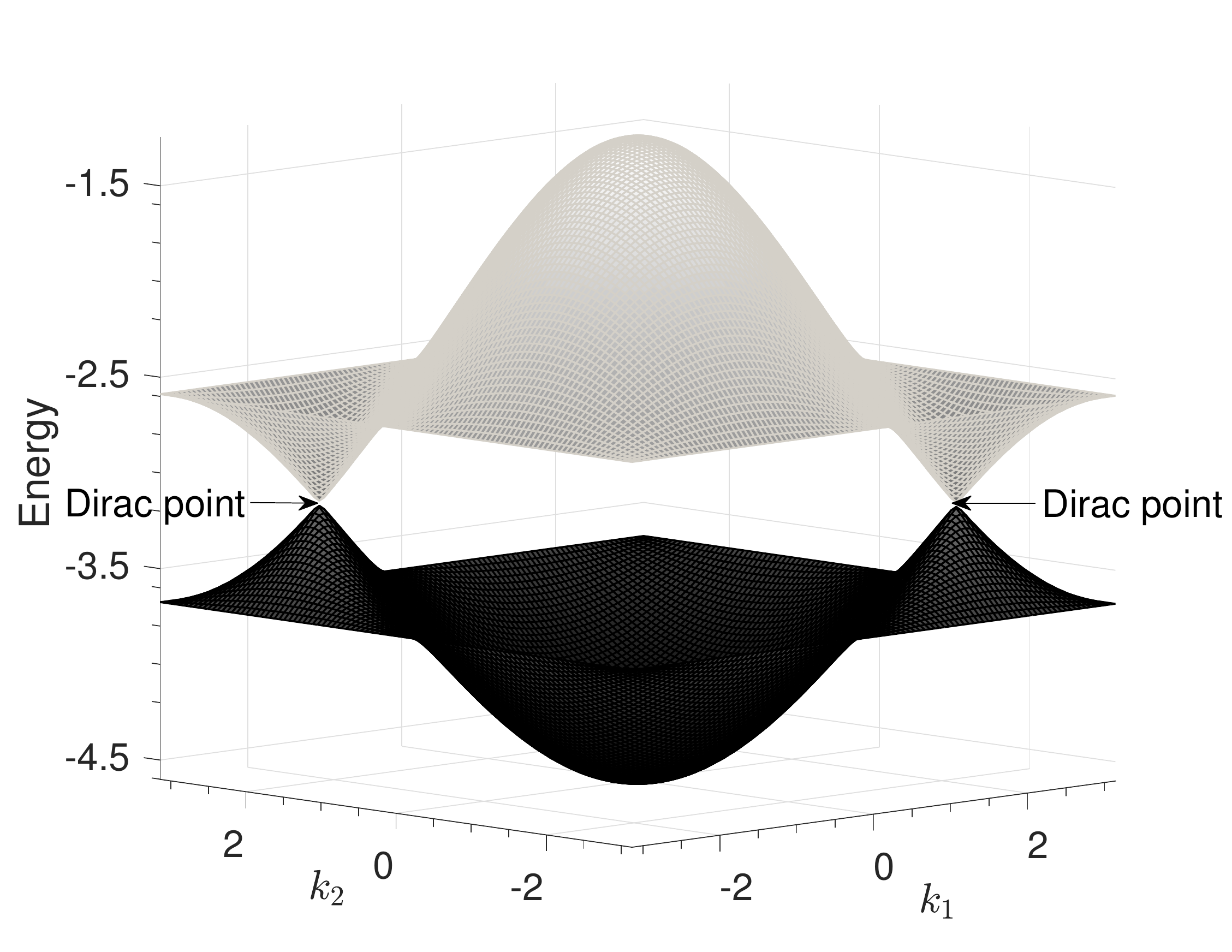}} 
\caption{The first two bands of the Schr\"odinger operator on the graph with Mathieu potential $V(t)=20\cos(2\pi t)$ and no magnetic field showing the characteristic conical Dirac points where the two differently colored bands touch. The two bands are differently colored. \label{Fig:firstbands}}
\end{figure}

\subsection{One-dimensional quasi-periodic Jacobi matrices}\label{preJacobi}
\

The proof of the main Theorems will involve the study of a one-dimensional quasi-periodic Jacobi matrix. 
We include several general facts that will be useful.

Let $H_{\Phi, \theta}\in \mathcal{L}(l^2(\Z))$ be a quasi-periodic Jacobi matrix, that is given by
\begin{equation}\label{defH}
(H_{\Phi, \theta}u)_m=c\left(\theta+m\frac{\Phi}{2\pi} \right)u_{m+1}+\overline{c\left(\theta+(m-1)\frac{\Phi}{2\pi}\right)}u_{m-1}+v\left(\theta+m \frac{\Phi}{2\pi}\right)u_m.
\end{equation}

Let $\Sigma_{\Phi, \theta}$ be the spectrum of $H_{\Phi, \theta}$, and $\Sigma_{\Phi}=\bigcup_{\theta\in \T_1}\Sigma_{\Phi, \theta}$. It is a well known result that 
for irrational $\frac{\Phi}{2\pi}$, the set $\Sigma_{\Phi, \theta}$ is independent of $\theta$, thus $\Sigma_{\Phi, \theta}=\sigma_{\Phi}$.
It is also well known that, for any $\Phi$, $\Sigma_{\Phi}$ has no isolated points.

\subsubsection{Transfer matrix and Lyapunov exponent}\label{deftransfersec}
\

We assume that $c(\theta)$ has finitely many zeros (counting multiplicity), and label them as $\theta_1, \theta_2,..., \theta_m$. 
\footnote{In our concrete model, $c(\theta)=1+e^{-2\pi i\theta}$, see \eqref{def:cv}, hence has a single zero $\theta_1=1/2$.}
Let $\Theta:=\cup_{j=1}^m \cup_{k\in \Z}\left\lbrace \theta_j+k\frac{\Phi}{2\pi}\right\rbrace$, in particular if $\frac{\Phi}{2\pi}\in \Q$, then $\Theta$ is a finite set in $\T$.

For $\theta\notin \Theta$, the eigenvalue equation $H_{\Phi, \theta}u=\lambda u$ has the following dynamical reformulation:
\begin{align*}
\left(
\begin{matrix}
u_{n+1}\\
u_n
\end{matrix}
\right)=
A^{\lambda}\left(\theta+n\frac{\Phi}{2\pi} \right)
\left(
\begin{matrix}
u_n\\
u_{n-1}
\end{matrix}
\right),
\end{align*}
where 
\begin{align*}
\mathrm{GL}(2, \C)\ni A^{\lambda}(\theta)=
\frac{1}{c(\theta)}
\left(
\begin{matrix}
\lambda-v(\theta)\ &-\overline{c(\theta-\frac{\Phi}{2\pi})}\\
c(\theta)\ &0
\end{matrix}
\right)
\end{align*}
is called the {\it transfer matrix}.
Let 
\begin{align*}
A^{\lambda}_{n}(\theta)=A^{\lambda}(\theta+(n-1)\frac{\Phi}{2\pi})\cdots A^{\lambda}(\theta+\frac{\Phi}{2\pi})A^{\lambda}(\theta)
\end{align*}
be the {\it n-step transfer matrix}.

We define the Lyapunov exponent of $H_{\Phi, \theta}$ at energy $\lambda$ as
\begin{align}\label{defLE}
L(\lambda, \Phi):=\lim_{n\rightarrow\infty}\frac{1}{n}\int_{\T_1}\log{\|A^{\lambda}_n(\theta)\|}\ \mathrm{d}\theta.
\end{align}

By a trivial bound $\|A\|^2\geq |\det{A}|$, which comes from the fact $A$ is a $2\times 2$ matrix, we get
\begin{align}
L(\lambda, \Phi)\geq \lim_{n\to\infty}\frac{1}{2n}\int_{\T_1}\log\left(\frac{|c(\theta-\frac{\Phi}{2\pi})|}{|c(\theta+(n-1)\frac{\Phi}{2\pi}|}\right)\ \mathrm{d}\theta=0.
\end{align}

\subsubsection{Normalized transfer matrix}
Let $|c(\theta)|=\sqrt{c(\theta)\overline{c(\theta)}}$.
We introduce the {\it normalized transfer matrix}:
\begin{align*}
\mathrm{SL}(2, \R)\ni \tilde{A}^{\lambda}(\theta)=
\frac{1}{\sqrt{|c(\theta)||c(\theta-\frac{\Phi}{2\pi})|}}
\left(
\begin{matrix}
\lambda-v(\theta)\ &-|c(\theta-\frac{\Phi}{2\pi})|\\
|c(\theta)|\ &0
\end{matrix}
\right)
\end{align*}
and the {\it n-step normalized transfer matrix} $\tilde{A}^{\lambda}_{n}(\theta)$.

The following connection between $A^{\lambda}$ and $\tilde{A}^{\lambda}$ is clear:
\begin{align}\label{conjtildeAA}
\tilde{A}^{\lambda}(\theta)=
\frac{c(\theta)}{\sqrt{|c(\theta)||c(\theta-\frac{\Phi}{2\pi})|}}
\left(
\begin{matrix}
1\ &0\\
0\ &\sqrt{\frac{\overline{{c}(\theta)}}{c(\theta)}}
\end{matrix}
\right)
A^{\lambda}(\theta)
\left(
\begin{matrix}
1\ &0\\
0\ &\sqrt{\frac{\overline{{c}(\theta-\frac{\Phi}{2\pi})}}{c(\theta-\frac{\Phi}{2\pi})}}
\end{matrix}
\right)^{-1}.
\end{align}

When $\frac{\Phi}{2\pi}=\frac{p}{q}$ is rational, \eqref{conjtildeAA} yields
\begin{align}\label{tildeAqAq}
\tr(\tilde{A}^{\lambda}_q(\theta))=\frac{\prod_{j=0}^{q-1}c(\theta+j\frac{p}{q})}{\prod_{j=0}^{q-1}|c(\theta+j\frac{p}{q})|}\tr (A^{\lambda}_q(\theta)).
\end{align}
Let 
\begin{align}\label{defD}
D^{\lambda}(\theta)=c(\theta)A^{\lambda}(\theta)=\left(
\begin{matrix}
\lambda-v(\theta)\ &-\overline{c(\theta-\frac{\Phi}{2\pi})}\\
c(\theta)\ &0
\end{matrix}
\right)
\end{align}
and $D_n^\lambda(\theta)=D^\lambda(\theta+(n-1)\frac{\Phi}{2\pi})\cdots D^\lambda(\theta+\frac{\Phi}{2\pi})D^\lambda(\theta)$.
Then when $\frac{\Phi}{2\pi}=\frac{p}{q}$ is rational, (\ref{tildeAqAq}) becomes
\begin{align}\label{tildeAqDq}
\tr(\tilde{A}^{\lambda}_q(\theta))=\frac{\tr(D^{\lambda}_q(\theta))}{\prod_{j=0}^{q-1}|c(\theta+j\frac{p}{q})|}.
\end{align}
Note that although $A^{\lambda}_n(\theta)$ is not well-defined for $\theta\in \Theta$, $D^{\lambda}_n(\theta)$ is always well-defined.


\subsection{Continued fraction expansion}
Let $\alpha\in \R\setminus \Q$, then $\alpha$ has the following continued fraction expansion 
\begin{align*}
\alpha=a_0+\frac{1}{a_1+\frac{1}{a_2+\frac{1}{a_3+\cdots}}},
\end{align*}
with $a_0$ being the integer part of $\alpha$ and $a_n\in \N^+$ for $n\geq 1$.


For $\alpha\in (0,1)$, let the reduced rational number
\begin{align}\label{defpnqn}
\frac{p_n}{q_n}=\frac{1}{a_1+\frac{1}{a_2+\frac{1}{\cdots+\frac{1}{a_n}}}}
\end{align}
be the continued fraction approximants of $\alpha$. 

The following property of continued fraction expansions is well-known:
\begin{align}\label{alpha-pnqn}
|\alpha-\frac{p_n}{q_n}|\leq \frac{1}{q_{n}q_{n+1}}.
\end{align}


\subsection{Lidskii inequalities}
Let $A$ be an $n\times n$ self-adjoint matrices, let $E_1(A)\geq E_2(A)\geq \cdots \geq E_n(A)$ be its eigenvalues.
Then, for the eigenvalues of the sum of two self-adjoint matrices, we have
\begin{align}\label{Lid}
\begin{cases}
E_{i_1}(A+B)+\cdots +E_{i_k}(A+B)\leq E_{i_1}(A)+\cdots +E_{i_k}(A)+E_1(B)+\cdots +E_k (B)\\
E_{i_1}(A+B)+\cdots +E_{i_k}(A+B)\geq E_{i_1}(A)+\cdots +E_{i_k}(A)+E_{n-k+1} (B)+\cdots +E_n (B)
\end{cases}
\end{align}
for any $1\leq i_1<\cdots <i_k\leq n$. These two inequalities are called {\it Lidskii inequality} and {\it dual Lidskii inequality}, respectively.

\section{Magnetic Hamiltonians on quantum graphs}\label{secwithmagnetic}
\subsection{Magnetic potential}

\

Given a vector potential $A(x)=A_1(x_1, x_2)\ dx_1+A_2(x_1, x_2)\ dx_2 \in \Omega^1(\mathbb{R}^2)$, the scalar vector potential $A_{\vec{e}} \in C^{\infty}(\vec{e})$ along edges 
$\vec{e} \in \mathcal{E}(\Lambda)$ is obtained by evaluating the form $A$ on the graph along the vector field generated by edges $\vec{[e]} \in \mathcal{E}(W_{\Lambda})$
\begin{equation}\label{defAe}
A_{\vec{e}}(x) := A(x)\left(\vec{[e]}_1 \partial_1 + \vec{[e]}_2 \partial_2 \right).
\end{equation}
The integrated vector potentials are defined as $\beta_{\vec{e}}:=\int_{\vec{e}} A_{\vec{e}}(x) dx$ for $\vec{e} \in \mathcal{E}(\Lambda).$ 
\begin{Assumption}
The magnetic flux $\Phi$ through each hexagon $Q$ of the lattice 
\begin{equation}\label{defPhi}
\Phi:= \int_{Q} dA
\end{equation}
is assumed to be constant.
\end{Assumption} 
Let us mention that the assumption above is equivalent to the following equation, in terms of the integrated vector potentials
\begin{align}\label{constantfluxbeta}
\beta_{\gamma_1, \gamma_2, \vec{f}}-\beta_{\gamma_1, \gamma_2+1, \vec{h}}+\beta_{\gamma_1, \gamma_2+1, \vec{g}}-\beta_{\gamma_1-1, \gamma_2+1,\vec{f}}+\beta_{\gamma_1-1, \gamma_2+1,\vec{h}}-\beta_{\gamma_1, \gamma_2, \vec{g}}=\Phi,
\end{align}
for any $\gamma_1, \gamma_2\in \Z$.

\begin{ex}
The vector potential $A \in \Omega^1(\mathbb{R}^2)$ of a homogeneous magnetic field $B  \in \Omega^2(\mathbb{R}^2)$ 
\begin{equation}
B(x)=B_0 \ dx_1 \wedge dx_2
\end{equation}
can be chosen as
\begin{equation}
\label{homogeneousA}
A(x):=B_0 x_1 \ dx_2.
\end{equation}
This scalar vector potential is invariant under $\vec{b}_2$-translations. The integrated vector potentials $\beta_{\vec{e}}$ are given by
\begin{equation}
\label{betas}
\beta_{\gamma_1, \gamma_2, \vec{f}}=  \frac{\Phi}{2}\left(\gamma_1+\frac{1}{6} \right),\ \beta_{\gamma_1, \gamma_2, \vec{g}}=0, \text{ and } \beta_{\gamma_1, \gamma_2, \vec{h}}=-\beta_{\gamma_1, \gamma_2, \vec{f}},
\end{equation}
where, in this case, the magnetic flux through each hexagon is $\Phi=\frac{3\sqrt{3}}{2}B_0$. 
\end{ex}

\subsection{Magnetic differential operator and modified Peierls' substitution}

In terms of the magnetic differential operator $(D^B \psi)_{\vec{e}}:= - i \psi_{\vec{e}}' - A_{\vec{e}}\psi_{\vec{e}} $, the 
Schr\"odinger operator modeling graphene in a magnetic field reads
\begin{equation}
\begin{split}
\label{magop}
H^B&:D(H^B) \subset L^2(\mathcal{E}\left(\Lambda\right)) \rightarrow L^2(\mathcal{E}\left(\Lambda\right))\\
(H^B& \psi)_{\vec{e}}:=(D^B D^B \psi)_{\vec{e}}+V_{\vec{e}}\psi_{\vec{e}},
\end{split}
\end{equation}
and is defined on
\begin{equation}
\begin{split}
\label{magdom}
D(H^B):= \Bigg\{&\psi \in \mathcal{H}^2(\mathcal{E}\left(\Lambda\right)): \psi_{\vec{e}_1}(v)=\psi_{\vec{e}_2}(v) \text{ for any } \vec{e}_1,\vec{e}_2 \in \mathcal{E}_{v}(\Lambda) \\
&\text{ and } \sum_{\vec{e} \in \mathcal{E}_{v}(\Lambda)} \left(D^B\psi\right)_{\vec{e}}(v) = 0 \Bigg\}.
\end{split}
\end{equation}

Let us first introduce a unitary operator $U$ on $L^2(\mathcal{E}(\Lambda))$, defined as
\begin{equation}
U\psi_{\gamma_1, \gamma_2, \vec{e}}=\zeta_{\gamma_1, \gamma_2}\psi_{\gamma_1, \gamma_2, \vec{e}}\ \text{  for  } \vec{e}=\vec{f},\vec{g},\vec{h},
\end{equation}
the factors $\zeta_{\gamma_1, \gamma_2}$ are defined as follows. 
First, choose a path $p(\cdot): \N\rightarrow \Z^2$ connecting $(0,0)$ to $(\gamma_1, \gamma_2)$ with
\begin{equation}
\label{path}
p(0)=(0,0)\ \text{  and  }\ p(|\gamma_1|+|\gamma_2|)=(\gamma_1, \gamma_2).
\end{equation}
Note that (\ref{path}) implies that both components of $p(\cdot)$ are monotonic functions.
Then we define $\zeta_{\gamma_1, \gamma_2}$ recursively through the following relations along $p(\cdot)$:
\begin{equation}
\begin{split}
\label{zetaH}
\zeta_{0, 0}&=1, \\
\zeta_{\gamma_1+1, \gamma_2}&=e^{i\beta_{\gamma_1, \gamma_2, \vec{f}}\ -i\beta_{\gamma_1+1, \gamma_2, \vec{g}}} \zeta_{\gamma_1, \gamma_2}, \\
\zeta_{\gamma_1, \gamma_2+1}&=e^{i\beta_{\gamma_1, \gamma_2, \vec{f}}\ -i\beta_{\gamma_1, \gamma_2+1, \vec{h}}\ -i\Phi \gamma_1}\zeta_{\gamma_1, \gamma_2}.
\end{split}
\end{equation}
Due to (\ref{constantfluxbeta}), it is easily seen that the definition of $\zeta_{\gamma_1, \gamma_2}$ is independent of the choice of $p(\cdot)$, hence is well-defined.

The unitary Peierls' substitution is the multiplication operator
\begin{equation}
\begin{split}
\label{Peierl}
P:& L^2(\mathcal{E}\left(\Lambda\right)) \rightarrow L^2(\mathcal{E}\left(\Lambda\right)) \\ 
&(\psi_{\vec{e}}) \mapsto \left( \left(\vec{e}\ni x\mapsto e^{i \int_{i(\vec{e}) \rightarrow x} A_{\vec{e}}(s)ds}\right) \psi_{\vec{e}} \right),
\end{split}
\end{equation}
where $i(\vec{e})\rightarrow x$ denotes the straight line connecting $i(\vec{e})$ with $x \in \vec{e}$.
It reduces the magnetic Schr\"odinger operator to non-magnetic ones with magnetic contribution moved into boundary condition, with multiplicative factors at terminal edges given by $e^{i\beta_{\vec{e}}}$.

We will define a modified Peierls' substitution that allows us to reduce the number of non-trivial multiplicative factors to one, by taking 
\begin{equation}
\begin{split}
\label{modifiedP}
\tilde{P}=PU.
\end{split}
\end{equation} 
It transforms $H^B$ into
\begin{equation}
\label{LambdaB}
\Lambda^B:=\left(- \frac{d^2}{dt_{\vec{e}}^2}+V_{\vec{e}} \right)_{\vec{e} \in \mathcal{E}(\Lambda)}={\tilde{P}}^{-1} H^B \tilde{P}.
\end{equation}
The domain of $\Lambda^B$ is
\begin{equation}
\begin{split}
\label{DomLambdaB}
D({\Lambda}^B)=\Bigg\{ &\psi\in \mathcal{H}^2(\mathcal{E}(\Lambda)):\ \text{any } \vec{e}_1, \vec{e}_2\in \mathcal{E}(\Lambda) \text{ with } i(\vec{e}_1)=i(\vec{e}_2)=v \text{ satisfy} \\
&\ \psi_{\vec{e}_1}(v)=\psi_{\vec{e}_2}(v) \text{ and } \sum_{i(\vec{e})=v} \psi^\prime_{\vec{e}}(v)=0; \text{ whilst at edges for which }
\\
& \ t(\vec{e}_1)=t(\vec{e}_2)=v, e^{i\tbe_{\vec{e}_1}}\psi_{\vec{e}_1}(v)=e^{i\tbe_{\vec{e}_2}}\psi_{\vec{e}_2}(v) \text{ and } \sum_{t(\vec{e})=v}e^{i\tbe_{\vec{e}}}\psi^\prime_{\vec{e}}(v)=0 \Bigg\},
\end{split}
\end{equation}
where 
\begin{align}\label{deftildebeta}
\tbe_{\gamma_1, \gamma_2, \vec{g}}\equiv \tbe_{\gamma_1, \gamma_2, \vec{f}}\equiv 0\ \text{ and }\ \tbe_{\gamma_1, \gamma_2, \vec{h}}=-\Phi \gamma_1.
\end{align}

Thus, the problem reduces to the study of non-magnetic Schr\"odinger operators with magnetic contributions moved into the boundary conditions.

Observe that the magnetic Dirichlet operator 
\begin{equation}
\begin{split}
\label{magopd}
H^{D,B}&:\bigoplus_{\vec{e} \in \mathcal{E}(\Lambda)}\left(\mathcal{H}_0^1(\vec{e})\cap \mathcal{H}^2(\vec{e}) \right) \subset L^2(\mathcal{E}\left(\Lambda\right)) \rightarrow L^2(\mathcal{E}\left(\Lambda\right)) \\
(H^{D,B}& \psi)_{\vec{e}}:=(D^B D^B \psi)_{\vec{e}}+V_{\vec{e}}\psi_{\vec{e}}
\end{split}
\end{equation}
is by the (modified) Peierls' substitution unitary equivalent to the Dirichlet operator without magnetic field
\begin{equation}
H^D=\tilde{P}^{-1}H^{D,B}\tilde{P}=P^{-1}H^{D,B}P.
\end{equation} 
Consequently, the spectrum of the Dirichlet operator $H^D$ is invariant under perturbations by the magnetic field.

\section{Main lemmas}\label{mainlemmassec}
First, let us introduce the following two-dimensional tight-binding Hamiltonian
\begin{equation}
\label{Q-op}
Q_{\Lambda}(\Phi) := \frac{1}{3} \left(\begin{matrix} 0 && 1+\tau_0+\tau_1 \\ \left(1 +\tau_0+\tau_1 \right)^* && 0 \end{matrix} \right)
\end{equation}
with translation operators $\tau_0, \tau_1 \in \mathcal{L}(l^2(\mathbb{Z}^2; \mathbb{C}))$ which for $\gamma \in \mathbb{Z}^2$ and $u \in l^2(\mathbb{Z}^2;\mathbb{C})$ are defined as
\begin{align}
\label{translatop}
(\tau_0(u))_{\gamma_1, \gamma_2}:= u_{\gamma_1-1,\gamma_2} \text{ and }
(\tau_1(u))_{\gamma_1, \gamma_2}:= e^{-i \Phi \gamma_1}u_{\gamma_1,\gamma_2-1}.
\end{align}

The following lemma connects the spectrum of $H^B$ with $\sigma(Q_{\Lambda})$. With $\Delta(\lambda)$ defined in (\ref{defDelta}), we have
\begin{lemm}\label{HBQLam}
A number $\lambda\in \rho(H^{D})$ lies in $\sigma(H^B)$ iff $\Delta(\lambda)\in \sigma(Q_{\Lambda}(\Phi))$. Such $\lambda$ is in the point spectrum of $H^B$ iff $\Delta(\lambda)\in \sigma_p(Q_{\Lambda}(\Phi))$.
\end{lemm}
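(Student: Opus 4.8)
The plan is to run the standard reduction of a Schr\"odinger operator on a metric graph to a discrete operator on its vertex set, using the edgewise solutions $c_{\lambda},s_{\lambda}$ and the Dirichlet-to-Neumann map $m(\lambda)$, and then to recognize the resulting discrete operator as $Q_{\Lambda}(\Phi)$ with spectral parameter $\Delta(\lambda)$. By the modified Peierls substitution \eqref{LambdaB} we have $H^{B}=\tilde P\,\Lambda^{B}\,\tilde P^{-1}$, and (as noted after \eqref{magopd}) $H^{D,B}=\tilde P\,H^{D}\,\tilde P^{-1}$; hence $\sigma(H^{B})=\sigma(\Lambda^{B})$, $\sigma_{p}(H^{B})=\sigma_{p}(\Lambda^{B})$ and $\rho(H^{D})=\rho(H^{D,B})$, so it suffices to prove both equivalences with $\Lambda^{B}$ in place of $H^{B}$. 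Fix $\lambda\in\rho(H^{D})$, so $s_{\lambda}(1)\neq 0$. On each edge $\vec e$ the equation $-\psi_{\vec e}''+V_{\vec e}\psi_{\vec e}=\lambda\psi_{\vec e}$ has a two-dimensional solution space, and by \eqref{representsol} a solution $\psi_{\vec e}$ is determined by the pair $(\psi_{\vec e}(i(\vec e)),\psi_{\vec e}(t(\vec e)))$; since $V_{\vec e}$, hence $c_{\lambda},s_{\lambda}$, is the same on every edge and $s_{\lambda}(1)\neq 0$, the linear map from this pair to $\psi_{\vec e}\in\mathcal H^{2}(\vec e)$ and its inverse (the boundary trace) are bounded with norms depending only on $\lambda$, uniformly in $\vec e$. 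The outgoing derivatives at the two endpoints are given by $m(\lambda)$ as in \eqref{DtN}, whose diagonal entries equal $-\Delta(\lambda)/s_{\lambda}(1)$ by \eqref{cossinrel}.

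Now suppose $\psi$ solves the edgewise equation. Since $\mathcal V(\Lambda)=i(\Lambda)\sqcup t(\Lambda)$ and each $r_{0}$-type vertex (resp. $r_{1}$-type vertex) meets precisely the three edges $\vec f,\vec g,\vec h$ drawn from suitable cells, the continuity conditions of \eqref{DomLambdaB} (including the $\tbe$-twisted one at $r_{1}$-type vertices, with $\tbe$ as in \eqref{deftildebeta}) say exactly that $\psi$ is encoded by the two sequences $u^{0}=(u^{0}_{\gamma})_{\gamma\in\Z^{2}}$ of values of $\psi$ at the $r_{0}$-type vertices and $u^{1}=(u^{1}_{\gamma})_{\gamma\in\Z^{2}}$ of values of $\psi_{\vec f}$ at the $r_{1}$-type vertices (the values on $\vec g,\vec h$ at a given $r_{1}$-vertex being fixed by the $\tbe$-phases). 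By the uniform two-sided trace bound, $\psi\in L^{2}(\mathcal E(\Lambda))$ iff $(u^{0},u^{1})\in\ell^{2}(\Z^{2})\oplus\ell^{2}(\Z^{2})$, with comparable norms. Substituting the Dirichlet-to-Neumann relations into the two Kirchhoff conditions of \eqref{DomLambdaB} and clearing the common nonzero factor $s_{\lambda}(1)^{-1}$, one obtains, after collecting the magnetic phases and absorbing them into an explicit diagonal unitary conjugation so that the shifts take the form \eqref{translatop}, exactly the system
\begin{align*}
Q_{\Lambda}(\Phi)\begin{pmatrix}u^{0}\\ u^{1}\end{pmatrix}=\Delta(\lambda)\begin{pmatrix}u^{0}\\ u^{1}\end{pmatrix}.
\end{align*}

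Conversely, any $(u^{0},u^{1})$ solving this system defines, through \eqref{representsol}, an edgewise $\lambda$-solution $\psi$ meeting all vertex conditions of \eqref{DomLambdaB}. Thus the parametrization $(u^{0},u^{1})\leftrightarrow\psi$ restricts to a bounded bijection with bounded inverse between $\ker(Q_{\Lambda}(\Phi)-\Delta(\lambda))$ and $\ker(\Lambda^{B}-\lambda)$, which gives the point-spectrum assertion. The same equivalence at the level of bounded invertibility --- established either by a Krein-type resolvent formula on the boundary space $\ell^{2}(\Z^{2})\oplus\ell^{2}(\Z^{2})$, whose Weyl operator works out to be $s_{\lambda}(1)^{-1}\bigl(\Delta(\lambda)I-Q_{\Lambda}(\Phi)\bigr)$, or directly by transporting Weyl sequences through the bounded-both-ways parametrization --- yields $\lambda\in\sigma(H^{B})\cap\rho(H^{D})$ iff $\Delta(\lambda)\in\sigma(Q_{\Lambda}(\Phi))$, completing the proof.

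The analytic ingredients --- the edgewise ODE theory, the Dirichlet-to-Neumann map, and the Krein/boundary-triple invertibility criterion --- are routine here. I expect the main obstacle to be the combinatorial-geometric bookkeeping on the hexagonal lattice: tracking which cell's copy of $\vec f,\vec g,\vec h$ is incident to a given $r_{0}$- or $r_{1}$-type vertex and carrying the magnetic phases $\tbe$ of \eqref{deftildebeta} through the two Kirchhoff sums, so that the reduced operator is exactly $Q_{\Lambda}(\Phi)$ with the translations \eqref{translatop} rather than a gauge-conjugate or flux-reversed variant --- the requisite diagonal unitary must be exhibited explicitly. The only quantitative input is the uniform-in-$\vec e$ two-sided trace estimate underlying the $\ell^{2}\leftrightarrow L^{2}$ isomorphism, which is immediate because the potential is identical on all edges and $\lambda\notin\sigma(H^{D})$.
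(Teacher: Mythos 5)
Your proposal is correct and follows essentially the same route as the paper: reduction to $\Lambda^B$ via the modified Peierls substitution, the Dirichlet-to-Neumann map feeding into a boundary-triple/Krein resolvent formula whose Weyl function is $M(\lambda,\Phi)=s_\lambda(1)^{-1}\bigl(K_\Lambda(\Phi)-\Delta(\lambda)\bigr)$, and a unitary identification of the vertex operator $K_\Lambda(\Phi)$ with $Q_\Lambda(\Phi)$. The one bookkeeping worry you flag (the explicit diagonal unitary needed to land on the translations \eqref{translatop}) turns out to be trivial in the paper because the modified Peierls substitution already normalizes $\tbe_{\vec f}=\tbe_{\vec g}=0$, $\tbe_{\vec h}=-\Phi\gamma_1$, so the identification $W$ is a mere relabeling of vertices.
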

\begin{rem}
We will show in Lemma \ref{spectrumQlambda} that $\sigma_p(Q_{\Lambda}(\Phi))$ is empty, thus $H^B$ has no point spectrum away from $\sigma(H^D)$.
\end{rem}

Lemma \ref{measure0cor} below shows $\sigma(Q_{\Lambda}(\Phi))$ is a zero-measure Cantor set for irrational flux $\frac{\Phi}{2\pi}$,
Lemma \ref{measureestlem} gives a measure estimate for rational flux,
and Lemma \ref{Hausdorff} provides an upper bound on the Hausdorff dimension of the spectrum of $Q_{\Lambda}(\Phi)$.
These three lemmas prove the topological structure part of Theorem \ref{T3}.

\begin{lemm}\label{measure0cor}
For $\frac{\Phi}{2\pi} \in \R\setminus \Q$, $\sigma(Q_{\Lambda}(\Phi))$ is a zero-measure Cantor set.
\end{lemm}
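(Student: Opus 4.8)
The plan is to reduce the two-dimensional operator $Q_\Lambda(\Phi)$ to a one-dimensional quasi-periodic Jacobi matrix of the form \eqref{defH} via supersymmetry, and then deduce the zero-measure Cantor property from properties of that Jacobi matrix. First I would partially diagonalize: the operator $\tau_1$ involves only the $\gamma_1$-dependent phase $e^{-i\Phi\gamma_1}$ and a shift in $\gamma_2$, so a partial Fourier transform in the $\gamma_2$ variable turns $\tau_1$ into multiplication by $e^{i(\theta - \Phi\gamma_1)}$ for a parameter $\theta \in \T_1$, while $\tau_0$ remains the shift $\gamma_1 \mapsto \gamma_1 - 1$. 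This exhibits $Q_\Lambda(\Phi)$ as a direct integral over $\theta$ of off-diagonal (chiral) operators of the shape $\frac19\begin{pmatrix} 0 & L_\theta^* \\ L_\theta & 0\end{pmatrix}$ acting on $\ell^2(\Z)\oplus\ell^2(\Z)$, where $L_\theta u_m = u_{m-1} + u_m + e^{i(\theta - m\Phi)}u_m$ (up to relabeling). Squaring, $\lambda \in \sigma(Q_\Lambda(\Phi))$ iff $9\lambda^2 \in \sigma(L_\theta^* L_\theta)$ (or $L_\theta L_\theta^*$), and $L_\theta^* L_\theta$ is exactly a quasi-periodic Jacobi matrix of type \eqref{defH} with $c(\theta) = 1 + e^{-2\pi i \theta}$ and some real diagonal $v$, matching the footnote identification $c(\theta)=1+e^{-2\pi i\theta}$, $\theta_1 = 1/2$. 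Thus $\sigma(Q_\Lambda(\Phi))$ is, up to the map $\lambda \mapsto 9\lambda^2$ and a square root, the spectrum $\Sigma_\Phi$ of a singular Jacobi matrix.

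Granting this reduction, the Cantor statement splits into three parts: closed (automatic, being a spectrum), no isolated points, and nowhere dense (equivalently, empty interior, which for a set of measure zero is automatic but in any case follows from zero measure). The absence of isolated points is quoted in Section \ref{preJacobi} as a well-known fact for $\Sigma_\Phi$ for any $\Phi$, and it transfers through the quadratic map and square root since these are continuous and the preimage/image of a set with no isolated points under a finite-to-one proper real-analytic map again has no isolated points away from the finitely many critical values $\lambda = 0$; the point $\lambda = 0$ needs a separate quick check, but chirality forces $\sigma(Q_\Lambda(\Phi))$ to be symmetric about $0$ and $0 \in \sigma(Q_\Lambda(\Phi))$ always (the Dirac point), and one argues it is not isolated because energies accumulate at it. The crux is therefore showing $|\Sigma_\Phi| = 0$ for irrational $\frac{\Phi}{2\pi}$, and this is exactly where the "exploit the singularity" idea enters: because $c(\theta) = 1 + e^{-2\pi i\theta}$ vanishes, the Lyapunov exponent $L(\lambda,\Phi)$ can be strictly positive on the spectrum even though, for rational approximants, the measure of the spectrum is controlled. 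I would invoke the measure estimate for rational flux — which is the content of Lemma \ref{measureestlem}, $|\sigma(Q_\Lambda(p/q))| \le C/\sqrt{q}$ — together with a continuity/semicontinuity argument along the continued fraction approximants $p_n/q_n \to \frac{\Phi}{2\pi}$ from \eqref{defpnqn}, \eqref{alpha-pnqn}.

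Concretely, the zero-measure deduction goes: fix the irrational $\alpha = \frac{\Phi}{2\pi}$ with approximants $p_n/q_n$. Using strong (or at least upper semicontinuous in an appropriate sense) convergence of $Q_\Lambda(2\pi p_n/q_n)$ to $Q_\Lambda(\Phi)$, one gets that $\sigma(Q_\Lambda(\Phi))$ is contained in an arbitrarily small neighborhood of $\sigma(Q_\Lambda(2\pi p_n/q_n))$ for $n$ large; more precisely, for any $\epsilon>0$ and $n$ large, $\sigma(Q_\Lambda(\Phi)) \subseteq \{x : \operatorname{dist}(x, \sigma(Q_\Lambda(2\pi p_n/q_n))) < \epsilon\}$. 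The rational spectrum is a union of at most $O(q_n)$ bands of total length $\le C/\sqrt{q_n}$, so its $\epsilon$-neighborhood has measure $\le C/\sqrt{q_n} + 2\epsilon \cdot O(q_n)$. Choosing $\epsilon = \epsilon_n = q_n^{-2}$ (small enough, using \eqref{alpha-pnqn} to guarantee the required closeness actually holds at scale $\epsilon_n$ — this is the delicate quantitative point), the bound becomes $C/\sqrt{q_n} + O(1/q_n) \to 0$. Hence $|\sigma(Q_\Lambda(\Phi))| = 0$. I expect the main obstacle to be precisely this last quantitative step: making the perturbation estimate $\|Q_\Lambda(\Phi) - Q_\Lambda(2\pi p_n/q_n)\|$ (or a weaker spectral-inclusion statement obtained via Lidskii-type inequalities \eqref{Lid} applied fiberwise, comparing the diagonal cocycle phases $m\Phi$ vs $m \cdot 2\pi p_n/q_n$) small enough relative to $1/q_n$ to beat the number of bands — the naive norm bound gives an error like $q_n |\alpha - p_n/q_n| \le 1/q_{n+1}$, which must be shown to dominate the band count favorably. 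This is where the structure of the continued fraction approximants and the telescoping of phases is essential, and it is the technical heart that the referenced paper presumably handles via a careful fiberwise Lidskii argument; in the write-up I would state it as the key estimate and defer its proof to the later section (Section \ref{secproofofLemmakey2}), consistent with the roadmap given in the introduction.
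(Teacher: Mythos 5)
Your reduction to the one-dimensional Jacobi matrix and the bookkeeping for the Cantor structure (closedness, no isolated points, symmetry about $0$ and $0\in\sigma(Q_\Lambda(\Phi))$) match the paper's Lemmas \ref{Qsym} and \ref{spectrumQlambda}. The problem is the zero-measure step, which is the heart of the lemma, and there your argument has a genuine gap. The only continuity of $\Sigma_\Phi$ in $\Phi$ that is available is the $1/2$-H\"older bound of Lemma \ref{continuity}: a point of $\Sigma_\Phi$ lies within $C_2|\Phi-\Phi'|^{1/2}$ of $\Sigma_{\Phi'}$. With $\Phi'=2\pi p_n/q_n$ and $|\frac{\Phi}{2\pi}-\frac{p_n}{q_n}|\le (q_nq_{n+1})^{-1}$, the admissible fattening scale is $\epsilon_n\sim (q_nq_{n+1})^{-1/2}$, not your $q_n^{-2}$; the $\epsilon_n$-neighborhood of the $q_n$ bands then has measure $\lesssim q_n^{-1/2}+q_n(q_nq_{n+1})^{-1/2}=q_n^{-1/2}+(q_n/q_{n+1})^{1/2}$. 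The second term tends to $0$ only along a subsequence with $q_{n+1}/q_n\to\infty$, i.e.\ only for $\alpha=\frac{\Phi}{2\pi}$ with unbounded partial quotients. For bounded-type irrationals (e.g.\ the golden mean, where $q_{n+1}\sim q_n$) the bound stays of order $1$ and the argument proves nothing. Your choice $\epsilon_n=q_n^{-2}$ would require Lipschitz (or better) dependence of the spectrum on $\Phi$, which is not available and is exactly the obstruction the paper points out at the start of the proof of Theorem \ref{measure0thm}: the approximation argument gives zero measure only for a.e.\ flux, and "extending the result to the remaining measure zero set this way would require a slightly stronger continuity ... which is not available."

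The paper closes this gap by an entirely different mechanism, and this is the idea your proposal is missing. One shows (Proposition \ref{LE=0}, via complexification, convexity, and quantization of acceleration from Avila's global theory) that the Lyapunov exponent of $H_{\Phi,\theta}$ \emph{vanishes identically on} $\Sigma_\Phi$ --- note this is the opposite of your remark that $L$ "can be strictly positive on the spectrum." Separately, because $c(\theta)=1+e^{-2\pi i\theta}$ has a real zero, the absolutely continuous spectrum is empty for a.e.\ $\theta$ (Proposition \ref{sing}). Kotani theory, extended to Jacobi matrices with $\log|c|\in L^1$ (Theorem \ref{Kotani}), identifies the essential closure of $\{\lambda: L(\lambda,\Phi)=0\}$ with the a.c.\ spectrum; since the latter is empty, the set $\{L=0\}\supseteq\Sigma_\Phi$ has Lebesgue measure zero. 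This is what the introduction means by "exploiting the singularity": the vanishing of $c$ simultaneously kills the a.c.\ spectrum and, through Kotani theory, forces the zero-Lyapunov set (hence the spectrum) to have zero measure, uniformly in the arithmetic of $\Phi$. Your rational-approximation scheme is essentially the paper's proof of the Hausdorff-dimension bound (Lemma \ref{Hausdorff}), which is only claimed for \emph{generic} $\Phi$ precisely because of the limitation described above.
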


\begin{lemm}\label{measureestlem}
If $\frac{\Phi}{2\pi}=\frac{p}{q}$ is a reduced rational number, then we have
\begin{align*}
|\sigma(Q_{\Lambda}(\Phi))|\leq \frac{8\sqrt{6 \pi}}{9\sqrt{q}}.
\end{align*}
\end{lemm}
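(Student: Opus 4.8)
The plan is to pass from $Q_{\Lambda}(\Phi)$ to the one–dimensional quasi‑periodic Jacobi matrix of Section~\ref{preJacobi}. Since $Q_{\Lambda}(\Phi)$ has the off‑diagonal (chiral) block form $\bigl(\begin{smallmatrix}0&T\\ T^{*}&0\end{smallmatrix}\bigr)$ with $T=\tfrac13(1+\tau_{0}+\tau_{1})$, its spectrum is symmetric about $0$ and, by the Schur complement, $\lambda\in\sigma(Q_{\Lambda}(\Phi))\setminus\{0\}$ iff $9\lambda^{2}\in\sigma\bigl((1+\tau_{0}+\tau_{1})(1+\tau_{0}+\tau_{1})^{*}\bigr)$. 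Taking the partial Fourier transform in $\gamma_{2}$ turns $\tau_{0}$ into a shift and $\tau_{1}$ into multiplication by $e^{-i\Phi\gamma_{1}-ik_{2}}$, and a short computation identifies the resulting nonnegative operator with $\int^{\oplus}H_{\Phi,\theta}\,d\theta$ for $c(\theta)=1+e^{-2\pi i\theta}$ and $v(\theta)=3+2\cos 2\pi\theta$ — this is the supersymmetric reduction behind \eqref{def:cv}. Hence $9\,\sigma(Q_{\Lambda}(\Phi))^{2}=\Sigma_{\Phi}$ up to the point $0$, so the substitution $\mu=9\lambda^{2}$ gives $|\sigma(Q_{\Lambda}(\Phi))|=\tfrac13\int_{\Sigma_{\Phi}}\mu^{-1/2}\,d\mu$. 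Splitting this integral at the level $a=|\Sigma_{\Phi}|/2$ — using $\int_{0}^{a}\mu^{-1/2}\,d\mu=2\sqrt a$ below and $\mu^{-1/2}\le a^{-1/2}$ above, then optimising in $a$ — yields
\begin{align*}
|\sigma(Q_{\Lambda}(\Phi))|\le \tfrac{2\sqrt 2}{3}\,|\Sigma_{\Phi}|^{1/2}.
\end{align*}
Since $\tfrac{2\sqrt 2}{3}\bigl(\tfrac{16\pi}{3q}\bigr)^{1/2}=\tfrac{8\sqrt{6\pi}}{9\sqrt q}$, the lemma is reduced to proving $|\Sigma_{\Phi}|\le \tfrac{16\pi}{3q}$ when $\Phi/2\pi=p/q$.

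Next I would make $\Sigma_{\Phi}$ explicit through a Chambers‑type formula. By \eqref{tildeAqDq}, for $\theta$ outside the finite set where some $c(\theta+jp/q)$ vanishes, $\Sigma_{\Phi,\theta}=\{\mu:|\tr D^{\mu}_{q}(\theta)|\le 2\prod_{j=0}^{q-1}|c(\theta+jp/q)|\}$. One computes $\prod_{j=0}^{q-1}c(\theta+jp/q)=1-(-1)^{q}e^{-2\pi iq\theta}$, and, because the cyclic product $D^{\mu}(\theta+(q-1)p/q)\cdots D^{\mu}(\theta)$ is carried to a cyclic rotation of itself under $\theta\mapsto\theta+p/q$, its trace is $\tfrac1q$‑periodic in $\theta$; since every entry of $D^{\mu}(\cdot)$ is a trigonometric polynomial of degree one, this forces $\tr D^{\mu}_{q}(\theta)=P_{q}(\mu)+2\gamma\cos(2\pi q\theta)$ with $\gamma=\pm 1$ and $P_{q}$ a \emph{monic} real polynomial of degree $q$. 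Eliminating $\theta$ (solving the resulting quadratic inequality in $\cos 2\pi q\theta\in[-1,1]$) identifies $\Sigma_{\Phi}=P_{q}^{-1}(J)$ for an explicit compact interval $J$ of length $9$. In particular $\Sigma_{\Phi}$ is a union of at most $q$ bands $B_{1},\dots,B_{N}$ on each of which $P_{q}$ — which equals the discriminant of the $q$‑periodic operator $H_{\Phi,\theta}$ at a non‑singular $\theta$ — is strictly monotone, $\int_{B_{j}}|P_{q}'|\,d\mu=|J|$, and the integrated density of states $N$ of $\int^{\oplus}H_{\Phi,\theta}\,d\theta$ increases by exactly $1/q$. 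Equivalently, one may work directly with the even degree‑$2q$ polynomial $\Psi(\lambda)=P_{q}(9\lambda^{2})$, for which $\sigma(Q_{\Lambda}(\Phi))=\Psi^{-1}(J)$.

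The main obstacle is the measure bound $|\Sigma_{\Phi}|\le \tfrac{16\pi}{3q}$: the crude estimate $|P_{q}^{-1}(J)|\le 4(|J|/2)^{1/q}$ coming from monotonicity and logarithmic capacity (using that $\operatorname{cap}\{|R|\le r\}=r^{1/q}$ for monic $R$ of degree $q$) tends to $4$, not to $0$, so the $1/q$ decay must genuinely be extracted from the \emph{singularity} of the Jacobi matrix — this is the point where, as advertised in the introduction, one exploits rather than circumvents the zeros of $c$. Concretely, I would use that the logarithmic capacity of the spectrum of a $q$‑periodic Jacobi matrix equals the geometric mean of its off‑diagonal entries, so $\operatorname{cap}(\Sigma_{\Phi,\theta})=|1-(-1)^{q}e^{-2\pi iq\theta}|^{1/q}$ degenerates to $0$ as $\theta$ runs into the zero set of $c$ — reflecting that $H_{\Phi,\theta}$ there decouples into $q\times q$ Jacobi blocks with finite (measure‑zero) spectrum — and combine this with the Thouless formula for $H_{\Phi,\theta}$, whose additive constant $\int_{\T_1}\log|c|\,d\theta$ vanishes because $c(\theta)=1+e^{-2\pi i\theta}$. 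This should produce a quantitative lower bound for the Lyapunov exponent in the spectral gaps, hence an upper bound on each $|B_{j}|$; feeding these into the Cauchy–Schwarz chain $|B_{j}|/q=|B_{j}|\int_{B_{j}}N'\ge\bigl(\int_{B_{j}}\sqrt{N'}\bigr)^{2}$, summing over the $N\le q$ bands to get $\int_{\Sigma_{\Phi}}\sqrt{N'}\le|\Sigma_{\Phi}|^{1/2}$, and tracking the numerical constants through $\|1+\tau_{0}+\tau_{1}\|$ and the endpoints of $J$, should yield the bound with $C=\tfrac{8\sqrt{6\pi}}{9}$. I expect the delicate step to be the quantitative gap/Lyapunov estimate, since everything else is soft; it is also the estimate that, in the $q\to\infty$ limit, degenerates into the zero‑measure statement of Lemma~\ref{measure0cor}.
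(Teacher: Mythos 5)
Your first paragraph is correct and is essentially the paper's own reduction: using \eqref{QLambdaHalpha}, the change of variables $\mu=9\lambda^{2}$ and the split of the integral at $a=|\Sigma_{\Phi}|/2$ reproduce exactly the paper's bound $|\sigma(Q_{\Lambda}(\Phi))|\le \tfrac{2\sqrt2}{3}|\Sigma_{\Phi}|^{1/2}$ (the paper phrases it as $\sqrt{\Sigma+3}\subseteq[0,\sqrt\epsilon\,]\cup\sqrt{(\Sigma+3)\cap(\epsilon,\infty)}$ and optimizes in $\epsilon$), and your arithmetic $\tfrac{2\sqrt2}{3}(\tfrac{16\pi}{3q})^{1/2}=\tfrac{8\sqrt{6\pi}}{9\sqrt q}$ is right. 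Your Chambers-type formula $\tr D^{\mu}_{q}(\theta)=P_{q}(\mu)+2\gamma\cos 2\pi q\theta$ and the identification $\Sigma_{2\pi p/q}=P_q^{-1}(J)$ with $|J|=9$ also match the paper (its $G_q$, with $J=[-6,3]$).

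The genuine gap is the estimate $|\Sigma_{2\pi p/q}|\le\tfrac{16\pi}{3q}$ itself (the paper's Lemma \ref{rationalest}), which carries all of the content of the statement. You correctly observe that the soft capacity bound for a monic degree-$q$ polynomial only gives $|P_q^{-1}(J)|\lesssim 4$, but the replacement you propose --- degenerating capacity of the fibers, the Thouless formula, a ``quantitative lower bound for the Lyapunov exponent in the spectral gaps,'' and a Cauchy--Schwarz chain through the integrated density of states --- is left entirely as a program, and you yourself flag its key step as unproven. As written, that chain does not close: Cauchy--Schwarz gives $\int_{B_j}\sqrt{N'}\le\sqrt{|B_j|/q}$, i.e.\ an \emph{upper} bound on $\int\sqrt{N'}$ by $|\Sigma_\Phi|^{1/2}$, which points in the wrong direction for bounding $|\Sigma_\Phi|$ from above, and no mechanism is exhibited that produces the rate $1/q$, let alone the constant $16\pi/3$. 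What the paper actually does is elementary and quantitative: from the Chambers formula, $\Sigma_{2\pi p/q}=\{\lambda:\min_\theta l_q\le G_q(\lambda)\le\max_\theta L_q\}$ is covered by just \emph{two} fibers $\Sigma_{2\pi p/q,\theta_1}\cup\Sigma_{2\pi p/q,\theta_2}$ at explicitly chosen $\theta_1,\theta_2$ whose ranges $[l_q(\theta_i),L_q(\theta_i)]$ together cover $[-6,3]$; each fiber's measure is then bounded by $4|c(\theta_i)|$ via the Lidskii inequalities applied to the rank-two difference $M_{q,0}(\theta)-M_{q,1/2}(\theta)$, whose nonzero eigenvalues are $\pm2|c(\theta)|$. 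The points $\theta_1,\theta_2$ lie within $O(1/q)$ of the zero $\theta=\tfrac12$ of $c$, giving $4|c(\theta_1)|<\tfrac{4\pi}{q}$ and $4|c(\theta_2)|<\tfrac{4\pi}{3q}$, whence $\tfrac{16\pi}{3q}$. This two-fiber-plus-Lidskii argument is the missing idea; without it (or a worked-out substitute) your proposal does not prove the lemma.
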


\begin{lemm}\label{Hausdorff}
For generic $\Phi$, the Hausdorff dimension of $\sigma(Q_{\Lambda}(\Phi))$ is $\leq \frac{1}{2}$.
\end{lemm}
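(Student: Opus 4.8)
The plan is to reduce the Hausdorff dimension bound to a statement about a one-dimensional quasi-periodic Jacobi matrix, and then exploit the zero Lyapunov exponent together with a Gordon-type / Jitomirskaya--Last subordinacy argument. By the supersymmetry reduction that underlies Lemmas \ref{HBQLam} and \ref{measure0cor}, the spectrum of $Q_\Lambda(\Phi)$ is governed (as a set) by a singular Jacobi matrix $H_{\Phi,\theta}$ of the form \eqref{defH}, with $c(\theta)=1+e^{-2\pi i\theta}$ and $v$ the appropriate potential; more precisely $\sigma(Q_\Lambda(\Phi))^2$, suitably interpreted via the square of the off-diagonal block, corresponds to $\Sigma_\Phi$. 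So it suffices to show that for generic $\Phi$ the Jacobi spectrum $\Sigma_\Phi$ has Hausdorff dimension $\le 1/2$. First I would record that the Lyapunov exponent $L(\lambda,\Phi)$ vanishes on $\Sigma_\Phi$: this follows from the general lower bound $L\ge 0$ proved in Section \ref{deftransfersec} together with the Thouless-type formula or the fact that positive Lyapunov exponent on a positive-measure set is incompatible with the zero-measure Cantor structure established in Lemma \ref{measure0cor} — in fact since $|\Sigma_\Phi|=0$ for irrational flux, Kotani theory forces $L(\lambda,\Phi)=0$ for a.e.\ $\lambda$ in the spectrum, and by upper semicontinuity and subharmonicity this propagates.

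The second step is the quantitative input. For a \emph{generic} (dense $G_\delta$) set of $\Phi$ with good Diophantine-type denominator behavior along the continued fraction approximants $p_n/q_n$ of $\Phi/2\pi$, I would use the periodic approximation: for the rational flux $p_n/q_n$ Lemma \ref{measureestlem} gives $|\sigma(Q_\Lambda(p_n/q_n))|\le \frac{8\sqrt{6\pi}}{9\sqrt{q_n}}$. The generic choice is to take $\Phi$ along a subsequence for which $q_{n+1}$ grows fast enough relative to $q_n$ (e.g.\ Liouville-type, which is a dense $G_\delta$) so that the periodic spectra approximate $\Sigma_\Phi$ at scale $\sim 1/q_n$ while the total length at that scale is $\lesssim q_n^{-1/2}$. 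Covering $\Sigma_\Phi$ by the $O(q_n)$ bands of the $q_n$-periodic operator, each of length $\le$ (total)$/q_n \sim q_n^{-3/2}$ after using the zero-Lyapunov-exponent control on band widths, and summing $(\text{diam})^{s}$ over the cover, one gets a finite $s$-dimensional Hausdorff premeasure for every $s>1/2$; letting $n\to\infty$ along the Liouville subsequence yields $\dim_H(\Sigma_\Phi)\le 1/2$. The key mechanism is that the measure bound $q^{-1/2}$ from Lemma \ref{measureestlem}, distributed over $\asymp q$ bands, forces the typical band to have length $\asymp q^{-3/2}$, and $q\cdot (q^{-3/2})^{1/2} = q\cdot q^{-3/4}=q^{1/4}$ — so one actually needs to be slightly more careful and use that only the bands genuinely meeting the limiting spectrum contribute, which is where the Liouville (fast approximation) hypothesis and the Cantor structure of Lemma \ref{measure0cor} enter to kill the spurious bands.

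I would then transfer the bound back: since $\lambda\in\sigma(H^B)\setminus\sigma(H^D)$ iff $\Delta(\lambda)\in\sigma(Q_\Lambda(\Phi))$ by Lemma \ref{HBQLam}, and $\Delta$ is real-analytic with $\Delta'\ne 0$ on the interiors of the Hill bands (by \eqref{Hillspectrum}), the map $\Delta$ is bi-Lipschitz on each Hill band away from the finitely many band edges, hence preserves Hausdorff dimension; the band edges and $\sigma(H^D)$ form a countable set of dimension $0$. This gives $\dim_H(\sigma(Q_\Lambda(\Phi)))\le 1/2$ and simultaneously the $\dim_H(\sigma^\Phi)\le 1/2$ clause of Theorem \ref{T1}.

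The main obstacle I anticipate is making the covering argument genuinely give exponent $1/2$ rather than something weaker: naively distributing a length-$q^{-1/2}$ spectrum over $q$ bands gives bands of size $q^{-3/2}$ and a premeasure sum $\sim q^{1-3s/2}$, which is bounded only for $s\ge 2/3$, not $s\ge 1/2$. To reach $1/2$ one must instead use that the $q_n$-periodic spectrum is contained in a $\delta_n$-neighborhood of $\Sigma_\Phi$ with $\delta_n$ controlled by $|\Phi/2\pi - p_n/q_n|\le (q_nq_{n+1})^{-1}$ via \eqref{alpha-pnqn} and the operator-norm Lipschitz dependence of $Q_\Lambda$ on $\Phi$, and then optimize the cover at scale $\min(q_n^{-1},\delta_n^{1/2})$ or similar — equivalently, genericity should be phrased so that $q_{n+1}\gg q_n^{2}$, making $\delta_n\ll q_n^{-3}$ so that the true spectrum, of measure $0$, is covered by far fewer than $q_n$ intervals of the periodic band structure. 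Getting the bookkeeping between the two scales $q_n^{-1}$ (band count) and $\delta_n$ (approximation error) exactly right, so that the surviving bands number $o(q_n^{1/2})$ while retaining length $\lesssim q_n^{-1/2}\cdot q_n^{-1}$, is the delicate part; everything else (zero Lyapunov exponent, bi-Lipschitz transfer through $\Delta$, reduction to the Jacobi matrix) is standard given the earlier sections.
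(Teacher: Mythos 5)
Your overall strategy (periodic approximation, a covering/Last-type lemma, genericity via fast rational approximation, transfer through a locally bi-Lipschitz map) is the same as the paper's, and you correctly diagnose the central difficulty: covering $\sigma(Q_\Lambda(\Phi))$ by the $q_n$ bands of the rational approximant with total length $O(q_n^{-1/2})$ from Lemma \ref{measureestlem} only yields $\dim_H\le 2/3$ via Lemma \ref{Last:lemma}. But you do not resolve this difficulty, and the fix you sketch (arguing that only $o(q_n^{1/2})$ of the bands genuinely meet $\Sigma_\Phi$, using the Liouville condition and the Cantor structure) is left as ``the delicate part'' with no mechanism to actually count the surviving bands; it is doubtful it can be carried out as stated. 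The paper's resolution is different and much simpler: one never covers $\sigma(Q_\Lambda(\Phi))$ directly, but instead bounds $\dim_H(\Sigma_\Phi)$ for the Jacobi spectrum, where Lemma \ref{rationalest} gives $q_n$ bands of \emph{total} measure $O(1/q_n)$ --- i.e.\ $\beta=1$ rather than $\beta=1/2$ in Lemma \ref{Last:lemma}, hence dimension $\le 1/2$ --- and then transfers to $\sigma(Q_\Lambda(\Phi))$ through the square-root map, which is bi-Lipschitz on $\bigl(\tfrac{\Sigma_\Phi}{9}+\tfrac13\bigr)\cap[\tfrac1k,1]$ for each $k$ and therefore preserves Hausdorff dimension away from $0$. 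You in fact announce ``it suffices to show $\dim_H(\Sigma_\Phi)\le 1/2$'' but then revert to the $q^{-1/2}$ bound on $\sigma(Q_\Lambda)$, which is where the argument breaks.

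A second concrete error: you invoke ``the operator-norm Lipschitz dependence of $Q_\Lambda$ on $\Phi$'' to place $\Sigma_{2\pi p_n/q_n}$ in a $\delta_n$-neighborhood of $\Sigma_\Phi$. This is false: $\|Q_\Lambda(\Phi)-Q_\Lambda(\Phi')\|$ does not tend to $0$ as $\Phi'\to\Phi$, since the phases $e^{-i\Phi\gamma_1}$ involve unbounded $\gamma_1$. The available continuity is only $1/2$-H\"older continuity of the \emph{spectrum} (Lemma \ref{continuity}, proved by an Avron--van~Mouche--Simon test-function argument in the appendix), and the resulting loss of exponent is precisely what the generic condition $q_n^4\,|\Phi/2\pi-p_n/q_n|<C$ is calibrated to absorb: it makes the H\"older error $q_n\,|\Phi/2\pi-p_n/q_n|^{1/2}\lesssim q_n^{-1}$ comparable to the band-measure term. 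Finally, the Lyapunov-exponent discussion in your first step plays no role in this lemma (it is used only for the zero-measure statement, Theorem \ref{measure0thm}), and there is no ``zero-Lyapunov-exponent control on band widths'' anywhere in the argument.
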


\section{Proof of Lemma \ref{measure0cor}}\label{secproofofLemmakey2}
\subsection{Symmetric property of $Q_{\Lambda}$}
\begin{lemm}\label{Qsym}
The spectrum of $Q_{\Lambda}$ has the following properties:
\begin{enumerate}
    \item $\sigma(Q_{\Lambda}(\Phi))$ is symmetric with respect to $0$.
    \item $0\in \sigma(Q_{\Lambda}(\Phi))$.
\end{enumerate}
\end{lemm}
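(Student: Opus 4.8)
The plan is to exhibit explicit unitary conjugations on $\ell^2(\Z^2;\C^2)$ that reveal the two claimed symmetries directly at the operator level, from which the spectral statements follow immediately. For item (1), the matrix structure of $Q_\Lambda(\Phi)$ in \eqref{Q-op} is off-diagonal: it has the block form $\left(\begin{smallmatrix} 0 & T \\ T^* & 0\end{smallmatrix}\right)$ with $T=\frac13(1+\tau_0+\tau_1)$. The natural chiral symmetry operator is the diagonal unitary $\Sigma = \left(\begin{smallmatrix} I & 0 \\ 0 & -I\end{smallmatrix}\right)$ on $\ell^2(\Z^2)\oplus\ell^2(\Z^2)$. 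One checks at once that $\Sigma Q_\Lambda(\Phi)\Sigma^{-1} = -Q_\Lambda(\Phi)$, since conjugating an off-diagonal block matrix by $\Sigma$ flips the sign of both off-diagonal entries. Hence $\sigma(Q_\Lambda(\Phi)) = \sigma(-Q_\Lambda(\Phi)) = -\sigma(Q_\Lambda(\Phi))$, which is exactly symmetry with respect to $0$.

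For item (2), the cleanest route is to use symmetry (1) together with the fact that $\sigma(Q_\Lambda(\Phi))$ is nonempty and bounded: a nonempty compact subset of $\R$ that is symmetric about $0$ need not contain $0$ in general (e.g. $\{-1,1\}$), so this alone is not enough, and I would instead argue that $\sigma(Q_\Lambda(\Phi))$ is connected-free of a gap at $0$ by a direct construction of approximate eigenvectors. Concretely, $0\in\sigma(Q_\Lambda(\Phi))$ iff $T=\frac13(1+\tau_0+\tau_1)$ is not bounded invertible, i.e. iff $0\in\sigma(T^*T)\cup\sigma(TT^*)$; equivalently it suffices to show $T$ (or $T^*$) is not bounded below. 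Using the fiber decomposition in the $\gamma_2$ (or appropriate) variable — for rational $\Phi/2\pi=p/q$ this is a genuine direct integral over a torus with $q\times q$ matrix fibers, and for irrational $\Phi$ one approximates — the operator $1+\tau_0+\tau_1$ acquires the form $1+e^{i\theta_1}+e^{i\theta_2}$-type symbols twisted by the magnetic phase, whose modulus attains the value $0$ (the Dirac point phenomenon: $1+e^{ik_1}+e^{ik_2}=0$ is solvable). I would make this rigorous by constructing normalized $u^{(n)}\in\ell^2(\Z^2)$, supported on large boxes, with $\|Tu^{(n)}\|\to 0$: take $u^{(n)}$ to be a smooth bump times the putative "zero mode" phase $e^{i(k_1\gamma_1+k_2\gamma_2)}$ adjusted for the $\Phi$-twist, chosen so that the three translated copies cancel up to boundary terms of relative size $O(1/n)$.

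The main obstacle is item (2) in the irrational case, where there is no honest direct-integral fiber to diagonalize and the magnetic phase $e^{-i\Phi\gamma_1}$ is genuinely quasiperiodic. The safest fix is to invoke continuity of the spectrum in $\Phi$: the map $\Phi\mapsto Q_\Lambda(\Phi)$ is norm-continuous (the phases $e^{-i\Phi\gamma_1}$ enter boundedly and $Q_\Lambda$ depends on $\Phi$ continuously in operator norm after the standard rewriting), hence $\Phi\mapsto\sigma(Q_\Lambda(\Phi))$ is continuous in the Hausdorff metric, and $0\in\sigma(Q_\Lambda(p/q))$ for all rationals $p/q$ (handled by the fiber argument above) passes to the limit to give $0\in\sigma(Q_\Lambda(\Phi))$ for all $\Phi$. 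Alternatively, and perhaps more in the spirit of the rest of the paper, one notes that the later reduction to the one-dimensional Jacobi matrix $H_{\Phi,\theta}$ with $c(\theta)=1+e^{-2\pi i\theta}$ already builds in a zero of $c$ at $\theta=1/2$; the singularity of the Jacobi matrix there forces $0$ into the spectrum of the associated operator, and unwinding Lemma \ref{HBQLam}'s correspondence transfers this back to $0\in\sigma(Q_\Lambda(\Phi))$. I would present the chiral-symmetry argument for (1) in full and then give the rational-flux computation plus the continuity argument for (2), flagging the Jacobi-matrix viewpoint as the conceptual reason.
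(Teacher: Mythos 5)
Your proof of item (1) is correct and is exactly the paper's argument: conjugation by $\mathrm{diag}(\mathrm{id},-\mathrm{id})$ anticommutes with the off-diagonal block operator, so $\sigma(Q_\Lambda(\Phi))=-\sigma(Q_\Lambda(\Phi))$. Your treatment of item (2) for rational flux (Dirac points of the $q$-dimensional Floquet fibers) is also in line with the paper, which simply cites \cite[Prop.~4.1]{HKL} for this.

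The genuine gap is in your passage to irrational flux. The map $\Phi\mapsto Q_\Lambda(\Phi)$ is \emph{not} norm-continuous: from \eqref{translatop}, $\bigl\|\tau_1(\Phi)-\tau_1(\Phi')\bigr\| = \sup_{\gamma_1\in\Z}\bigl|e^{-i\Phi\gamma_1}-e^{-i\Phi'\gamma_1}\bigr|$, and since $\gamma_1$ is unbounded this supremum does not tend to $0$ as $\Phi'\to\Phi$ (for $\Phi-\Phi'\notin 2\pi\Q$ it equals $2$). No gauge transformation ("standard rewriting") removes this, because the commutation relation of $\tau_0,\tau_1$ depends on $\Phi$; this failure of norm continuity is precisely why Hofstadter-type spectra vary so wildly with the flux. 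Consequently you cannot deduce Hausdorff continuity of $\sigma(Q_\Lambda(\Phi))$ this way, and your main argument for $0\in\sigma(Q_\Lambda(\Phi))$ at irrational flux does not close. The paper instead argues that $\Phi\mapsto\langle Q_\Lambda(\Phi)x,y\rangle$ is analytic for $x,y\in c_{00}$, treats $Q_\Lambda$ as a bounded analytic family, and uses that invertibility is an open condition for such families (citing Kato), so that invertibility at some irrational $\Phi_0$ would propagate to nearby rational fluxes and contradict \cite{HKL}. If you prefer a continuity route, the correct substitute is a genuine spectral continuity estimate such as Lemma \ref{continuity} of the paper ($1/2$-H\"older continuity of $\Sigma_\Phi$ for the reduced Jacobi family), applied to the energy $-3=\inf\Sigma_\Phi$, which corresponds to $0\in\sigma(AA^*)$; this requires the test-function argument of the appendix and is not free. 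Your fallback via the zero of $c(\theta)=1+e^{-2\pi i\theta}$ at $\theta=1/2$ is also not sufficient as stated: singularity of the off-diagonal sampling function does not by itself place the bottom energy $-3$ in $\Sigma_\Phi$, which is what $0\in\sigma(Q_\Lambda(\Phi))$ actually requires after the reduction \eqref{QLambdaHalpha}.
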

\begin{proof}
(1). Conjugating $Q_{\Lambda}$ in \eqref{Q-op} by
\begin{equation}
\Omega =
\left(\begin{matrix}
-\operatorname{id} & 0\\
0 & \operatorname{id}  \\
\end{matrix}\right)\end{equation}
shows that $\sigma(Q_{\Lambda}(\Phi))$ is symmetric with respect to $0$ \cite[Prop. $3.5$]{KL}.\bigbreak

(2). If we view $Q_{\Lambda}(\Phi)$ as an operator-valued function of the flux $\Phi,$ then 
\begin{equation}
\Phi \mapsto \langle Q_{\Lambda}(\Phi)x,y \rangle,
\end{equation}
for $x,y \in c_{00}$ arbitrary, is analytic and $Q_{\Lambda}$ therefore is a bounded analytic map. If there was ${\Phi_0}/{2\pi} \in \mathbb{R} \backslash \mathbb{Q}$ where $Q_{\Lambda}(\Phi_0)$ was invertible, then $Q_{\Lambda}(\Phi)$ would also be invertible in a sufficiently small neighborhood of $\Phi_0$ (e.g. \cite[Ch. $7.1$]{Ka}). Yet, in \cite[Prop. $4.1$]{HKL} it has been shown that for rational $\Phi/2\pi$, the Dirac points are in the spectrum, i.e. $0 \in \sigma(Q_{\Lambda}(\Phi)).$ Thus, by density $0 \in \sigma(Q_{\Lambda}(\Phi))$, independent of $\Phi \in \mathbb{R}.$
\end{proof}

\subsection{Reduction to the one-dimensional Hamiltonian}

\

Relating the spectrum of $Q_{\Lambda}$ to that of $Q_{\Lambda}^2$, we obtain the following characterization of $\sigma(Q_{\Lambda})$.
\begin{lemm}
\label{spectrumQlambda}
\begin{enumerate}
\item The spectrum of the operator $Q_{\Lambda}(\Phi)$ as a set is given by 
\begin{equation}\label{QLambdaHalpha}
\sigma(Q_{\Lambda}(\Phi))= \pm \sqrt{\frac{\bigcup_{\theta \in \mathbb{T}_1}\sigma(H_{\Phi, \theta})}{9}+\frac{1}{3}} \bigcup \left\{0\right\}.
\end{equation}
where $H_{\Phi, \theta} \in \mathcal{L}( l^2(\mathbb{Z}) )$ is the one-dimensional quasi-periodic Jacobi matrix defined as in (\ref{defH})
with 
\begin{align}\label{def:cv}
c(\theta)=1+e^{-2\pi i \theta},\ \  \text{and}\ \ \ v(\theta)=2\cos{2\pi \theta}.
\end{align}
\item $Q_{\Lambda}(\Phi)$ has no point spectrum.
\end{enumerate}
\end{lemm}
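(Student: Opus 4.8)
The plan is to exploit the off-diagonal (``supersymmetric'') structure of $Q_\Lambda(\Phi)$ together with a partial Fourier transform realizing $Q_\Lambda(\Phi)^2$ as a direct integral of the one-dimensional Jacobi matrices $H_{\Phi,\theta}$ of \eqref{defH}. Write $Q_\Lambda(\Phi)=\tfrac13\left(\begin{matrix}0 & A\\ A^* & 0\end{matrix}\right)$ with $A:=1+\tau_0+\tau_1\in\mathcal{L}(l^2(\Z^2;\C))$, so that
\[
Q_\Lambda(\Phi)^2=\frac19\left(\begin{matrix}AA^* & 0\\ 0 & A^*A\end{matrix}\right).
\]
Since $Q_\Lambda(\Phi)$ is bounded self-adjoint, the spectral mapping theorem gives $\sigma(Q_\Lambda(\Phi)^2)=\{t^2:t\in\sigma(Q_\Lambda(\Phi))\}$, and combined with the reflection symmetry of Lemma~\ref{Qsym}(1) this yields $\sigma(Q_\Lambda(\Phi))=\{\pm\sqrt{t}:t\in\sigma(Q_\Lambda(\Phi)^2)\}$. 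As $AA^*$ and $A^*A$ are bounded, their spectra agree off $0$; since $0\in\sigma(Q_\Lambda(\Phi))$ by Lemma~\ref{Qsym}(2), we get $\sigma(Q_\Lambda(\Phi)^2)=\tfrac19\big(\sigma(AA^*)\cup\{0\}\big)$, so everything reduces to identifying $\sigma(AA^*)$.

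Using that $\tau_0,\tau_1$ are unitary with $\tau_0\tau_0^*=\tau_1\tau_1^*=1$ one computes $AA^*=3+(\tau_0+\tau_0^*)+(\tau_1+\tau_1^*)+(\tau_0\tau_1^*+\tau_1\tau_0^*)$. Conjugating by the partial Fourier transform $\mathcal{F}\colon l^2(\Z^2)\to L^2(\T_1;l^2(\Z))$, $(\mathcal{F}u)_n(\theta)=\sum_{\gamma_2}u_{n,\gamma_2}e^{-2\pi i\gamma_2\theta}$, in the second variable turns $\tau_0$ into the fibrewise shift $(v_n)\mapsto(v_{n-1})$ and $\tau_1$ into multiplication by $e^{-2\pi i(\theta+n\frac{\Phi}{2\pi})}$; a direct computation then shows $\mathcal{F}AA^*\mathcal{F}^{-1}=\int_{\T_1}^{\oplus}\big(3+H_{\Phi,\theta}\big)\,d\theta$ with $H_{\Phi,\theta}$ \emph{exactly} the operator \eqref{defH} for the data \eqref{def:cv} (the diagonal is $2\cos(2\pi(\theta+n\frac{\Phi}{2\pi}))=v(\theta+n\frac{\Phi}{2\pi})$ and the superdiagonal is $1+e^{-2\pi i(\theta+n\frac{\Phi}{2\pi})}=c(\theta+n\frac{\Phi}{2\pi})$). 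Since $\theta\mapsto H_{\Phi,\theta}$ is norm continuous, $\sigma(AA^*)=3+\overline{\bigcup_{\theta\in\T_1}\sigma(H_{\Phi,\theta})}=3+\Sigma_\Phi$, and substituting above gives
\[
\sigma(Q_\Lambda(\Phi))=\pm\sqrt{\frac{3+\Sigma_\Phi}{9}}\ \cup\ \{0\}=\pm\sqrt{\frac{\Sigma_\Phi}{9}+\frac13}\ \cup\ \{0\},
\]
which is \eqref{QLambdaHalpha}.

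For the second statement, suppose $Q_\Lambda(\Phi)\psi=\lambda\psi$ with $\psi=(\psi^{(1)},\psi^{(2)})\neq0$. If $\lambda\neq0$, then $\psi^{(1)}\neq0$ (otherwise $A^*\psi^{(1)}=0$ would force $\psi^{(2)}=0$) and $AA^*\psi^{(1)}=9\lambda^2\psi^{(1)}$, so $9\lambda^2\in\sigma_p(AA^*)$; if $\lambda=0$, then $\psi^{(1)}\in\ker A^*$ and $\psi^{(2)}\in\ker A$, so $0\in\sigma_p(AA^*)\cup\sigma_p(A^*A)$. Hence it suffices to show $\sigma_p(AA^*)=\sigma_p(A^*A)=\emptyset$. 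Both $AA^*$ and $A^*A$ are, after subtracting $3$, direct integrals over $\T_1$ of Jacobi matrices whose off-diagonal sequence is $1+e^{-2\pi i(\theta+n\frac{\Phi}{2\pi})}$; by Thomas' description of the pure point spectrum of fibered operators (already invoked for $H$ in Section~\ref{prelimsec}), an eigenvalue would force some energy $E$ with $|\{\theta\in\T_1:E\in\sigma_p(H_{\Phi,\theta})\}|>0$. This cannot occur: when $\frac{\Phi}{2\pi}=\frac pq$ is rational, for every $\theta$ outside the finite set $\Theta$ of Section~\ref{deftransfersec} the matrix $H_{\Phi,\theta}$ is a $q$-periodic Jacobi matrix with non-vanishing off-diagonal entries, hence has purely absolutely continuous spectrum, so $\{\theta:E\in\sigma_p(H_{\Phi,\theta})\}\subseteq\Theta$ has measure zero; when $\frac{\Phi}{2\pi}$ is irrational, $\{H_{\Phi,\theta}\}_{\theta\in\T_1}$ is an ergodic family (translation by $\frac{\Phi}{2\pi}$ is ergodic on $\T_1$), and for an ergodic family no fixed energy is an eigenvalue for a set of phases of positive measure. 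Thus $\sigma_p(AA^*)=\sigma_p(A^*A)=\emptyset$ and $Q_\Lambda(\Phi)$ has no point spectrum.

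The algebra for $AA^*$ and the bookkeeping of the Fourier transform are routine; the genuine obstacle is the last step, i.e.\ excluding that a fixed energy is an eigenvalue of $H_{\Phi,\theta}$ on a positive-measure set of $\theta$. One has to treat the rational case (where honest finite-block eigenvalues of $H_{\Phi,\theta}$ do appear, but only over the measure-zero resonant set $\Theta$ where $c$ vanishes along the orbit) separately from the irrational case (handled by ergodicity), and one must check that the fibres of $A^*A$, whose diagonal and off-diagonal phases are shifted relative to each other, are still covered by the same two arguments.
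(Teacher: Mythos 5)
Your proof is correct and follows essentially the same route as the paper: square $Q_\Lambda(\Phi)$, reduce to $AA^*$ via Lemma \ref{Qsym}, fiber by the partial Fourier transform in the second variable to obtain the direct integral of $H_{\Phi,\theta}$, and exclude point spectrum by showing no fixed energy is an eigenvalue of the fibers on a positive-measure set of $\theta$. If anything, your treatment of part (2) is slightly more careful than the paper's (which only invokes continuity of the integrated density of states), since you separately handle the rational and irrational cases and the phase-shifted fibers of $A^*A$.
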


\begin{proof}
(1). Let $A:=\frac{1}{3}\left(1+\tau_0+\tau_1\right)$. Then squaring the operator $Q_{\Lambda}(\Phi)$ yields
\begin{equation}
\label{Diracoperator}
Q_{\Lambda}^2(\Phi)=\left(\begin{matrix} AA^* & 0 \\ 0 & A^*A \end{matrix} \right).
\end{equation}
The spectral mapping theorem implies that $\sigma(Q_{\Lambda}^2(\Phi))=\sigma(Q_{\Lambda}(\Phi))^2$ and from Lemma \ref{spectrumQlambda} we conclude that $\sigma(Q_{\Lambda}(\Phi))= \pm \sqrt{\sigma(Q_{\Lambda}^2(\Phi))}.$
Clearly, the operator $AA^* \vert_{\operatorname{ker}(A^*)^{\perp}}$ and $A^*A \vert_{\operatorname{ker}(A)^{\perp}}$ are unitarily equivalent. Thus, the spectrum can be expressed by 
\begin{equation}
\sigma(Q_{\Lambda}(\Phi))=\pm \sqrt{\sigma(AA^*)} \cup \left\{0\right\}
\end{equation} where we are able to use either of the two ($AA^*$ or $A^*A$) since $0 \in \sigma(Q_{\Lambda}(\Phi))$ due to Lemma \ref{Qsym}.  

Then, it follows that
\begin{align}
AA^* &= \frac{\operatorname{id}}{3}+\frac{\operatorname{id}}{9}\underbrace{\left((\tau_0+\tau_0^*)+(\tau_1+\tau_1^*)+\tau_0\tau_1^* + \tau_1\tau_0^*\right)}_{=:H_{\Phi}}.
\end{align}
Observe that 
\begin{align}
H_{\Phi}\psi_{m,n}=
&\psi_{m-1,n}+\psi_{m+1,n}+e^{- i \Phi m } \psi_{m,n-1}+e^{i \Phi m} \psi_{m,n+1} \nonumber\\
&+e^{ i \Phi (m-1)} \psi_{m-1,n+1}+e^{-i \Phi m}\psi_{m+1,n-1}.
\end{align}
Since $H_{\Phi}$ is invariant under discrete translations in $n$, the operator is unitarily equivalent to the direct integral operator
\begin{equation}
\int_{\mathbb{T}_1}^{\oplus} H_{\Phi, \theta}\ d \theta, 
\end{equation}
which gives the claim. \bigbreak

(2). It follows from a standard argument that the two dimensional operator $H_{\Phi}$ has no point spectrum.
Indeed, assume $H_{\Phi}$ has point spectrum at energy $E$, then $H_{\Phi, \theta}$ would have the same point spectrum $E$ for a.e. $\theta\in \T_1$. 
This implies the integrated density of states of $H_{\Phi, \theta}$ has a jump discontinuity at $E$, which is impossible. 
Therefore the point spectrum of $H_{\Phi}$ is empty, hence the same holds for $Q_{\Lambda}(\Phi)$.
\end{proof}

Lemma \ref{measure0cor} follows as a direct consequence of (\ref{QLambdaHalpha}) and the following Theorem \ref{measure0thm}. 
Let $\Sigma_{\Phi}$ be defined as in Section \ref{preJacobi}.
\begin{theo}\label{measure0thm}
For $\frac{\Phi}{2\pi} \in \R\backslash \Q$, $\Sigma_{\Phi}$ is a zero-measure Cantor set.
\end{theo}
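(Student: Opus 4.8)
The plan is to prove that $\Sigma_\Phi$ for irrational $\Phi/2\pi$ is a zero-measure Cantor set by the standard two-pronged strategy: (i) show the set has empty interior, hence (being closed and, by the general fact quoted in Section~\ref{preJacobi}, having no isolated points) is a Cantor set; and (ii) show it has Lebesgue measure zero. For the measure-zero part I would use the continuity of the measure of the spectrum under rational approximation together with the quantitative rational estimate in Lemma~\ref{measureestlem}. Concretely, pick the continued-fraction approximants $p_n/q_n \to \alpha := \{\Phi/2\pi\}$ (after absorbing the integer part, which only shifts $\theta$), and let $\Phi_n := 2\pi p_n/q_n$. One knows that for one-dimensional quasi-periodic Jacobi matrices with continuous (here trigonometric-polynomial) coefficients, $\Phi \mapsto \Sigma_\Phi$ is continuous in the Hausdorff metric, and moreover $\limsup_n |\Sigma_{\Phi_n}| \geq |\Sigma_\Phi|$ — the latter being the usual semicontinuity one extracts from strong resolvent convergence of $H_{\Phi_n,\theta} \to H_{\Phi,\theta}$ uniformly in $\theta$, since $|c(\theta + m\Phi_n/2\pi) - c(\theta + m\Phi/2\pi)|$ and the analogous $v$-difference are $O(m|\Phi_n - \Phi|)$ and can be controlled via $|\alpha - p_n/q_n| \leq 1/(q_nq_{n+1})$. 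But by Lemma~\ref{measureestlem} (applied to $H_{\Phi,\theta}$ via the relation $\sigma(Q_\Lambda) = \pm\sqrt{\Sigma_\Phi/9 + 1/3}\cup\{0\}$ from Lemma~\ref{spectrumQlambda}, or more directly to the Jacobi matrix, since that estimate is really a statement about $\Sigma_{\Phi_n}$), $|\Sigma_{\Phi_n}| \to 0$ as $q_n \to \infty$. Hence $|\Sigma_\Phi| = 0$.

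For the Cantor (empty-interior) part, I would argue by contradiction: if $\Sigma_\Phi$ contained an interval $I$, then since $|\Sigma_\Phi| = 0$ we'd have $|I| = 0$, a contradiction, so \emph{provided the measure-zero step is in hand}, Cantor-ness is immediate — $\Sigma_\Phi$ is closed, has empty interior (no interval), and has no isolated points by the general fact recalled in Section~\ref{preJacobi}. So structurally the whole theorem reduces to the single assertion $|\Sigma_\Phi| = 0$, and the real content is the limiting argument linking it to Lemma~\ref{measureestlem}.

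The main obstacle is establishing the semicontinuity $|\Sigma_\Phi| \leq \liminf_n |\Sigma_{\Phi_n}|$ (or at least $\leq \limsup$) cleanly in the presence of the singular coefficient $c(\theta) = 1 + e^{-2\pi i\theta}$, which vanishes at $\theta = 1/2$: the transfer-matrix formalism degenerates on the orbit $\Theta$ of the zero, and one must be slightly careful that the spectrum $\Sigma_{\Phi,\theta}$ is still $\theta$-independent and that the approximation $\Sigma_{\Phi_n} \to \Sigma_\Phi$ does not lose mass near the singular points. The standard fix is to work with the operators $H_{\Phi,\theta}$ directly (which are bounded, with $\|H_{\Phi,\theta}\| \leq 6$, the singularity only affecting the \emph{transfer matrices}, not the operator), note that $\theta \mapsto H_{\Phi,\theta}$ is norm-continuous and $\Phi \mapsto H_{\Phi,\theta}$ converges in norm uniformly on $\T_1$ along $\Phi_n$, and invoke the elementary fact that for a norm-continuous family of self-adjoint operators the spectra converge in Hausdorff distance, which upgrades to $|\Sigma_\Phi| \leq \liminf |\Sigma_{\Phi_n}|$ once one covers $\Sigma_\Phi$ by finitely many intervals and absorbs the $o(1)$ Hausdorff error into arbitrarily small length. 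I would carry out the steps in the order: (1) reduce to showing $|\Sigma_\Phi| = 0$; (2) set up the continued-fraction approximants and the uniform-in-$\theta$ norm convergence $H_{\Phi_n,\theta} \to H_{\Phi,\theta}$; (3) deduce Hausdorff convergence $\Sigma_{\Phi_n} \to \Sigma_\Phi$ and hence upper semicontinuity of Lebesgue measure; (4) combine with $|\Sigma_{\Phi_n}| \leq C/\sqrt{q_n} \to 0$ from Lemma~\ref{measureestlem} to conclude $|\Sigma_\Phi| = 0$; (5) conclude Cantor structure from closedness, empty interior, and the no-isolated-points fact.
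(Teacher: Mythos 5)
There is a genuine gap in step (3), and it is precisely the obstruction the paper's proof is designed to circumvent. The ``elementary fact'' you invoke --- that Hausdorff convergence of the spectra upgrades to $|\Sigma_\Phi|\le\liminf_n|\Sigma_{\Phi_n}|$ --- is false in general: Lebesgue measure is not upper semicontinuous under Hausdorff convergence (finite sets, of measure zero, are Hausdorff-dense among compact sets, so a sequence of null sets can converge in Hausdorff distance to an interval). To make your argument work you must be quantitative: by Lemma \ref{continuity}, $\Sigma_\Phi$ lies in the $C|\Phi-\Phi_n|^{1/2}$-neighborhood of $\Sigma_{\Phi_n}$, which by Lemma \ref{rationalest} is a union of $q_n$ bands of total length $O(1/q_n)$; the neighborhood therefore has measure at most $O(1/q_n)+2Cq_n|\Phi-\Phi_n|^{1/2}$. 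With $|\Phi-\Phi_n|\asymp 1/(q_nq_{n+1})$ the second term is $\asymp\sqrt{q_n/q_{n+1}}$, which tends to zero along a subsequence only when the continued fraction coefficients of $\Phi/2\pi$ are unbounded. For bounded-type irrationals (e.g.\ the golden mean) this term does not vanish, and the H\"older exponent $1/2$ in Lemma \ref{continuity} cannot be improved enough to save the argument. So your route proves the theorem only for a full-measure (but not exhaustive) set of irrational fluxes --- exactly what the paper concedes before presenting its actual proof.

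The paper handles \emph{all} irrational fluxes by an entirely different mechanism: it shows the Lyapunov exponent vanishes identically on $\Sigma_\Phi$ (Proposition \ref{LE=0}, via complexification, convexity, and quantization of the acceleration from Avila's global theory), shows the absolutely continuous spectrum is empty for a.e.\ $\theta$ because $c(\theta)=1+e^{-2\pi i\theta}$ has a real zero (Proposition \ref{sing}), and then invokes Kotani theory for Jacobi matrices with $\log|c|\in L^1$ (Theorem \ref{Kotani}) to identify the essential closure of the zero-Lyapunov-exponent set with the a.c.\ spectrum. Emptiness of the latter forces $\Sigma_\Phi$ to have empty essential closure, i.e.\ zero measure. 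Your reduction of Cantor-ness to the measure-zero statement (closed, no isolated points, and empty interior once the measure vanishes) is fine and matches the paper; the missing ingredient is the measure-zero step for bounded-type frequencies, for which some version of the Kotani-theoretic argument (or another idea exploiting the singularity of $c$) appears unavoidable.
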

We will prove Theorem \ref{measure0thm} in the next section.

\section{Proof of Lemmas \ref{measureestlem},   \ref{Hausdorff}, and Theorem \ref{measure0thm}}\label{secHausdorff}
For a set $U$, let $\mathrm{dim}_H(U)$ be its Hausdorff dimension.

We will need the following three lemmas.

First, we have
\begin{lemm}\label{rationalest}
Let $\frac{\Phi}{2\pi}=\frac{p}{q}$ be a reduced rational number, then $\Sigma_{2\pi p/q}$ is a union of $q$ (possibly touching) bands with $|\Sigma_{2\pi p/q}|<\frac{16\pi}{3q}$.
\end{lemm}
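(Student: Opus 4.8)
The plan is to treat the two assertions separately: the band count is soft, while the measure bound is the real content.

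\emph{Band count.} Since $\frac{\Phi}{2\pi}=\frac pq$, the Jacobi matrix $H_{\Phi,\theta}$ is periodic of period $q$ in $m$, so for each fixed $\theta$ the Floquet--Bloch decomposition writes it as a direct integral of $q\times q$ matrices over the dual circle; the $q$ ordered eigenvalue branches are continuous and their ranges are at most $q$ closed intervals whose union is $\sigma(H_{\Phi,\theta})$. As $\theta$ runs over $\T_1$ these branches move continuously, and the union over a connected parameter set of a continuously varying family of intervals is again an interval, so $\Sigma_{2\pi p/q}=\bigcup_\theta\sigma(H_{\Phi,\theta})$ is a union of at most $q$ (possibly touching) bands.

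\emph{Reduction to a polynomial sub-level set.} Passing to the normalized $\SL(2,\R)$ transfer matrix, $\lambda\in\sigma(H_{\Phi,\theta})$ iff $|\tr\tilde A^{\lambda}_q(\theta)|\le 2$, and by \eqref{tildeAqDq} this reads $|\tr D^{\lambda}_q(\theta)|\le 2\prod_{j=0}^{q-1}|c(\theta+j\tfrac pq)|$. Using \eqref{def:cv} one computes $\prod_{j=0}^{q-1}c(\theta+j\tfrac pq)=1-(-1)^qe^{-2\pi iq\theta}$, hence $\prod_{j=0}^{q-1}|c(\theta+j\tfrac pq)|$ equals $2|\sin\pi q\theta|$ or $2|\cos\pi q\theta|$ according to the parity of $q$. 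Next, as a trigonometric polynomial in $\theta$, $\tr D^{\lambda}_q(\theta)$ carries only Fourier modes that are multiples of $q$ (the cancellation $\sum_j e^{2\pi ikjp/q}=0$ unless $q\mid k$), and a degree count forces
\[
\tr D^{\lambda}_q(\theta)=\mathcal P_q(\lambda)-2\cos 2\pi q\theta ,
\]
for a monic real polynomial $\mathcal P_q$ of degree $q$ independent of $\theta$ (the $e^{\pm 2\pi iq\theta}$–coefficient being $\lambda$–independent and equal to $-1$). Evaluating at a $\theta_0$ with $c(\theta_0+j_0\tfrac pq)=0$, where $H_{\Phi,\theta_0}$ decouples into finite blocks, identifies $\mathcal P_q\mp 2$ with the characteristic polynomial of the $q\times q$ Jacobi block obtained by cutting $H_{\Phi,\theta_0}$ at that vanishing bond; since $\Sigma_\Phi=\sigma(H_\Phi)\subseteq[-3,6]$ (because $0\le 9AA^{*}=H_\Phi+3\le 9$), the polynomial $\mathcal P_q\mp 2$ has all $q$ of its distinct roots in $[-3,6]$. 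Finally, taking the union over $\theta$, i.e. over $b:=|\sin\pi q\theta|\in[0,1]$, of the intervals $\{\lambda:\mathcal P_q(\lambda)\in[\,2-4b-4b^{2},\,2+4b-4b^{2}\,]\}$ collapses the description to
\[
\Sigma_{2\pi p/q}=\mathcal P_q^{-1}(I),\qquad |I|=9,
\]
with the $q$ bands being exactly the branches on which $\mathcal P_q$ runs monotonically across the whole of $I$ (so there is no interior touching).

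\emph{The measure estimate.} I would bound $|\mathcal P_q^{-1}(I)|$ by a Cauchy--Schwarz argument against an ``arccosine potential''. Writing $I=[\alpha,\beta]$ and $t(\lambda):=\arccos\!\big(\tfrac{2\mathcal P_q(\lambda)-\alpha-\beta}{\beta-\alpha}\big)\in[0,\pi]$, the function $t$ sweeps $[0,\pi]$ across every band, so $\int_{\Sigma_\Phi}|t'|\,d\lambda=q\pi$; a direct computation gives $|t'|=\tfrac{2|\mathcal P_q'|}{9\sin t}$, whence on the $k$-th band $|B_k|=\int_0^\pi \tfrac{9\sin t}{2|\mathcal P_q'(\lambda_k(t))|}\,dt\le \tfrac{9}{\min_{B_k}|t'|}$. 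The heart of the matter — and the step I expect to be the main obstacle — is a lower bound for $|\mathcal P_q'|$ on each band away from its endpoints, equivalently an upper bound for $(\min_{B_k}|t'|)^{-1}$; since the relevant zeros $\mu_k$ of $\mathcal P_q\mp 2$ sit one to a band, this amounts to a genuine separation estimate for these $\mu_k$ inside $[-3,6]$ (and the band-edge Dirac points, where $\mathcal P_q'$ vanishes together with $\beta-\mathcal P_q$ or $\mathcal P_q-\alpha$ so that $t'$ stays finite, need separate but harmless handling). Summing $|B_k|\le \tfrac{9}{\min_{B_k}|t'|}$ over $k$ then yields $|\Sigma_{2\pi p/q}|\le \tfrac Cq$, and keeping track of the numerology — the $\pi$ from $\int_0^\pi\sin t\,dt$ and the arccosine normalization, the geometric constants from $\Sigma_\Phi\subseteq[-3,6]$ and $|I|=9$ — should produce $C=\tfrac{16\pi}{3}$.
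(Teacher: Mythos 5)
The first half of your argument --- the period-$q$ Floquet reduction, the Chambers-type formula $\tr D^{\lambda}_q(\theta)=G_q(\lambda)-2\cos 2\pi q\theta$, the computation of $\prod_{j}|c(\theta+j\tfrac pq)|$, the identification of $G_q\mp2$ with the characteristic polynomial of the decoupled block at the zero of $c$, and the resulting description $\Sigma_{2\pi p/q}=G_q^{-1}(I)$ with $|I|=9$ --- is correct and is essentially the paper's own route up to \eqref{Spq4}; the count of $q$ (possibly touching) bands follows. The gap is in the measure estimate, which is the real content of the lemma, and it is a gap you flag yourself: you never obtain the lower bound on $|G_q'|$ along the bands, and the route you propose for it cannot succeed. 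A separation estimate for the $q$ roots of $G_q\mp2$ in $[-3,6]$ yields spacings of order $1/q$ at best, whereas $\sum_k|B_k|\le\sum_k 9/\min_{B_k}|G_q'|\le C/q$ would require $|G_q'|\gtrsim q^2$ throughout the spectrum. No such bound holds for a general monic degree-$q$ polynomial whose level sets interlace in a fixed interval: the Chebyshev-type extremal case has $|G_q^{-1}(I)|$ of order $1$, not $1/q$. Put differently, your scheme applies verbatim to a non-singular periodic Jacobi matrix (say $c\equiv1$), where the total bandwidth does not decay in $q$; any proof of the $\tfrac{16\pi}{3q}$ bound must therefore exploit the vanishing of $c$ at $\theta=\tfrac12$, which root separation of the $\mu_k$ never sees.

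The mechanism in the paper is different. From the Chambers formula, the extremes $\max_\theta L_q$ and $\min_\theta l_q$ of the two envelopes are attained on the arithmetic progressions $\theta\in\tfrac{6\Z+1}{6q}$ and $\theta\in\tfrac{2\Z+1}{2q}$ (for $q$ even; analogously for $q$ odd), and the two corresponding windows $[-1,3]$ and $[-6,2]$ already cover $I=[-6,3]$; hence $\Sigma_{2\pi p/q}$ equals the union of just \emph{two} fiber spectra $\Sigma_{2\pi p/q,\theta_1}\cup\Sigma_{2\pi p/q,\theta_2}$, and the representatives may be chosen within $O(1/q)$ of the zero of $c$, namely $\theta_1=\tfrac12+\tfrac1{2q}$ and $\theta_2=\tfrac12+\tfrac1{6q}$, so that $|c(\theta_i)|=2\sin\bigl(\pi|\theta_i-\tfrac12|\bigr)=O(1/q)$. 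Each fiber spectrum then satisfies $|\Sigma_{2\pi p/q,\theta}|\le4|c(\theta)|$ (Lemma \ref{estlemma}): the periodic and antiperiodic truncations $M_{q,0}(\theta)$ and $M_{q,1/2}(\theta)$ differ by a rank-two matrix with nonzero eigenvalues $\pm2|c(\theta)|$, and the alternating sum $\sum_k(-1)^{q-k}(\lambda_k-\tilde\lambda_k)$ expressing the fiber bandwidth is controlled by the Lidskii inequalities \eqref{Lid}. Adding $\tfrac{4\pi}{q}+\tfrac{4\pi}{3q}$ gives $\tfrac{16\pi}{3q}$. This covering-plus-Lidskii step is the missing input; with it your arccosine set-up becomes unnecessary.
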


Lemma \ref{rationalest} will be proved in subsections \ref{even} and \ref{odd} after some further preparation. The following lemma addresses the continuity of the spectrum $\Sigma_{\Phi}$ in $\Phi$, extending a result of \cite{AvMS} (see Proposition 7.1 therein) from quasiperiodic Schr\"odinger operators to Jacobi matrices.
\begin{lemm}\label{continuity}
There exists absolute constants $C_1, C_2>0$ such that if $\lambda \in \Sigma_{\Phi}$ and $|\Phi-\Phi^\prime|<C_1$, then there exists $\lambda^\prime\in \Sigma_{\Phi^\prime}$ such that
\begin{align*}
|\lambda-\lambda^\prime| \le C_2|\Phi-\Phi'|^{\frac{1}{2}}.
\end{align*}
\end{lemm}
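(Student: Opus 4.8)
The goal is Lipschitz-in-$\sqrt{\cdot}$ continuity of the union-over-$\theta$ spectrum $\Sigma_\Phi$ of the Jacobi family \eqref{defH} with the concrete coefficients \eqref{def:cv}. The strategy I would follow mirrors the Avron--van Mouche--Simon argument (\cite{AvMS}, Prop.~7.1), adapted to the Jacobi setting. The first step is to fix $\lambda\in\Sigma_\Phi$ and pick $\theta$ with $\lambda\in\Sigma_{\Phi,\theta}$; then choose a normalized approximate eigenvector, i.e.\ $u\in\ell^2(\Z)$, $\|u\|=1$, with $\|(H_{\Phi,\theta}-\lambda)u\|<\delta$ for arbitrarily small $\delta$. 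By a standard cut-off one may in fact take $u$ to be finitely supported (an element of $c_{00}$), at the cost of enlarging $\delta$ by an amount controlled by the tails of $u$ and by the boundedness of the coefficients; since $|c|\le 2$ and $|v|\le 2$ uniformly, the off-diagonal terms created by truncation are harmless. The point of passing to $c_{00}$ is that for such $u$ the quantity $\langle H_{\Phi',\theta}u,u\rangle$ and, more importantly, $\|(H_{\Phi',\theta}-\lambda)u\|$ depend only on finitely many of the numbers $c(\theta+m\Phi'/2\pi)$, $v(\theta+m\Phi'/2\pi)$.

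**Key step: comparing $H_{\Phi,\theta}$ and $H_{\Phi',\theta}$ on a fixed finite window.** On the support of $u$, say $|m|\le N$, the coefficients of the two operators differ by
\[
|c(\theta+m\tfrac{\Phi}{2\pi})-c(\theta+m\tfrac{\Phi'}{2\pi})|\le 2\pi\,|m|\,\frac{|\Phi-\Phi'|}{2\pi}\le N|\Phi-\Phi'|,
\]
because $c(\theta)=1+e^{-2\pi i\theta}$ is Lipschitz with constant $2\pi$; the same bound (with a different absolute constant) holds for $v(\theta)=2\cos2\pi\theta$. Hence $\|(H_{\Phi,\theta}-H_{\Phi',\theta})u\|\le C N|\Phi-\Phi'|$ for some absolute $C$, and therefore
\[
\mathrm{dist}(\lambda,\Sigma_{\Phi',\theta})\le \|(H_{\Phi',\theta}-\lambda)u\|\le \delta + CN|\Phi-\Phi'|.
\]
Now the standard trick: the cut-off error $\delta$ and the window size $N$ are not independent — to make $\delta$ small one needs $N$ large, and the tails of an approximate eigenvector of a bounded Jacobi matrix need not decay, so one cannot send $N\to\infty$ freely. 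The resolution, exactly as in \cite{AvMS}, is to optimize: one shows that there is always an approximate eigenvector supported on a window of length $N\sim |\Phi-\Phi'|^{-1/2}$ with truncation error $\delta\lesssim |\Phi-\Phi'|^{1/2}$. This is obtained by an averaging/pigeonhole argument: among translates $u(\cdot-k)$ of a fixed long approximate eigenvector one can find one whose mass near the two ends of a window of length $\sim|\Phi-\Phi'|^{-1/2}$ is $\lesssim |\Phi-\Phi'|^{1/2}$, because the total mass is $1$ spread over $\sim|\Phi-\Phi'|^{-1/2}$ disjoint blocks. Plugging $N\sim|\Phi-\Phi'|^{-1/2}$ into the displayed bound gives $\mathrm{dist}(\lambda,\Sigma_{\Phi',\theta})\lesssim |\Phi-\Phi'|^{1/2}$, hence $\mathrm{dist}(\lambda,\Sigma_{\Phi'})\lesssim |\Phi-\Phi'|^{1/2}$, which is the claim with $C_2$ the resulting absolute constant and $C_1$ chosen so small that the various "$\lesssim$" steps are valid.

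**Main obstacle.** The only genuinely delicate point is the balancing argument producing an approximate eigenvector that is simultaneously well-localized (window $\sim|\Phi-\Phi'|^{-1/2}$) and a good approximate eigenvector (error $\sim|\Phi-\Phi'|^{1/2}$); this is where the square-root exponent comes from and it is exactly the content transplanted from \cite{AvMS}. In the Jacobi setting there is one extra wrinkle compared to the Schrödinger case: truncating $u$ also perturbs the off-diagonal coefficients $c$, not just the potential, so one must check that $\|(H_{\Phi,\theta}-H^{(N)}_{\Phi,\theta})u\|$ — where $H^{(N)}$ is the truncated Jacobi matrix — is controlled by the boundary mass of $u$ times $\sup|c|\le 2$; this is immediate but must be stated. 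A second routine point is that $c(\theta)$ vanishes at $\theta=1/2$, so $H_{\Phi,\theta}$ genuinely degenerates for $\theta\in\Theta$; however, since we work with the union over all $\theta$ and with $\Sigma_{\Phi,\theta}$ (which, for irrational $\Phi/2\pi$, is $\theta$-independent, and in general has closure behaving continuously), the vanishing of $c$ causes no trouble — one never needs to invert $c$, only to bound it and its modulus of continuity. I would therefore present the proof as: (i) reduce to finitely supported approximate eigenvectors; (ii) the Lipschitz bound on coefficients over a window of length $N$; (iii) the optimization $N\sim|\Phi-\Phi'|^{-1/2}$ via the pigeonhole/averaging lemma of \cite{AvMS}; (iv) conclude.
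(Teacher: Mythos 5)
Your outline follows essentially the same route as the paper's proof (given in the appendix as Lemma \ref{generalcontinuity}), which is itself the Avron--van Mouche--Simon argument transplanted to the Jacobi setting: take an approximate eigenvector, localize it to a window of length $N\sim|\Phi-\Phi'|^{-1/2}$, compare coefficients across the window using the Lipschitz bound, and optimize. Your recentering of the window at the origin by translating $u$ is equivalent to the paper's recentering of the phase (it chooses $\theta'$ with $\theta+j\alpha=\theta'+j\alpha'$ so the coefficients match at the center $j$ of the selected window); both exploit the union over $\theta$ in the definition of $\Sigma_\Phi$, as you note.

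The one step whose justification, as you sketch it, would not deliver the claimed exponent is the localization. A sharp truncation to a window of length $N$ produces a commutator error controlled by $\|c\|_\infty$ times the $\ell^2$ mass of $u$ at the $O(1)$ boundary sites, and the pigeonhole/averaging over window positions only guarantees that this boundary mass is $\lesssim N^{-1/2}$ times the norm of the restricted vector --- not $\lesssim N^{-1}$ as your phrase ``mass near the two ends is $\lesssim|\Phi-\Phi'|^{1/2}$'' requires. Balancing $N^{-1/2}$ against $N|\Phi-\Phi'|$ would give only H\"older exponent $1/3$. The exponent $1/2$ comes from using a \emph{smooth} (triangular) partition of unity $\eta_{j,N}$ with slope $1/N$: the commutator $[\eta_{j,N},H]$ then carries a factor $1/N$ pointwise, and the identity $\sum_j\eta_{j,N}^2=\const$ lets one select a $j$ for which $\|(H-\lambda)\eta_{j,N}\phi\|\lesssim N^{-1}\|\eta_{j,N}\phi\|$. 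This is exactly what the paper does (and what \cite{AvMS} does), so the conclusion you assert in step (iii) is true, but the mechanism is the slope of the cutoff, not the smallness of the boundary mass; with a sharp cutoff the argument genuinely loses a power.
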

We will prove Lemma \ref{continuity} in the appendix.

The next Lemma gives a way to bound the Hausdorff dimension from above.
\begin{lemm}\label{Last:lemma}(Lemma 5.1 of \cite{Last})
Let $S\subset \R$, and suppose that $S$ has a sequence of covers: $\{S_n\}_{n=1}^{\infty}$, $S\subset S_n$, such that each $S_n$ is a union of $q_n$ intervals, $q_n\to \infty$ as $n\to \infty$, and for each $n$, 
\begin{align*}
|S_n|<\frac{C}{q_n^{\beta}},
\end{align*}
where $\beta$ and $C$ are positive constants, then 
\begin{align*}
\mathrm{dim}_H(S)\leq \frac{1}{1+\beta}.
\end{align*}
\end{lemm}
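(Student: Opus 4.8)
The plan is to estimate the Hausdorff measure $\mathcal{H}^{s}(S)$ directly from the given covers, choosing $s$ slightly larger than $\frac{1}{1+\beta}$ and showing $\mathcal{H}^{s}(S)=0$, which forces $\dim_H(S)\le \frac{1}{1+\beta}$. Fix such an $s$. For each $n$, write $S_n=\bigcup_{k=1}^{q_n} I_{n,k}$, a union of $q_n$ intervals with $\sum_{k}|I_{n,k}|=|S_n|<C q_n^{-\beta}$. Since $S\subset S_n$, the collection $\{I_{n,k}\}_{k=1}^{q_n}$ is itself a cover of $S$, and one can use it in the definition of Hausdorff (pre-)measure provided the mesh is controlled; so the first real step is to bound the diameters of the covering intervals.

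The key observation is a concavity/counting trade-off: among $q_n$ intervals whose lengths sum to at most $Cq_n^{-\beta}$, the sum $\sum_k |I_{n,k}|^{s}$ is, for $s<1$, maximized (up to constants) when the lengths are as equal as possible, namely each of size $\asymp C q_n^{-\beta}/q_n = C q_n^{-(1+\beta)}$. More precisely, by concavity of $x\mapsto x^{s}$ on $[0,\infty)$ for $0<s\le 1$, together with the constraint $\sum_k|I_{n,k}|\le C q_n^{-\beta}$ and the fact that there are $q_n$ terms, Jensen's inequality gives
\begin{align*}
\sum_{k=1}^{q_n}|I_{n,k}|^{s}\le q_n\left(\frac{1}{q_n}\sum_{k=1}^{q_n}|I_{n,k}|\right)^{s}\le q_n\left(\frac{C q_n^{-\beta}}{q_n}\right)^{s}=C^{s}\,q_n^{\,1-(1+\beta)s}.
\end{align*}
If $s>\frac{1}{1+\beta}$, then the exponent $1-(1+\beta)s<0$, so $\sum_{k}|I_{n,k}|^{s}\to 0$ as $q_n\to\infty$, i.e. as $n\to\infty$. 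Combined with a bound on the mesh $\max_k |I_{n,k}|\le |S_n|<Cq_n^{-\beta}\to 0$, the covers $\{I_{n,k}\}$ have vanishing mesh, so they are admissible in the definition of the $s$-dimensional Hausdorff measure, and we conclude $\mathcal{H}^{s}(S)=0$. Since this holds for every $s>\frac{1}{1+\beta}$, we get $\dim_H(S)\le \frac{1}{1+\beta}$, as claimed.

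The only subtle point — the ``main obstacle'' — is the mild technicality that the bound $\sum_k|I_{n,k}|^s\le C^s q_n^{1-(1+\beta)s}$ uses $s\le 1$ in the concavity step; since $\frac{1}{1+\beta}<1$ automatically (as $\beta>0$), we may freely restrict to $s\in(\frac{1}{1+\beta},1)$, and that suffices to pin down the dimension. One should also note that the argument does not need the $I_{n,k}$ to be disjoint; overlaps only help, since they can only decrease the relevant sums. Everything else is a routine application of the definition of Hausdorff measure, so I would present it essentially as written above.
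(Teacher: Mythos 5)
Your proof is correct and is essentially the argument of Lemma 5.1 in Last's paper, which the present paper cites without reproving: the concavity/Jensen step $\sum_k|I_{n,k}|^{s}\le q_n^{1-s}\bigl(\sum_k|I_{n,k}|\bigr)^{s}$, combined with the vanishing mesh, is exactly the mechanism there. One small imprecision: if the $q_n$ given intervals overlap, then $\sum_k|I_{n,k}|$ need not be bounded by $|S_n|$, so rather than asserting that ``overlaps only help'' you should pass to the (at most $q_n$) connected components of $S_n$, whose lengths do sum to $|S_n|$; with that adjustment everything goes through.
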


\subsection*{Proof of Lemma \ref{measureestlem}}

This is a quick consequence of Lemma \ref{rationalest}.
It is clear that for any $\epsilon>0$, we have
\begin{align*}
\sqrt{\Sigma_{2 \pi p/q}+3}\subseteq [\ 0, \sqrt{\epsilon}\ ]\bigcup \sqrt{\left(\Sigma_{2 \pi p/q}+3\right) \bigcap\ (\epsilon, \infty)}.
\end{align*}
Hence by Lemma \ref{rationalest}, we have
\begin{align*}
|\sqrt{\Sigma_{2 \pi p/q}+3}|\leq \sqrt{\epsilon}+\frac{|\Sigma_{2\pi p/q}|}{2\sqrt{\epsilon}}\leq \sqrt{\epsilon}+\frac{8\pi}{3\sqrt{\epsilon} q}.
\end{align*}
Optimizing in $\epsilon$ leads to 
\begin{align*}
|\sqrt{\Sigma_{2 \pi p/q}+3}|\leq \frac{4\sqrt{6\pi}}{3\sqrt{q}}.
\end{align*}
Then \eqref{QLambdaHalpha} implies
\begin{align}
|\sigma(Q_{\Lambda}(2\pi p/q))| \leq \frac{8\sqrt{6 \pi}}{9\sqrt{q}}. 
\end{align}
\begin{flushright}
\qed
\end{flushright}

\subsection*{Proof of Lemma \ref{Hausdorff}}
We will show that if $\frac{\Phi}{2\pi}$ is an irrational obeying
\begin{align}\label{genericcondition}
q_n^4 \left| \frac{\Phi}{2\pi}-\frac{p_n}{q_n}\right| <C,
\end{align}
for some constant $C$, and a sequence of reduced rationals $\{p_n/q_n\}$ with $q_n\to \infty$, then $\dim_H(\sigma(Q_{\Lambda}(\Phi)))\leq 1/2$.

Without loss of generality, we may assume $\frac{\Phi}{2\pi}\in (0,1)$.

First, by \eqref{QLambdaHalpha}, we have that 
\begin{align*}
\mathrm{dim}_H(\sigma(Q_{\Lambda}(\Phi)))=\sup_{k\geq 2}\ \mathrm{dim}_H\left(\pm \sqrt{\left(\frac{\Sigma_{\Phi}}{9}+\frac{1}{3}\right)\cap [\frac{1}{k}, 1]}\right),
\end{align*}
where we used a trivial bound $\|H_{\Phi, \theta}\|\leq 6.$
Hence it suffices to show that for each $k\geq 2$, 
\begin{align}\label{HDk}
\mathrm{dim}_H\left(\sqrt{\left(\frac{\Sigma_{\Phi}}{9}+\frac{1}{3}\right)\cap [\frac{1}{k}, 1]}\right)\leq \frac{1}{2}.
\end{align}

The rest of the argument is similar to that of \cite{Last}. By Lemma \ref{continuity}, taking any $\lambda\in \Sigma_{\Phi}$, for $n\geq n_0$, there exists $\lambda^\prime\in \Sigma_{2\pi p_n/q_n}$ such that 
$|\lambda-\lambda^\prime|\le \tilde{C}_2 |\frac{\Phi}{2\pi}-\frac{p_n}{q_n}|^{\frac{1}{2}}$. 
This means $\Sigma_{\Phi}$ is contained in the $\tilde{C}_2 |\frac{\Phi}{2\pi}-\frac{p_n}{q_n}|^{\frac{1}{2}}$ neighbourhood of $\Sigma_{2\pi p_n/q_n}$.
By Lemma \ref{rationalest}, $\Sigma_{2\pi p_n/q_n}$ has $q_n$ (possibly touching) bands with total measure $|\Sigma_{2\pi p_n/q_n}|\leq \frac{16\pi}{3q_n}$.
Hence 
$\Sigma_{\Phi}$ has cover $S_n$ such that $S_n$ is a union of (at most) $q_n$ intervals with total measure
\begin{align}\label{measureestSn}
|S_n|\leq \frac{16\pi}{3q_n}+ 2\tilde{C}_2 q_n \left|\frac{\Phi}{2\pi}-\frac{p_n}{q_n}\right|^{\frac{1}{2}}.
\end{align}
Since $q_n^4 \left|\frac{\Phi}{2\pi}-\frac{p_n}{q_n}\right|\leq C$, we have, by \eqref{measureestSn},
\begin{align}
|S_n|\leq \frac{16\pi}{3q_n}+\frac{2C_2\sqrt{C}}{q_n}=:\frac{\tilde{C}}{q_n}.
\end{align}
This implies $\left(\frac{\Sigma_{\Phi}}{9}+\frac{1}{3} \right)\cap [\frac{1}{k}, 1]$ has cover $\tilde{S}_n$ such that $\tilde{S}_n$ is a union of (at most) $q_n$ intervals with total measure
\begin{align}\label{measureesttildeSn}
|\tilde{S}_n|\leq \frac{\sqrt{k}\tilde{C}}{2q_n}.
\end{align}
Then Lemma \ref{Last:lemma} yields \eqref{HDk}.
\qed
\subsection{Proof of Theorem \ref{measure0thm}}

\
Note that Lemmas \ref{rationalest} and \ref{continuity} already imply zero measure (and thus Cantor nature) of the spectrum for fluxes $\alpha$ with unbounded coefficients in the continued fraction expansion, thus for a.e. $\alpha,$ by an argument similar to that used in the proof of Lemma \ref{Hausdorff}.  However extending the result to the remaining measure zero set this way would require a slightly stronger continuity in Lemma \ref{continuity}, which is not available. We circumvent this by combining quantization of localization-type arguments, singularity-induced absence of absolutely continuous spectrum, and Kotani theory for Jacobi matrices.

Let $\Sigma_{ac}(H_{\Phi, \theta})$ be the absolutely continuous spectrum of $H_{\Phi, \theta}$. 
Let $L(\lambda, \Phi)$ be the Lyapunov exponent of $H_{\Phi, \theta}$ at energy $\lambda$, as defined in \eqref{defLE}.
For a set $U\subseteq \R$, let $\overline{U}^{ess}$ be its essential closure.

First, we are able to give a characterization of the Lyapunov exponent on the spectrum.
\begin{prop}\label{LE=0}
For $\frac{\Phi}{2\pi}\in \R\setminus \Q$, 
$L(\lambda, \Phi)=0$ if and only if $\lambda\in \Sigma_{\Phi}$.
\end{prop}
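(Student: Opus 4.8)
The plan is to prove the two directions separately, using Kotani theory for the non-trivial inclusion and a soft argument for the easy one. First I would dispose of the direction ``$\lambda\notin\Sigma_\Phi \Rightarrow L(\lambda,\Phi)>0$'': off the spectrum the transfer matrix cocycle $A^\lambda$ is uniformly hyperbolic (after removing the finite-orbit singular set $\Theta$ and passing to the normalized cocycle $\tilde A^\lambda\in\SL(2,\R)$, which differs from $A^\lambda$ only by the bounded, $\log$-integrable conjugation \eqref{conjtildeAA}), and uniform hyperbolicity forces $L(\lambda,\Phi)>0$; equivalently one can invoke that the Lyapunov exponent of $\tilde A^\lambda$ is positive on the resolvent set and then transfer back to $A^\lambda$ using that the scalar prefactors in \eqref{conjtildeAA} are $\log$-integrable so the two Lyapunov exponents coincide. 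Hence $L(\lambda,\Phi)=0$ implies $\lambda\in\Sigma_\Phi$.

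For the reverse direction $\lambda\in\Sigma_\Phi\Rightarrow L(\lambda,\Phi)=0$, the key input is the singularity of the Jacobi matrix: since $c(\theta)=1+e^{-2\pi i\theta}$ vanishes at $\theta=1/2$, the classical Kotani/Deift–Simon/Last–Simon theory for ergodic Jacobi matrices with singular (i.e.\ non-bounded-below $|c|$, or here vanishing $c$) coefficients shows the absolutely continuous spectrum is empty: $\Sigma_{ac}(H_{\Phi,\theta})=\emptyset$ for a.e.\ $\theta$. (This is exactly the mechanism the introduction advertises — ``the singularity leads to vanishing of the measure of the spectrum'' — and I would cite the Kotani-theory-for-Jacobi-matrices references, e.g.\ the extended-Harper framework in \cite{han,ajm,han2} and the original Kotani/Simon arguments.) Kotani theory then gives the converse essential-support identification: $\overline{\{\lambda: L(\lambda,\Phi)=0\}}^{ess}=\Sigma_{ac}=\emptyset$, so $\{\lambda:L(\lambda,\Phi)=0\}$ has Lebesgue measure zero. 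That alone is not quite the statement; I need $L=0$ on \emph{all} of $\Sigma_\Phi$, not just a.e.

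To upgrade from ``a.e.'' to ``everywhere on $\Sigma_\Phi$'' I would use the rational approximation and continuity already established. For rational flux $\Phi/2\pi=p/q$, the cocycle is periodic and one computes $L(\lambda,2\pi p/q)=0$ for all $\lambda\in\Sigma_{2\pi p/q}$ directly (this is the standard fact that a periodic Jacobi matrix has zero Lyapunov exponent on its spectrum, which is a union of bands — consistent with Lemma \ref{rationalest}). Then I would use upper semicontinuity of the Lyapunov exponent in $(\lambda,\Phi)$ together with Lemma \ref{continuity}: given $\lambda\in\Sigma_\Phi$ and $p_n/q_n\to\Phi/2\pi$, pick $\lambda_n\in\Sigma_{2\pi p_n/q_n}$ with $\lambda_n\to\lambda$; then $L(\lambda_n,2\pi p_n/q_n)=0$, and upper semicontinuity gives $L(\lambda,\Phi)\le\limsup L(\lambda_n,2\pi p_n/q_n)=0$, while $L\ge 0$ always (shown in the excerpt). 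Alternatively, and perhaps more cleanly, since $L(\cdot,\Phi)$ is known to be continuous in $\lambda$ on a fixed irrational flux (by the Thouless formula / harmonicity of $L$, the function $\lambda\mapsto L(\lambda,\Phi)$ extends (sub)harmonically and its zero set is closed in $\Sigma_\Phi$), and we just showed it vanishes a.e.\ on $\Sigma_\Phi$ while $\Sigma_\Phi$ has no isolated points, the everywhere-vanishing follows.

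The main obstacle I expect is the passage from ``$L=0$ for a.e.\ $\lambda$'' (which Kotani theory plus absence of a.c.\ spectrum delivers almost for free) to ``$L=0$ for every $\lambda\in\Sigma_\Phi$''. The subtlety is that a.c.\ spectrum being empty says $\{L=0\}$ is Lebesgue-null, whereas $\Sigma_\Phi$ is itself Lebesgue-null (that's Theorem \ref{measure0thm}, which this Proposition is meant to help prove — so there is a potential circularity to avoid). The way out is precisely not to use Theorem \ref{measure0thm} here, but to get $L\equiv 0$ on $\Sigma_\Phi$ from the rational-approximation/semicontinuity argument above, which only uses Lemmas \ref{rationalest} and \ref{continuity} and the trivially-verified rational case; Theorem \ref{measure0thm} is then deduced afterwards from $\Sigma_{ac}=\emptyset$ (no a.c.\ spectrum) combined with $L\equiv 0$ on $\Sigma_\Phi$ via Kotani theory (vanishing Lyapunov exponent plus no a.c.\ spectrum on a set forces that set to have measure zero). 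So the architecture is: (i) rational case $L\equiv0$ on the bands; (ii) semicontinuity $+$ Lemma \ref{continuity} $\Rightarrow$ $L\equiv0$ on $\Sigma_\Phi$ for irrational flux; (iii) the reverse inclusion from uniform hyperbolicity off $\Sigma_\Phi$; and separately, for Theorem \ref{measure0thm}, singularity $\Rightarrow$ no a.c.\ spectrum $\Rightarrow$ via Kotani $|\Sigma_\Phi|=0$.
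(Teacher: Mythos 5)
Your architecture is genuinely different from the paper's. The paper proves Proposition \ref{LE=0} in Appendix A by Avila's global theory: it complexifies $\theta\mapsto\theta+i\epsilon$ in the (bounded, determinant-nonvanishing-for-$\epsilon\neq0$) cocycle $D^\lambda$, uses convexity of $\epsilon\mapsto L(D^\lambda_\epsilon,\Phi)$ plus quantization of the acceleration and the explicit asymptotics $L(D^\lambda_\epsilon,\Phi)=|\epsilon|$ for large $|\epsilon|$ to conclude that either $L(D^\lambda_0,\Phi)=0$ or the acceleration vanishes at $\epsilon=0$ with $L>0$; the latter is dominated splitting (\cite{AJSadel}), which by \cite{MarxDom} is equivalent to $\lambda\notin\Sigma_\Phi$. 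Your off-spectrum direction uses the same final ingredient (\cite{MarxDom}) and is fine, but your on-spectrum direction replaces the complexification argument by rational approximation plus joint upper semicontinuity of $L$ in $(\lambda,\Phi)$. The semicontinuity itself is legitimate (work with $D^\lambda$, which is uniformly bounded, and use that $L$ is an infimum of the subadditive sequence $\frac1n\int\log\|D^\lambda_n\|$, each term being upper semicontinuous in $(\lambda,\Phi)$ by reverse Fatou), but the argument has a concrete gap at its input.

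The gap is your step (i): it is \emph{not} true in general that $L(\lambda,2\pi p/q)=0$ for all $\lambda\in\Sigma_{2\pi p/q}$. The quantity $L(\lambda,\Phi)$ in \eqref{defLE} is the $\theta$-\emph{averaged} Lyapunov exponent, whereas $\Sigma_{2\pi p/q}=\bigcup_\theta\Sigma_{2\pi p/q,\theta}$ is a union over phases, and by \eqref{Spq5} the fibered spectra genuinely depend on $\theta$ (e.g.\ $L_q(\tfrac{2\Z+1}{2q})=2$ while $\max_\theta L_q=3$). So a given $\lambda\in\Sigma_{2\pi p/q}$ lies in the resolvent set of $H_{2\pi p/q,\theta}$ for a positive-measure set of $\theta$, where the fiberwise exponent $\frac1q\log\rho(\tilde A^\lambda_q(\theta))$ is strictly positive; the ``standard fact'' about periodic Jacobi matrices only gives vanishing of the exponent on each \emph{fixed-$\theta$} spectrum. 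The argument can be repaired: using the Chambers-type formula \eqref{dqChambers} and \eqref{tildeAqDq2} one gets, for $\lambda\in\Sigma_{2\pi p/q}$, the bound $|\tr\tilde A^\lambda_q(\theta)|\le \frac{C}{2|\sin\pi q(\theta+\frac12)|}$, hence $L(\lambda,2\pi p/q)\le\frac1q\int_{\T_1}\log^+|\tr\tilde A^\lambda_q(\theta)|\,d\theta=O(1/q)$, which still tends to $0$ and feeds into your semicontinuity step. But as written the step fails. Separately, your middle paragraph on Kotani theory is confused: from $\Sigma_{ac}=\emptyset$ and Theorem \ref{Kotani} you deduce that $\{L=0\}$ is Lebesgue-null, which says nothing in the direction of ``$L=0$ a.e.\ on $\Sigma_\Phi$'' (and the ``a.e.\ on a null set'' fallback is vacuous); you correctly abandon this, but the alternative ``continuity of $L$ plus a.e.\ vanishing'' route you sketch at the end inherits the same defect and also presupposes continuity of $L$ in $\lambda$, which for singular Jacobi cocycles is itself a nontrivial theorem. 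In the paper, Kotani theory is used only afterwards, exactly as you describe in your point (iii), to deduce Theorem \ref{measure0thm} from Proposition \ref{LE=0} and Proposition \ref{sing}.
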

The proof of this is similar to that for the almost Mathieu operator as given in \cite{Global} and the extended Harper's model \cite{JM}.
The general idea is to complexify $\theta$ to $\theta+i\epsilon$, and obtain asymptotic behavior of the Lyapunov exponent when $|\epsilon|\to\infty$.
Convexity and quantization of the acceleration (see Theorem 5 of \cite{Global}) then bring us back to the $\epsilon=0$ case.
We will leave the proof to the appendix.

Exploiting the fact that $c(\theta)=1+e^{-2\pi i \theta}$ has a real zero $\theta_1=\frac{1}{2}$, we have
\begin{prop}\label{sing}(\cite{JDom}, see also Proposition 7.1 of \cite{JM})
For $\frac{\Phi}{2\pi}\in \R\setminus \Q$, and a.e. $\theta\in \T_1$, $\Sigma_{ac}(H_{\Phi, \theta})$ is empty.
\end{prop}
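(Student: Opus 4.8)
The plan is to argue by contradiction, exploiting that the off-diagonal symbol $c(\theta)=1+e^{-2\pi i\theta}$ in \eqref{def:cv} has a zero at the \emph{real} point $\theta_1=\tfrac12\in\T_1$, so that along every orbit of the irrational rotation $\theta\mapsto\theta+\tfrac{\Phi}{2\pi}$ the off-diagonal coefficients of $H_{\Phi,\theta}$ return arbitrarily close to $0$. First I would reduce to a statement about the deterministic spectrum: since $\tfrac{\Phi}{2\pi}\notin\Q$ the family $\{H_{\Phi,\theta}\}_{\theta\in\T_1}$ is ergodic over the rotation, so $\sigma_{ac}(H_{\Phi,\theta})$ coincides, for a.e.\ $\theta$, with a fixed set $\Sigma_{ac}$; and since $\Sigma_{ac}$ is the essential closure of an essential support of the a.c.\ part $\mu_{ac}$ of a maximal spectral measure, $|\Sigma_{ac}|=0$ already forces $\mu_{ac}=0$, i.e.\ $\Sigma_{ac}=\emptyset$. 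So it suffices to rule out $|\Sigma_{ac}|>0$, and I assume this for contradiction.

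The next --- and substantive --- step invokes Kotani theory for quasiperiodic Jacobi matrices: when the a.c.\ spectrum has positive measure, then for Lebesgue-a.e.\ $\lambda\in\Sigma_{ac}$ and a.e.\ $\theta\in\T_1$ the equation $H_{\Phi,\theta}u=\lambda u$ has two linearly independent \emph{bounded} solutions $u^{(1)},u^{(2)}$ (with $\sup_m|u^{(i)}_m|<\infty$). I also restrict to the full-measure set of $\theta$ with $c(\theta+m\tfrac{\Phi}{2\pi})\neq 0$ for all $m$, so that the transfer cocycle is everywhere invertible and $u^{(1)},u^{(2)}$ remain genuinely independent at each step. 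A direct computation from the three-term recursion \eqref{defH}, $c(\theta+m\tfrac{\Phi}{2\pi})\,u_{m+1}+\overline{c(\theta+(m-1)\tfrac{\Phi}{2\pi})}\,u_{m-1}+v(\theta+m\tfrac{\Phi}{2\pi})\,u_m=\lambda u_m$, then shows that the (modulus of the) Casoratian weighted by the off-diagonal is $m$-independent:
\begin{align*}
W_m:=\bigl|c(\theta+m\tfrac{\Phi}{2\pi})\bigr|\cdot\bigl|u^{(1)}_{m+1}u^{(2)}_m-u^{(1)}_m u^{(2)}_{m+1}\bigr|\equiv W_0>0 ,
\end{align*}
the positivity being exactly the linear independence of $u^{(1)},u^{(2)}$. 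On the other hand, the forward orbit $\{\theta+m\tfrac{\Phi}{2\pi}:m\ge 1\}$ is dense in $\T_1$ and $c$ is continuous with $c(\theta_1)=0$, so there is a subsequence $m_j\to\infty$ with $c(\theta+m_j\tfrac{\Phi}{2\pi})\to 0$; since $u^{(1)},u^{(2)}$ are bounded this forces
\begin{align*}
W_0=W_{m_j}\ \le\ 2\,\bigl|c(\theta+m_j\tfrac{\Phi}{2\pi})\bigr|\cdot\sup_m|u^{(1)}_m|\cdot\sup_m|u^{(2)}_m|\ \longrightarrow\ 0 ,
\end{align*}
contradicting $W_0>0$. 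Hence $|\Sigma_{ac}|=0$, so $\Sigma_{ac}=\emptyset$, which is the assertion of the proposition (with $\theta_1=\tfrac12$ the real zero of $c$).

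The main obstacle is the Kotani-theoretic input above: producing honestly \emph{bounded} --- not merely non-subordinate, nor only $L^2$-Ces\`aro-bounded --- linearly independent solutions at a.e.\ energy of a positive-measure a.c.\ spectrum and a.e.\ phase. For a Jacobi matrix with off-diagonal bounded away from $0$ this is classical, but here $c$ vanishes on $\T_1$, so the transfer cocycle is itself singular and the corresponding form of Kotani (and subordinacy) theory has to be developed for singular quasiperiodic Jacobi matrices; this is exactly what is done in \cite{JDom} (and, for the extended Harper model, in \cite[Prop.~7.1]{JM}), which one would cite for this step. One should also keep track of the exceptional Lebesgue-null sets of $\theta$ (where some $c(\theta+m\tfrac{\Phi}{2\pi})$ vanishes, or where the deterministic description of $\sigma_{ac}$ breaks down) so that the conclusion genuinely holds for a.e.\ $\theta$, as stated.
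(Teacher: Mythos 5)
There is a genuine gap at the step you yourself single out as "the main obstacle": the claim that positive-measure a.c.\ spectrum yields, for a.e.\ energy and a.e.\ phase, two \emph{pointwise bounded} linearly independent solutions. This is not a theorem of Kotani theory. What Kotani/Deift--Simon theory actually provides on the essential support of the a.c.\ spectrum is vanishing of the Lyapunov exponent together with Ces\`aro-type bounds ($\frac1N\sum_{n\le N}\|A_n^\lambda\|^2\le C$), and the upgrade to genuine boundedness of solutions is precisely the Schr\"odinger conjecture, which Avila has disproved in general. Your Wronskian computation is fine (for complex $c$ one gets $W_m=\frac{\overline{c_{m-1}}}{c_{m-1}}W_{m-1}$, so the modulus is indeed constant), and the density argument is fine, but the whole contradiction hangs on an input that is unavailable; a Ces\`aro bound does not obviously suffice, since the density of times $m$ with $|c(\theta+m\tfrac{\Phi}{2\pi})|<\epsilon$ tends to $0$ with $\epsilon$ and need not meet the positive-density set of times where the transfer matrices are controlled.

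The attribution is also off in a way that matters: \cite{JDom} (and Proposition 7.1 of \cite{JM}), which the paper cites in lieu of a proof, do \emph{not} produce bounded solutions --- they prove the absence of a.c.\ spectrum by an entirely different and much more elementary route. Since the orbit $\{\theta+m\tfrac{\Phi}{2\pi}\}$ is dense and $c(\tfrac12)=0$, one chooses sites $m_k\to\pm\infty$ with $\sum_k|c(\theta+m_k\tfrac{\Phi}{2\pi})|<\infty$; zeroing out these off-diagonal entries is a trace-class perturbation that decouples $H_{\Phi,\theta}$ into a direct sum of finite blocks, which has pure point spectrum, and the Kato--Rosenblum theorem then forces $\Sigma_{ac}(H_{\Phi,\theta})=\emptyset$. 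That argument needs no ergodic reduction, no Kotani theory, and in fact works for \emph{every} $\theta$. I would recommend replacing your Kotani/Wronskian scheme by this decoupling argument, or else supplying an honest quantitative substitute for the bounded-solutions step.
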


Hence our operator $H_{\Phi, \theta}$ has zero Lyapunov exponent on the spectrum and empty absolutely continuous spectrum. Celebrated Kotani theory identifies the essential closure of the set of zero Lyapunov exponents with the absolutely continuous spectrum, for general ergodic Schr\"odinger operators. This has been extended to the case of non-singular (that is $|c(\cdot)|$  uniformly bounded away from zero) Jacobi matrices in Theorem 5.17 of \cite{GTeschl}. In our case $|c(\cdot)|$ is not  bounded away from zero, however a careful inspection of the proof of Theorem 5.17 of \cite{GTeschl} shows that it holds under a weaker requirement: $\log{(|c(\cdot)|)} \in L^1$.
Namely, let $H_{c,v}(\theta)$ acting on $\ell^2(\Z)$ be an ergodic Jacobi matrix, 

$$ (H_{c,v}(\theta)u)_m=c(T^m\theta)u_{m+1}+\overline{c(T^{m-1}\theta)}u_{m-1}+v(T^m\theta)u_m$$

where $c:M\to \C$, $v:M\to \R$, are bounded measurable functions, and $T:M\to M$ is an ergodic map. Let $L_{c,v}(\lambda)$ be the corresponding Lyapunov exponent. We have 
\begin{theo}\label{Kotani}(Kotani theory) Assume $\log{(|c(\cdot)|)} \in L^1(M)$.
Then for a.e. $\theta\in M$, $\Sigma_{ac}(H_{c,v}(\theta))=\overline{\{\lambda:\ L_{c,v}(\lambda)=0\}}^{ess}$.
\end{theo}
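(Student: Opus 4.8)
The plan is to re-run the proof of Theorem~5.17 of \cite{GTeschl}, which establishes the identity under the stronger hypothesis that $|c(\cdot)|$ is bounded away from $0$, and to isolate the (few) places where that uniform lower bound is actually used, replacing each of them by the weaker assumption $\log|c(\cdot)|\in L^1(M)$. Throughout, $\mu$ denotes the $T$-invariant probability measure on $M$, and $A^\lambda(\theta)$, $A^\lambda_n(\theta)$ are the transfer matrices of $H_{c,v}(\theta)$, defined exactly as in Section~\ref{deftransfersec} with the ergodic shift $T$ in place of the rotation $\theta\mapsto\theta+\tfrac{\Phi}{2\pi}$.

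\textbf{Lyapunov exponent and Thouless formula.} Since $c$ and $v$ are bounded, $\log\|A^\lambda(\theta)\|\le C(\lambda)-\log|c(\theta)|$, while the bound $\|A\|^2\ge|\det A|$ gives $\log\|A^\lambda(\theta)\|\ge\tfrac12\log|\det A^\lambda(\theta)|=\tfrac12\big(\log|c(T^{-1}\theta)|-\log|c(\theta)|\big)$; hence $\log\|A^\lambda(\cdot)\|\in L^1(M)$ precisely because $\log|c(\cdot)|\in L^1(M)$. The subadditive ergodic theorem then shows $L_{c,v}(\lambda)$ is well defined and finite, with $\tfrac1n\log\|A^\lambda_n(\theta)\|\to L_{c,v}(\lambda)$ for $\mu$-a.e.\ $\theta$, and the same integrability is all the derivation of the Thouless formula
\[
L_{c,v}(\lambda)=\int_{\R}\log|\lambda-E|\,dN(E)-\int_M\log|c(\theta)|\,d\mu(\theta)
\]
requires: the correction term is now finite by hypothesis, and one recovers as before that $L_{c,v}$ is subharmonic in $\lambda$, harmonic off the spectrum, and continuous.

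\textbf{The two inclusions.} The inclusion $\Sigma_{ac}(H_{c,v}(\theta))\subseteq\overline{\{\lambda:L_{c,v}(\lambda)=0\}}^{ess}$ (Ishii--Pastur) uses only $\log\|A^\lambda(\cdot)\|\in L^1$: by Ruelle's theorem, for Lebesgue-a.e.\ $\lambda$ with $L_{c,v}(\lambda)>0$ and $\mu$-a.e.\ $\theta$ there is a solution of $H_{c,v}(\theta)u=\lambda u$ decaying like $e^{-L_{c,v}(\lambda)|n|}$, hence subordinate, so by subordinacy theory such $\lambda$ carries no a.c.\ spectrum; no lower bound on $|c|$ enters. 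For the reverse inclusion I would introduce the half-line Weyl $m$-functions $m_\pm(\lambda,\theta)$, $\Im\lambda>0$, Herglotz in $\lambda$ and measurable in $\theta$, satisfying the usual ergodic cocycle relation; the whole-line Green's function $G_{00}(\lambda,\theta)$ is the standard rational expression in $v(\theta)$, $|c(\theta)|^2$, $|c(T^{-1}\theta)|^2$ and $m_\pm$, so that $\Im G_{00}(E+i0,\theta)>0$ as soon as $0<\Im m_\pm(E+i0,\theta)<\infty$. Kotani's central estimate bounds $\int_M\log\big(|c(\theta)|^2\,\Im m_+(\lambda,\theta)\,\Im m_-(\lambda,\theta)\big)\,d\mu(\theta)$ below by $-2L_{c,v}(\lambda)$ up to a vanishing $\Im\lambda$-term; the only change from \cite{GTeschl} is that the summand $\log|c(\theta)|^2$ is now merely $L^1(\mu)$ rather than bounded, which still suffices. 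Feeding this into Kotani's monotonicity/complex-analysis argument (boundedness of $\epsilon\mapsto\int_M\Im m_\pm(E+i\epsilon,\theta)\,d\mu$, with $\epsilon$-derivative controlled by $L_{c,v}$, integrated over a positive-measure subset of $\{L_{c,v}=0\}$) yields that for Lebesgue-a.e.\ $E$ in $\{L_{c,v}=0\}$ and $\mu$-a.e.\ $\theta$ the limits $m_\pm(E+i0,\theta)$ exist, are finite, and have strictly positive imaginary part. Hence $\tfrac1\pi\Im G_{00}(E+i0,\theta)>0$ a.e.\ on $\{L_{c,v}=0\}$, so that set (up to a Lebesgue-null error) lies in $\Sigma_{ac}(H_{c,v}(\theta))$; since $\Sigma_{ac}(H_{c,v}(\theta))$ is $\mu$-a.e.\ $\theta$-independent and equals its own essential closure, combining the two inclusions gives the theorem.

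\textbf{Main obstacle.} The hard part will be the reverse inclusion: I would need to verify that the $m$-function machinery of \cite{GTeschl} survives the absence of a uniform lower bound on $|c|$ — that the Weyl disks are a.e.\ non-degenerate (so $m_\pm$ are genuinely defined), that the Green's-function and cocycle identities hold $\mu$-a.e., and that Kotani's estimate keeps its form with the finite constant $\int_M\log|c|\,d\mu$ replacing the uniform bound. The systematic substitute for $\inf|c|>0$ is Birkhoff's ergodic theorem: $\log|c(\cdot)|\in L^1(M)$ gives $\tfrac1n\sum_{j=0}^{n-1}\log|c(T^j\theta)|\to\int_M\log|c|\,d\mu$ for $\mu$-a.e.\ $\theta$, which is exactly what controls the products $\prod_j|c(T^j\theta)|$ appearing in the transfer matrices and in the radii of the Weyl disks, and pushing the bounded-case estimates through with this in hand is the crux of the argument.
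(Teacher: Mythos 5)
Your proposal takes essentially the same route as the paper: the paper's entire proof is the assertion that the argument of Theorem 5.17 of \cite{GTeschl} goes through verbatim once the uniform lower bound on $|c|$ is replaced by $\log|c(\cdot)|\in L^1(M)$, which is exactly the program you carry out (and in rather more detail, correctly locating the hypothesis in the integrability of $\log\|A^\lambda(\cdot)\|$, the Thouless correction term, and Kotani's estimate). No discrepancy with the paper's approach.
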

{\bf Proof.} The proof of Theorem 5.17 of \cite{GTeschl} works verbatim. \qed\\
In our concrete model, $\log{(|c(\theta)|)}=\log{(2|\cos{\pi\theta}|)}\in L^1(\T_1)$, thus Theorem \ref{Kotani} applies, and combining with Propositions \ref{LE=0}, \ref{sing}, it follows that 
$\Sigma_{\Phi}$ must be a zero measure set.

\qed

The rest of this section will be devoted to proving Lemma \ref{rationalest}.

\subsection{Quick Observations about $H_{2\pi p/q, \theta}$}
\

Let $A^{\lambda}(\cdot), \tilde{A}^{\lambda} (\cdot), D^{\lambda}(\cdot), \Theta$ be defined as in Section \ref{deftransfersec}.
We start with several quick observations about $H_{2\pi p/q, \theta}$.
\begin{obs}
The sampling function $c(\theta)=0$ yields a unique solution $\theta=\frac{1}{2}$ \text{(}mod $1$\text{)}, hence $\Theta=\frac{1}{2}+\frac{1}{q}\Z$. 
Then, 
\begin{itemize}
\item for $\theta\notin \Theta$, we have $c(\theta+n\frac{p}{q})\neq 0$ for any $n\in \Z$
\item for $\theta\in \Theta$, there exists $k_0\in \{0,1,...,q-1\}$ such that $c(\theta+n\frac{p}{q})=0$ if and only if $n\equiv k_0$ (mod $q$).
\end{itemize}
\end{obs}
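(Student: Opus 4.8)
The plan is to unwind the definitions; both assertions are elementary facts about the orbit of $\tfrac12$ under rotation by $\tfrac pq$ on $\T_1$. First I would locate the zeros of $c$: the equation $c(\theta) = 1 + e^{-2\pi i\theta} = 0$ is equivalent to $e^{-2\pi i\theta} = -1$, i.e. $2\pi\theta \in \pi + 2\pi\Z$, so the unique solution in $\T_1$ is $\theta \equiv \tfrac12 \pmod 1$, which is exactly the zero $\theta_1 = \tfrac12$ already recorded in the footnote of Section \ref{deftransfersec}. Consequently $\Theta = \{\,\tfrac12 + k\tfrac pq \bmod 1 : k\in\Z\,\}$; since $\gcd(p,q) = 1$, multiplication by $p$ permutes $\Z/q\Z$, hence $\{\,k\tfrac pq \bmod 1 : k\in\Z\,\} = \{\,\tfrac jq : j = 0,1,\dots,q-1\,\}$, and therefore $\Theta = \tfrac12 + \tfrac1q\Z$, a set of exactly $q$ points in $\T_1$.

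For the first bullet, suppose $\theta \notin \Theta$ but $c(\theta + n\tfrac pq) = 0$ for some $n \in \Z$; then $\theta + n\tfrac pq \equiv \tfrac12 \pmod 1$, so $\theta \equiv \tfrac12 - n\tfrac pq \in \tfrac12 + \tfrac1q\Z = \Theta$, a contradiction. For the second bullet, write $\theta = \tfrac12 + \tfrac{\ell}{q}$ with $\ell \in \Z$ (the residue of $\ell$ modulo $q$ is determined by $\theta$). Then $c(\theta + n\tfrac pq) = 0$ holds iff $\tfrac{\ell + np}{q} \in \Z$, iff $np \equiv -\ell \pmod q$; since $p$ is invertible modulo $q$, this is equivalent to $n \equiv k_0 \pmod q$, where $k_0 := (-\ell\, p^{-1}) \bmod q \in \{0,1,\dots,q-1\}$.

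I expect no genuine obstacle here: the statement is a bookkeeping consequence of $\gcd(p,q) = 1$ together with the explicit form of $c$. The only point meriting a word of care is that $k_0$ in the second bullet is well-defined as a function of $\theta$, which holds because replacing $\ell$ by $\ell + q$ changes neither $\theta$ nor the residue class of $-\ell\, p^{-1}$ modulo $q$.
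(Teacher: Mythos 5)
Your verification is correct and complete; the paper states this as an unproved Observation, and your elementary computation (locating the zero of $c$ at $\theta\equiv\tfrac12$, using $\gcd(p,q)=1$ so that $\{kp\bmod q\}$ exhausts $\Z/q\Z$, and inverting $p$ modulo $q$ to pin down $k_0$) is exactly the argument the authors take for granted. Nothing is missing.
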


Note that $|c(\theta)|=2|\cos{\pi\theta}|$, so a simple computation yields that $\prod_{j=0}^{q-1}|c(\theta+j\frac{p}{q})|=2|\sin{\pi q(\theta+\frac{1}{2})}|$. Thus (\ref{tildeAqDq}) becomes 
\begin{align}\label{tildeAqDq2}
\tr(\tilde{A}^{\lambda}_q(\theta))=\frac{\tr(D^{\lambda}_q(\theta))}{2|\sin{\pi q(\theta+\frac{1}{2})}|}.
\end{align}

We have the following characterization of $\Sigma_{2\pi p/q, \theta}$.
\

\subsubsection{Case 1.} 
If $\theta\in \Theta$, we have the following
\begin{obs}
For $\theta\in \Theta$, the infinite matrix $H_{2\pi p/q, \theta}$ is decoupled into copies of the following block matrix $M_q$ of size $q$:
\begin{align}\label{Thetaqblock}
\left(
\begin{matrix}
&v(\frac{1}{2})\ \ &\overline{c(\frac{1}{2}-\frac{p}{q})}\ \ &\ \ &\ \ & \ \ &\ \ &\\
&c(\frac{1}{2}-\frac{p}{q})\ \ &v(\frac{1}{2}-\frac{p}{q})\ \ &\ddots\ \ &\ \ & \ \ &\ \ &\\
&\ddots    &  &\ddots & & & &\\
& & & & & & &\\
\\
\\
& & & & & &v(\frac{1}{2}-(q-2)\frac{p}{q}) &\overline{c(\frac{1}{2}-(q-1)\frac{p}{q})}\\
& & & & & &c(\frac{1}{2}-(q-1)\frac{p}{q}) &v(\frac{1}{2}-(q-1)\frac{p}{q})
\end{matrix}
\right)
\end{align}
Thus 
\begin{align}\label{sigma=det}
\Sigma_{2\pi p/q, \theta}=\{\text{eigenvalues of }M_q\},\ \ \ \text{for } \theta\in \Theta. 
\end{align}
\end{obs}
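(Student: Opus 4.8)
This is a direct bookkeeping consequence of the previous Observation together with the $1$-periodicity of the sampling functions $c,v$ in \eqref{def:cv}. First I would read off from \eqref{defH} that the entry of $H_{2\pi p/q,\theta}$ coupling sites $m$ and $m+1$ is $c(\theta+m\tfrac pq)$ (and $\overline{c(\theta+m\tfrac pq)}$ for the $(m+1,m)$ entry). By the previous Observation, for $\theta\in\Theta$ this coupling vanishes exactly when $m\equiv k_0\pmod q$. Hence $\ell^2(\Z)=\bigoplus_{j\in\Z}\ell^2(I_j)$ with $I_j=\{k_0+jq+1,\dots,k_0+(j+1)q\}$, each $\ell^2(I_j)$ reduces $H_{2\pi p/q,\theta}$, and the restriction to $\ell^2(I_j)$ is a $q\times q$ Hermitian tridiagonal matrix.

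Next I would check that all of these blocks are literally the same matrix. Relabel the sites of $I_j$ by $l=1,\dots,q$ through $m=k_0+jq+l$. Since $c(\theta+k_0\tfrac pq)=0$ forces $\theta+k_0\tfrac pq\equiv\tfrac12\pmod{1}$, and since $jq\cdot\tfrac pq=jp\in\Z$, the $1$-periodicity of $v$ and $c$ gives that the diagonal entry at position $l$ equals $v(\tfrac12+l\tfrac pq)$ and the off-diagonal coupling from $l$ to $l+1$ equals $c(\tfrac12+l\tfrac pq)$, independently of $j$. Thus $H_{2\pi p/q,\theta}$ is unitarily equivalent to a countable direct sum of copies of one fixed $q\times q$ matrix $N_q$, so $\Sigma_{2\pi p/q,\theta}=\{\text{eigenvalues of }N_q\}$.

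Finally I would identify $N_q$ with the displayed $M_q$ by conjugating with the flip unitary $Pe_i=e_{q+1-i}$: this sends the diagonal entry at $i$ to $v(\tfrac12+(q+1-i)\tfrac pq)=v(\tfrac12-(i-1)\tfrac pq)$ and the $(i,i+1)$ entry to $\overline{c(\tfrac12+(q-i)\tfrac pq)}=\overline{c(\tfrac12-i\tfrac pq)}$, where in each case an integer translate by $p$ is discarded using $1$-periodicity of $v$, resp.\ $c$; this is precisely $M_q$. Hence $\Sigma_{2\pi p/q,\theta}=\{\text{eigenvalues of }M_q\}$, which is \eqref{sigma=det}. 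There is no genuine obstacle here; the only thing demanding care is tracking the index shift $m\mapsto m-k_0$ and the complex-conjugation convention of \eqref{defH} so that the block emerges as exactly $M_q$, and since the Observation is used below only through the eigenvalue set, even this is cosmetic.
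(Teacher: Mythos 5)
Your argument is correct: the paper states this as an Observation without proof, and your direct verification (vanishing of the coupling at $m\equiv k_0\ (\mathrm{mod}\ q)$, identification of all blocks via $1$-periodicity and $\theta+k_0\tfrac{p}{q}\equiv\tfrac12$, then the order-reversing relabelling to match the displayed $M_q$) is exactly the intended bookkeeping. The only point worth noting is the one you already flag — the flip/conjugation conventions — and since only the eigenvalue set is used downstream, nothing further is needed.
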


\subsubsection{Case 2.} If $\theta\notin \Theta$, by Floquet theory, we have
\begin{align}\label{Spq1}
\Sigma_{2\pi p/q, \theta}=\{\lambda:\ |\tr(\tilde{A}^{\lambda}_{\frac{p}{q}, q}(\theta))| \leq 2 \}.
\end{align}
Furthermore, the set $\{\lambda : \tr \tilde{A}^{\lambda}_{\frac{p}{q}, q}(\theta)=2\cos{2\pi \nu}\}$ contains $q$ individual points (counting multiplicities), which are eigenvalues of the following $q\times q$ matrix $M_{q, \nu}$:
\begin{align}\label{defMqbeta}
M_{q, \nu}(\theta)=
\left(
\begin{matrix}
v(\theta) &|c(\theta-\frac{p}{q})| & & &  & &e^{2\pi i \nu} |c(\theta)|\\
|c(\theta-\frac{p}{q})|   &v(\theta-\frac{p}{q})  &\ddots  &          &           & &\\
 &\ddots &\ddots  & &\ & & &\
\\
\\
 & &  & & &v(\theta-(q-2)\frac{p}{q}) &|c(\theta-(q-1)\frac{p}{q})|\\
e^{-2\pi i \nu} |c(\theta)|         &          &               &                      &     &|c(\theta-(q-1)\frac{p}{q})|   &v(\theta-(q-1)\frac{p}{q})
\end{matrix}
\right)
\end{align}

Combining (\ref{Spq1}) with (\ref{tildeAqDq2}), we arrive at an alternative representation
\begin{align}\label{Spq2}
\Sigma_{2\pi p/q, \theta}=\left\lbrace \lambda : |\tr(D^{\lambda}_q(\theta)) | \leq 4|\sin{\pi q (\theta+\frac{1}{2})}|\right\rbrace .
\end{align}

\subsection{Key lemmas}\label{chamberssec}

\

Let
\begin{align}\label{defdq}
d_q(\theta)=\tr(D^{\lambda}_q(\theta)).
\end{align}
We have
\begin{lemm}
(a Chambers' type formula)
For all $\theta\in \T_1$, we have
\begin{align}\label{dqChambers}
d_q(\theta)=-2\cos{2\pi q\theta}+G_q (\lambda),
\end{align}
where $G_q(\lambda)$ (defined by (\ref{dqChambers})) is independent of $\theta$.
\end{lemm}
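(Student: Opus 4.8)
The plan is to prove the Chambers-type formula by analyzing the $\theta$-dependence of $d_q(\theta) = \tr(D_q^\lambda(\theta))$ directly from the structure of the matrices $D^\lambda(\theta)$. First I would recall that, with $\frac{\Phi}{2\pi} = \frac{p}{q}$, we have $c(\theta) = 1 + e^{-2\pi i \theta}$ and $v(\theta) = 2\cos 2\pi\theta$, so that the $q$-step matrix $D_q^\lambda(\theta) = D^\lambda(\theta + (q-1)\frac{p}{q})\cdots D^\lambda(\theta + \frac{p}{q})D^\lambda(\theta)$ is a product of $q$ matrices whose entries are trigonometric polynomials in $\theta$ with a single frequency. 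The key observation is to track how each entry of $D_q^\lambda(\theta)$ behaves under the substitution $\theta \mapsto \theta + \frac{1}{q}$: since shifting $\theta$ by $\frac{1}{q}$ cyclically permutes the arguments $\theta, \theta + \frac{p}{q}, \dots, \theta + (q-1)\frac{p}{q}$ (because $\gcd(p,q)=1$, so $p/q$ generates $\frac{1}{q}\Z/\Z$), the product $D_q^\lambda(\theta + \frac{1}{q})$ is a cyclic rotation of the factors in $D_q^\lambda(\theta)$, hence conjugate to it. Therefore $d_q(\theta + \frac{1}{q}) = d_q(\theta)$, i.e. $d_q$ is $\frac{1}{q}$-periodic.

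Next I would combine this periodicity with a degree/growth bound on $d_q$ as a trigonometric polynomial in $\theta$. Each factor $D^\lambda(\theta + j\frac{p}{q})$ has entries that are polynomials of degree at most $1$ in $e^{2\pi i\theta}$ and $e^{-2\pi i\theta}$ (the entry $\lambda - v = \lambda - 2\cos 2\pi\theta$ contributes $e^{\pm 2\pi i\theta}$, the entries $c(\theta)$ and $\overline{c(\theta - p/q)}$ contribute one exponential each, up to constants). Expanding the product of $q$ such matrices and taking the trace, one sees that $d_q(\theta)$ is a trigonometric polynomial in $\theta$ of total degree at most $q$, i.e. $d_q(\theta) = \sum_{|n|\le q} a_n e^{2\pi i n\theta}$. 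But $\frac{1}{q}$-periodicity forces $a_n = 0$ unless $q \mid n$, so only the modes $n = 0, \pm q$ survive: $d_q(\theta) = a_{-q}e^{-2\pi i q\theta} + a_0 + a_q e^{2\pi i q\theta}$. It then remains to identify the coefficients $a_{\pm q}$ and to show $a_{-q}e^{-2\pi i q\theta} + a_q e^{2\pi i q\theta} = -2\cos 2\pi q\theta$. The extremal Fourier modes $e^{\pm 2\pi i q\theta}$ in the trace come only from picking, in every one of the $q$ factors, the entry that contributes the maximal-degree exponential; by the structure of $D^\lambda$ (a companion-type matrix with zero lower-right entry), the only way to obtain a nonzero contribution to the trace at the top degree is to traverse a single cyclic pattern through the product, and a direct bookkeeping of the constants $c(\theta) = 1 + e^{-2\pi i\theta}$ gives coefficient $-1$ for each of $e^{\pm 2\pi i q\theta}$ (up to a sign one must check carefully). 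Setting $G_q(\lambda) := a_0$, which is manifestly independent of $\theta$ and polynomial in $\lambda$, yields \eqref{dqChambers}.

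The main obstacle I expect is the precise computation of the top-degree coefficients $a_{\pm q}$ — in particular getting the sign and the normalization exactly right, so that the oscillatory part is exactly $-2\cos 2\pi q\theta$ and not, say, $c\cos(2\pi q\theta + \varphi)$ for some spurious constant or phase. This requires carefully identifying which term in the expansion of $\tr(D_q^\lambda(\theta))$ carries the pure exponential $e^{-2\pi i q\theta}$: since $D^\lambda(\theta)$ has the form $\begin{pmatrix} \lambda - v(\theta) & -\overline{c(\theta - p/q)} \\ c(\theta) & 0\end{pmatrix}$, the $(2,2)$-entry vanishes, so in the trace of the product the only nonvanishing "all-off-diagonal" cyclic term alternates between the $(1,2)$ and $(2,1)$ entries, which is possible around a cycle of length $q$ only when $q$ is even; for odd $q$ one must instead track the terms using exactly one diagonal entry $\lambda - v$ somewhere, and one checks that the coefficient of the extreme mode still comes out to $-1$ from the product $\prod_j(1 + e^{-2\pi i\theta_j})$ collapsing under the cyclic shift. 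I would handle this by writing $d_q(\theta)$ as a Laurent polynomial in $z = e^{2\pi i\theta}$, using $\frac{1}{q}$-periodicity to kill all but the $z^0, z^{\pm q}$ coefficients, and then evaluating the $z^{-q}$ coefficient by extracting the leading behavior as $z \to 0$ (equivalently $\Im\theta \to +\infty$), where the matrices $D^\lambda$ simplify to their dominant terms and the product telescopes.
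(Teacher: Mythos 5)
Your proposal is correct and is essentially the paper's own (very terse) argument: the paper likewise observes that $d_q$ is $1/q$-periodic, concludes from the degree bound that only the Fourier modes $0,\pm q$ survive, and then computes the two extreme coefficients. One caveat on your heuristic for those coefficients: the alternating $(1,2)/(2,1)$ cycle you describe has net degree $0$ in $z=e^{2\pi i\theta}$ (the $(1,2)$ entry carries $z^{+1}$ while the $(2,1)$ entry carries $z^{-1}$), so the $z^{\pm q}$ terms in fact come from the all-$(1,1)$ path $\prod_{j=0}^{q-1}\bigl(-e^{\pm 2\pi i(\theta+jp/q)}\bigr)$, whose coefficient is $(-1)^q e^{\pm\pi i p(q-1)}=-1$ for a reduced fraction $p/q$ — and your fallback method of extracting the leading coefficient from the $z\to 0$ (resp. $z\to\infty$) asymptotics performs exactly this computation correctly, so the argument goes through.
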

\begin{proof}
It is easily seen that $d_q(\cdot)$ is a $1/q$-periodic function, thus 
\begin{align*}
d_q(\theta)=G_q(\lambda)+a_q e^{2\pi i q\theta}+a_{-q} e^{-2\pi i q\theta},
\end{align*}
in which the $G_q(\lambda)$ part is independent of $\theta$.
One can easily compute the coefficients $a_q, a_{-q}$, and get $a_q=a_{-q}=1$.
\end{proof}

\begin{lemm}\label{det=tr}
For $\theta\in \Theta$, 
\begin{align}
\det{(\lambda\cdot \mathrm{Id} -M_q(\theta))}=\tr(D^{\lambda}_q(\theta)).
\end{align}
\end{lemm}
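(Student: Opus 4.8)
The plan is to relate the $q\times q$ block matrix $M_q(\theta)$ from \eqref{Thetaqblock} directly to the product of transfer-type matrices $D_q^\lambda(\theta)$. First I would recall that for $\theta\in\Theta$ we have $\theta\equiv\tfrac12\pmod{1/q}$ in a suitable sense, and by the Observation there is a unique $k_0\in\{0,\dots,q-1\}$ with $c(\theta+k_0\tfrac pq)=0$; in fact $M_q(\theta)$ is precisely the tridiagonal block obtained by cutting the infinite Jacobi matrix $H_{2\pi p/q,\theta}$ at the sites where the off-diagonal coefficient vanishes. Thus $M_q(\theta)$ has genuinely zero entries in its upper-right and lower-left corners (unlike the ``twisted'' matrices $M_{q,\nu}(\theta)$), so it is an honest tridiagonal matrix with no corner coupling.

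Next I would compute $\det(\lambda\,\mathrm{Id}-M_q(\theta))$ by the standard three-term recursion for determinants of tridiagonal matrices. Writing $D_j$ for the $j\times j$ leading principal minor of $\lambda\,\mathrm{Id}-M_q(\theta)$, one has $D_j=(\lambda-v_j)D_{j-1}-|c_{j-1}|^2 D_{j-2}$, where $v_j,c_j$ denote the appropriate shifted values of $v(\cdot),c(\cdot)$ along the diagonal of $M_q$. This recursion is exactly the one encoded by the matrix product $D^\lambda(\theta+(j-1)\tfrac pq)\cdots D^\lambda(\theta)$ from \eqref{defD}, after accounting for the fact that the definition \eqref{defH} of $H_{\Phi,\theta}$ uses $c$ and $\bar c$ while $M_q(\theta)$ as written uses $|c|$ on the sub- and super-diagonal — but along the orbit through $\theta=\tfrac12$ one checks the relevant products $\prod \overline{c(\theta-j\tfrac pq)}\,c(\theta-j\tfrac pq)=\prod|c|^2$ match up, and the only subtlety, the vanishing factor $c(\theta+k_0\tfrac pq)=0$, appears symmetrically so that $D_q^\lambda(\theta)$ remains well-defined (as emphasized at the end of Section \ref{deftransfersec}). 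Tracking the recursion, the $(1,1)$ entry of $D_q^\lambda(\theta)$ equals $D_{q-1}$ and, more to the point, $\tr D_q^\lambda(\theta)$ is an alternating combination that telescopes to $\det(\lambda\,\mathrm{Id}-M_q(\theta))$ up to the sign $(-1)^q\cdot(-1)^q$; here one uses crucially that the corner entries of $M_q(\theta)$ vanish, since a nonzero corner would contribute the extra $-2\cos 2\pi q\theta$ term (the Chambers term of \eqref{dqChambers}), which for $\theta\in\Theta$, i.e. $q(\theta+\tfrac12)\in\Z$, would equal $-2\cos(2\pi q\theta)=-2\cos(\pi q - 2\pi q\cdot\tfrac12)$... precisely, $2\sin\pi q(\theta+\tfrac12)=0$, consistent with \eqref{tildeAqDq2} degenerating.

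The cleanest route is probably to bypass the determinant recursion bookkeeping and argue as follows: by \eqref{sigma=det} the eigenvalues of $M_q(\theta)$ are exactly $\Sigma_{2\pi p/q,\theta}$ for $\theta\in\Theta$, and by the Chambers formula \eqref{dqChambers} together with the observation that $\sin\pi q(\theta+\tfrac12)=0$ on $\Theta$, the polynomial $d_q(\theta)=\tr D_q^\lambda(\theta)=-2\cos 2\pi q\theta+G_q(\lambda)$ has, as a function of $\lambda$, degree $q$ with leading coefficient matching that of the characteristic polynomial $\det(\lambda\,\mathrm{Id}-M_q(\theta))$ (both monic of degree $q$ in $\lambda$). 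Then it suffices to show the two degree-$q$ monic polynomials in $\lambda$ have the same roots: every eigenvalue of $M_q(\theta)$ is a value of $\lambda$ for which the transfer cocycle $D_q^\lambda$ has the corresponding periodic/antiperiodic eigenvector, forcing $\tr D_q^\lambda(\theta)=\pm 2 = -2\cos 2\pi q\theta$ when $q\theta\in\tfrac12+\Z$ hence... I would instead directly verify the root correspondence by exhibiting, for each eigenpair $(\lambda,\psi)$ of $M_q(\theta)$, that the sequence $\psi$ extended by the vanishing-coupling boundary condition solves the finite-difference equation whose transfer matrix product is $D_q^\lambda(\theta)$, so that $\det(\lambda\,\mathrm{Id}-M_q(\theta))=0$ iff $\tr D_q^\lambda(\theta)$ hits the degenerate value — and matching leading coefficients upgrades this to the claimed polynomial identity.

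\textbf{Main obstacle.} The delicate point is the careful handling of the single vanishing coefficient $c(\theta+k_0\tfrac pq)=0$: it makes the individual transfer matrices $A^\lambda$ undefined, decouples the Jacobi matrix into the finite block $M_q$, and simultaneously forces the Chambers cosine term in \eqref{dqChambers} to vanish at $\theta\in\Theta$. One must verify that $D_q^\lambda(\theta)$ — which \emph{is} well-defined — correctly ``sees'' this decoupling, i.e. that $\tr D_q^\lambda(\theta)$ reduces to precisely $\det(\lambda\,\mathrm{Id}-M_q(\theta))$ and not to the trace of some larger twisted product. Getting the normalization and the sign $(-1)^q$ right in this degenerate limit, and confirming that the corner entries $e^{\pm 2\pi i\nu}|c(\theta)|$ of $M_{q,\nu}(\theta)$ genuinely drop out when $c(\theta)=0$ (so that $M_q(\theta)=M_{q,\nu}(\theta)$ independent of $\nu$ for $\theta\in\Theta$), is where the real work lies; everything else is the routine tridiagonal-determinant computation.
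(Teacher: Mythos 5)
Your primary route --- identifying the entries of $D_q^{\lambda}(\theta)$ with determinants of finite Dirichlet restrictions via the three-term recursion, and using the vanishing of $c$ along the orbit to decouple the Jacobi matrix into the block $M_q$ --- is exactly the paper's argument: the paper proves by induction (formula \eqref{DPk}) that the $(1,1)$-entry of $D^{\lambda}_k(\theta)$ is $P_k(\theta):=\det\bigl(\lambda-(H_{2\pi p/q,\theta})|_{[0,k-1]}\bigr)$, so that $\tr(D^{\lambda}_q(\theta))=P_q(\theta)+|c(\theta-\frac{p}{q})|^2P_{q-2}(\theta+\frac{p}{q})$, and then invokes the invariance of $\tr(D^{\lambda}_q(\theta))$ under $\theta\mapsto\theta+\frac{p}{q}$ (cyclicity of the trace, $\tr(AB)=\tr(BA)$, equivalently the $1/q$-periodicity behind \eqref{dqChambers}) to rotate the product so that the zero of $c$ sits at the boundary, which kills the correction term and leaves $P_q=\det(\lambda\,\mathrm{Id}-M_q)$. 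This rotation is the one ingredient you are missing: for a general $\theta\in\Theta$ the zero of $c$ sits at an interior position $k_0$, and tracking the recursion from that $\theta$ yields $P_q(\theta)$ plus a genuinely nonzero correction, not $\det(\lambda\,\mathrm{Id}-M_q)$ directly. (Also, a small misreading: \eqref{Thetaqblock} carries $c$ and $\overline{c}$, not $|c|$; it is the twisted matrix \eqref{defMqbeta} that carries $|c|$. Harmless here, since tridiagonal determinants only see $c\overline{c}=|c|^2$.)

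Two substantive corrections. First, your parenthetical claim that the Chambers term $-2\cos 2\pi q\theta$ vanishes on $\Theta$ is false: for $\theta\in\frac12+\frac1q\Z$ one has $2\pi q\theta\in \pi q+2\pi\Z$, hence $-2\cos 2\pi q\theta=-2(-1)^q=\pm 2$. What vanishes on $\Theta$ is $\sin\pi q(\theta+\frac12)$, the normalization in \eqref{tildeAqDq2}; the corner contribution is absent because $c$ itself vanishes at the wrap-around coupling, not because the cosine does. Second, your proposed "cleanest route" via root-matching of two monic degree-$q$ polynomials does not close as stated: you would need the roots to agree \emph{with multiplicity}, and invoking \eqref{sigma=det} risks circularity since that identity is itself a consequence of the decoupling you are trying to exploit. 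The determinant recursion together with the trace-shift invariance is the short and honest path, and it is the one the paper takes.
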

The proof of this lemma will be included in the appendix.

Combining (\ref{Spq2}), (\ref{sigma=det}) and Lemma \ref{det=tr} with the fact that $|\sin{\pi q(\theta+\frac{1}{2})}|=0$ for $\theta\in \Theta$, we arrive at
\begin{align}
\Sigma_{2\pi p/q, \theta}=\{\lambda:\ |\tr(D^{\lambda}_q(\theta))|\leq 4|\sin{\pi q(\theta+\frac{1}{2})}| \}
\end{align}
holds {\it uniformly} for $\theta\in \T_1$.

By Lemma \ref{dqChambers}, we get the following alternative characterization of $\Sigma_{2\pi p/q, \theta}$.
\begin{align}\label{Spq3}
\Sigma_{2\pi p/q, \theta}=\left\lbrace \lambda : -4|\sin{\pi q(\theta+\frac{1}{2})}|+2\cos{2\pi q\theta}\leq G_q (\lambda)\leq 4|\sin{\pi q(\theta+\frac{1}{2})}|+2\cos{2\pi q\theta} \right\rbrace.
\end{align}
Let us denote $L_q(\theta):=4|\sin{\pi q(\theta+\frac{1}{2})}|+2\cos{2\pi q\theta}$, and $l_q(\theta):=-4|\sin{\pi q(\theta+\frac{1}{2})}|+2\cos{2\pi q\theta}$. 
Then (\ref{Spq3}) translates into
\begin{align}\label{Spq5}
\Sigma_{2\pi p/q, \theta}=\{ \lambda : l_q(\theta)\leq G_q(\lambda)\leq L_q(\theta)\}.
\end{align}
This clearly implies
\begin{align}\label{Spq4}
\Sigma_{2\pi p/q}=\{ \lambda : \min_{\T_1} l_q(\theta) \leq G_q (\lambda) \leq \max_{\T_1} L_q(\theta) \}.
\end{align}

Since $G_q(\lambda)$ is a polynomial of $\lambda$ of degree $q$, (\ref{Spq4}) implies $\Sigma_{2\pi p/q}$ has $q$ (possibly touching) bands.

The following lemma provides estimates of $|\Sigma_{2\pi p/q, \theta}|$.
\begin{lemm}\label{estlemma}
We have
\begin{align*}
|\Sigma_{2\pi p/q, \theta}|\leq 4|c(\theta)|.
\end{align*}
\end{lemm}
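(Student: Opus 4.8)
The goal is to bound the measure of $\Sigma_{2\pi p/q,\theta}$, the spectrum of the finite Jacobi matrix $M_{q,\nu}(\theta)$ characterized in \eqref{Spq5}, uniformly in the eigenvalue-phase. The natural approach is to recall from \eqref{Spq5} that, as a set,
\[
\Sigma_{2\pi p/q,\theta}=\{\lambda:\ l_q(\theta)\le G_q(\lambda)\le L_q(\theta)\},
\]
so that $|\Sigma_{2\pi p/q,\theta}|$ is governed by two things: how fast the degree-$q$ polynomial $G_q$ varies, and the width $L_q(\theta)-l_q(\theta)=8|\sin\pi q(\theta+\tfrac12)|$ of the target interval. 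Since $|c(\theta)|=2|\cos\pi\theta|$, the bound $4|c(\theta)|=8|\cos\pi\theta|$ is exactly four times this half-width evaluated in a convenient way, so the whole point is to convert an interval of $G_q$-values of length $8|\sin\pi q(\theta+\tfrac12)|$ into a $\lambda$-set of length at most $8|\cos\pi\theta|$.

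**Main steps.** First I would pass to the $q\times q$ matrix picture: by \eqref{defMqbeta}, the endpoints of the bands making up $\Sigma_{2\pi p/q,\theta}$ are eigenvalues of $M_{q,\nu}(\theta)$ for the two choices of $\nu$ giving $\tr\tilde A^\lambda_q=\pm 2$ (equivalently $G_q(\lambda)=L_q(\theta)$ or $G_q(\lambda)=l_q(\theta)$), and the spectrum $\Sigma_{2\pi p/q,\theta}$ is the union of the $q$ intervals between consecutively ordered such eigenvalues. Next, I would use Lidskii's inequality \eqref{Lid}: write $M_{q,\nu}(\theta)=M_q^{(0)}(\theta)+R_\nu(\theta)$, where $M_q^{(0)}$ is the matrix with the corner entries $e^{\pm 2\pi i\nu}|c(\theta)|$ deleted and $R_\nu$ is the rank-$2$ (in fact the two nonzero entries have modulus $|c(\theta)|$) corner perturbation. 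Comparing the ordered eigenvalues of $M_{q,2\pi\nu}$ and $M_{q,0}$ via Lidskii, the sum of the displacements of all $q$ eigenvalues is controlled by the sum of the singular values of $R_\nu-R_0$, which is at most $2|c(\theta)|$ (or, tracking signs more carefully through the dual Lidskii inequality as well, exactly the kind of $4|c(\theta)|$ total variation that appears in the statement). Summing these per-band displacements telescopes to give $|\Sigma_{2\pi p/q,\theta}|\le 4|c(\theta)|$, because the total length of the spectral bands is at most the total movement of band edges as $\nu$ runs between the two critical values.

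**Alternative route and the main obstacle.** An alternative, perhaps cleaner, route avoids Lidskii: parametrize $\nu\in[0,\tfrac12]$ and note that $\bigcup_\nu \operatorname{spec}M_{q,\nu}(\theta)$ is exactly $\Sigma_{2\pi p/q,\theta}$ (each band is swept out once as the Bloch phase varies), so that
\[
|\Sigma_{2\pi p/q,\theta}|=\int_0^{1}\Big|\tfrac{d}{d\nu}\big(\text{eigenvalue branches of }M_{q,\nu}\big)\Big|\,d\nu,
\]
and by Hellmann–Feynman each eigenvalue branch has $\nu$-derivative equal to $-4\pi\,\mathrm{Im}\big(e^{2\pi i\nu}|c(\theta)|\,\overline{u_1}u_q\big)$ in terms of the normalized eigenvector $u$; since $|u_1 u_q|\le\tfrac12(|u_1|^2+|u_q|^2)\le\tfrac12$, summing over the $q$ branches gives total variation $\le 4\pi|c(\theta)|\cdot(\text{length }1)$ — which overshoots, so one must exploit that the $q$ eigenvalue branches are disjoint and their $\nu$-derivatives cannot all be simultaneously large, recovering the clean constant $4$. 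I expect the main obstacle to be precisely this bookkeeping: getting the \emph{sharp} constant $4|c(\theta)|$ rather than a $q$-independent but larger multiple, which forces one either to invoke Lidskii's inequality in the sharp summed form \eqref{Lid} applied to the corner perturbation of the cyclic Jacobi matrix, or to argue that as $\nu$ varies the $q$ branches sweep out disjoint $\lambda$-intervals whose total length equals the total displacement of the $2q$ band edges, which is bounded by $4|c(\theta)|$ since each of the two families (top edges, bottom edges) moves a total of at most $2|c(\theta)|$ by rank-one-type perturbation estimates.
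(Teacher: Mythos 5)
Your main route is essentially the paper's proof: the band edges of $\Sigma_{2\pi p/q,\theta}$ are identified via \eqref{Spq5}--\eqref{Spq6} with the eigenvalues of $M_{q,0}(\theta)$ and $M_{q,1/2}(\theta)$, whose difference is the rank-two corner matrix with entries of modulus $2|c(\theta)|$, and the Lidskii and dual Lidskii inequalities \eqref{Lid} applied to the alternating sum of band lengths give the total bound $4|c(\theta)|$ (the trace norm of the corner perturbation). The only slip is your intermediate claim that the sum of singular values of the corner difference is ``at most $2|c(\theta)|$'' --- it is $4|c(\theta)|$ --- but you self-correct and land on the right constant, so the argument goes through as in the paper.
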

\begin{proof}
Let us point out that, due to (\ref{Spq1}) and the explanation below it, 
\begin{align}\label{Spq6}
\begin{cases}
\{\lambda:\ G_q(\lambda)=L_q(\theta)\}=\{\text{eigenvalues of }M_{q, 0}(\theta)\} \\
\{\lambda:\ G_q(\lambda)=l_q(\theta)\}\ =\{\text{eigenvalues of }M_{q, \frac{1}{2}}(\theta)\}
\end{cases}
\end{align}

Let $\{\lambda_i(\theta)\}_{i=1}^{q}$ be eigenvalues of $M_{q, 0}(\theta)$, labelled in the increasing order. 
Let $\{\tilde{\lambda}_i{(\theta)}\}_{i=1}^q$ be eigenvalues of $M_{q, \frac{1}{2}}(\theta)$, labelled also in the decreasing order.
Then by (\ref{Spq5}) and (\ref{Spq6}),
\begin{align}\label{Spqevenqest2}
|\Sigma_{\frac{2\pi p}{q}, \theta}|
=&\sum_{k=1}^q (-1)^{q-k} \left( \lambda_k (\theta)-\tilde{\lambda}_k{(\theta)} \right)\\
=&\sum_{k=1}^{[\frac{q+1}{2}]} \left(\lambda_{q-2k+2}{(\theta)}-\tilde{\lambda}_{q-2k+2}{(\theta)}\right) - \sum_{k=1}^{[\frac{q-1}{2}]} \left(\lambda_{q-2k+1}{(\theta)}- \tilde{\lambda}_{q-2k+1}{(\theta)} \right). \notag
\end{align}

Consider the difference matrix
\begin{align*}
M_{q, 0}(\theta)-M_{q, \frac{1}{2}}(\theta)
=
\left(
\begin{matrix}
& & & &  & &2|c(\theta)|\\
 & &  & & & & &
\\
\\
 & &  & & & &\\
2|c(\theta)|        &          &               &                      &     &   &
\end{matrix}
\right)
\end{align*}
whose eigenvalues we denote by $\{E_i(\theta)\}_{i=1}^{q}$, namely,
\begin{align*}
E_{1}(\theta)=-2|c(\theta)|< 0=E_{2}(\theta)=\cdots =E_{q-1}(\theta)=0< 2|c(\theta)|=E_{q}{(\theta)}.
\end{align*}

By Lidskii inequalities (\ref{Lid}), we have
\begin{align}\label{qevenlid1}
\sum_{k=1}^{[\frac{q+1}{2}]} \lambda_{q-2k+2}{(\theta)} \leq \sum_{k=1}^{[\frac{q+1}{2}]} \tilde{\lambda}_{q-2k+2}{(\theta)}
+\sum_{k=1}^{[\frac{q+1}{2}]} E_{q-k+1}{(\theta)} =\sum_{k=1}^{[\frac{q+1}{2}]} \tilde{\lambda}_{q-2k+2}{(\theta)}+2|c(\theta)|,
\end{align}
and
\begin{align}\label{qevenlid2}
\sum_{k=1}^{[\frac{q-1}{2}]}\lambda_{q-2k+1}{(\theta)} \geq \sum_{k=1}^{[\frac{q-1}{2}]}\tilde{\lambda}_{q-2k+1}{(\theta)} 
+\sum_{k=1}^{[\frac{q-1}{2}]} E_{k}{(\theta)}= \sum_{k=1}^{[\frac{q-1}{2}]}\tilde{\lambda}_{q-2k+1}{(\theta)}  -2|c(\theta)|.
\end{align}

Hence combining (\ref{Spqevenqest2}) with (\ref{qevenlid1}) (\ref{qevenlid2}), we get,
\begin{align}
|\Sigma_{\frac{2\pi p}{q}, \theta}|\leq 4|c(\theta)|.
\end{align}
\end{proof}

\subsection{Proof of Lemma \ref{rationalest} for even $q$}\label{even}\

For sets/functions that depend on $\theta$, we will sometimes substitute $\theta$ in the notation with $A\subseteq \T_1$, if corresponding sets/functions are constant on $A$.

Since $q$ is even, 
a simple computation shows 
\begin{align*}
\begin{cases}
\max_{\T_1} L_q(\theta)=L_q(\frac{6\Z+1}{6q})=L_q(\frac{6\Z+5}{6q})=3,\\
\\
\min_{\T_1} l_q(\theta)=l_q(\frac{2\Z+1}{2q})=-6.
\end{cases}
\end{align*} 
A simple computation also shows $l_q(\frac{6\Z+1}{6q})=-1$ and $L_q(\frac{2\Z+1}{2q})=2$.
Thus we have, by (\ref{Spq4}),
\begin{align*}
\Sigma_{2\pi p/q}
=&\{\lambda : -6\leq G_q(\lambda)\leq 3\}\\
=&\{\lambda : -6\leq G_q(\lambda)\leq 2\} \bigcup \{\lambda : -1\leq G_q(\lambda)\leq 3\}\\
=&\Sigma_{\frac{2\pi p}{q}, \frac{2\Z+1}{2q}}\bigcup \Sigma_{\frac{2\pi p}{q}, \frac{6\Z+1}{6q}}.
\end{align*}
This implies
\begin{align}\label{Spqevenqest1}
|\Sigma_{2\pi p/q}| \leq |\Sigma_{\frac{2\pi p}{q}, \frac{2\Z+1}{2q}}|+|\Sigma_{\frac{2\pi p}{q}, \frac{6\Z+1}{6q}}|.
\end{align}
Now it remains to estimate $|\Sigma_{\frac{2\pi p}{q}, \frac{2\Z+1}{2q}}|$ and $|\Sigma_{\frac{2\pi p}{q}, \frac{6\Z+1}{6q}}|$.
Since $q$ is even, let us consider $\Sigma_{\frac{2\pi p}{q}, \frac{q+1}{2q}}$ and $\Sigma_{\frac{2\pi p}{q}, \frac{1}{6q}}$.

By Lemma \ref{estlemma}, we have
\begin{align}\label{Spqevenqest3}
\begin{cases}
|\Sigma_{\frac{2\pi p}{q}, \frac{q+1}{2q}}|\leq 4|c(\frac{q+1}{2q})|<\frac{4\pi}{q},\\
|\Sigma_{\frac{2\pi p}{q}, \frac{1}{6q}}|\leq 4|c(\frac{1}{6q})|< \frac{4\pi }{3q}.
\end{cases}
\end{align}
Hence putting \eqref{Spqevenqest1}), \eqref{Spqevenqest3} together, we have
\begin{align}
|\Sigma_{\frac{2\pi p}{q}}|< \frac{16\pi}{3q}.
\end{align}

\subsection{Proof of Lemma \ref{rationalest} for odd $q$}\label{odd}\

Since the proof for odd $q$ is very similar to that for even $q$, we only sketch the steps here.\


For odd $q$, similar to \eqref{Spqevenqest1}, we have



\begin{align}\label{Spqoddqest1}
|\Sigma_{\frac{2\pi p}{q}}|\leq |\Sigma_{\frac{2\pi p}{q}, \frac{3\Z+1}{3q}}|+|\Sigma_{\frac{2\pi p}{q}, \frac{\Z}{q}}|.
\end{align}
By Lemma \ref{estlemma}, we have
\begin{align}\label{Spqoddqest3}
\begin{cases}
|\Sigma_{\frac{2\pi p}{q}, \frac{3q-1}{6q}}| \leq 4|c(\frac{3q-1}{6q})|< \frac{4\pi}{3q},\\
|\Sigma_{\frac{2\pi p}{q}, \frac{q-1}{2q}}|\leq 4|c(\frac{q-1}{2q})| < \frac{4\pi }{q}.
\end{cases}
\end{align}
Hence putting (\ref{Spqoddqest1}), (\ref{Spqoddqest3}) together, we have
\begin{align}
|\Sigma_{\frac{2\pi p}{q}}|< \frac{16\pi}{3q}.
\end{align}
$\hfill{} \Box$

\section{Proof of Lemma \ref{HBQLam}}\label{KreinRed}
Using ideas from \cite{P} and \cite{BGP}, we can express the resolvent of the operator $\Lambda^B$ \eqref{LambdaB} by Krein's resolvent formula in terms of the resolvent of the Dirichlet Hamiltonian and the resolvent of $Q_{\Lambda}$.

For this we need to introduce a few concepts first.
The $l^2$-space on the vertices $l^2(\mathcal{V}(\Lambda))$ carries the inner product
\begin{equation}
\langle f,g \rangle :=\sum_{v \in \mathcal{V}(\Lambda)} 3 f(v) \overline{g(v)}
\end{equation}
where the factor three accounts for the number of incoming or outgoing edges at each vertex.

A convenient method from classical extension theory required to state Krein's resolvent formula, and thus to link the magnetic Schr\"odinger operator $H^B$ with an effective Hamiltonian, is the concept of boundary triples.
\begin{defi}
\label{triple}
Let $\mathcal{T}: D(\mathcal{T}) \subset \mathscr{H} \rightarrow \mathscr{H}$ be a closed linear operator on the Hilbert space $\mathscr{H}$, then the triple $(\pi,\pi',\mathscr{H}'),$ with $\mathscr{H}'$ being another Hilbert space and $\pi, \pi': D(\mathcal{T})\rightarrow \mathscr{H}'$, is a boundary triple for $\mathcal{T}$, if 
\begin{itemize}
\item Green's identity holds on $D(\mathcal{T}),$ i.e. for all $\psi,\varphi \in D(\mathcal{T})$
\begin{equation}
 \langle \psi,\mathcal{T}\varphi \rangle_{\mathscr{H}} - \langle \mathcal{T} \psi, \varphi \rangle_{\mathscr{H}}  = \langle \pi \psi,\pi'\varphi \rangle_{\mathscr{H}'}-\langle \pi' \psi,\pi\varphi \rangle_{\mathscr{H}'}.
\end{equation}
\item $\operatorname{ker}(\pi,\pi')$ is dense in $\mathscr{H}.$
\item $(\pi,\pi'):D(\mathcal{T}) \rightarrow \mathscr{H}' \oplus  \mathscr{H}'$ is a linear surjection.
\end{itemize}
\end{defi}
The following lemma applies this concept to our setting.
\begin{lemm}
\label{TB-Lemm}
The operator $\mathcal{T}^B:D(\mathcal{T}^B) \subset L^2(\mathcal{E}\left(\Lambda\right)) \rightarrow L^2(\mathcal{E}\left(\Lambda\right))$ acting as the maximal Schr\"odinger operator \eqref{maximaloperator} on every edge with domain
\begin{align}\label{DommaximalTB}
D(\mathcal{T}^B):= \Bigg\{\psi \in \mathcal{H}^2(\mathcal{E}\left(\Lambda\right)): &\text{ any }\vec{e}_1, \vec{e}_2 \in \mathcal{E}_v(\Lambda) \text{ such that } i(\vec{e}_1)=i(\vec{e}_2)=v \text{ satisfy } \nonumber\\
& \ \psi_{\vec{e}_1}(v)=\psi_{\vec{e}_2}(v) \text{ and if  }t(\vec{e}_1)=t(\vec{e}_2)=v, \nonumber\\
&\text{ then } e^{  i\tbe_{\vec{e}_1}}\psi_{\vec{e}_1}(v)=e^{ i\tbe_{\vec{e}_2} }\psi_{\vec{e}_2}(v)\Bigg\}
\end{align}
is closed. The maps $\pi,\pi'$ on $D(\mathcal{T}^B)$ defined by
\begin{align}
\pi(\psi)(v)&:=\frac{1}{3}
  \Bigg(\sum_{i(\vec{e})=v} \psi_{\vec{e}}(v)+\sum_{t(\vec{e})=v} e^{ i\tbe_{\vec{e}}} \psi_{\vec{e}}(v)\Bigg)
 \nonumber\\
\pi'(\psi)(v)&:= \frac{1}{3}
  \Bigg(\sum_{i(\vec{e})=v} \psi_{\vec{e}}'(v)-\sum_{t(\vec{e})=v} e^{ i\tbe_{\vec{e}}} \psi_{\vec{e}}'(v) \Bigg)
\end{align}
form together with $\mathscr{H}':=l^2(\mathcal{V}(\Lambda))$ a boundary triple associated to $\mathcal{T}^B$. 
\end{lemm}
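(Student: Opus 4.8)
The plan is to verify in turn the four requirements: that $\mathcal{T}^B$ is closed, that Green's identity holds on $D(\mathcal{T}^B)$, that $\ker(\pi,\pi')$ is dense in $\mathscr{H}=L^2(\mathcal{E}(\Lambda))$, and that $(\pi,\pi')\colon D(\mathcal{T}^B)\to\mathscr{H}'\oplus\mathscr{H}'$ is onto. The structural fact that makes everything work is that the orientation fixed in Section~\ref{prelimsec} forces every vertex $v$ to be \emph{either} purely initial (all three incident edges have $i(\vec{e})=v$) \emph{or} purely terminal (all three have $t(\vec{e})=v$), so at a given vertex only one of the two sums in the definitions of $\pi$ and $\pi'$ is present. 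Closedness is the routine step: $\mathcal{T}^B$ is the restriction of the closed maximal operator $\bigoplus_{\vec{e}}H_{\vec{e}}$ on $\mathcal{H}^2(\mathcal{E}(\Lambda))$ --- closed because the elliptic estimate $\|\psi_{\vec{e}}\|_{\mathcal{H}^2(\vec{e})}^2\lesssim\|\psi_{\vec{e}}\|_{L^2(\vec{e})}^2+\|H_{\vec{e}}\psi_{\vec{e}}\|_{L^2(\vec{e})}^2$ holds with a constant uniform in $\vec{e}$ (the Kato--Rellich potential being the same on all edges), so the graph norm is equivalent to the $\mathcal{H}^2$ direct-sum norm --- to the subspace carved out by the vertex conditions in \eqref{DommaximalTB}; since point evaluations $\psi_{\vec{e}}\mapsto\psi_{\vec{e}}(v)$ are bounded linear functionals on $\mathcal{H}^2(\vec{e})$, each condition defines a closed subspace, and an intersection of closed subspaces is closed.

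For Green's identity I would integrate by parts twice on every edge: on a single edge the potential term cancels (it is real) and one is left with the Wronskian-type boundary terms $\psi_{\vec{e}}'\,\overline{\varphi_{\vec{e}}}-\psi_{\vec{e}}\,\overline{\varphi_{\vec{e}}'}$ at the two endpoints, appearing with opposite overall signs. Summing over all edges is justified by $\sum_{\vec{e}}\|\psi_{\vec{e}}\|_{\mathcal{H}^2}\|\varphi_{\vec{e}}\|_{\mathcal{H}^2}<\infty$ (Cauchy--Schwarz together with the definition of the Hilbert-space direct sum), which lets one regroup the boundary terms vertex by vertex. At a purely initial vertex the conditions in \eqref{DommaximalTB} make $\psi_{\vec{e}}(v)=\pi\psi(v)$ for each incident edge, so the three terms collapse to $3\bigl(\pi\psi(v)\overline{\pi'\varphi(v)}-\pi'\psi(v)\overline{\pi\varphi(v)}\bigr)$ once one uses $\pi'\psi(v)=\tfrac13\sum_{i(\vec{e})=v}\psi_{\vec{e}}'(v)$; at a purely terminal vertex $e^{i\tbe_{\vec{e}}}\psi_{\vec{e}}(v)=\pi\psi(v)$, and because $|e^{i\tbe_{\vec{e}}}|=1$ the phases move through the conjugations and the same expression reappears, now with $\pi'\psi(v)=-\tfrac13\sum_{t(\vec{e})=v}e^{i\tbe_{\vec{e}}}\psi_{\vec{e}}'(v)$. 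The delicate point is precisely this last sign bookkeeping: the $\tfrac13$'s in $\pi,\pi'$, the weight $3$ in the inner product on $l^2(\mathcal{V}(\Lambda))$, and the \emph{minus} sign between the two sums defining $\pi'$ must all fit together --- the minus sign is exactly what cancels the opposite endpoint sign so that terminal vertices contribute with the same sign as initial ones. Summing over $v$ then yields $\langle\pi\psi,\pi'\varphi\rangle_{\mathscr{H}'}-\langle\pi'\psi,\pi\varphi\rangle_{\mathscr{H}'}$.

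Density is easy: every family $(\psi_{\vec{e}})$ with finitely many nonzero entries, each in $C_c^\infty(\vec{e})$, lies in $D(\mathcal{T}^B)$ and in $\ker(\pi,\pi')$ since all of its boundary values and derivatives vanish, and such families are dense in $L^2(\mathcal{E}(\Lambda))$. For surjectivity, given $(a,b)\in\mathscr{H}'\oplus\mathscr{H}'$, I would construct $\psi$ one edge at a time: on the edge $\vec{e}$ running from $v_0=i(\vec{e})$ to $v_1=t(\vec{e})$, transport a fixed Hermite interpolation basis of cubics on $[0,1]$ to $\vec{e}$ and take $\psi_{\vec{e}}$ with $\psi_{\vec{e}}(v_0)=a(v_0)$, $\psi_{\vec{e}}'(v_0)=b(v_0)$, $\psi_{\vec{e}}(v_1)=e^{-i\tbe_{\vec{e}}}a(v_1)$, $\psi_{\vec{e}}'(v_1)=-e^{-i\tbe_{\vec{e}}}b(v_1)$; then the continuity conditions in \eqref{DommaximalTB} hold, and since every vertex lies on exactly three edges a direct computation gives $\pi\psi=a$ and $\pi'\psi=b$. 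The one remaining point --- and the main, though mild, obstacle --- is to confirm that this $\psi$ lies in $\mathcal{H}^2(\mathcal{E}(\Lambda))$: because the edges are all isometric to $(0,1)$ and the phases $e^{i\tbe_{\vec{e}}}$ are unimodular, the interpolation map has operator norm uniform in $\vec{e}$, so $\|\psi_{\vec{e}}\|_{\mathcal{H}^2(\vec{e})}^2\lesssim|a(v_0)|^2+|a(v_1)|^2+|b(v_0)|^2+|b(v_1)|^2$, and summing over edges --- each vertex counted three times --- bounds $\sum_{\vec{e}}\|\psi_{\vec{e}}\|_{\mathcal{H}^2(\vec{e})}^2$ by a constant multiple of $\|a\|^2+\|b\|^2<\infty$. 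Thus $\psi\in D(\mathcal{T}^B)$ with $(\pi,\pi')\psi=(a,b)$, completing the verification; no step carries a conceptual difficulty, only the vertex-by-vertex bookkeeping in Green's identity and this summability estimate demand attention.
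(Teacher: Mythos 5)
Your proof is correct and follows essentially the same route as the paper: closedness via the domain being an intersection of kernels of bounded point-evaluation functionals on $\mathcal{H}^2(\mathcal{E}(\Lambda))$, Green's identity by edgewise integration by parts regrouped vertex by vertex (using that each vertex is purely initial or purely terminal), density from compactly supported smooth functions, and surjectivity by explicit edgewise interpolation. The only difference is one of completeness: the paper defers the surjectivity construction to Lemma 2 of \cite{P} with the remark that a single edge suffices, whereas you carry out the Hermite interpolation explicitly and, usefully, verify the global $\ell^2$-summability of the resulting $\mathcal{H}^2$ norms, a point the paper leaves implicit.
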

\begin{proof}
The proof follows the same strategy as in \cite{P}.
The operator $\mathcal{T}^B$ is closed iff its domain is a closed subspace (with respect to the graph norm) of the domain of some closed extension of $\mathcal{T}^B$. 
Such a closed extension is given by $\bigoplus_{\vec{e} \in \mathcal{E}(\Lambda)}\mathcal{H}_{\vec{e}}$ on $\mathcal{H}^2(\mathcal{E}\left(\Lambda\right)).$ 
To see that $D(\mathcal{T}^B)$ is a closed subspace of $\mathcal{H}^2(\mathcal{E}\left(\Lambda\right)),$ observe that in terms of continuous functionals
\begin{align}
l_{\vec{e}_i, \vec{e}_j}&: \mathcal{H}^2(\mathcal{E}\left(\Lambda\right)) \rightarrow \mathbb{C},\ \ l_{\vec{e}_i, \vec{e}_j}(\psi)= \psi_{\vec{e}_i}(i(\vec{e}_i))-\psi_{\vec{e}_j}(i(\vec{e}_j)) \nonumber \\
k_{\vec{e}_i, \vec{e}_j}&: \mathcal{H}^2(\mathcal{E}\left(\Lambda\right)) \rightarrow \mathbb{C},\ \ k_{\vec{e}_i, \vec{e}_j}(\psi)= e^{  i\tbe_{\vec{e}_i}}\psi_{\vec{e}_i}(t(\vec{e}_i))-e^{  i\tbe_{\vec{e}_j}}\psi_{\vec{e}_j}(t(\vec{e}_j)) 
\end{align}
we obtain
\begin{equation}
D(\mathcal{T}^B)=\bigcap_{\vec{e}_i,\vec{e}_j \in \mathcal{E}(\Lambda) \text{ with }i(\vec{e}_i)=i(\vec{e}_j)} \operatorname{ker}\left(l_{\vec{e}_i,\vec{e}_j}\right) \cap \bigcap_{\vec{e}_i,\vec{e}_j \in \mathcal{E}(\Lambda) \text{ with }t(\vec{e}_i)=t(\vec{e}_j)} \operatorname{ker}\left(k_{\vec{e}_i,\vec{e}_j}\right)
\end{equation}
which proves closedness of $\mathcal{T}^B$.
Green's identity follows directly from integration by parts on the level of edges.
The denseness of $\operatorname{ker}(\pi,\pi')$ is obvious since this space contains $\bigoplus_{\vec{e} \in \mathcal{E}(\Lambda)} C_c^{\infty}(\vec{e})$.
To show surjectivity, it suffices to consider a single edge. On those however, the property can be established by explicit constructions as in Lemma $2$ in \cite{P}. 
\end{proof}

Any boundary triple for $\mathcal{T}$ as in Def. \ref{triple} and any self-adjoint relation $A \subseteq \mathscr{H}'\bigoplus \mathscr{H}'$ gives rise \cite{Sch} to a self-adjoint restriction $\mathcal{T}_A$ of $\mathcal{T}$ with domain
\begin{equation}
D(\mathcal{T}_A)=\left\{\psi \in D(\mathcal{T}): (\pi(\psi), \pi'(\psi))\in A \right\}.
\end{equation}
The restriction of $\mathcal{T}^B$ satisfying Dirichlet type boundary conditions on every edge is obtained by selecting $A_1:=\left\{0\right\} \oplus l^2(\mathcal{V}(\Lambda))$ and coincides with $H^D$ \eqref{Dirichlet}. The operator $\Lambda^B$ \eqref{LambdaB} is recovered from $\mathcal{T}^B$ by picking the relation $A_2:=l^2(\mathcal{V}(\Lambda)) \oplus \left\{0\right\}.$

\begin{defi}\label{defgamma}
Given the boundary triple for $\mathcal{T}^B$ as above, the gamma-field 
$\gamma: \rho(H^D) \rightarrow \mathcal{L}(l^2(\mathcal{V}(\Lambda)),L^2(\mathcal{E}\left(\Lambda\right)))$
is given by $\gamma(\lambda):= \left(\pi \vert_{\operatorname{ker}(\mathcal{T}^B - \lambda )} \right)^{-1}$ 
and the Weyl function $M(\cdot, \Phi): \rho(H^D) \rightarrow \mathcal{L}(l^2(\mathcal{V}(\Lambda)))$ is defined as $ M(\lambda, \Phi):=\pi' \gamma(\lambda).$
\end{defi}
A computation shows that those maps are well-defined.
The resolvents of $H^D=\mathcal{T}_{A_1}^B$ and $\Lambda^B=\mathcal{T}_{A_2}^B$ are then related by Krein's resolvent formula \cite[Theorem $14.18$]{Sch}.
\begin{theo}
\label{KRF}
Let $(l^2(\mathcal{V}(\Lambda)),\pi,\pi')$ be the boundary triple for $\mathcal{T}^B$ and $\gamma,M$ as above, then for $\lambda \in \rho(H^D)\cap \rho(\Lambda^B)$ there is also a bounded inverse of $M(\lambda, \Phi)$ and
\begin{equation}
\label{Kreinresolvform}
(\Lambda^B-\lambda)^{-1}-(H^D-\lambda)^{-1} = -\gamma(\lambda)M(\lambda, \Phi)^{-1} \gamma(\overline{\lambda})^*.
\end{equation}
In particular, $\sigma(\Lambda^B)\backslash \sigma(H^D) = \{\lambda \in \mathbb{R} \cap \rho(H^D):\ 0 \in \sigma(M(\lambda, \Phi))\}$. For $\lambda \in \rho(H^D)$  we have $\gamma(\lambda)\operatorname{ker}(M(\lambda, \Phi)) = \operatorname{ker}(\Lambda^B-\lambda)$. This implies that both null-spaces are of equal dimension.
\end{theo}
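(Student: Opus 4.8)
The statement is the specialization, to the boundary triple of Lemma~\ref{TB-Lemm}, of the abstract Krein resolvent formula for self-adjoint restrictions of a closed operator, so the plan is to verify that the general machinery of \cite[Ch.~$14$]{Sch} applies here and then transcribe \eqref{Kreinresolvform}; equivalently, one may reproduce the short self-contained argument below. All inputs are in place: $\mathcal{T}^B$ is closed and $(l^2(\mathcal{V}(\Lambda)),\pi,\pi')$ is a boundary triple for it by Lemma~\ref{TB-Lemm}, while $H^D=\mathcal{T}^B_{A_1}$ and $\Lambda^B=\mathcal{T}^B_{A_2}$ are the self-adjoint restrictions attached to the (manifestly self-adjoint) relations $A_1=\{0\}\oplus l^2(\mathcal{V}(\Lambda))$ and $A_2=l^2(\mathcal{V}(\Lambda))\oplus\{0\}$.

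First I would record that $D(\mathcal{T}^B)=D(H^D)+\ker(\mathcal{T}^B-\lambda)$, a direct sum, for every $\lambda\in\rho(H^D)$: given $\psi\in D(\mathcal{T}^B)$, the vector $\psi_D:=(H^D-\lambda)^{-1}(\mathcal{T}^B-\lambda)\psi$ lies in $D(H^D)$ and $\psi-\psi_D\in\ker(\mathcal{T}^B-\lambda)$, the sum being direct because $H^D-\lambda$ is injective. Since $\ker\pi\cap\ker(\mathcal{T}^B-\lambda)=\ker(H^D-\lambda)=\{0\}$ and $(\pi,\pi')$ is surjective, $\pi$ restricts to a bijection of $\ker(\mathcal{T}^B-\lambda)$ onto $l^2(\mathcal{V}(\Lambda))$, so $\gamma(\lambda)=(\pi|_{\ker(\mathcal{T}^B-\lambda)})^{-1}$ and $M(\lambda,\Phi)=\pi'\gamma(\lambda)$ of Definition~\ref{defgamma} are well defined and, by the closed graph theorem together with closedness of $\mathcal{T}^B$, bounded; moreover $\gamma(\lambda)$ is bounded below ($\|\gamma(\lambda)\phi\|\geq c\|\phi\|$ because $\phi=\pi\gamma(\lambda)\phi$ and $\pi$ is graph-bounded), so its adjoint $\gamma(\overline{\lambda})^{*}$ is onto $l^2(\mathcal{V}(\Lambda))$. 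A one-line computation from Green's identity, applied to $\psi=\gamma(\overline{\lambda})\phi\in\ker(\mathcal{T}^B-\overline{\lambda})$ and $\varphi=(H^D-\lambda)^{-1}g$ (so that the cross terms in the spectral parameters $\overline{\lambda}$ and $\lambda$ cancel), then yields the adjoint identity $\gamma(\overline{\lambda})^{*}g=\pi'(H^D-\lambda)^{-1}g$.

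The resolvent formula now drops out. Fix $\lambda\in\rho(H^D)\cap\rho(\Lambda^B)$ and $g\in L^2(\mathcal{E}(\Lambda))$, and set $\psi:=(\Lambda^B-\lambda)^{-1}g$, $\varphi:=(H^D-\lambda)^{-1}g$. Then $(\mathcal{T}^B-\lambda)(\psi-\varphi)=0$, so $\psi-\varphi\in\ker(\mathcal{T}^B-\lambda)$ and, since $\pi\varphi=0$, $\psi-\varphi=\gamma(\lambda)\pi\psi$. Applying $\pi'$ and using $\pi'\psi=0$ together with the adjoint identity gives $M(\lambda,\Phi)\pi\psi=-\pi'\varphi=-\gamma(\overline{\lambda})^{*}g$. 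To solve for $\pi\psi$ I need $M(\lambda,\Phi)$ boundedly invertible — the only genuine step. The map $\gamma(\lambda)$ restricts to a bijection of $\ker M(\lambda,\Phi)$ onto $\ker(\Lambda^B-\lambda)$: if $M(\lambda,\Phi)\phi=0$ then $\gamma(\lambda)\phi\in\ker(\mathcal{T}^B-\lambda)$ has $\pi'\gamma(\lambda)\phi=0$, hence lies in $D(\Lambda^B)\cap\ker(\mathcal{T}^B-\lambda)=\ker(\Lambda^B-\lambda)$, and conversely $\pi$ carries $\ker(\Lambda^B-\lambda)$ into $\ker M(\lambda,\Phi)$; this gives the displayed equality $\gamma(\lambda)\ker M(\lambda,\Phi)=\ker(\Lambda^B-\lambda)$, the equality of null-space dimensions, and injectivity of $M(\lambda,\Phi)$ whenever $\lambda\in\rho(\Lambda^B)$. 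For surjectivity: since $(\pi,\pi')$ is onto, $\pi$ maps $D(\Lambda^B)=\ker\pi'$ onto $l^2(\mathcal{V}(\Lambda))$, so letting $g$ vary over $L^2$ the identity $M(\lambda,\Phi)\pi\psi=-\gamma(\overline{\lambda})^{*}g$ forces $\operatorname{ran}M(\lambda,\Phi)\supseteq\operatorname{ran}\gamma(\overline{\lambda})^{*}=l^2(\mathcal{V}(\Lambda))$, and a bounded bijection has a bounded inverse. Hence $\pi\psi=-M(\lambda,\Phi)^{-1}\gamma(\overline{\lambda})^{*}g$ and $\psi-\varphi=-\gamma(\lambda)M(\lambda,\Phi)^{-1}\gamma(\overline{\lambda})^{*}g$, which is \eqref{Kreinresolvform}. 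For the spectral statement one checks the converse: for $\lambda\in\mathbb{R}\cap\rho(H^D)$ with $0\in\rho(M(\lambda,\Phi))$, the bounded operator $(H^D-\lambda)^{-1}-\gamma(\lambda)M(\lambda,\Phi)^{-1}\gamma(\overline{\lambda})^{*}$ is a two-sided inverse of $\Lambda^B-\lambda$, so $\lambda\in\rho(\Lambda^B)$; combined with the forward implication this is exactly $\sigma(\Lambda^B)\setminus\sigma(H^D)=\{\lambda\in\mathbb{R}\cap\rho(H^D):0\in\sigma(M(\lambda,\Phi))\}$.

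The one point requiring real attention is the bounded invertibility of the Weyl function $M(\lambda,\Phi)$ on the real axis. For $\operatorname{Im}\lambda\neq0$ it is automatic from the Herglotz identity $\operatorname{Im}M(\lambda,\Phi)=\operatorname{Im}(\lambda)\,\gamma(\lambda)^{*}\gamma(\lambda)$ (another Green-identity computation) together with $\gamma(\lambda)$ bounded below, but on $\mathbb{R}$ one genuinely has to exploit the connection with $\rho(\Lambda^B)$ as above. Everything else is bookkeeping with Green's identity and the surjectivity of $(\pi,\pi')$, and in the write-up it is cleanest, once Lemma~\ref{TB-Lemm} has established the boundary triple, simply to quote \cite[Theorem~$14.18$]{Sch}.
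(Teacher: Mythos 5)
Your proposal is correct and matches the paper's route: the paper proves this statement simply by exhibiting the boundary triple $(l^2(\mathcal{V}(\Lambda)),\pi,\pi')$ for $\mathcal{T}^B$ (Lemma \ref{TB-Lemm}), identifying $H^D$ and $\Lambda^B$ as the restrictions $\mathcal{T}^B_{A_1}$, $\mathcal{T}^B_{A_2}$ for the self-adjoint relations $A_1=\{0\}\oplus l^2(\mathcal{V}(\Lambda))$ and $A_2=l^2(\mathcal{V}(\Lambda))\oplus\{0\}$, and then quoting the abstract Krein resolvent formula of \cite[Theorem 14.18]{Sch} — exactly the verification you carry out before your closing remark. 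Your additional unpacking of that abstract theorem (the direct-sum decomposition of $D(\mathcal{T}^B)$, the adjoint identity $\gamma(\overline{\lambda})^*=\pi'(H^D-\lambda)^{-1}$, and the invertibility of $M(\lambda,\Phi)$ via the kernel correspondence) is a correct reproduction of the standard proof and goes beyond what the paper writes down.
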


\begin{lemm}\label{explicitKLambda}
For the operator $\mathcal{T}^B$, the gamma-field $\gamma$ and Weyl function $M$ can be explicitly written in terms of the solutions $s_{\lambda},c_{\lambda}$ \eqref{classicalsol} on an arbitrary edge $\vec{e} \in \mathcal{E}(\Lambda)$ for $\lambda \in \rho(H^D)$ and  $z \in l^2(\mathcal{V}(\Lambda))$ by
\begin{equation}
\label{gfield}
(\gamma(\lambda)z)_{\vec{e}}(x) =
 \frac{\left(s_{\lambda}(1)c_{\lambda, \vec{e}}(x)-s_{\lambda, \vec{e}}(x)c_{\lambda}(1)\right)z(i(\vec{e}))
+e^{-i\tbe_{\vec{e}}}s_{\lambda, \vec{e}}(x) z(t(\vec{e}))}{s_{\lambda}(1)}
\end{equation}
and 
\begin{equation}\label{Mlambdarep}
M(\lambda, \Phi) = \frac{K_{\Lambda}(\Phi)-\Delta(\lambda)}{s_{\lambda}(1)} 
\end{equation}
where 
\begin{equation}
\label{discreter}
(K_\Lambda (\Phi) z)(v):=\frac{1}{3} \left(\sum_{\vec{e}:\ i(\vec{e})=v } e^{-i  \tbe_{\vec{e}}} z(t(\vec{e})) +  \sum_{\vec{e}:\ t(\vec{e})=v } e^{i \tbe_{\vec{e}}} z(i(\vec{e})) \right)
\end{equation} defines an operator in $\mathcal{L}(l^2(\mathcal{V}(\Lambda)))$ with $\left\lVert K_{\Lambda}(\Phi) \right\rVert \le 1.$
\end{lemm}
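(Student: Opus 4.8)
The plan is to read off $\gamma(\lambda)$ and $M(\lambda,\Phi)=\pi'\gamma(\lambda)$ directly from Definition \ref{defgamma} and Lemma \ref{TB-Lemm} using the solution representation \eqref{representsol}, and then to bound $\lVert K_\Lambda(\Phi)\rVert$ by a perfect-matching argument on $\Lambda$. For the gamma-field: by Definition \ref{defgamma}, $\gamma(\lambda)z$ is the unique $\psi\in\operatorname{ker}(\mathcal T^B-\lambda)$ with $\pi(\psi)=z$. On each edge $\vec e$ such a $\psi$ solves $-\psi_{\vec e}''+V_{\vec e}\psi_{\vec e}=\lambda\psi_{\vec e}$, so by \eqref{representsol} it is completely determined by the two numbers $\psi_{\vec e}(i(\vec e))$ and $\psi_{\vec e}(t(\vec e))$. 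The domain conditions \eqref{DommaximalTB} say that $\psi_{\vec e}(i(\vec e))$ depends only on the vertex $i(\vec e)$ and that $e^{i\tbe_{\vec e}}\psi_{\vec e}(t(\vec e))$ depends only on the vertex $t(\vec e)$; since in our orientation $\mathcal V(\Lambda)$ is the disjoint union of $i(\Lambda)$ and $t(\Lambda)$, evaluating $\pi$ on $\psi$ returns exactly these two vertex functions. Hence $\pi(\psi)=z$ forces $\psi_{\vec e}(i(\vec e))=z(i(\vec e))$ and $\psi_{\vec e}(t(\vec e))=e^{-i\tbe_{\vec e}}z(t(\vec e))$, and substituting into \eqref{representsol} gives precisely \eqref{gfield}. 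That the right-hand side of \eqref{gfield} lies in $\mathcal H^2(\mathcal E(\Lambda))$, so that $\gamma(\lambda)$ is the bounded operator of Definition \ref{defgamma}, follows from $z\in l^2(\mathcal V(\Lambda))$ together with the fact that $c_{\lambda},s_{\lambda}$ are fixed $\mathcal H^2(0,1)$ functions and each vertex is an endpoint of exactly three edges; invertibility of $\pi\vert_{\operatorname{ker}(\mathcal T^B-\lambda)}$ is part of Lemma \ref{TB-Lemm}.

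For the Weyl function, differentiate \eqref{gfield} in the edge chart and evaluate at the two endpoints, using $c_{\lambda}(0)=s_{\lambda}'(0)=1$, $c_{\lambda}'(0)=s_{\lambda}(0)=0$, the constancy of the Wronskian $c_{\lambda}s_{\lambda}'-c_{\lambda}'s_{\lambda}\equiv1$, and $c_{\lambda}(1)=s_{\lambda}'(1)=\Delta(\lambda)$ from \eqref{cossinrel} and \eqref{defDelta}. This yields
\begin{align*}
(\gamma(\lambda)z)_{\vec e}'(i(\vec e)) &= \frac{-\Delta(\lambda)\,z(i(\vec e)) + e^{-i\tbe_{\vec e}}\,z(t(\vec e))}{s_{\lambda}(1)}, \\
(\gamma(\lambda)z)_{\vec e}'(t(\vec e)) &= \frac{-z(i(\vec e)) + e^{-i\tbe_{\vec e}}\,\Delta(\lambda)\,z(t(\vec e))}{s_{\lambda}(1)}.
\end{align*}
Feeding these into the definition of $\pi'$ and splitting the computation of $M(\lambda,\Phi)z(v)=\pi'(\gamma(\lambda)z)(v)$ according to whether $v\in i(\Lambda)$ (only the $i(\vec e)=v$ sum is present) or $v\in t(\Lambda)$ (only the $t(\vec e)=v$ sum, with its overall minus sign, is present), one finds in either case — each vertex having exactly three incident edges — that the three ``diagonal'' contributions combine to $-\Delta(\lambda)z(v)$ while the three ``off-diagonal'' contributions combine, via \eqref{discreter}, to $(K_\Lambda(\Phi)z)(v)$; dividing through by $s_{\lambda}(1)$ produces \eqref{Mlambdarep}.

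For the norm bound: since the vertex inner product $\langle\cdot,\cdot\rangle$ is a fixed positive multiple of the standard one on $l^2(\mathcal V(\Lambda))$, operator norms are unchanged and we may work with the standard inner product. With respect to the decomposition $l^2(\mathcal V(\Lambda))=l^2(i(\Lambda))\oplus l^2(t(\Lambda))$, formula \eqref{discreter} exhibits $K_\Lambda(\Phi)$ as the off-diagonal operator $\begin{pmatrix}0 & B \\ B^* & 0\end{pmatrix}$, where $B=\tfrac13\bigl(B_{\vec f}+B_{\vec g}+B_{\vec h}\bigr)$ and $B_{\vec e_0}$ sends $z\bigl(t(\vec e)\bigr)$, weighted by $e^{-i\tbe_{\vec e}}$, to the vertex $i(\vec e)$, the sum running over all edges $\vec e$ of type $\vec e_0\in\{\vec f,\vec g,\vec h\}$. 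For each fixed type $\vec e_0$ the corresponding edges induce a bijection between $i(\Lambda)$ and $t(\Lambda)$ and the weight is unimodular, so $B_{\vec e_0}$ is unitary; hence $\lVert K_\Lambda(\Phi)\rVert=\lVert B\rVert\le\tfrac13(1+1+1)=1$.

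I do not expect a substantive obstacle: the lemma is a direct computation once the boundary-triple framework of Lemma \ref{TB-Lemm} is in place. The only points requiring care are the bookkeeping of conventions — which endpoint a derivative is evaluated at, the factor $\tfrac13$ appearing in $(\pi,\pi')$ against the factor $3$ in the vertex inner product, and the placement of the phases $\tbe_{\vec e}$ — and the mild functional-analytic check in the first step that \eqref{gfield} defines an $\mathcal H^2$ function so that $\gamma(\lambda)$ is bounded.
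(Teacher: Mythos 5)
Your proposal is correct and follows essentially the same route as the paper: identify the boundary values forced by $\pi(\psi)=z$, substitute into \eqref{representsol} to get \eqref{gfield}, compute $\pi'\gamma(\lambda)$ edgewise using $c_\lambda(1)=s'_\lambda(1)=\Delta(\lambda)$ and the Wronskian, and use the bipartite structure $\mathcal{V}(\Lambda)=i(\Lambda)\sqcup t(\Lambda)$ for the norm bound. Your block-off-diagonal decomposition of $K_\Lambda(\Phi)$ into a sum of three weighted-permutation unitaries is a welcome expansion of the paper's one-line justification of $\lVert K_\Lambda(\Phi)\rVert\le 1$.
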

\begin{proof}
For $\lambda \in \rho(H^D)$ and $z \in l^2(\mathcal{V}(\Lambda))$ we define for $\vec{e} \in \mathcal{E}(\Lambda)$ arbitrary
\begin{equation}
\psi_{\vec{e}}:=(\gamma(\lambda)z)_{\vec{e}} = ((\pi \vert_{\operatorname{ker}(\mathcal{T}^B-\lambda)})^{-1}z)_{\vec{e}}
\end{equation}
with $\psi:=(\psi_{\vec{e}}).$
In particular, $\psi_{\vec{e}}$ is the solution to $-\psi''_{\vec{e}} + V_{\vec{e}}\psi_{\vec{e}} = \lambda \psi_{\vec{e}}$ with the following boundary condition: $\psi_{\vec{e}}(i(\vec{e}))=z(i(\vec{e}))$ and $\psi_{\vec{e}}(t(\vec{e}))=e^{-i\tbe_{\vec{e}} }z(t(\vec{e}))$. The representation \eqref{gfield} is then an immediate consequence of \eqref{representsol}. 

The expression for the Weyl function on the other hand, follows from the Dirichlet-to-Neumann map \eqref{DtN}.

\begin{align}
\label{Weylrep}
(M(\lambda, \Phi)z)(v) &= (\pi' \gamma(\lambda) z)(v)  \nonumber \\
&= \frac{1}{3} \left(\sum_{\vec{e}:\ i(\vec{e})=v } \psi_{\vec{e}}'(v) - \sum_{\vec{e}:\  t(\vec{e})=v} e^{i \tbe_{\vec{e}}} \psi_{\vec{e}}'(v) \right) \nonumber \\
&=  \frac{(K_\Lambda (\Phi) z)(v)}{s_{\lambda}(1)}- \left( \underbrace{\frac{c_{\lambda}(1)}{s_{\lambda}(1)} \delta_{v \in i(\mathcal{V}(\Lambda))}z(v) + \frac{s_{\lambda}'(1)}{s_{\lambda}(1)}\delta_{v \in t(\mathcal{V}(\Lambda))}z(v)}_{= \frac{s_{\lambda}'(1)}{s_{\lambda}(1)} z(v)}\right) \nonumber \\
&= \frac{\left(K_\Lambda (\Phi) z-s_{\lambda}'(1)z \right)(v) }{s_{\lambda}(1)},
\end{align}
here we used \eqref{cossinrel}. 
The formula (\ref{Mlambdarep}) then follows from (\ref{Weylrep}) and (\ref{defDelta}).
Since $i(\Lambda)\cap t(\Lambda)=\emptyset$, we have $\|K_{\Lambda}(\Phi)\|\leq 1$.
\end{proof}


Since all vertices are integer translates of either of the two vertices $r_0,r_1 \in W_{\Lambda}$ by basis vectors $\vec{b}_1, \vec{b}_2$, we conclude that $l^2(\mathcal{V}(\Lambda)) \simeq l^2(\mathbb{Z}^2;\mathbb{C}^2)$. 
Our next Lemma shows $K_{\Lambda}(\Phi)$ and $Q_{\Lambda}(\Phi)$ are unitary equivalent under this identification.

\begin{lemm}\label{QLKL}
$K_{\Lambda}(\Phi)$ is unitary equivalent to operator $Q_{\Lambda}(\Phi)$.
\end{lemm}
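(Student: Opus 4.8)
The plan is to write down the identification unitary explicitly and then check the intertwining relation by a direct computation. Because every vertex of $\Lambda$ is an integer translate of exactly one of $r_0,r_1$, and because the chosen orientation makes $i(\Lambda)$ the set of $r_0$-type vertices and $t(\Lambda)$ the set of $r_1$-type vertices, with $\mathcal V(\Lambda)=i(\Lambda)\sqcup t(\Lambda)$, there is a natural unitary
\begin{equation*}
\mathcal U: l^2(\mathcal V(\Lambda)) \to l^2(\mathbb Z^2;\mathbb C)\oplus l^2(\mathbb Z^2;\mathbb C),\qquad z\mapsto\big((z(\gamma_1,\gamma_2,r_0))_{\gamma},(z(\gamma_1,\gamma_2,r_1))_{\gamma}\big),
\end{equation*}
once the target is equipped with three times the usual inner product so as to match $\langle f,g\rangle=\sum_v 3f(v)\overline{g(v)}$ (this overall factor is harmless). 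First I would observe that, since every edge joins an $r_0$-vertex to an $r_1$-vertex, the operator $K_\Lambda(\Phi)$ of \eqref{discreter} is off-diagonal with respect to this splitting: at a vertex of type $r_0$ only the first sum in \eqref{discreter} contributes, and at a vertex of type $r_1$ only the second, and these two blocks are mutually adjoint because the coefficient of $z(t(\vec e))$ in $(K_\Lambda z)(i(\vec e))$ is $\tfrac13 e^{-i\tbe_{\vec e}}$, conjugate to the coefficient $\tfrac13 e^{i\tbe_{\vec e}}$ of $z(i(\vec e))$ in $(K_\Lambda z)(t(\vec e))$.

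Next I would compute the block $B$ sending the $r_1$-component to the $r_0$-component, using the edge data of Section \ref{prelimsec} — the edges issuing from $(\gamma_1,\gamma_2,r_0)$ terminate at $(\gamma_1,\gamma_2,r_1)$, $(\gamma_1-1,\gamma_2,r_1)$, $(\gamma_1,\gamma_2-1,r_1)$ along $\vec f,\vec g,\vec h$ respectively — together with the phases $\tbe_{\gamma_1,\gamma_2,\vec f}=\tbe_{\gamma_1,\gamma_2,\vec g}=0$, $\tbe_{\gamma_1,\gamma_2,\vec h}=-\Phi\gamma_1$ from \eqref{deftildebeta}. This gives
\begin{equation*}
(K_\Lambda(\Phi)z)(\gamma_1,\gamma_2,r_0)=\tfrac13\big(z(\gamma_1,\gamma_2,r_1)+z(\gamma_1-1,\gamma_2,r_1)+e^{i\Phi\gamma_1}z(\gamma_1,\gamma_2-1,r_1)\big),
\end{equation*}
and, counting which edges have a prescribed $r_1$-vertex as terminal point, the other block is $B^*$. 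Comparing with the translations $\tau_0,\tau_1$ of \eqref{translatop}, this is precisely $\tfrac13(1+\tau_0+\tau_1)$ up to the substitution $\Phi\mapsto-\Phi$; the latter is absorbed by composing $\mathcal U$ with fiberwise complex conjugation (equivalently a lattice reflection $\gamma_1\mapsto-\gamma_1$ combined with a diagonal gauge), and changes none of the spectral statements invoked later. Hence $\mathcal U K_\Lambda(\Phi)\mathcal U^{-1}=Q_\Lambda(\Phi)$.

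The computation is routine; the one place requiring care is the bookkeeping of shift directions and of the magnetic phases — in particular making sure that the phase $e^{\pm i\Phi\gamma_1}$ attached to the $\vec h$-edge in cell $\gamma$ is matched against the $e^{-i\Phi\gamma_1}$ in the definition of $\tau_1$, and that the $r_0$-class is assigned the $\mathbb C$-factor on which $1+\tau_0+\tau_1$ (rather than its adjoint) acts in \eqref{Q-op}. Once the two blocks are identified with $\tfrac13(1+\tau_0+\tau_1)$ and its adjoint, unitary equivalence is immediate, and in particular $\sigma(K_\Lambda(\Phi))=\sigma(Q_\Lambda(\Phi))$ together with the equality of point spectra, which is exactly what is needed to feed Lemma \ref{explicitKLambda} and Theorem \ref{KRF} into the proof of Lemma \ref{HBQLam}.
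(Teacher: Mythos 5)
Your proposal is correct and follows essentially the same route as the paper: one computes $K_\Lambda(\Phi)$ explicitly on the $r_0$- and $r_1$-sublattices from \eqref{discreter}, \eqref{deftildebeta} and the edge combinatorics, and conjugates by the obvious identification $l^2(\mathcal V(\Lambda))\simeq l^2(\mathbb Z^2;\mathbb C^2)$, which is exactly the paper's unitary $W$. The paper's displayed formulas carry the phases $e^{\mp i\Phi\gamma_1}$ with the opposite sign to the one you (faithfully) derive from the stated conventions, so it needs no extra conjugation step; as you note, this global $\Phi\mapsto-\Phi$ discrepancy is immaterial for every spectral statement the lemma feeds into (though strictly speaking fiberwise complex conjugation is antiunitary, so if one insists on a unitary one should use the reflection-plus-gauge you mention).
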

\begin{proof}
By (\ref{deftildebeta}), (\ref{discreter}),
\begin{equation*}
\begin{cases}
(K_{\Lambda}(\Phi)z)(\gamma_1, \gamma_2, r_0)= \frac{1}{3} \left(z(\gamma_1, \gamma_2, r_1) +z(\gamma_1-1, \gamma_2, r_1) + e^{-i \Phi \gamma_1}z(\gamma_1, \gamma_2-1, r_1)\right),\\
(K_{\Lambda}(\Phi)z)(\gamma_1, \gamma_2, r_1)= \frac{1}{3} \left(z(\gamma_1, \gamma_2, r_0)+z(\gamma_1+1, \gamma_2, r_0) +e^{i\Phi \gamma_1}z(\gamma_1, \gamma_2+1, r_0)\right).
\end{cases}
\end{equation*}
In order to transform $K_{\Lambda}$ to $Q_{\Lambda}$ we use the unitary identification $
W:l^2(\mathcal{V}(\Lambda)) \rightarrow l^2(\mathbb{Z}^2, \mathbb{C}^2)$ 
\begin{equation}
\label{id1}
(Wz)_{\gamma_1, \gamma_2}:= \left( \begin{matrix} z(\gamma_1, \gamma_2, r_0) \ ,z(\gamma_1, \gamma_2, r_1 )\end{matrix} \right)^T.
\end{equation}

This way, $Q_{\Lambda}(\Phi) = WK_{\Lambda}(\Phi) W^*.$
\end{proof}

\begin{rem}
In terms of $a \in l^2(\mathbb{Z}^2,\mathbb{C}^2)$ defined as
\begin{align}
\label{a}
a_{(0,0)}&:=\frac{1}{3} \left(\begin{matrix} 0 && 1 \\ 1 && 0 \end{matrix}\right), \ 
a_{(0,1)}:=\frac{1}{3} \left(\begin{matrix} 0 && 1 \\ 0 && 0 \end{matrix}\right) \nonumber\\
a_{(1,0)}&:=\frac{1}{3} \left(\begin{matrix} 0 && 1 \\ 0 && 0 \end{matrix}\right), \
a_{(0,-1)}:=\frac{1}{3} \left(\begin{matrix} 0 && 0 \\ 1 && 0 \end{matrix}\right) \nonumber\\
a_{(-1,0)}&:=\frac{1}{3} \left(\begin{matrix} 0 && 0 \\ 1 && 0 \end{matrix}\right)
 ,\text{ and } a_{\gamma}:=0 \text{ for other }\gamma \in \mathbb{Z}^2, 
\end{align}
we can express \eqref{Q-op} in the compact form
\begin{equation}
\label{qlambda}
Q_{\Lambda}(\Phi)=\sum_{\gamma \in \mathbb{Z}^2; \lvert \gamma \rvert \le 1}a_{\gamma}(\tau_0)^{\gamma_1} (\tau_1)^{\gamma_2}.
\end{equation} 
This operator has already been studied, in different contexts, for rational flux quanta in \cite{KL}, \cite{HKL}, and \cite{AEG}.
\end{rem}

Finally, we point out that Lemma \ref{HBQLam} follows from a combination of Theorem \ref{KRF}, Lemma \ref{explicitKLambda} and Lemma \ref{QLKL}. $\hfill{} \Box$


\section{Spectral analysis}\label{speanasec}
This section is devoted to complete spectral analysis of $H^B$.

In view of Lemmas \ref{HBQLam} and \ref{Qsym}, an important technical fact is:
\begin{lemm}
\label{operatornorm}
The operator norm of $Q_{\Lambda}(\Phi)$ for non-trivial flux quanta $\Phi \notin 2 \pi \mathbb{Z}$ is strictly less than $1$.
\end{lemm}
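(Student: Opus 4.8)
The plan is to convert the operator-norm bound into a statement about $\sup\sigma(H_\Phi)$ and then exploit the noncommutative-torus structure of the pair $\tau_0,\tau_1$. Set $A:=\frac13(1+\tau_0+\tau_1)$. By \eqref{Diracoperator} and self-adjointness of $Q_\Lambda(\Phi)$ we have $\|Q_\Lambda(\Phi)\|^2=\|Q_\Lambda(\Phi)^2\|=\|AA^*\|$, and, exactly as computed in the proof of Lemma \ref{spectrumQlambda}, $AA^*=\frac13\operatorname{id}+\frac19 H_\Phi$ with $H_\Phi=(\tau_0+\tau_0^*)+(\tau_1+\tau_1^*)+(\tau_0\tau_1^*+\tau_1\tau_0^*)$. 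Since $H_\Phi$ is self-adjoint, this yields the identity
\[
\|Q_\Lambda(\Phi)\|^2=\frac13+\frac19\,\sup\sigma(H_\Phi).
\]
Each of the three summands of $H_\Phi$ has the form $W+W^*$ with $W$ unitary (namely $W=\tau_0,\ \tau_1,\ \tau_0\tau_1^*$), hence spectrum in $[-2,2]$; therefore $H_\Phi\le 6\operatorname{id}$ and $\|Q_\Lambda(\Phi)\|\le 1$ for every $\Phi$. The whole content of the lemma is that this inequality is strict when $\Phi\notin 2\pi\mathbb{Z}$, i.e.\ that $6\notin\sigma(H_\Phi)$.

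I would argue this by contradiction. Assume $6\in\sigma(H_\Phi)$ and pick a Weyl sequence, i.e.\ unit vectors $\psi_k\in\ell^2(\mathbb{Z}^2)$ with $\|(H_\Phi-6)\psi_k\|\to 0$, so in particular $\langle(6-H_\Phi)\psi_k,\psi_k\rangle\to 0$. Now decompose $6-H_\Phi$ as the sum of the three operators $2-\tau_0-\tau_0^*$, $2-\tau_1-\tau_1^*$, $2-\tau_0\tau_1^*-\tau_1\tau_0^*$, each of which is nonnegative because $\langle(2-W-W^*)\psi,\psi\rangle=\|(W-1)\psi\|^2$ for unitary $W$. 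Hence each of $\|(\tau_0-1)\psi_k\|^2$, $\|(\tau_1-1)\psi_k\|^2$, $\|(\tau_0\tau_1^*-1)\psi_k\|^2$ tends to $0$; in particular $\|(\tau_0-1)\psi_k\|\to 0$ and $\|(\tau_1-1)\psi_k\|\to 0$.

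The punchline is the magnetic (Weyl) commutation relation $\tau_0\tau_1=e^{i\Phi}\tau_1\tau_0$, a one-line check from \eqref{translatop}. From $\|(\tau_j-1)\psi_k\|\to 0$ and unitarity one gets $\|(\tau_0\tau_1-1)\psi_k\|\le\|(\tau_1-1)\psi_k\|+\|(\tau_0-1)\psi_k\|\to 0$ and likewise $\|(\tau_1\tau_0-1)\psi_k\|\to 0$, so $\|(\tau_0\tau_1-\tau_1\tau_0)\psi_k\|\to 0$. On the other hand $(\tau_0\tau_1-\tau_1\tau_0)\psi_k=(e^{i\Phi}-1)\,\tau_1\tau_0\psi_k$ has norm exactly $|e^{i\Phi}-1|$ for every $k$, forcing $e^{i\Phi}=1$, i.e.\ $\Phi\in 2\pi\mathbb{Z}$, contradicting the hypothesis. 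Therefore $\|Q_\Lambda(\Phi)\|<1$.

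I do not anticipate a genuine obstacle: the argument is soft once two bookkeeping facts are in place, namely the splitting of $6-H_\Phi$ into three manifestly nonnegative pieces and the commutation relation $\tau_0\tau_1=e^{i\Phi}\tau_1\tau_0$. If one wished to avoid the operator-algebraic flavour, an equivalent route would be to fiber $H_\Phi=\int^{\oplus}H_{\Phi,\theta}\,d\theta$ as in Lemma \ref{spectrumQlambda} and show $\sup\sigma(H_{\Phi,\theta})<6$ uniformly in $\theta$, but that appears more cumbersome than the commutator argument above.
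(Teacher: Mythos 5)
Your proof is correct, and it takes a genuinely different route from the one in the paper. The paper's argument (given there as an ``alternate proof'') first fibers $H_\Phi$ into the one-dimensional Jacobi operators $H_{\Phi,\theta}$ via \eqref{QLambdaHalpha} and then bounds $\|H_{\Phi,\theta}\varphi\|^2$ by $3\sum_i\|h_i\varphi\|^2$, reducing everything to the elementary observation that $\sup_\theta\bigl(\sin^2\pi(\theta-\tfrac{\Phi}{2\pi})+\sin^2(\pi\theta)+\cos^2(2\pi\theta)\bigr)<3$ when $\Phi\notin 2\pi\Z$, since the three maxima cannot be attained simultaneously; this produces an explicit constant $c_\Phi<6$ uniform in $\theta$. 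You instead stay with the two-dimensional operator $H_\Phi$, write $6-H_\Phi$ as a sum of the three manifestly nonnegative operators $(W-1)^*(W-1)$ for $W=\tau_0,\tau_1,\tau_0\tau_1^*$, and rule out $6\in\sigma(H_\Phi)$ by a Weyl-sequence argument against the magnetic commutation relation $\tau_0\tau_1=e^{i\Phi}\tau_1\tau_0$ (which indeed follows in one line from \eqref{translatop}); since $H_\Phi\le 6\operatorname{id}$ and the spectrum is closed, $6\notin\sigma(H_\Phi)$ does give $\sup\sigma(H_\Phi)<6$, as you use. Each step checks out, including the identity $\|Q_\Lambda(\Phi)\|^2=\frac13+\frac19\sup\sigma(H_\Phi)$ (valid because $AA^*\ge 0$). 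What the two approaches buy is complementary: the paper's computation yields an explicit, $\theta$-uniform bound $c_\Phi$ that quantifies the gap at the Hill band edges, whereas your commutator argument is softer but cleaner and more robust --- it applies verbatim to any self-adjoint operator built from finitely many unitaries satisfying a nontrivial Weyl relation, without any reduction to one-dimensional fibers, at the price of not directly producing a quantitative estimate.
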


Indeed, then, away from the Dirichlet spectrum $\sigma(H^D)$, which are located on the edges of the Hill bands (\ref{DirichletHilledge}), we have the following characterization of $\sigma(H^B)$. Let $B_n$ and $\Delta$ be defined as in Section \ref{Hillsec}. 

\begin{lemm}\label{simpleproperties}
For the magnetic Schr\"odinger operator $H^B$, the following properties hold.
\begin{enumerate}
    \item The level of the Dirac points $\Delta\vert_{\operatorname{int}(B_n)}^{-1}(0)$ always belongs to the spectrum of $H^B$, i.e. $0 \in \Delta\vert_{\operatorname{int}(B_n)}(\sigma(H^B))$.
    \item $\lambda \in \Delta\vert_{\operatorname{int}(B_n)}(\sigma(H^B))$ iff $-\lambda \in \Delta\vert_{\operatorname{int}(B_n)}(\sigma(H^B)).$ Consequently, the property $\Delta'(0)\neq 0$ implies that locally with respect to the Dirac points, the spectrum of $H^B$ is symmetric.
    \item $H^B$ has no point spectrum away from $\sigma(H^D)$.
    \item For non-trivial flux $\Phi\notin 2\pi \Z$, $H^B$ has purely continuous spectrum bounded away from $\sigma(H^D)$.
\end{enumerate}
\end{lemm}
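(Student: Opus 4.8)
The plan is to obtain all four statements from Lemma~\ref{HBQLam}, combined with the structure of the Floquet discriminant recalled in Section~\ref{Hillsec} and the spectral information on $Q_\Lambda(\Phi)$ supplied by Lemmas~\ref{Qsym}, \ref{spectrumQlambda} and \ref{operatornorm}. The relevant dictionary is this: on each Hill band $B_n=[\alpha_n,\beta_n]$ the restriction $\Delta\vert_{\operatorname{int}(B_n)}$ is a strictly monotone real-analytic bijection of $\operatorname{int}(B_n)$ onto $(-1,1)$, since $\Delta\vert_{\operatorname{int}(B_n)}'\neq0$; and by \eqref{DirichletHilledge} one has $\sigma(H^D)\subset\{\lambda:\ |\Delta(\lambda)|=1\}=\bigcup_n\{\alpha_n,\beta_n\}$, so every $\lambda$ with $|\Delta(\lambda)|\neq1$ --- in particular every point of $\operatorname{int}(B_n)$ and every point in an open spectral gap of $H_{\text{Hill}}$ --- lies in $\rho(H^D)$. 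On such points Lemma~\ref{HBQLam} applies and reads $\lambda\in\sigma(H^B)\iff\Delta(\lambda)\in\sigma(Q_\Lambda(\Phi))$, with the same equivalence for the point spectrum.

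For (1): $0\in\sigma(Q_\Lambda(\Phi))$ by Lemma~\ref{Qsym} and $0\in(-1,1)$, so $\lambda_n:=\Delta\vert_{\operatorname{int}(B_n)}^{-1}(0)\in\operatorname{int}(B_n)\subset\rho(H^D)$, whence $\lambda_n\in\sigma(H^B)$ by Lemma~\ref{HBQLam}, i.e.\ $0\in\Delta\vert_{\operatorname{int}(B_n)}(\sigma(H^B))$. For (2): for $\lambda\in(-1,1)$ write $\mu=\Delta\vert_{\operatorname{int}(B_n)}^{-1}(\lambda)\in\rho(H^D)$; Lemma~\ref{HBQLam} gives $\mu\in\sigma(H^B)\iff\lambda\in\sigma(Q_\Lambda(\Phi))$, and since $\sigma(Q_\Lambda(\Phi))$ is symmetric about $0$ (Lemma~\ref{Qsym}) and $-\lambda\in(-1,1)$, the same equivalence applied to $\Delta\vert_{\operatorname{int}(B_n)}^{-1}(-\lambda)$ yields $\lambda\in\Delta\vert_{\operatorname{int}(B_n)}(\sigma(H^B))\iff-\lambda\in\Delta\vert_{\operatorname{int}(B_n)}(\sigma(H^B))$. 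The ``consequently'' then follows because $\Delta$ is a local diffeomorphism near $\lambda_n$ with $\Delta(\lambda_n)=0$: on a small interval $I\ni\lambda_n$ one has $\sigma(H^B)\cap I=\Delta\vert_{\operatorname{int}(B_n)}^{-1}\bigl(\sigma(Q_\Lambda(\Phi))\cap\Delta(I)\bigr)$, the $\Delta$-preimage of a set symmetric about $0$.

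For (3): Lemma~\ref{HBQLam} gives, for $\lambda\in\rho(H^D)$, that $\lambda\in\sigma_p(H^B)\iff\Delta(\lambda)\in\sigma_p(Q_\Lambda(\Phi))$, and $\sigma_p(Q_\Lambda(\Phi))=\emptyset$ by Lemma~\ref{spectrumQlambda}; hence $\sigma_p(H^B)\cap\rho(H^D)=\emptyset$. For (4): by Lemma~\ref{operatornorm}, $r:=\|Q_\Lambda(\Phi)\|<1$ whenever $\Phi\notin2\pi\Z$, so $\sigma(Q_\Lambda(\Phi))\subset[-r,r]$. Therefore any $\lambda\in\rho(H^D)$ with $|\Delta(\lambda)|>r$ satisfies $\Delta(\lambda)\notin\sigma(Q_\Lambda(\Phi))$ and hence $\lambda\in\rho(H^B)$; this set includes every point with $|\Delta|>1$ together with a punctured neighbourhood of each band edge, in particular of each Dirichlet eigenvalue. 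Equivalently, inside each Hill band $\sigma(H^B)\cap\operatorname{int}(B_n)=\Delta\vert_{\operatorname{int}(B_n)}^{-1}(\sigma(Q_\Lambda(\Phi)))$ is a compact subset of $\operatorname{int}(B_n)$, hence at positive distance from the band edges $\{\alpha_n,\beta_n\}\supseteq\sigma(H^D)\cap B_n$; in other words a spectral gap of $H^B$ opens around each Dirichlet eigenvalue. Together with (3), this shows $H^B$ has purely continuous spectrum outside of, and bounded away from, $\sigma(H^D)$.

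No real obstacle is expected here: the substance already lies in the quoted lemmas, above all the operator-norm bound of Lemma~\ref{operatornorm} and the reduction of Lemma~\ref{HBQLam}. The only points requiring care are bookkeeping --- verifying that every set on which Lemma~\ref{HBQLam} is invoked genuinely lies in $\rho(H^D)$ (via \eqref{DirichletHilledge}), and making the ``local symmetry'' in (2) and ``bounded away'' in (4) precise through the diffeomorphism $\Delta\vert_{\operatorname{int}(B_n)}$.
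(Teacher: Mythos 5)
Your proposal is correct and follows exactly the paper's route: the paper's own proof is the one-line observation that (1), (2) come from Lemmas \ref{HBQLam} and \ref{Qsym}, (3) from part (2) of Lemma \ref{spectrumQlambda}, and (4) from Lemma \ref{operatornorm} together with (3); your write-up simply fills in the same bookkeeping (that $\operatorname{int}(B_n)\subset\rho(H^D)$ via \eqref{DirichletHilledge} and that $\Delta\vert_{\operatorname{int}(B_n)}$ is a homeomorphism onto $(-1,1)$) explicitly.
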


Combining Lemma \ref{simpleproperties} with Lemma \ref{Hausdorff}, we get 
\begin{lemm}\label{HausdorffofH}
For generic $\Phi$, $\dim_H(\sigma^{\Phi})\leq \frac{1}{2}$.
\end{lemm}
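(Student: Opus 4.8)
The plan is to push the Hausdorff-dimension bound for $\sigma(Q_{\Lambda}(\Phi))$ obtained in Lemma~\ref{Hausdorff} forward to $\sigma^{\Phi}$ through the Floquet discriminant $\Delta$, exploiting that $\Delta$ is, on the interior of each Hill band, a strictly monotone real-analytic bijection onto $(-1,1)$ with nonvanishing derivative, and that Hausdorff dimension does not increase under Lipschitz maps. Throughout, ``generic $\Phi$'' is taken to mean the same Diophantine-type condition \eqref{genericcondition} already used in Lemma~\ref{Hausdorff}; such $\Phi$ has $\tfrac{\Phi}{2\pi}$ irrational, in particular $\Phi\notin 2\pi\Z$, so Lemmas~\ref{operatornorm} and \ref{simpleproperties} are available.

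First I would split $\sigma^{\Phi}=\big(\sigma^{\Phi}\cap\sigma(H^{D})\big)\cup\big(\sigma^{\Phi}\setminus\sigma(H^{D})\big)$. The set $\sigma(H^{D})$ is the spectrum of the Dirichlet Schr\"odinger operator on a single unit interval (the same on every edge), hence a countable discrete set, so $\dim_H\big(\sigma^{\Phi}\cap\sigma(H^{D})\big)=0$. For the remaining part, Lemma~\ref{HBQLam} gives $\lambda\in\sigma^{\Phi}\setminus\sigma(H^{D})$ iff $\Delta(\lambda)\in\sigma(Q_{\Lambda}(\Phi))$; by Lemma~\ref{operatornorm} we have $\|Q_{\Lambda}(\Phi)\|\le r$ for some $r=r(\Phi)<1$, so any such $\lambda$ satisfies $|\Delta(\lambda)|\le r<1$ and therefore lies in $\operatorname{int}(B_n)$ for some $n$ (by \eqref{DirichletHilledge} the edges of the Hill bands, where $|\Delta|=1$, carry all of $\sigma(H^{D})$, and $\Delta$ is strictly monotone on each $\operatorname{int}(B_n)$, so $\sigma(H^{D})\cap\operatorname{int}(B_n)=\varnothing$). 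Consequently
\begin{equation*}
\sigma^{\Phi}\setminus\sigma(H^{D})=\bigcup_{n}\big(\Delta|_{\operatorname{int}(B_n)}\big)^{-1}\big(\sigma(Q_{\Lambda}(\Phi))\big).
\end{equation*}

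Next, for each $n$ set $K_n:=\big(\Delta|_{\operatorname{int}(B_n)}\big)^{-1}([-r,r])$. Since $\Delta|_{\operatorname{int}(B_n)}$ is a strictly monotone real-analytic bijection onto $(-1,1)$ with $\Delta'\neq 0$, the set $K_n$ is a compact subinterval of $\operatorname{int}(B_n)$ on which $\Delta'$ is continuous and bounded away from $0$ and $\infty$; hence $\Delta|_{K_n}$ and its inverse are both Lipschitz, i.e.\ $\Delta|_{K_n}$ is bi-Lipschitz onto $[-r,r]$. Because bi-Lipschitz maps preserve Hausdorff dimension and $\big(\Delta|_{\operatorname{int}(B_n)}\big)^{-1}(\sigma(Q_{\Lambda}(\Phi)))\subseteq K_n$, Lemma~\ref{Hausdorff} yields
\begin{equation*}
\dim_H\Big(\big(\Delta|_{\operatorname{int}(B_n)}\big)^{-1}\big(\sigma(Q_{\Lambda}(\Phi))\big)\Big)\le\dim_H\big(\sigma(Q_{\Lambda}(\Phi))\big)\le\frac{1}{2}.
\end{equation*}
Finally, countable stability of Hausdorff dimension gives $\dim_H\big(\sigma^{\Phi}\setminus\sigma(H^{D})\big)\le\tfrac12$, and combining with the Dirichlet part we obtain $\dim_H(\sigma^{\Phi})\le\tfrac12$.

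The one delicate point — the ``main obstacle'' — is exactly the bi-Lipschitz control of $\Delta$: near the band edges $\Delta'$ degenerates, so a priori $\big(\Delta|_{\operatorname{int}(B_n)}\big)^{-1}$ could inflate Hausdorff dimension there and the transport argument would fail. This is circumvented not by a computation but by the spectral input of Lemmas~\ref{operatornorm} and \ref{simpleproperties}(4): for non-trivial flux the continuous spectrum stays uniformly inside each Hill band, equivalently $\sigma(Q_{\Lambda}(\Phi))$ avoids the critical values $\pm1$ of $\Delta$, so only the harmless regime where $\Delta$ is bi-Lipschitz is ever used.
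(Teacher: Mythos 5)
Your proof is correct and follows essentially the same route as the paper: decompose $\sigma^{\Phi}$ into the (countable, hence dimension-zero) Dirichlet set and the pullbacks $\bigl(\Delta|_{\operatorname{int}(B_n)}\bigr)^{-1}\bigl(\sigma(Q_{\Lambda}(\Phi))\bigr)$, apply Lemma~\ref{Hausdorff} to each piece, and conclude by countable stability of Hausdorff dimension. You additionally make explicit a point the paper leaves implicit — that $\|Q_{\Lambda}(\Phi)\|<1$ (Lemma~\ref{operatornorm}) confines each pullback to a compact subinterval of $\operatorname{int}(B_n)$ on which $\Delta$ is bi-Lipschitz, so the inverse map cannot inflate the dimension near the band edges.
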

\subsubsection*{Proof of Lemma \ref{HausdorffofH}}
Lemma \ref{Hausdorff} and (2) of Lemma \ref{simpleproperties} together imply that
for generic $\Phi$,
\begin{align*}
\dim_H\left(\Delta\vert_{\operatorname{int}(B_n)}^{-1}(\sigma(Q_{\Lambda}(\Phi ))\right)\leq \frac{1}{2}.
\end{align*}
Hence since 
\begin{align*}
\sigma^{\Phi}=\sigma(H^D)\bigcup \left(\cup_{n\in \N}\ \Delta\vert_{\operatorname{int}(B_n)}^{-1}(\sigma(Q_{\Lambda}(\Phi ))\right),
\end{align*} 
we have
\begin{align*}
\dim_H(\sigma^{\Phi})\leq \sup\left\lbrace\dim_H(\sigma(H^D)),\ \sup_{n\in \N}\ \dim_H\left(\Delta\vert_{\operatorname{int}(B_n)}^{-1}(\sigma(Q_{\Lambda}(\Phi ))\right) \right\rbrace\leq \frac{1}{2}.
\end{align*}
\qed

\subsubsection*{Proof of Lemma \ref{simpleproperties}}
(1), (2) follow from a quick combination of Lemma \ref{HBQLam} and Lemma \ref{Qsym}, and (3) follows from Part (2) of Lemma \ref{spectrumQlambda}. (4) is a corollary of Lemma \ref{operatornorm} and (3). $\hfill{} \Box$

\subsubsection*{Alternate proof of Lemma \ref{operatornorm}}
Without loss of generality $\Phi\in (0, 2\pi)$.
By \eqref{QLambdaHalpha}, it suffices to show $\|H_{\Phi, \theta}\|<c_{\Phi}<6$ for some constant $c_{\Phi}$ independent of $\theta\in \T_1$.
Let us take $\varphi\in \ell^2(\Z)$ with $\|\varphi\|_{\ell^2(\Z)}=1$. 
Consider 
\begin{align*}
(H_{\Phi, \theta}\varphi)_n
=&\underbrace{c\left(\theta+n\frac{\Phi}{2\pi}\right)\varphi_{n+1}}_{=:(h_1\varphi)_n}+\underbrace{\overline{c\left(\theta+(n-1)\frac{\Phi}{2\pi}\right)}\varphi_{n-1}}_{=:(h_2\varphi)_n}+\underbrace{v\left(\theta+n\frac{\Phi}{2\pi}\right)\varphi_n}_{=:(h_3\varphi)_n},
\end{align*}
in which $h_1, h_2, h_3\in \mathcal{L}(\ell^2(\Z))$.
Hence 
\begin{align*}
\|H_{\Phi, \theta}\varphi\|_{\ell^2(\Z)}^2
\leq &3\left(\|h_1\varphi\|_{\ell^2(\Z)}^2+\|h_2\varphi\|_{\ell^2(\Z)}^2+\|h_3\varphi\|_{\ell^2(\Z)}^2\right)\\
\leq &3\ \sup_{n\in \Z}\left(\left|c\left(\theta+(n-1)\frac{\Phi}{2\pi}\right)\right|^2+\left|\overline{c\left(\theta+n\frac{\Phi}{2\pi}\right)}\right|^2+\left|v\left(\theta+n\frac{\Phi}{2\pi}\right)\right|^2\right)\\
\leq & 12\sup_{\theta\in \T_1}\left(\sin^2 \pi\left(\theta-\frac{\Phi}{2\pi}\right)+\sin^2 (\pi \theta)+\cos^2 (2\pi \theta)\right)\\
=:&c_{\Phi}^2<36.
\end{align*}
\qed

In order to investigate further the Dirichlet spectrum and spectral decomposition of the continuous spectrum into absolutely and singular continuous parts, we start with constructing magnetic translations.

\subsection{Magnetic translations}

\

In general, $\Lambda^B$ does not commute with lattice translations $T^{st}_{\gamma}$. 
Yet, there is a set of modified translations that do still commute with $\Lambda^B,$ although they in general no longer commute with each other.
We define those magnetic translations $T^{B}_{\gamma} : L^2(\mathcal{E}\left(\Lambda\right)) \rightarrow L^2(\mathcal{E}\left(\Lambda\right))$ as unitary operators given by
\begin{equation}
\label{MagTra}
(T^{B}_{\gamma} \psi)_{\vec{e}} := u^B_{\gamma}(\vec{e}) (T^{\text{st}}_{\gamma}\psi)_{\vec{e}}
\end{equation}
for any $\psi:=(\psi_{\vec{e}})_{\vec{e} \in \mathcal{E}(\Lambda)} \in L^2(\mathcal{E}\left(\Lambda\right))$ and $\gamma \in \mathbb{Z}^2.$
The lattice translation $T^{st}_{\gamma}$ is defined by $(T^{st}_{\gamma}\psi)_{\vec{e}}(x)=\psi_{\vec{e}-\gamma_1\vec{b}_1-\gamma_2\vec{b}_2}(x-\gamma_1 \vec{b}_1-\gamma_2\vec{b}_2)$ as before.
The function $u^B_{\gamma}$ is constant on each copy of the fundamental domain, and defined as follows
\begin{align}\label{defuBgamma}
u^B_{\gamma}(\tilde{\gamma}_1, \tilde{\gamma}_2, \vec{[e]})= e^{i\Phi \gamma_1 \tilde{\gamma}_2},\ \ \ \text{for } \vec{[e]}=\vec{f}, \vec{g}\text{ or }\vec{h}, \ \ \gamma=(\gamma_1, \gamma_2)\in \Z^2.
\end{align}

For $\vec{e}=\tilde{\gamma}_1\vec{b}_1+\tilde{\gamma}_2\vec{b}_2+\vec{[e]}$ we have
\begin{align}\label{TmuTgamma}
(T^B_{\mu}T^B_{\gamma}\psi)_{\vec{e}}=&u_{\mu}(\vec{e})u_{\gamma}(\vec{e}-\mu_1 \vec{b}_1-\mu_2 \vec{b}_2) \psi_{\vec{e}-\mu_1 \vec{b}_1-\mu_2 \vec{b}_2-\gamma_1 \vec{b}_1-\gamma_2 \vec{b}_2} \nonumber\\
=&e^{i\Phi\mu_1 \tilde{\gamma}_2+i\Phi\gamma_1(\tilde{\gamma}_2-\mu_2)}\psi_{\vec{e}-\mu_1 \vec{b}_1-\mu_2 \vec{b}_2-\gamma_1 \vec{b}_1-\gamma_2 \vec{b}_2}, \nonumber \\
(T^B_{\gamma} T^B_{\mu}\psi)_{\vec{e}}=&u_{\gamma}(\vec{e})u_{\mu}(\vec{e}-\gamma_1 \vec{b}_1-\gamma_2 \vec{b}_2) \psi_{\vec{e}-\mu_1 \vec{b}_1-\mu_2 \vec{b}_2-\gamma_1 \vec{b}_1-\gamma_2 \vec{b}_2} \nonumber\\
=&e^{i\Phi \gamma_1 \tilde{\gamma}_2+i\Phi \mu_1(\tilde{\gamma}_2-\gamma_2)}\psi_{\vec{e}-\mu_1 \vec{b}_1-\mu_2 \vec{b}_2-\gamma_1 \vec{b}_1-\gamma_2 \vec{b}_2}.
\end{align}
Hence 
\begin{align}\label{commrel}
T^B_{\mu}T^B_{\gamma}=T^B_{\gamma}T^B_{\mu}e^{i\Phi \omega(\mu, \gamma)},
\end{align}
where $\omega(\mu,\gamma):=\mu_1\gamma_2-\mu_2\gamma_1$ is the standard symplectic form on $\mathbb{R}^2.$
It also follows from (\ref{TmuTgamma}) that $(T^B_{\gamma})^{-1}=e^{-i\Phi \gamma_1 \gamma_2} T^B_{-\gamma}$.
One can also check that 
\begin{align}\label{TB*}
(T^B_{\gamma})^*=e^{-i\Phi \gamma_1\gamma_2} T^B_{-\gamma}=(T^B_{\gamma})^{-1},
\end{align}
hence $T^B_{\gamma}$ is unitary.

By the definition (\ref{MagTra}), (\ref{defuBgamma}), it is clear that for any $\psi\in L^2(\mathcal{E}(\Lambda))$, 
\begin{align}\label{TBcommutesHe}
\frac{d}{dt} T^B_{\gamma}\psi=T^B_{\gamma}\frac{d}{dt}\psi \text{ and } VT^B_{\gamma}\psi=T^B_{\gamma}V\psi.
\end{align}

In order to make sure $D(\Lambda^BT^B_{\gamma})=D(T^B_{\gamma}\Lambda^B)$, 
it suffices to check $T^B_{\gamma}(D(\Lambda^B))=D(\Lambda^B)$, which translates into
\begin{align}\label{TBcommutesLambdaBtranslates}
\left\lbrace
\begin{matrix}
u_{\gamma}(\vec{e}_1)=u_{\gamma}(\vec{e}_2)\ \text{ whenever } i(\vec{e}_1)=i(\vec{e}_2)\\
\\
\frac{e^{i\tbe_{\vec{e}_2}}u_{\gamma}(\vec{e}_2)}{e^{i\tbe_{\vec{e}_1}}u_{\gamma}(\vec{e}_1)}=\frac{e^{i\tbe_{\vec{e}_2-\gamma_1\vec{b}_1-\gamma_2 \vec{b}_2}}}{e^{i\tbe_{\vec{e}_1-\gamma_1 \vec{b}_1-\gamma_2 \vec{b}_2}}}\ \text{ whenever } t(\vec{e}_1)=t(\vec{e}_2).
\end{matrix}
\right.
\end{align}
This, by (\ref{deftildebeta}) is in turn equivalent to the following: for any $\tilde{\gamma}_1, \tilde{\gamma}_2\in \Z$:
\begin{align*}
\left\lbrace
\begin{matrix} 
u_{\gamma}(\tilde{\gamma}_1, \tilde{\gamma}_2, \vec{f})=u_{\gamma}(\tilde{\gamma}_1, \tilde{\gamma}_2, \vec{g})
=u_{\gamma}(\tilde{\gamma}_1, \tilde{\gamma}_2, \vec{h})\\
\\
u_{\gamma}(\tilde{\gamma}_1, \tilde{\gamma}_2, \vec{f})=u_{\gamma}(\tilde{\gamma}_1+1, \tilde{\gamma}_2, \vec{g})=e^{-i\Phi \gamma_1}u_{\gamma}(\tilde{\gamma}_1, \tilde{\gamma}_2+1, \vec{h})
\end{matrix}
\right.
\end{align*}
The definition of $u^B_{\gamma}$ (\ref{defuBgamma}) clearly satisfies this requirement. 

Therefore, although magnetic translations do not necessarily commute with one another \eqref{commrel}, they commute with $\Lambda^B$
\begin{equation}\label{TBcommutesLambda}
T_{\gamma}^B \Lambda^B = \Lambda^B T_{\gamma}^B.
\end{equation}

One can check that $T^B_{\gamma}(D(\mathcal{T}^B))=D(\mathcal{T}^B)$ is also equivalent to \eqref{TBcommutesLambdaBtranslates}, hence 
by \eqref{TBcommutesHe},
\begin{align}\label{TBcommutescalTB}
T_{\gamma}^B \mathcal{T}^B=\mathcal{T}^B T_{\gamma}^B.
\end{align}
Note that we also have 
\begin{align}\label{TBcommutesHD}
T_{\gamma}^B H^D=H^D T_{\gamma}^B,
\end{align}
which is due to (\ref{TBcommutesHe}) and $T_{\gamma}^B (\mathcal{H}_0^1(\vec{e}))=\mathcal{H}_0^1(\vec{e}-\gamma_1\vec{b}_1-\gamma_2\vec{b}_2)$ for any $\vec{e}\in \mathcal{E}(\Lambda)$.

We will now study the structure of eigenfunctions and the nature of the continuous spectrum of $H^B$.

\subsection{Dirichlet spectrum}\label{Dirichsec}

\

In this subsection, we will study the energies belonging to the Dirichlet spectrum $\sigma(H^D)$. 
Lemma \ref{Dirichlet contribution} below shows that $\sigma(H^D)$ is contained in the point spectrum of $H^B$, hence the {\it only} point spectrum of $H^B$, due to Part (3) of Lemma \ref{simpleproperties}.

Consider a compactly supported simply closed loop, which is a path with vertices of degree $2$ enclosing $q$ hexagons, see e.g. Fig. \ref{Fig:loop}. 
Then this loop passes (proceeding in positive direction from the center of an edge $\vec{e}_1$ such that the first vertex we reach is $t(\vec{e}_1)$) $n:=2+4q$ edges $\vec{e}_1,..., \vec{e}_{n}$ in $\mathcal{E}(\Lambda).$
For a solution vanishing outside this loop,
the boundary conditions imposed by \eqref{DomLambdaB} on the derivatives can be represented in a matrix equation
\begin{equation}
T_{\Phi}(n)\psi^\prime(n)=0,
\end{equation}
where
\begin{equation}
\label{Matrix}
T_{\Phi}(n):= \left(\begin{matrix} 
e^{i\tbe_{\vec{e}_1}} & e^{i\tbe_{\vec{e}_2}} & 0 & 0 & 0 &\cdots & 0 \\
0 & 1 & 1 &0 & 0 & \cdots& 0 \\
0 &0 &e^{i\tbe_{\vec{e}_3}} & e^{i \tbe_{\vec{e}_4}} & 0 &\cdots & 0 \\
\vdots & \vdots & 0 & \ddots & \ddots  & 0 & 0  \\ 
0 & 0 & 0 &0 & 0  & e^{i \tbe_{\vec{e}_{n-1}}} & e^{i \tbe_{\vec{e}_n}} \\ 
1 & 0 & 0 &0 & 0  & 0 & 1 \\ 
\end{matrix}\right)\ \mathrm{and}\
\psi^\prime(n):=
\left(\begin{matrix}
\psi^\prime_{\vec{e}_1}(1)\\
\psi^\prime_{\vec{e}_2}(1)\\
\psi^\prime_{\vec{e}_3}(1)\\
\vdots\\
\psi^\prime_{\vec{e}_{n-1}}(1)\\
\psi^\prime_{\vec{e}_n}(1)
\end{matrix}
\right).
\end{equation}

\begin{rem}
\label{rankofmat}
We observe that $T_{\Phi}(n)$ can be row-reduced to an upper triangular matrix with diagonal 
\begin{align*}
  &(e^{i\tbe_{\vec{e}_1}}, 1, e^{i\tbe_{\vec{e}_3}}, 1, e^{i\tbe_{\vec{e}_5}},...,1, e^{i\tbe_{\vec{e}_{n-1}}}, 1-e^{i\sum_{j=1}^n (-1)^j \tbe_{\vec{e}_j}})\\
=&(e^{i\tbe_{\vec{e}_1}}, 1, e^{i\tbe_{\vec{e}_3}}, 1, e^{i\tbe_{\vec{e}_5}},...,1, e^{i\tbe_{\vec{e}_{n-1}}}, 1-e^{iq\Phi}),
\end{align*} 
where $q$ is the number of enclosed hexagons.
Hence $\operatorname{rank}(T_{\Phi}(n))=n$ iff $ q \Phi \notin 2\pi \mathbb{Z}$ and $\operatorname{rank}(T_{\Phi}(n))=n-1$ otherwise.
\end{rem}

\begin{lemm}
\label{Dirichlet contribution}
The Dirichlet eigenvalues $\lambda \in \sigma(H^D)$ are contained in the point spectrum of $H^B$. 
\end{lemm}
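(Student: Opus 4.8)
The plan is to pass to the Peierls-transformed operator $\Lambda^B=\tilde P^{-1}H^B\tilde P$ of \eqref{LambdaB}: since $\tilde P$ is unitary it suffices, for each $\lambda\in\sigma(H^D)$, to exhibit a nonzero $\psi\in D(\Lambda^B)$ with $\Lambda^B\psi=\lambda\psi$. Recall that $\lambda\in\sigma(H^D)$ is equivalent to $s_\lambda(1)=0$, so on any edge $\vec e$ the $\mathcal H^2$-solutions of $-\psi_{\vec e}''+V_{\vec e}\psi_{\vec e}=\lambda\psi_{\vec e}$ vanishing at both endpoints are precisely the multiples of $s_{\lambda,\vec e}$, and from constancy of the Wronskian together with \eqref{cossinrel} one gets $s_\lambda'(1)=\pm1$, the sign depending only on $\lambda$. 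Hence, for any $\psi$ that equals a multiple of $s_{\lambda,\vec e}$ on each edge of a finite edge set and vanishes elsewhere: the continuity conditions in \eqref{DomLambdaB} hold automatically at every vertex (both sides being $0$), and on each edge leaving the support $\psi$ is identically $0$, so it contributes nothing to any Kirchhoff sum.

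Next I would look for such a $\psi$ supported on the edge set of a \emph{double hexagon} $R$, i.e.\ the union of two hexagonal faces of $\Lambda$ sharing a common edge, writing $\psi|_{\vec e}=c_{\vec e}\,s_{\lambda,\vec e}$ for $\vec e\in\mathcal E(R)$ and $\psi|_{\vec e}=0$ otherwise. By the remarks just made, $\psi\in D(\Lambda^B)$ reduces to the derivative conditions of \eqref{DomLambdaB} at the vertices of $R$; thanks to the orientation convention (all edges at a vertex are simultaneously incoming or outgoing), each vertex $v\in\mathcal V(R)$ contributes exactly one homogeneous linear equation in the coefficients $\{c_{\vec e}:\vec e\in\mathcal E_v(R)\}$ --- with coefficients $\pm e^{i\tbe_{\vec e}}$, since $\psi_{\vec e}'$ at $v$ equals $c_{\vec e}$ times $s_\lambda'(0)=1$ or $s_\lambda'(1)=\pm1$ --- while vertices outside $R$ impose nothing. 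Thus $\psi\in D(\Lambda^B)$ amounts to a homogeneous linear system with one equation per vertex of $R$ and one unknown per edge of $R$. Euler's formula for the planar subgraph $R$ (two hexagonal faces plus the outer face) gives $\#\mathcal E(R)-\#\mathcal V(R)=1$; concretely $R$ has $11$ edges and $10$ vertices. A homogeneous system with strictly more unknowns than equations has a nonzero solution $(c_{\vec e})$, and since $s_{\lambda,\vec e}\not\equiv0$ the resulting $\psi$ is nonzero and, by construction, satisfies $\Lambda^B\psi=\lambda\psi$. This produces the desired compactly supported eigenfunction --- a ``double hexagonal state'' --- for every flux $\Phi$, proving the lemma.

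For rational flux $\Phi/2\pi=p/q$ there is the alternative of taking a simply closed loop enclosing $q$ hexagons: then $q\Phi\in2\pi\Z$, so Remark \ref{rankofmat} gives $\rank T_\Phi(n)=n-1$ with $n=2+4q$, and a nonzero kernel vector of $T_\Phi(n)$ yields a loop-state eigenfunction in the same way. The advantage of the double-hexagon construction is that it is uniform in $\Phi$: it handles the irrational case, where single simple loops always fail, since $q\Phi\notin2\pi\Z$ for all $q\ge1$ forces $\rank T_\Phi(n)=n$.

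The only delicate point is the bookkeeping in the second paragraph: one must verify that each vertex of $R$ really contributes exactly one scalar constraint (the continuity equations collapsing to $0=0$, and the orientation convention leaving a single Kirchhoff equation there), that gluing the support edges of $R$ to the ambient zero function violates no condition of $\Lambda^B$ at the interface, and that the eigenvalue equation $(\Lambda^B\psi)_{\vec e}=\lambda\psi_{\vec e}$ then holds on every edge (trivially off $R$, and by the choice of $s_{\lambda,\vec e}$ on $R$). Once the count $\#\mathcal E(R)>\#\mathcal V(R)$ is in hand, existence of the eigenfunction is automatic and independent of the actual magnetic phases $\tbe_{\vec e}$.
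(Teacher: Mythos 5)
Your argument is correct, and it is genuinely different from (and slicker than) the paper's. The paper first disposes of $\Phi\in 2\pi\Z$ by citing \cite{KP}, and for $\Phi\notin 2\pi\Z$ it builds a double hexagonal state explicitly: it sets up the matrix $T_{\Phi}(10)$ for the outer loop of the double hexagon, splits into the cases $2\Phi\in 2\pi\Z$ (where $T_{\Phi}(10)$ has a kernel and the slicing edge is given coefficient $0$) and $2\Phi\notin 2\pi\Z$ (where $T_{\Phi}(10)$ is invertible and one solves $T_{\Phi}(10)a=y$ for a specific right-hand side $y$ encoding the slicing edge's contribution). You instead observe that, since all vertex values vanish, membership in $D(\Lambda^B)$ reduces to one homogeneous Kirchhoff equation per vertex (the bipartite orientation $\mathcal{V}(\Lambda)=i(\Lambda)\sqcup t(\Lambda)$ guaranteeing exactly one, and edges outside the support contributing nothing), and then the count $\#\mathcal{E}(R)=11>10=\#\mathcal{V}(R)$ forces a nontrivial kernel --- uniformly in $\Phi$, with no case distinction and no need to know the actual phases $\tbe_{\vec e}$. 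What the dimension count does not give you, and what the paper's explicit two-case construction is really for, is the finer statement used later in the spanning argument (Lemma \ref{compsuppstat} and its sequel): there one needs a double hexagonal state whose restriction to a \emph{prescribed} boundary edge $\vec d$ matches a given multiple of $s_{\lambda,\vec d}$, so that it can be subtracted off; mere existence of a nonzero kernel vector does not immediately provide that. For the lemma as stated, however, your proof is complete; the only cosmetic slip is the sign bookkeeping $s_\lambda'(1)=\pm1$, which is correct (Wronskian plus \eqref{cossinrel}) but not actually needed for the counting argument.
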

\begin{proof}
For $\Phi \in 2\pi \mathbb{Z}$ the statement is known \cite[Theroem $3.6$]{KP}, thus we focus on $\Phi \notin 2\pi \mathbb{Z}.$ By unitary equivalence, it suffices to construct an eigenfunction to $\Lambda^B.$
We will construct an eigenfunction on two adjacent hexagons $\Gamma$ as in Fig. \ref{Fig:doublehex}. Thus, $q=2$, the total number of edges is $m=11,$ of which $n=10$ are on the outer loop.
Let us denote the slicing edge by $\vec{e}$ and the edges on the outer loop by $\vec{e}_1, \vec{e}_2,..., \vec{e}_{10}$ (see Fig. \ref{Fig:doublehex}).
Recall that $s_{\lambda, \vec{e}}$ is the Dirichlet eigenfunction on $\vec{e}$.

By Remark \ref{rankofmat}, for $2 \Phi \in 2\pi \mathbb{Z}$, operator $T_{\Phi}(10)$ has a non-trivial nullspace. We could take 
\begin{equation}
a=(a_j) \in \operatorname{ker}\left(T_{\Phi}(10)\right)\backslash\{0\},
\end{equation}
and an eigenfunction $\psi$ on $\Gamma$ such that $\psi_{\vec{e}}=0$ and $\psi_{\vec{e}_j}=a_j s_{\lambda, \vec{e}_j}$.

If $2\Phi \notin 2\pi \mathbb{Z}$, we take a vector $y \in \mathbb{C}^{10}$ such that $y_2=-1$, $y_7=-e^{i\tbe_{\vec{e}}}$ and $y_j=0$ otherwise.
Since in this case $T_{\Phi}(10)$ is invertible, there exists a unique solution $a=(a_j)$ to the following equation:
\begin{equation}\label{10y}
T_{\Phi}(10) a = y.
\end{equation}
Let us take $\psi$ on $\Gamma$ such that $\psi_{\vec{e}_j}=a_j s_{\lambda, \vec{e}_j}$ and $\psi_{\vec{e}}=s_{\lambda, \vec{e}}$, then one can easily check $\psi$ is indeed an eigenfunction on $\Gamma$.
\end{proof}

\begin{center}
\begin{figure}
\label{doublehexstate}
\centerline{\includegraphics[height=7cm]{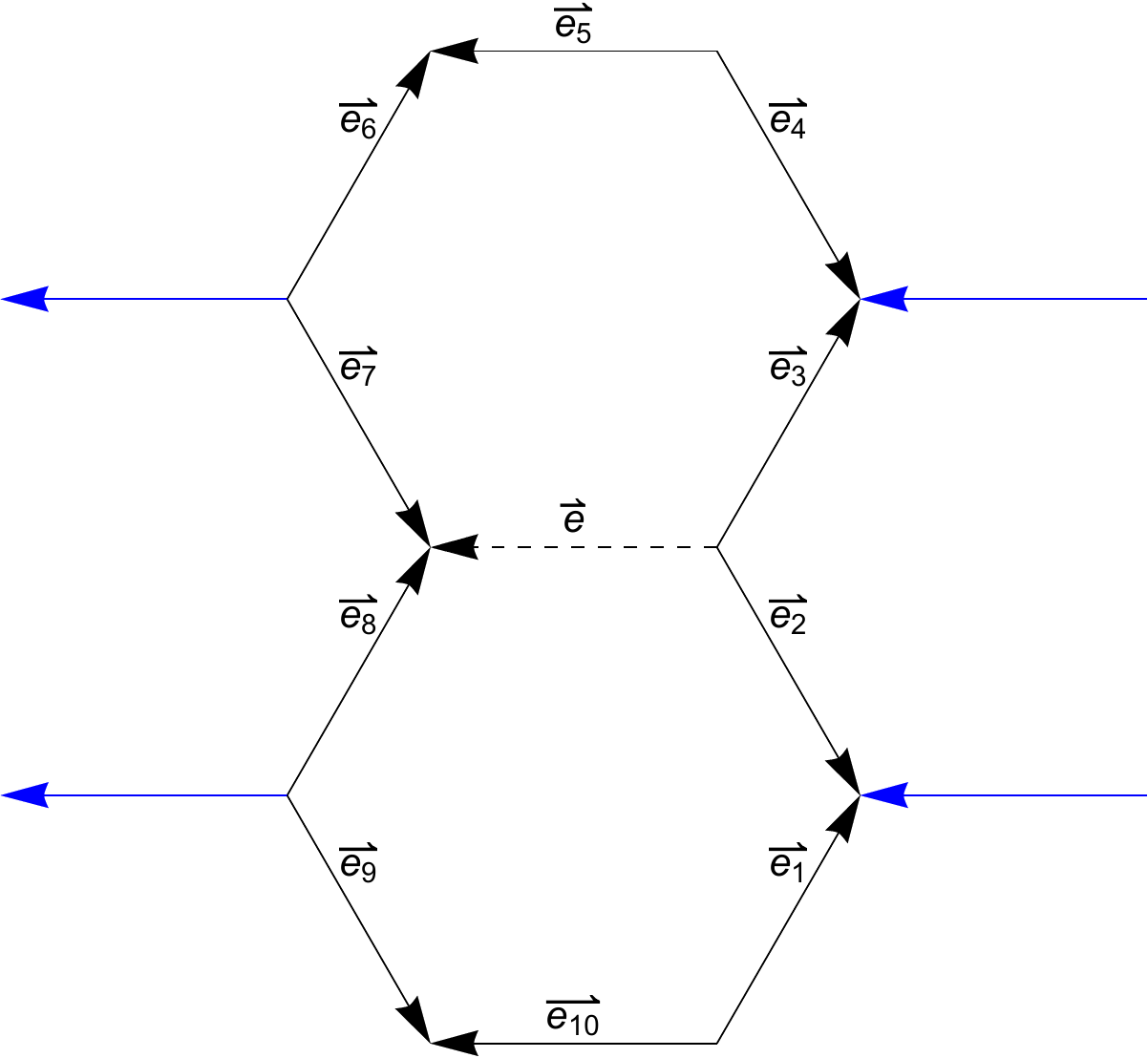}} 
\caption{Black arrows describe the double hexagon with slicing edge $\vec{e}$ indicated by the dashed arrow. \label{Fig:doublehex}}
\end{figure}
\end{center}

As a corollary of Lemma \ref{operatornorm} and Lemma \ref{Dirichlet contribution}, we have the following:
\begin{corr}
The spectrum of $H^B$ must always have open gaps for $\Phi \notin 2 \pi \mathbb{Z}$ at the edges of the Hill bands. 
\end{corr}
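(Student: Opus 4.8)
The plan is to read the gap structure off from the set-theoretic description of $\sigma(H^B)$ and the behaviour of the Floquet discriminant on the Hill bands. The starting point is the identity
\[
\sigma(H^B)=\sigma(H^D)\cup\bigcup_{n\in\N}\Delta\vert_{\operatorname{int}(B_n)}^{-1}\bigl(\sigma(Q_\Lambda(\Phi))\bigr),
\]
obtained in the proof of Lemma \ref{HausdorffofH} from Lemma \ref{HBQLam} and \eqref{DirichletHilledge}; in particular, by \eqref{DirichletHilledge} every Dirichlet eigenvalue sits at an edge of some Hill band, so it is already accounted for by the $\sigma(H^D)$ term and never lies in the interior of a band. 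To this I would add Lemma \ref{operatornorm}: for $\Phi\notin 2\pi\Z$ the self-adjoint operator $Q_\Lambda(\Phi)$ has norm $c_\Phi:=\lVert Q_\Lambda(\Phi)\rVert<1$, hence $\sigma(Q_\Lambda(\Phi))\subseteq[-c_\Phi,c_\Phi]$, and moreover $\pm c_\Phi\in\sigma(Q_\Lambda(\Phi))$ by self-adjointness together with the symmetry of $\sigma(Q_\Lambda(\Phi))$ about $0$ (Lemma \ref{Qsym}).

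The one geometric fact I need is that on each Hill band $B_n=[\alpha_n,\beta_n]$ the discriminant $\Delta$ is a homeomorphism onto $[-1,1]$: it is continuous on $B_n$, strictly monotone since $\Delta\vert_{\operatorname{int}(B_n)}'\neq 0$, stays in $[-1,1]$, and attains the values $\pm1$ at the two endpoints (by \eqref{Hillspectrum}). Consequently $\Delta\vert_{B_n}^{-1}([-c_\Phi,c_\Phi])$ is a closed subinterval $[\tilde\alpha_n,\tilde\beta_n]$ with $\alpha_n<\tilde\alpha_n\le\tilde\beta_n<\beta_n$, and the displayed decomposition gives $\Delta\vert_{\operatorname{int}(B_n)}^{-1}(\sigma(Q_\Lambda(\Phi)))\subseteq[\tilde\alpha_n,\tilde\beta_n]$. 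Since the nonempty open intervals $(\alpha_n,\tilde\alpha_n)$ and $(\tilde\beta_n,\beta_n)$ lie in $\operatorname{int}(B_n)$, they contain no Hill band edge and hence no Dirichlet eigenvalue, so
\[
\sigma(H^B)\cap(\alpha_n,\tilde\alpha_n)=\emptyset=\sigma(H^B)\cap(\tilde\beta_n,\beta_n),
\]
while $\tilde\alpha_n,\tilde\beta_n\in\sigma(H^B)$ because $\pm c_\Phi\in\sigma(Q_\Lambda(\Phi))$. These are the asserted open gaps, one adjacent to each edge of every Hill band. Lemma \ref{Dirichlet contribution} then settles the status of the edge point itself: if $\alpha_n$ (resp.\ $\beta_n$) is a Dirichlet eigenvalue it belongs to $\sigma(H^B)$ as an isolated point of the spectrum bordering the gap, and otherwise $\Delta(\alpha_n)=\pm1\notin\sigma(Q_\Lambda(\Phi))$, so $\alpha_n\notin\sigma(H^B)$ and the gap simply merges with the adjacent spectral gap of the Hill operator.

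I do not expect a genuine obstacle here, as all the real work is in Lemmas \ref{operatornorm} and \ref{Dirichlet contribution}. The two points that need a little care are (i) the displayed decomposition of $\sigma(H^B)$, i.e.\ that the ``iff'' of Lemma \ref{HBQLam} on $\rho(H^D)$ combined with \eqref{DirichletHilledge} really produces exactly that set; and (ii) the elementary observation that $\Delta$ restricts to a homeomorphism of each closed Hill band onto $[-1,1]$, since this is precisely what converts the strict bound $\lVert Q_\Lambda(\Phi)\rVert<1$ into a genuine retreat of the spectrum away from the band edges (for $\Phi\in 2\pi\Z$ one has $\lVert Q_\Lambda(\Phi)\rVert=1$ and the spectrum fills the whole Hill band, so the strictness in Lemma \ref{operatornorm} is indispensable).
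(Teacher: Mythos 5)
Your proof is correct and follows exactly the route the paper intends: the corollary is stated there as an immediate consequence of Lemma \ref{operatornorm} (the strict bound $\lVert Q_\Lambda(\Phi)\rVert<1$ for $\Phi\notin 2\pi\Z$) together with Lemma \ref{Dirichlet contribution}, and your argument is simply the careful elaboration of that, using the decomposition $\sigma(H^B)=\sigma(H^D)\cup\bigcup_n\Delta\vert_{\operatorname{int}(B_n)}^{-1}(\sigma(Q_\Lambda(\Phi)))$ and the fact that $\Delta$ maps each Hill band homeomorphically onto $[-1,1]$ with the Dirichlet eigenvalues pinned at $\Delta=\pm1$.
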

\begin{rem}
If the magnetic flux is trivial, i.e. $\Phi \in 2 \pi \mathbb{Z}$, then there do not have to be gaps. In particular, for zero potential in the non-magnetic case discussed in Theorem \ref{zero magnetic potential} all gaps of the absolutely continuous spectrum are closed and $\sigma_{\text{ac}}(H^B)=[0,\infty)$.
\end{rem}

The next lemma concerns the general feature of eigenspace of $H^B$.
Before proceeding, let us introduce the degree of a vertex in order to distinguish different types of eigenfunctions.
\begin{defi}
An eigenfunction is said to have a vertex of degree $d$ if there is a vertex with exactly $d$ adjacent edges on which the eigenfunction does not vanish. 
\end{defi}

\begin{lemm}\label{eigenspace}
For the point spectrum of $H^B$ it follows that
\begin{enumerate}
    \item Every eigenspace of $H^B$ is infinitely degenerated.
    \item Eigenfunctions of $H^B$ vanish at every vertex and are thus eigenfunctions of $H^D$ as well.
    \item Eigenfunctions of $H^B$ with compact support cannot have vertices of degree $1$. In particular, they must contain loops and the boundary edges of their support form loops as well. 
\end{enumerate}
\end{lemm}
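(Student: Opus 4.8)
The plan is to push everything through the modified Peierls substitution $\tilde P$ of \eqref{modifiedP}. Since $\Lambda^B=\tilde P^{-1}H^B\tilde P$ and $\tilde P$ acts on each edge as multiplication by a non-vanishing function, $\phi\mapsto\tilde P^{-1}\phi$ is a bijection between eigenfunctions of $H^B$ and of $\Lambda^B$ which preserves supports, the set of vertices at which a function vanishes, and linear independence, so it is enough to prove (1)--(3) for $\Lambda^B$. Fix an eigenvalue $\lambda$ of $\Lambda^B$; by (3) of Lemma \ref{simpleproperties} we have $\lambda\in\sigma(H^D)$, hence $s_\lambda(1)=0$, and by \eqref{cossinrel}, \eqref{DirichletHilledge} and constancy of the Wronskian, $c_\lambda(1)=s_\lambda'(1)=\Delta(\lambda)=\pm1$. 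For (2), take $\psi\in\ker(\Lambda^B-\lambda)$ and write $\psi_{\vec e}=a_{\vec e}c_{\lambda,\vec e}+b_{\vec e}s_{\lambda,\vec e}$ with $a_{\vec e}=\psi_{\vec e}(i(\vec e))$. Because $s_\lambda(1)=0$ and $|c_\lambda(1)|=1$, we get $|\psi_{\vec e}(t(\vec e))|=|\psi_{\vec e}(i(\vec e))|$ for every edge, while the continuity conditions in \eqref{DomLambdaB} (which only involve the unimodular factors $e^{i\tbe_{\vec e}}$) make $v\mapsto|\psi|(v)$ well defined; hence $|\psi|$ is constant along every edge and, $\Lambda$ being connected, constant on the whole graph, say $|\psi|\equiv c$. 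Since $c_\lambda,s_\lambda$ are linearly independent solutions on $[0,1]$, there is $\delta=\delta(\lambda)>0$ with $\|\psi_{\vec e}\|_{L^2(\vec e)}^2\ge\delta\,|a_{\vec e}|^2=\delta c^2$ for every $\vec e$; summing over the infinitely many edges forces $c=0$, as otherwise $\psi\notin L^2(\mathcal E(\Lambda))$. Thus $\psi$ vanishes at every vertex, so $a_{\vec e}=0$ and $\psi_{\vec e}=b_{\vec e}s_{\lambda,\vec e}\in\mathcal H_0^1(\vec e)\cap\mathcal H^2(\vec e)$ solves the edge equation, i.e. $\psi\in D(H^D)$ with $H^D\psi=\lambda\psi$.

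For (1), combine (2) with (3) of Lemma \ref{simpleproperties} to see that the point spectrum of $\Lambda^B$ is exactly $\sigma(H^D)$, and recall from Lemma \ref{Dirichlet contribution} that every $\lambda\in\sigma(H^D)$ carries a compactly supported double-hexagon eigenfunction $\psi_0$. The magnetic translations $T^B_\gamma$ commute with $\Lambda^B$ by \eqref{TBcommutesLambda}, so $T^B_\gamma\psi_0\in\ker(\Lambda^B-\lambda)$ and $\supp(T^B_\gamma\psi_0)$ is the lattice translate of $\supp\psi_0$ by $\gamma_1\vec b_1+\gamma_2\vec b_2$. Choosing $\gamma$ along an infinite sublattice for which these translated supports are pairwise disjoint yields an infinite linearly independent family (disjointly supported non-zero functions are linearly independent, and $T^B_\gamma$ is unitary), so $\dim\ker(\Lambda^B-\lambda)=\infty$.

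For (3), let $\psi$ be a compactly supported eigenfunction; by (2), $\psi_{\vec e}=b_{\vec e}s_{\lambda,\vec e}$ with $\vec e\in\supp\psi$ exactly when $b_{\vec e}\neq0$. At a vertex $v$ all of whose edges are outgoing, the derivative condition in \eqref{DomLambdaB} reads $\sum_{i(\vec e)=v}b_{\vec e}s_\lambda'(0)=\sum_{i(\vec e)=v}b_{\vec e}=0$, and at a vertex all of whose edges are incoming it reads $\sum_{t(\vec e)=v}e^{i\tbe_{\vec e}}b_{\vec e}s_\lambda'(1)=\pm\sum_{t(\vec e)=v}e^{i\tbe_{\vec e}}b_{\vec e}=0$; in either case a single non-zero $b_{\vec e}$ among the edges at $v$ is impossible, so no vertex has degree $1$. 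Consequently the finite non-empty support subgraph has minimal degree $\ge2$ and therefore contains a cycle, so $\psi$ contains a loop; and since a $1$-complex embedded in the plane with no free ends has complement whose frontier is traced by closed walks within the complex, the boundary edges of $\supp\psi$ form loops as well.

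The step I expect to be the main obstacle is (2): collapsing the magnetic eigenvalue problem onto the non-magnetic Dirichlet one. It rests on two facts that must be combined carefully --- that at a Dirichlet energy the one-edge transfer is unimodular, so $|\psi|$ is transported unchanged across every vertex, and that a non-zero solution on an edge carries a uniform positive amount of $L^2$-mass, which over infinitely many edges contradicts $\psi\in L^2$. Once (2) is in hand, (1) and (3) follow readily from Lemma \ref{Dirichlet contribution}, the commutation relation \eqref{TBcommutesLambda}, and the Neumann-type boundary conditions of \eqref{DomLambdaB}.
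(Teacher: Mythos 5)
Your proposal is correct, and parts (2) and (3) run essentially along the paper's lines: for (2) the paper likewise observes that at a Dirichlet energy the vertex values of an eigenfunction of $\Lambda^B$ all have the same modulus (via $c_\lambda(1)=s_\lambda'(1)=\pm1$ and the unimodular phases in \eqref{DomLambdaB}) and then derives a contradiction with square-summability --- the paper uses the trace bound $\sum_{\vec e}|\varphi_{\vec e}(i(\vec e))|^2\le\|\varphi\|^2_{\mathcal H^2}$ where you use a uniform lower bound $\|\psi_{\vec e}\|^2_{L^2}\ge\delta|a_{\vec e}|^2$ coming from linear independence of $c_\lambda,s_\lambda$; both close the argument. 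For (1), however, you take a genuinely different route. The paper argues by contradiction purely from unitarity: a finite-dimensional eigenspace of $H^B$ would be invariant under the magnetic translations, hence would contain a normalized eigenvector of some $T^B_\gamma$ with unimodular eigenvalue, whose $L^2$-mass is carried off to infinity under iteration, contradicting \eqref{eq1}--\eqref{eq2}. You instead use the characterization of the point spectrum from Lemma \ref{simpleproperties}(3) together with the compactly supported double-hexagon states of Lemma \ref{Dirichlet contribution}, and translate one such state by $T^B_\gamma$ along a sparse sublattice to get infinitely many disjointly supported eigenfunctions. Both are legitimate here since Lemma \ref{Dirichlet contribution} and Lemma \ref{simpleproperties} precede Lemma \ref{eigenspace}; the paper's soft argument has the advantage of not presupposing anything about the structure or support of eigenfunctions (it would apply verbatim to any hypothetical eigenvalue), while yours is constructive and exhibits an explicit infinite independent family. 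Your elaboration of (3) --- that the Kirchhoff sum at an all-incoming or all-outgoing vertex cannot consist of a single non-zero term because $s_\lambda'(0)=1$ and $s_\lambda'(1)=\pm1$ --- is exactly the content the paper compresses into ``clearly follows from (2) and \eqref{DomLambdaB}.''
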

\begin{proof}
(1). This follows immediately using magnetic translations (\ref{MagTra}). 
Assume there was a finite-dimensional eigenspace of $H^B.$  
Because magnetic translations commute with $H^B$, they leave the eigenspaces of $H^B$ invariant.
Magnetic translations are unitary, thus there is for any magnetic translation a normalized eigenfunction $\psi$ with eigenvalue $\lambda$ on the unit circle in $\mathbb{C}$. For $\psi$, there is a sufficiently large ball $B(0,R)$ such that 
\begin{equation}
\label{eq1}
\left\lVert \psi \right\rVert_{L^2(\mathcal{E}(\Lambda) \cap B(0,R))}>1-\varepsilon.
\end{equation}
Upon $n-$fold application of the magnetic translation, the point $0$ gets translated to some point $x_n$ whereas the eigenfunction $\psi$ acquires only a complex phase $\lambda^n$. Thus, \eqref{eq1} still holds and we must also have that 
\begin{equation}
\label{eq2}
\left\lVert \psi \right\rVert_{L^2(\mathcal{E}(\Lambda) \cap B(x_n,R))}>1-\varepsilon.
\end{equation}
Yet, there exists $n$ such that $B(0,R)\cap B(x_n,R)=\emptyset.$ Therefore, \eqref{eq1} and \eqref{eq2} cannot hold at the same time for arbitrarily large $n$. This contradicts the existence of an eigenfunction to magnetic translations and thus the existence of a finite-dimensional eigenspace.
 \bigbreak

(2). If there is an eigenfunction to $H^B$ with eigenvalue $\lambda$ that does not vanish at a vertex, by (modified) Peierls' substitution \eqref{modifiedP}, there is one to $\Lambda^B,$ denoted as $\varphi,$  as well. We may expand the function in local coordinates on every edge $\vec{e} \in \mathcal{E}(\Lambda)$ as $\varphi_{\vec{e}}=a_{\vec{e}} c_{\lambda, \vec{e}} + b_{\vec{e}} s_{\lambda, \vec{e}}$ according to \eqref{classicalsol}. Recall also that the Dirichlet eigenfunction $s_{\lambda}$ is either even or odd. Thus, using \eqref{cossinrel} we conclude that $\left\lvert c_{\lambda}(0) \right\rvert= \left\lvert c_{\lambda}(1) \right\rvert$ and thus $\varphi$ cannot be compactly supported. In particular, $\varphi$ has the same absolute value at any vertex by boundary conditions \eqref{DomLambdaB}. Due to 
\begin{equation}
\sum_{\vec{e} \in \mathcal{E}(\Lambda)} \left\lvert \varphi_{\vec{e}}(i(\vec{e})) \right\rvert^2 \le \left\lVert \varphi \right\rVert^2_{\mathcal{H}^2}< \infty
\end{equation}
$\varphi$ has to vanish at every vertex. Thus $\varphi$ is also an eigenfunction to $H^{D}.$

(3) clearly follows from (2) and \eqref{DomLambdaB}.
\end{proof}

\subsubsection{Dirichlet spectrum for rational flux quanta}

\

In this section, the flux quanta are assumed to be reduced fractions $\frac{\Phi}{2\pi} =\frac{p}{q}.$

If magnetic fields are absent, the point spectrum is spanned by hexagonal simply closed loop states, i.e. states supported on a single hexagon \cite{KP}. 
We will see in the following that similar statements remain true in the case of rational flux quanta and derive such a basis as well.
The natural extension of loop states supported on a single hexagon, in the case of magnetic fields, are simply closed loops enclosing an area $\frac{q\Phi}{B_0}$ rather than just $\frac{\Phi}{B_0}$, see Fig. \ref{Fig:loop}.
\begin{center}
\begin{figure}
\centerline{\includegraphics[height=6cm]{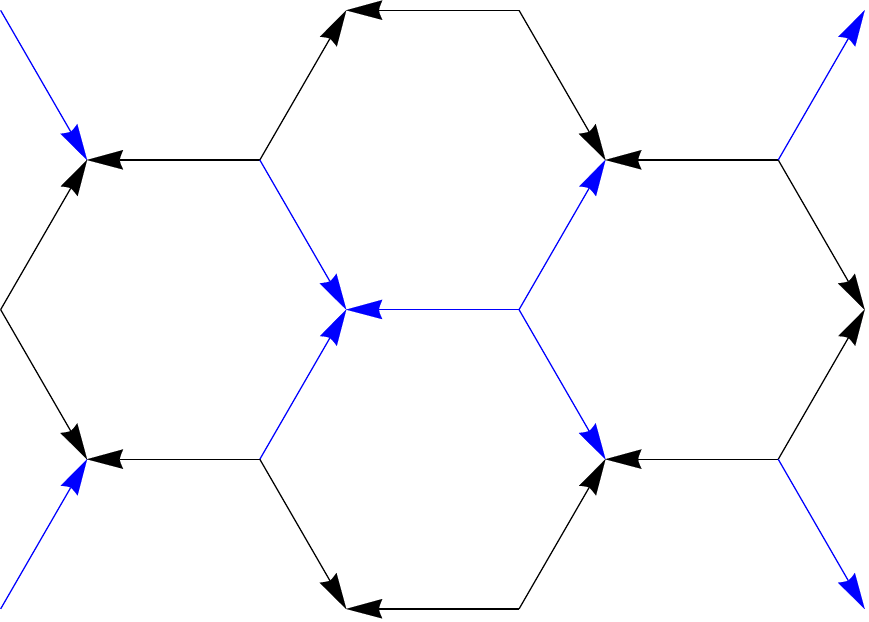}} 
\caption{Simply closed loop state supported on black arrows encloses area $\frac{4\Phi}{B_0}$\label{Fig:loop}.}
\end{figure}
\end{center}
\begin{lemm}
\label{Dirichletrat}
Any simply closed loop enclosing an area of $\frac{q \Phi}{B_0}$ has a unique (up to normalization) eigenfunction of $H^B$ supported on it. 
\end{lemm}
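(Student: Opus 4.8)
The plan is to pass to $\Lambda^B$ via the modified Peierls' substitution \eqref{modifiedP} and then read the statement off from the rank computation in Remark \ref{rankofmat}. Fix a Dirichlet eigenvalue $\lambda \in \sigma(H^D)$; we will see that the eigenvalue of any loop eigenfunction is necessarily of this form. Recall that for such $\lambda$ one has $s_\lambda(1)=0$, and the constancy of the Wronskian $c_\lambda(1)s_\lambda'(1)-c_\lambda'(1)s_\lambda(1)=1$ then forces $s_\lambda'(1)\neq 0$. Let $\mathcal{C}$ be a simply closed loop enclosing $q$ hexagons, with edges $\vec{e}_1,\dots,\vec{e}_n$, $n=2+4q$, oriented and labelled as in the derivation of \eqref{Matrix}.

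\textbf{Step 1: shape of a loop eigenfunction.} First I would show that any $\psi\in D(\Lambda^B)$ supported on $\mathcal{C}$ with $\Lambda^B\psi=\lambda\psi$ vanishes at every vertex of $\mathcal{C}$. Indeed, every vertex of $\Lambda$ has degree three, so a vertex $v$ of $\mathcal{C}$ carries exactly two loop edges together with one edge $\vec{e}\notin\mathcal{C}$ on which $\psi_{\vec{e}}\equiv 0$; the matching conditions in \eqref{DomLambdaB} (with or without the phases $e^{i\tbe_{\cdot}}$, according to whether the edges at $v$ are incoming or outgoing) then force $\psi(v)=0$. Hence on each $\vec{e}_j$ the component $\psi_{\vec{e}_j}$ solves $-\psi''+V_{\vec{e}_j}\psi=\lambda\psi$ with vanishing boundary data, which is possible only if $\lambda\in\sigma(H^D)$, and in that case $\psi_{\vec{e}_j}=a_j\,s_{\lambda,\vec{e}_j}$ for some $a_j\in\mathbb{C}$.

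\textbf{Step 2: the derivative conditions.} Substituting this form into the Kirchhoff-type conditions of \eqref{DomLambdaB} at each vertex of $\mathcal{C}$, the edge $\vec{e}\notin\mathcal{C}$ contributes $\psi_{\vec{e}}'(v)=0$, so only the two loop edges enter, and the $n$ resulting equations are precisely $T_\Phi(n)\psi'(n)=0$ with $\psi'(n)=s_\lambda'(1)\,(a_1,\dots,a_n)^T$. Since $s_\lambda'(1)\neq 0$, this is equivalent to $a:=(a_1,\dots,a_n)^T\in\ker T_\Phi(n)$; conversely, any such $a$ yields via $\psi_{\vec{e}_j}:=a_j\,s_{\lambda,\vec{e}_j}$ an element of $D(\Lambda^B)$ that is an eigenfunction supported on $\mathcal{C}$, and it is nonzero whenever $a\neq 0$ because each $s_{\lambda,\vec{e}_j}\not\equiv 0$.

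\textbf{Step 3: conclusion and main difficulty.} Since $\frac{\Phi}{2\pi}=\frac{p}{q}$ we have $q\Phi=2\pi p\in 2\pi\mathbb{Z}$, so Remark \ref{rankofmat} gives $\operatorname{rank}T_\Phi(n)=n-1$, i.e.\ $\dim\ker T_\Phi(n)=1$. This simultaneously yields the existence and the uniqueness up to normalization of the eigenfunction of $\Lambda^B$ (hence, by unitary equivalence, of $H^B$) supported on $\mathcal{C}$, at the fixed Dirichlet eigenvalue $\lambda$. The only genuinely delicate point — and the one I would check most carefully — is the bookkeeping: matching each vertex of $\mathcal{C}$ to the correct row of $T_\Phi(n)$ (phase row versus unit-coefficient row, and the wrap-around row pairing $\vec{e}_n$ with $\vec{e}_1$) and verifying that the third incident edge contributes nothing to either the continuity or the derivative conditions. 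Once the support is confined to $\mathcal{C}$ this is immediate, and there is no deeper obstacle beyond Remark \ref{rankofmat}.
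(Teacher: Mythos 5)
Your proposal is correct and follows essentially the same route as the paper: the paper's proof simply reduces the problem to the kernel of $T_{\Phi}(n)$ and invokes Remark \ref{rankofmat} (with $q\Phi=2\pi p\in 2\pi\mathbb{Z}$ giving $\dim\ker T_{\Phi}(n)=1$), which is exactly your Step 3. Your Steps 1 and 2 merely make explicit the reduction (vanishing at vertices, the third edge contributing nothing, $\psi'(n)=s_\lambda'(1)\,a$) that the paper leaves implicit.
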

\begin{proof}
The existence of eigenfunctions on simply closed loops enclosing this flux follows directly from the non-trivial kernel of \eqref{Matrix}, see Remark \ref{rankofmat}. Due to $\operatorname{dim}(\operatorname{ker}(T_{\Phi}))=1$, such eigenfunctions are also unique (up to normalization). 
\end{proof}

\begin{lemm}
\label{compsuppstat}
The nullspaces $\operatorname{ker}(H^B-\lambda)$ where $\lambda \in \sigma(H^D)$ are generated by compactly supported eigenfunctions.
\end{lemm}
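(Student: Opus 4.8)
The plan is to reduce the claim to a statement about a discrete weighted ``cycle space'' and then to exploit the periodicity available for rational flux, in the spirit of \cite{KP}. Since the (modified) Peierls' substitution $\tilde P$ is a multiplication operator by nowhere-vanishing phases, it preserves supports, so it suffices to prove the statement for $\Lambda^B$. Fix $\lambda\in\sigma(H^D)$. By part (2) of Lemma \ref{eigenspace} every $\psi\in\ker(\Lambda^B-\lambda)$ vanishes at all vertices, hence on each edge $\vec e$ it is a multiple $\psi_{\vec e}=a_{\vec e}\,s_{\lambda,\vec e}$ of the Dirichlet solution in \eqref{classicalsol}, and $(a_{\vec e})_{\vec e\in\mathcal E(\Lambda)}\in\ell^2(\mathcal E(\Lambda))$ because $\|s_{\lambda,\vec e}\|_{L^2(\vec e)}$ does not depend on $\vec e$. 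Using $s_\lambda(0)=s_\lambda(1)=0$, $s'_\lambda(0)=1$ and $s'_\lambda(1)=\Delta(\lambda)=\pm1$ (the last equality by \eqref{cossinrel} and \eqref{DirichletHilledge}), the continuity conditions in \eqref{DomLambdaB} hold automatically and the derivative conditions become one linear equation per vertex in the three incident coefficients: $\sum_{i(\vec e)=v}a_{\vec e}=0$ at $r_0$-type vertices and $\sum_{t(\vec e)=v}e^{i\tbe_{\vec e}}a_{\vec e}=0$ at $r_1$-type vertices, with $\tbe$ as in \eqref{deftildebeta}. Thus $\psi\mapsto(a_{\vec e})$ identifies $\ker(\Lambda^B-\lambda)$ with the kernel $\mathcal Z\subset\ell^2(\mathcal E(\Lambda))$ of a bounded ``discrete boundary operator'', and finitely supported elements of $\mathcal Z$ correspond precisely to compactly supported eigenfunctions; so the task is to show $\mathcal Z=\overline{\operatorname{span}}\,\{\text{finitely supported elements of }\mathcal Z\}$.

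Next I would bring in periodicity. For $\tfrac{\Phi}{2\pi}=\tfrac pq$ the magnetic translations $T^B_{(q,0)}$ and $T^B_{(0,1)}$ commute, since $\Phi\,\omega((q,0),(0,1))=2\pi p\in2\pi\Z$ in \eqref{commrel}, and they commute with $\Lambda^B$ by \eqref{TBcommutesLambda}; they generate a free $\Z^2$-action with finite quotient graph obtained from $\Lambda$ by the sublattice $q\vec b_1\Z\oplus\vec b_2\Z$, a magnetic supercell of $q$ hexagons. A Floquet--Bloch decomposition then gives $\Lambda^B\cong\int_{\T_2^*}^{\oplus}\Lambda^B_k\,dk$ with fibres acting on this finite graph and having compact resolvent. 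By Lemma \ref{Dirichlet contribution}, $\lambda$ is an eigenvalue of $\Lambda^B$, and by part (1) of Lemma \ref{eigenspace} of infinite multiplicity; in a direct integral of operators with discrete spectrum this is possible only if the band through $\lambda$ is flat, i.e. $\lambda\in\sigma(\Lambda^B_k)$ for every $k$ with multiplicity $m\ge1$ constant in $k$ (the same Thomas-type argument used for the non-magnetic case in the excerpt). Equivalently, $\mathcal Z$ is a periodic subspace of $\ell^2(\Z^2;\C^N)$, $N=3q$, cut out fibrewise by a condition of constant rank.

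The hard part is the passage from this flat band to a \emph{compactly supported} spanning set. This is exactly the mechanism behind the description of the Dirichlet spectrum in the non-magnetic case \cite[Theorem 3.6]{KP} (see also \cite{K2}): the fibrewise orthogonal projection onto the fibre of $\mathcal Z$ is a real-analytic projection-valued map on the torus $\T_2^*$ of constant rank $m$, so its range admits an analytic frame $v_1(k),\dots,v_m(k)$; analytic functions on the torus have rapidly decaying Fourier coefficients, so the inverse Floquet transform of this frame yields eigenfunctions of $\Lambda^B$ at $\lambda$ lying in the closed span of compactly supported ones, whose $T^B_\gamma$-translates span $\ker(\Lambda^B-\lambda)$. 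Making this rigorous requires the usual care with the regularity of the Floquet data near $\sigma(H^D)$; I expect this to be the main obstacle, and it is precisely the step that fails without periodicity, forcing the separate treatment of the irrational case.

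Finally I would identify the compactly supported eigenfunctions explicitly. By part (3) of Lemma \ref{eigenspace} the support of any compactly supported eigenfunction is a finite union of loops, so its coefficient vector lies in the finite-dimensional space of finitely supported elements of $\mathcal Z$ inside a large disc; a planar cycle-space (Euler characteristic) count, together with Remark \ref{rankofmat} and Lemma \ref{Dirichletrat}, shows this space is spanned by the elementary states compatible with the flux quantization --- the double hexagonal states produced in Lemma \ref{Dirichlet contribution}, which degenerate to simply closed loops around $q$ hexagons when $2\Phi\in2\pi\Z$. Matching the number of these generators per supercell with the flat-band multiplicity $m$ then yields that $\ker(H^B-\lambda)=\ker(\Lambda^B-\lambda)$ is generated by compactly supported (double hexagonal) eigenfunctions for every $\lambda\in\sigma(H^D)$, which is the assertion.
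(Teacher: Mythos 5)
Your first step coincides with the paper's: pass to $\Lambda^B$, use part (2) of Lemma \ref{eigenspace} to write every eigenfunction as $\varphi_{\vec e}=a_{\vec e}s_{\lambda,\vec e}$, and identify $\ker(\Lambda^B-\lambda)$ with the $\ell^2$-kernel of a $\Z^2$-periodic finite-order difference operator (the paper encodes the data through vertex derivatives $u(v)$ rather than edge coefficients, but this is cosmetic). The divergence is in how the crucial implication ``kernel of a periodic difference operator is generated by its finitely supported elements'' is obtained. The paper does not prove it: it invokes Theorem~8 of \cite{K2} as a black box. You instead sketch a proof via Floquet--Bloch theory, and that sketch has a genuine gap at exactly the decisive step.

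Concretely, the chain ``the flat band gives a constant-rank analytic projection, hence an analytic frame $v_1(k),\dots,v_m(k)$, hence rapidly decaying Fourier coefficients, hence eigenfunctions lying in the closed span of compactly supported ones'' does not close. Rapid (even exponential) decay of an eigenfunction does not place it in the closed span of \emph{compactly supported eigenfunctions}: truncations of an eigenfunction are not eigenfunctions, and bridging decay to compact support is precisely the nontrivial content here (it is the same $\ell^1$-versus-$\ell^2$ difficulty the paper highlights in the introduction and must work hard to overcome in the irrational case). What is actually needed, and what Theorem~8 of \cite{K2} supplies, is an algebraic fact: the fiberwise kernel of the symbol, viewed as a module over the ring of Laurent polynomials in $e^{ik_1},e^{ik_2}$, admits \emph{Laurent-polynomial} generating sections, whose inverse Floquet transforms are genuinely compactly supported; density of their translates then follows by multiplying these sections by arbitrary $L^2$ functions of $k$. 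An analytic frame is strictly weaker than a polynomial one, so your argument as written does not yield the conclusion. (A secondary, repairable issue: the multiplicity of $\lambda$ in $\sigma(\Lambda^B_k)$ need not be constant in $k$ --- other bands may cross the flat band at exceptional $k$ --- so ``constant rank'' requires justification, e.g.\ via a Riesz projection onto the identically-$\lambda$ bands.) Your concluding paragraph, identifying the generators as double hexagonal states, is the content of the subsequent lemma in the paper rather than of Lemma \ref{compsuppstat} itself; it is not needed here.
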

\begin{proof}
Unitary equivalence allows us to work with $\Lambda^B$ rather than $H^B$. 
Without loss of generality, we assume that the Dirichlet eigenfunction to $\lambda$ is even. 
Due to Lemma \ref{eigenspace}, eigenfunctions of $\Lambda^B$ to Dirichlet eigenvalues vanish at every vertex. Thus, on every edge $\vec{e}\in \mathcal{E}(V)$, they are of the form $\varphi_{\vec{e}}=a_{\vec{e}}s_{\lambda, \vec{e}}$ for some $a_{\vec{e}}$.

Let $\varphi$ be such a function. We define the sequence $(u(v))_{v \in \mathcal{V}(\Lambda)}$ as follows
\begin{align}
\left\lbrace
\begin{matrix}
u(\gamma_1, \gamma_2, r_0):= \varphi_{\gamma_1, \gamma_2, \vec{g}}'(\gamma_1, \gamma_2, r_0) \nonumber \\
u(\gamma_1, \gamma_2, r_1):= \varphi_{\gamma_1, \gamma_2, \vec{f}}'(\gamma_1, \gamma_2, r_1).
\end{matrix}
\right.
\end{align}
Observe that the sequence $(u(v))$ determines the eigenfunction on every edge. Indeed, $a_{\gamma_1, \gamma_2, \vec{g}}=u(\gamma_1, \gamma_2, r_0)$ and $a_{\gamma_1, \gamma_2, \vec{f}}=u(\gamma_1, \gamma_2, r_1)$, since $s^\prime_{\lambda}(1)=s^\prime_{\lambda}(0)$.
At the same time, $a_{\gamma_1, \gamma_2, \vec{h}}$ can be determined in two different ways, one for each endpoint, from the boundary condition \eqref{DomLambdaB}. 
Let us now introduce an operator $A \in \mathcal{L}( l^2(\mathcal{V}(\Lambda)))$ that has precisely the sequences $(u(v))$ with matching boundary conditions for $a_{\gamma_1, \gamma_2, \vec{h}}$ in its kernel.
Then,
\begin{align}
(Au)(\gamma_1, \gamma_2, r_0)
&:= u(\gamma_1, \gamma_2, r_0)+u(\gamma_1, \gamma_2, r_1) \nonumber\\
&\ \ \ \ -e^{2\pi i \frac{p\gamma_1}{q}} \Big(u(\gamma_1+1, \gamma_2-1, r_0)+u(\gamma_1, \gamma_2-1, r_1) \Big) \text{ and } \nonumber\\
(Au)(\gamma_1, \gamma_2, r_1)&:=0.
\end{align}
The operator $A$ is then a $\mathbb{Z}^2$-periodic finite-order difference operator.
Any eigenfunction $\varphi$ satisfying $(\Lambda^B - \lambda)\varphi=0$ leads by standard arguments to a square-summable sequence $(u(v))$ as defined above in the nullspace of $A$. Conversely, any such element in the nullspace of $A$ uniquely defines an eigenfunction $\varphi=a_{\vec{e}} s_{\lambda, \vec{e}}$ to $\Lambda^B.$
Theorem $8$ in \cite{K2} implies then that the nullspace of $A$ is generated by sequences in $c_{00}(\mathcal{V}(\Lambda))$. It suffices now to observe that those compactly supported sequences also give rise to compactly supported eigenfunctions to conclude the claim.
\end{proof}

\begin{lemm}
Let $\Phi \notin 2 \pi \mathbb{Z}.$ The eigenspaces are spanned by the set of double hexagonal states Fig. \ref{Fig:doublehex}.
\end{lemm}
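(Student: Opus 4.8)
\emph{The plan.} By Lemma~\ref{compsuppstat} together with the unitary equivalence $H^B\cong\Lambda^B$, the eigenspace $\ker(H^B-\lambda)$ for $\lambda\in\sigma(H^D)$ is the closed linear span of its compactly supported elements, so it suffices to write each compactly supported eigenfunction of $\Lambda^B$ as a \emph{finite} combination of double hexagonal states. First I would record the shape of such an eigenfunction $\varphi$: by Lemma~\ref{eigenspace} it vanishes at every vertex, hence $\varphi_{\vec e}=a_{\vec e}\,s_{\lambda,\vec e}$ with $(a_{\vec e})\in c_{00}(\mathcal E(\Lambda))$, and since $s'_\lambda(1)=\Delta(\lambda)=\pm1$ by \eqref{DirichletHilledge}, the boundary conditions \eqref{DomLambdaB} collapse to the homogeneous linear relations $\sum_{i(\vec e)=v}a_{\vec e}=0$ at every $r_0$--vertex $v$ and $\sum_{t(\vec e)=v}e^{i\tbe_{\vec e}}a_{\vec e}=0$ at every $r_1$--vertex $v$. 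Thus the eigenspace is exactly the space of such $c_{00}$ ``twisted cycles''. I would also note that the construction in the proof of Lemma~\ref{Dirichlet contribution}, which uses only the rank dichotomy of Remark~\ref{rankofmat}, produces a double hexagonal state on a pair of adjacent hexagons regardless of the direction of the slicing edge, and that composing with the magnetic translations $T^B_\gamma$ of \eqref{MagTra} (which commute with $\Lambda^B$ and permute such pairs) yields one for \emph{every} pair of adjacent hexagons, unique up to a scalar by the same rank count.

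\emph{The reduction.} I would then argue by induction. A naive ``peel at an extremal hexagon'' does not work directly, because a double hexagonal state is supported on two entire hexagons ($11$ edges) and subtracting it off would in general re-populate edges pointing toward hexagons outside the current support. Instead I would use the lattice direction $\delta$ along which the relations above organize into a first--order transport: this is the direction visible in the difference operator $A$ of the proof of Lemma~\ref{compsuppstat}, whose defining equation relates $u(\gamma_1{+}1,\gamma_2{-}1,r_0)$ to $u(\gamma_1,\gamma_2,r_0)$ along the lines $\ell_k=\{\gamma_1+\gamma_2=k\}$, with forcing from the $r_1$--data. Along each $\ell_k$ the coefficients on the $r_0$--edges are then determined by those on the $r_1$--edges up to one constant, and compact support of $\varphi$ imposes exactly one linear ``integrability'' constraint per line. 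Let $\ell_{k_1}$ be the largest line met by $\mathrm{supp}(\varphi)$. The constraint attached to $\ell_{k_1+1}$ only sees $\varphi$ on $\ell_{k_1}$, and the double hexagonal states whose two hexagons straddle $\ell_{k_1}$ and $\ell_{k_1-1}$ have their $r_1$--data supported on $\ell_{k_1}\cup\ell_{k_1-1}$; I would check that these span, on $\ell_{k_1}$, the full one--codimension space of constrained $c_{00}$ sequences there. Subtracting a finite combination then clears $\ell_{k_1}$ without disturbing any line beyond it, lowering the induction parameter; the base case is a support spanning at most two consecutive lines, i.e.\ essentially a single or double hexagon, the single hexagon being excluded for $\Phi\notin2\pi\Z$ by Remark~\ref{rankofmat}.

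\emph{The hard part.} The main obstacle will be the combinatorial bookkeeping in two places: (i) confirming that, after the identification $s'_\lambda(1)=\pm1$, the relations \eqref{DomLambdaB} genuinely reduce to a first--order recursion along a single lattice direction with a one--dimensional obstruction per line — the point at which the explicit form of $A$ and the flux hypothesis $\Phi\notin2\pi\Z$ are used; and (ii) showing that the translated double hexagonal states, restricted to one line $\ell_{k_1}$, already exhaust the subspace of $c_{00}$ sequences cut out by the integrability constraint, which is what makes the peeling terminate and keeps it from enlarging the support in the $\delta$--direction. Carrying the phases $e^{i\tbe_{\vec e}}$ correctly through (i) and (ii) is the delicate part; once that is in place, the remaining steps are routine finite linear algebra on a bounded patch of the honeycomb.
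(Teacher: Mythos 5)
Your setup coincides with the paper's: reduce to compactly supported eigenfunctions via Lemma \ref{compsuppstat}, use Lemma \ref{eigenspace} to write $\varphi_{\vec e}=a_{\vec e}s_{\lambda,\vec e}$ with $(a_{\vec e})\in c_{00}$, and observe that since $s_\lambda'(0)=1$ and $s_\lambda'(1)=\pm1$ the conditions \eqref{DomLambdaB} become the two families of linear relations on the coefficients that you state. Where you diverge is the combinatorial core, and there the paper does essentially the ``naive peel'' you reject. It picks an edge $\vec d$ on the \emph{boundary loop} of $\operatorname{supp}(\varphi)$ (a loop, by part (3) of Lemma \ref{eigenspace}), takes a double hexagon $\Gamma$ containing $\vec d$ and lying inside the region enclosed by that loop, and uses the construction from Lemma \ref{Dirichlet contribution} to produce a double hexagonal state $\psi$ with $\psi_{\vec d}=\varphi_{\vec d}$; subtracting kills $\vec d$. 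The induction parameter is the number of hexagons enclosed by the support's boundary loop, not the size of the support: since $\Gamma$ sits inside the enclosed region, the subtraction may well populate new edges, but only interior ones, so the enclosed region cannot grow while the chosen boundary edge disappears. This largely defuses your objection that the subtraction ``re-populates edges pointing toward hexagons outside the current support,'' and it is why the paper needs only the rank dichotomy of Remark \ref{rankofmat} (to match a \emph{single} prescribed edge value) rather than any spanning statement.

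Your alternative --- a sweep along the lines $\gamma_1+\gamma_2=k$ organized by the transport structure of the operator $A$ from Lemma \ref{compsuppstat} --- is a plausible strategy, but as written it is a plan with its crux left unproved. Everything rests on your claim (ii): that the restrictions to the top line $\ell_{k_1}$ of the double hexagonal states straddling $\ell_{k_1}$ and $\ell_{k_1-1}$ span the full codimension-one subspace of constrained finitely supported sequences on that line. That is a concrete, flux-dependent linear-algebra assertion, it is exactly where the content of the lemma lives, and you only say you ``would check'' it. Two further points need care before the sweep even makes sense: a single hexagon $\Gamma_\gamma$ already meets two of your lines and a double hexagon can meet three, so ``clearing $\ell_{k_1}$ without disturbing any line beyond it'' requires verifying that no subtracted state touches $\ell_{k_1+1}$ for \emph{every} orientation of adjacent pair you use; and the reduction of the vertex relations to ``a first-order recursion with exactly one integrability constraint per line'' must account for the fact that each $\vec h$-edge coefficient is determined twice (once from each endpoint), which is precisely the compatibility the operator $A$ encodes. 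Until (i) and (ii) are actually carried out, the proposal is a credible outline rather than a proof; the paper's edge-by-edge matching avoids both issues.
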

\begin{proof}
By Lemma \ref{eigenspace}, all eigenfunctions vanish at every vertex. Compactly supported eigenfunctions are dense in the eigenspace by the previous Lemma \ref{compsuppstat}. Thus, it suffices, as in the non-magnetic \cite{KP} case, to show that any compactly supported eigenfunction is a linear combination of double hexagonal states. Let $\varphi$ be a compactly supported eigenfunction of $\Lambda^B$ to some Dirichlet eigenvalue $\lambda$. Consider an edge $\vec{d} \in \mathcal{E}(\Lambda)$ on the boundary loop of the support of $\varphi$. It exists due to (3) of Lemma \ref{eigenspace}. The boundary loop, which cannot  be just a loop around a single hexagon, as this one does not support such eigenfunctions, necessarily encloses a double hexagon $\Gamma$, as in Fig. \ref{Fig:doublehex}, which contains the chosen edge $\vec{d}.$ Then, there is by the proof of Lemma \ref{Dirichlet contribution} a state $\psi$ on $\Gamma$ so that the wavefunction $\psi_{\vec{d}}$ on $\vec{d}$ coincides with $\varphi_{\vec{d}}.$ Subtracting $\psi$ from $\varphi$ leaves us with an eigenfunction to $\Lambda^B$ that encloses at least one single hexagon less than $\psi.$ Thus, iterating this procedure shows that compactly supported eigenfunctions are spanned by double hexagonal states which implies the claim.
\end{proof}

\subsubsection{Dirichlet spectrum for irrational flux quanta}\


After proving Theorem \ref{T4} for rational flux quanta, we now prove the analogous result for irrational magnetic fluxes.
We start by introducing the following definition.
\begin{defi}
The Hilbert space $l^2(\mathcal{E}(\Lambda))$ is defined as 
\begin{equation}
l^2(\mathcal{E}(\Lambda)):=\left\{ z: \mathcal{E}(\Lambda) \rightarrow \mathbb{C},\  \left\lVert z \right\rVert_{l^2(\mathcal{E}(\Lambda))}^2:=\sum_{\vec{e} \in \mathcal{E}(\Lambda)} \left \lvert z(\vec{e}) \right \rvert^2 < \infty \right\}.
\end{equation}
\end{defi}
\begin{theo}
\label{irratdirichlet}
The double hexagonal states generate the eigenspaces of Dirichlet spectrum of $H^B$ for irrational flux quanta.
\end{theo}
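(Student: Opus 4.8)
The plan is to mirror the rational-flux proof as far as it goes, isolate the one genuinely new difficulty, and dispatch it by a constructive invertibility argument; the hard part will be exactly the $\ell^1$-to-$\ell^2$ passage flagged in the introduction. First I would reduce, via the modified Peierls substitution \eqref{modifiedP}--\eqref{LambdaB}, to building the eigenspace of $\Lambda^B$ at a fixed $\lambda\in\sigma(H^D)$, taking $s_\lambda$ even without loss of generality. By part (2) of Lemma \ref{eigenspace} every $\varphi\in\ker(\Lambda^B-\lambda)$ vanishes at all vertices, so $\varphi_{\vec e}=a_{\vec e}\,s_{\lambda,\vec e}$; since $\|s_{\lambda,\vec e}\|_{L^2(\vec e)}$ is a positive constant independent of $\vec e$, the assignment $\varphi\mapsto(a_{\vec e})_{\vec e\in\mathcal E(\Lambda)}$ is, up to that constant, an isometry of $\ker(\Lambda^B-\lambda)$ onto the closed subspace of $\ell^2(\mathcal E(\Lambda))$ carved out by the per-vertex linear relations of \eqref{DomLambdaB} (where $s_\lambda'(1)=s_\lambda'(0)\neq 0$ and $c_\lambda(1)$ enter the coefficients). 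Exactly as in the proof of Lemma \ref{compsuppstat}, these relations are the kernel equations of an explicit finite-order, $\mathbb Z^2$-quasiperiodic difference operator $A=A(\Phi)$ on $\ell^2(\mathbb Z^2;\mathbb C^2)\cong\ell^2(\mathcal V(\Lambda))$, whose coefficients depend on $\gamma_1$ only through the unimodular phase $e^{-i\Phi\gamma_1}$ and not at all on $\gamma_2$. Being an isometry, this identification carries double hexagonal states to finitely supported elements of $\ker A$ and orthogonality in $L^2$ to orthogonality in $\ell^2$.

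Next I would recover the combinatorial part of the argument in this language. The double hexagonal states of Fig.~\ref{Fig:doublehex} lie in $\ker(\Lambda^B-\lambda)$ by the construction in the proof of Lemma \ref{Dirichlet contribution}. By the peeling argument already used for rational flux and in \cite{KP} --- choose an edge on the boundary loop of the support of a finitely supported eigenfunction; by part (3) of Lemma \ref{eigenspace} that loop encloses a double hexagon; subtract the double hexagonal state matching the chosen edge; induct on the number of enclosed single hexagons --- every finitely supported element of $\ker A$ is a finite linear combination of double hexagonal states, and running the same peeling as a norm-convergent series shows that every summable ($\ell^1$) element of $\ker A$ lies in the closed span of the double hexagonal states. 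What is \emph{not} automatic, and is the whole point, is that $\ker A\cap\ell^1$ is dense in $\ker A$: a proper closed subspace of $\ell^2$ need not be approximable by its $\ell^1$ members. Thus the theorem becomes equivalent to the assertion that any $\psi\in\ker A$ orthogonal to all double hexagonal states vanishes.

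To prove this I would argue by contradiction: given such a $\psi\neq 0$, the previous step forces $\psi\in\ell^2\setminus\ell^1$, i.e.\ it decays slowly. Since the coefficients of $A$ are $\gamma_2$-independent, a partial Fourier transform in $\gamma_2$ yields $A=\int_{\mathbb T_1}^{\oplus}A_\theta\,d\theta$, each $A_\theta$ a one-dimensional finite-order quasiperiodic (frequency $\Phi/2\pi$) difference operator in $\gamma_1$, and $\psi$ produces a nonzero $\ell^2(\mathbb Z)$ element of $\ker A_\theta$ for $\theta$ in a positive-measure set. Eliminating one of the two components reduces $A_\theta\widehat\psi=0$ to a first-order recursion in $\gamma_1$ whose homogeneous solutions are unimodular, hence not $\ell^2$; the $\ell^2$ solution is then given by variation of constants, and the component decaying genuinely more slowly than $\ell^1$ is forced into the kernel of an auxiliary operator $\mathcal B=\mathcal B(\Phi)$ assembled from this recursion together with the orthogonality-to-double-hexagons constraints, the relevant oscillatory-sum (small-divisor) estimates being what make $\mathcal B$ bounded. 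It then remains to show $\mathcal B(\Phi)$ is invertible: I would do this constructively, using that an associated family is holomorphic and bounded in an auxiliary complex parameter, that the variation-of-constants formula supplies an explicit candidate inverse, and that triviality of the kernel is already known in the rational case (via \cite[Theorem~8]{K2}, as used in Lemma \ref{compsuppstat}), so that holomorphy and continuity pin down invertibility for irrational $\Phi$ as well; invertibility of $\mathcal B(\Phi)$ gives $\psi=0$, the desired contradiction.

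The main obstacle is this last step: constructing $\mathcal B(\Phi)$ so that it genuinely captures \emph{all} slowly decaying $\ell^2$ solutions, controlling the oscillatory sums needed for its boundedness, and --- with no periodicity available --- proving its invertibility. Steps one and two are routine adaptations of the rational-flux proof and of \cite{KP}; the content of the theorem is concentrated entirely in the $\ell^1$-to-$\ell^2$ bridge.
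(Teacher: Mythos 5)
Your reduction is the same as the paper's: identify eigenfunctions with $\ell^2$ coefficient sequences via Lemma \ref{eigenspace}(2), encode the vertex conditions and the orthogonality to double hexagonal states in one operator, and reduce the theorem to the statement that this operator has trivial kernel for irrational flux. You also correctly locate the difficulty in the $\ell^1$-to-$\ell^2$ passage. But your proposed resolution has a decisive gap at the very last step: knowing that the auxiliary operator ($\mathcal B(\Phi)$ for you, $A(\Phi)$ in the paper) has trivial kernel for all rational $\Phi/2\pi$ and depends continuously (even holomorphically) on $\Phi$ does \emph{not} ``pin down invertibility'' at irrational $\Phi$. Invertibility is an open condition, not a closed one; a norm limit of invertible operators can perfectly well fail to be invertible, and ruling that out is exactly the content of the theorem. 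What is needed, and what the paper supplies, is a bound on the inverse that is \emph{uniform} over rationals approaching the given irrational flux. This comes from the quantitative surjectivity statement (Lemma \ref{sur}): explicit preimages of the standard basis of $\ell^2(\mathcal{E}(\Lambda))$, supported on single hexagons, with coefficients of size $|1-e^{-i\Phi}|^{-1}$, giving $\lVert A(\Phi)^{-1}\rVert \le C/|1-e^{-i\Phi}|$; only with this does the Neumann series $A(\tilde\Phi)=A(\Phi)\bigl(\mathrm{Id}+A(\Phi)^{-1}(A(\tilde\Phi)-A(\Phi))\bigr)$ close the argument. Your proposal never isolates this uniform quantitative control, and without it the continuity argument is vacuous.

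The route you sketch for constructing $\mathcal B(\Phi)$ is also problematic in two places. First, the claim that every $\ell^1$ element of $\ker A$ lies in the closed span of double hexagonal states ``by running the same peeling as a norm-convergent series'' is unsubstantiated: an infinitely supported function has no boundary loop of its support from which to start peeling, and you give no ordering of the peeling steps that would make the resulting series converge; the paper deliberately avoids ever proving this intermediate $\ell^1$ statement. Second, and more seriously, the variation-of-constants formula for your fibered first-order recursion produces oscillatory sums of products of the phases $e^{-i\Phi\gamma_1}$, and you yourself say that small-divisor estimates are what would make $\mathcal B$ bounded. Small-divisor estimates require Diophantine conditions on $\Phi/2\pi$, whereas Theorem \ref{irratdirichlet} is asserted for \emph{every} irrational flux, Liouville ones included; at best this route yields a strictly weaker result. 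The paper's construction avoids small divisors entirely: the only denominator that ever appears is $1-e^{-i\Phi}$, coming from inverting the single-hexagon loop condition, which is controlled by $\Phi$ itself and not by its continued fraction expansion.
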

We will give a proof of this theorem after a couple of auxiliary observations. For this entire discussion to follow we consider a fixed $\lambda \in \sigma(H^D).$
\begin{defi}
We denote the closed $L^2(\mathcal{E}(\Lambda))$ subspace generated by linear combinations of all double hexagonal states on the entire graph $\Lambda$ by $DH_{\mathcal{E}(\Lambda)}(\Phi).$
\end{defi}
There is a countable orthonormal system of states $V(\Phi)\subset DH_{\mathcal{E}(\Lambda)}(\Phi)$ such that  
\begin{equation}
\overline{\operatorname{span}(V(\Phi))}= DH_{\mathcal{E}(\Lambda)}(\Phi). 
\end{equation}
We may label elements of $V(\Phi)$ by $\varphi_{\gamma}(\Phi)$ with $\gamma \in \mathbb{Z}^2$.
Without loss of generality, $\varphi_{\gamma}(\Phi)$ can be chosen to depend analytically on $\Phi\in (0,1)$.
Every element $\varphi_{\gamma}(\Phi) \in V(\Phi)$ is due to Lemma \ref{eigenspace} of the form
\begin{equation}
\label{GramSchmidtstates}
\varphi_{\gamma}(\Phi)=\sum_{\vec{e} \in \mathcal{E}(\Lambda)} \varphi_{\gamma,\vec{e}}(\Phi) s_{\lambda,\vec{e}}
\end{equation}
because it is an element of $\operatorname{ker}(H^B-\lambda).$

Now assume that the statement of Theorem \ref{irratdirichlet} does not hold, this is equivalent to saying that  $Z(\Phi):= \operatorname{ker}(H^B-\lambda) \cap DH_{\mathcal{E}(\Lambda)}(\Phi)^{\perp}$ is not the zero space, i.e. there are eigenfunctions not spanned by double hexagonal states. Our goal is to characterize $Z(\Phi)$ as the nullspace of a suitable operator we define next. 
\begin{defi}
Let $A(\Phi) \in \mathcal{L}(l^2(\mathcal{E}(\Lambda)))$ on $u\in l^2(\mathcal{E}(\Lambda))$ for $\gamma \in \mathbb{Z}^2$ be given by 
\begin{equation}
\begin{split}
\label{defA}
(A(\Phi)u)(\gamma,\vec{f})&:=u(\gamma,\vec{f})+u(\gamma,\vec{g})+u(\gamma,\vec{h}) \\
(A(\Phi)u)(\gamma,\vec{g})&:=u(\gamma_1, \gamma_2-1 ,\vec{f})+u(\gamma_1+1, \gamma_2-1,\vec{g})+e^{-i \Phi \gamma_1 } u(\gamma_1, \gamma_2,\vec{h}) \\
(A(\Phi)u)(\gamma,\vec{h})&:=\left\langle u, (\varphi_{\gamma,\vec{e}}(\Phi)) \right\rangle_{l^2(\mathcal E(\Lambda))}.
\end{split}
\end{equation}
\end{defi}
\begin{rem}
The first two lines of this definition resemble the boundary conditions for the derivatives at outgoing/incoming vertices \eqref{magopd} and with the third line we monitor the orthogonality of $\sum_{\vec{e} \in \mathcal{E}(\Lambda)} u_{\vec{e}} s_{\lambda,\vec{e}}$ to $DH_{\mathcal{E}(\Lambda)}(\Phi).$
\end{rem}
In particular, there is an isometric isomorphism $\eta \in \mathcal{L}( \operatorname{ker}(A(\Phi)), Z(\Phi))$ with
\begin{equation}
\label{iso}
\eta(u):=\sum_{\vec{e} \in \mathcal{E}(\Lambda)} \frac{u_{\vec{e}}}{\left\lVert s_{\lambda,\vec{e}}\right\rVert_{L^2(\vec{e})}} s_{\lambda,\vec{e}}.
\end{equation}
We observe that by Lemma \eqref{compsuppstat} and the isomorphism \eqref{iso} the operator $A(\Phi)$ is injective for $\frac{\Phi}{2\pi} \in \mathbb{Q} \cap (0,1)$. 
To prove Theorem \ref{irratdirichlet} we only need the following Lemma:
\begin{lemm}
\label{sur}
The operator $A(\Phi)$ is surjective for $\frac{\Phi}{2\pi} \in  (0,1)$. In particular, for any $(a(\vec{e}))\in l^2(\mathcal{E}(\Lambda))$, there exists $(u(\vec{e}))\in l^2(\mathcal{E}(\Lambda))$ such that $A(\Phi)u=a$ and 
\begin{align}\label{surnorm}
\|u\|_{l^2(\mathcal{E}(\Lambda))}\leq \frac{C}{|1-e^{-i\Phi}|} \|a\|_{l^2(\mathcal{E}(\Lambda))}
\end{align}
holds for a universal constant $C$.
\end{lemm}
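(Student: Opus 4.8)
The plan is to reduce the statement to an operator‑norm estimate for an explicit band operator. Identify $\ell^2(\mathcal E(\Lambda))$ with $\ell^2(\Z^2;\C^3)$ via the three edge types $\vec f,\vec g,\vec h$ in each fundamental cell, and write $u=(u_{\vec f},u_{\vec g},u_{\vec h})$. The first two lines of \eqref{defA} define a bounded operator
\[
B(\Phi):=\begin{pmatrix} I & I & I\\ S_2 & S_1^{-1}S_2 & D\end{pmatrix}\colon \ell^2(\Z^2;\C^3)\to\ell^2(\Z^2;\C^2),
\]
where $S_1,S_2$ are the coordinate shifts and $D$ is multiplication by $e^{-i\Phi\gamma_1}$, so that the $\vec g$‑row of $A(\Phi)$ reads off as $S_2u_{\vec f}+S_1^{-1}S_2u_{\vec g}+Du_{\vec h}$; the third line sends $u$ to the coordinates of the orthogonal projection $P_{\mathcal D}u$ of $u$ onto the closed span $\mathcal D\subseteq\ell^2(\mathcal E(\Lambda))$ of the double‑hexagonal‑state coefficient sequences $(\varphi_{\gamma,\vec e}(\Phi))$ in the orthonormal system $V(\Phi)$. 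First I would record that, comparing \eqref{defA} with the boundary conditions in \eqref{DomLambdaB} and using $s_\lambda(0)=s_\lambda(1)=0$, the sequences in $\ker B(\Phi)$ are exactly the coefficient sequences of the $\ell^2$ eigenfunctions $\sum_{\vec e}u_{\vec e}\,s_{\lambda,\vec e}$ of $\Lambda^B$ at $\lambda$; in particular $\mathcal D\subseteq\ker B(\Phi)$, since double hexagonal states are genuine eigenfunctions (proof of Lemma \ref{Dirichlet contribution}). Given $a\in\ell^2(\mathcal E(\Lambda))$, prescribing the $\vec h$‑component of $A(\Phi)u$ forces $P_{\mathcal D}u=u_{\mathcal D}$ with $\|u_{\mathcal D}\|\le\|a\|$, while the $\vec f$‑ and $\vec g$‑components require $B(\Phi)\bigl((I-P_{\mathcal D})u\bigr)=(a(\cdot,\vec f),a(\cdot,\vec g))=:b$ with $(I-P_{\mathcal D})u\in\mathcal D^\perp$. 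Since $\mathcal D\subseteq\ker B(\Phi)$, it suffices to produce, for each $b$, a solution $w$ of $B(\Phi)w=b$ lying in $(\ker B(\Phi))^\perp\subseteq\mathcal D^\perp$ with $\|w\|\le C|1-e^{-i\Phi}|^{-1}\|b\|$; then $u:=u_{\mathcal D}+w$ works, and $\|b\|\le\|a\|$, $|1-e^{-i\Phi}|\le 2$ give \eqref{surnorm}.

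So the whole statement comes down to surjectivity of $B(\Phi)$ together with $\|(B(\Phi)B(\Phi)^*)^{-1}\|\le C|1-e^{-i\Phi}|^{-2}$, after which $w:=B(\Phi)^*(B(\Phi)B(\Phi)^*)^{-1}b$ does the job ($B(\Phi)^*(B(\Phi)B(\Phi)^*)^{-1/2}$ being a partial isometry, one gets $\|w\|=\|(B(\Phi)B(\Phi)^*)^{-1/2}b\|$). Using that $S_1,S_2,D$ are unitary and that $S_2$ commutes with $S_1$ and $D$, a one‑line computation gives
\[
B(\Phi)B(\Phi)^*=\begin{pmatrix}3I & X^*\\ X & 3I\end{pmatrix},\qquad X:=S_2\bigl(I+S_1^{-1}\bigr)+D,
\]
so $\inf\sigma\bigl(B(\Phi)B(\Phi)^*\bigr)=3-\|X\|$, and everything reduces to the quantitative estimate $\|X\|\le 3-c\,|1-e^{-i\Phi}|^2$ for $0<\Phi<2\pi$.

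To prove this I would Fourier‑transform in the $\gamma_2$‑variable, in which $B(\Phi)$ is invariant, writing $X=\int_{\T}^{\oplus}X_{\theta_2}\,d\theta_2$ with $X_{\theta_2}=e^{i\theta_2}(I+S_1^{-1})+D$ acting on $\ell^2(\Z)$ in the variable $\gamma_1=n$, i.e. the bidiagonal operator $(X_{\theta_2}v)_n=d_n v_n+e^{i\theta_2}v_{n+1}$ with $d_n=e^{i\theta_2}+e^{-i\Phi n}$, hence $|d_n|=2|\cos\tfrac{\theta_2+\Phi n}{2}|$. The elementary bound $|d_nv_n+e^{i\theta_2}v_{n+1}|^2\le(|d_n|^2+|d_n|)|v_n|^2+(|d_n|+1)|v_{n+1}|^2$, summed over $n$ (reindexing the $|v_{n+1}|^2$ terms), yields $\|X_{\theta_2}v\|^2\le\sup_n\bigl(|d_n|^2+|d_n|+|d_{n-1}|+1\bigr)\|v\|^2$. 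Writing $r_n=|\cos\tfrac{\theta_2+\Phi n}{2}|$, the bound $2r\le 1+r^2$ and the identity $r_n^2+r_{n-1}^2=\cos^2\tfrac{\psi_n}{2}+\cos^2\tfrac{\psi_n-\Phi}{2}=1+\cos(\psi_n-\tfrac{\Phi}{2})\cos\tfrac{\Phi}{2}\le 1+|\cos\tfrac{\Phi}{2}|$ (with $\psi_n=\theta_2+\Phi n$) give $|d_n|^2+|d_n|+|d_{n-1}|+1\le 8+|\cos\tfrac{\Phi}{2}|$, uniformly in $n$ and $\theta_2$. Hence $\|X\|\le\sqrt{8+|\cos(\Phi/2)|}$ and
\[
3-\|X\|\ \ge\ \frac{1-|\cos(\Phi/2)|}{3+\sqrt{8+|\cos(\Phi/2)|}}\ \ge\ \frac{1-\cos^2(\Phi/2)}{6}\ =\ \frac{\sin^2(\Phi/2)}{6}\ =\ \frac{|1-e^{-i\Phi}|^2}{24},
\]
which is the required estimate with $c=1/24$, so $\|(B(\Phi)B(\Phi)^*)^{-1/2}\|\le\sqrt{24}\,|1-e^{-i\Phi}|^{-1}$ and the construction above closes.

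The one genuinely delicate point is this last estimate: the crude bounds $\|S_2(I+S_1^{-1})\|\le 2$ and $\|D\|=1$ only give $\|X\|\le 3$, and $\|X\|=3$ is in fact attained when $\Phi\in 2\pi\Z$ (where $I+S_1^{-1}$ and $D$ can be made simultaneously resonant). The gain comes precisely from the fact that $d_n$ and $d_{n-1}$ cannot both have modulus near $2$ once $\Phi\notin 2\pi\Z$ — the content of the trigonometric identity above — and this is the sole place where the hypothesis $\Phi/2\pi\in(0,1)$ enters; it also explains the degeneration of the constant as $\Phi\to 0$ or $\Phi\to 2\pi$.
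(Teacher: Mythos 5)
Your argument is correct and takes a genuinely different route from the paper. The paper proves surjectivity by explicitly constructing preimages $\alpha_{\bullet,(\gamma,\vec{f})},\alpha_{\bullet,(\gamma,\vec{g})},\alpha_{\bullet,(\gamma,\vec{h})}$ of the standard basis vectors -- supported on single hexagons and then projected off $DH_{\mathcal{E}(\Lambda)}(\Phi)$, with the explicit coefficients \eqref{coeff}, \eqref{coeff2} carrying the factor $(1-e^{-i\Phi})^{-1}$ -- and then controls $\|u\|=\|\sum_{\vec d}a(\vec d)\alpha_{\bullet,\vec d}\|$ by a finite-overlap (Schur-type) counting argument with the functions $M$ and $N$. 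You instead split $A(\Phi)$ into the local difference block $B(\Phi)$ and the projection row, observe that the double-hexagon coefficient sequences lie in $\ker B(\Phi)$ (so the two blocks decouple orthogonally), and reduce everything to the lower bound $\inf\sigma(B(\Phi)B(\Phi)^*)=3-\|X\|\geq c\,|1-e^{-i\Phi}|^2$, which you obtain by fibering in $\gamma_2$ and exploiting the two-site identity $r_n^2+r_{n-1}^2\leq 1+|\cos(\Phi/2)|$. This yields the minimal-norm right inverse directly, avoids the hexagon bookkeeping entirely, and isolates cleanly where the hypothesis $\Phi\notin 2\pi\Z$ and the degeneration of the constant enter; the paper's construction, in exchange, produces concrete compactly supported preimages, which is in the spirit of the rest of Section \ref{Dirichsec}. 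Two small points: the final numerical step has a slip -- from $1-|c|=\frac{1-c^2}{1+|c|}$ one gets $\frac{1-|\cos(\Phi/2)|}{3+\sqrt{8+|\cos(\Phi/2)|}}\geq\frac{\sin^2(\Phi/2)}{12}=\frac{|1-e^{-i\Phi}|^2}{48}$, not $\frac{\sin^2(\Phi/2)}{6}$, so your constant is $c=1/48$ rather than $1/24$ (harmless); and the bound $\|u_{\mathcal D}\|\leq\|a\|$ implicitly normalizes $\|s_\lambda\|_{L^2(0,1)}$, which contributes a $\lambda$-dependent factor exactly as in the paper's proof, so it does not affect the $\Phi$-dependence.
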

Combining Lemma \ref{sur} with the already established injectivity result, we have $A(\Phi)$ is continuously invertible for $\frac{\Phi}{2\pi}\in \mathbb{Q}\cap (0,1)$ with the following control of its norm
\begin{align}\label{inversenorm}
\|A(\Phi)^{-1}\|\leq \frac{C}{|1-e^{-i\Phi}|}.
\end{align}

Now let us give the proof of Theorem \ref{irratdirichlet}, assuming the result of Lemmas \ref{sur}.
\subsection*{Proof of Theorem \ref{irratdirichlet}}
Since $\|A(\Phi)\|$ is uniformly bounded by a constant and $\Phi \mapsto \langle x,A(\Phi)y \rangle$ is analytic for $x,y \in c_{00}(\mathcal{E}(\Lambda))$,
$A(\Phi)$ is an analytic operator in $\Phi$. 
Thus for any $\frac{\tilde{\Phi}}{2\pi} \in (0,1)$, there exists $\epsilon_1(\tilde{\Phi})$ and $C(\tilde{\Phi})$ such that 
\begin{align}\label{Lip}
\|A(\Phi)-A(\tilde{\Phi})\|\leq C(\tilde{\Phi})|\Phi-\tilde{\Phi}|,\ \ \ \ \ \mathrm{for}\ |\Phi-\tilde{\Phi}|<\epsilon_1(\tilde{\Phi}).
\end{align}
Also by (\ref{inversenorm}), for any irrational $\frac{\tilde{\Phi}}{2\pi}\in (0,1)$ and rational $\frac{\Phi}{2\pi}$ with $|\Phi-\tilde{\Phi}|<\epsilon_2(\tilde{\Phi})$, we have
\begin{align}
\|A(\Phi)^{-1}\|\leq \frac{2C}{|1-e^{-i\tilde{\Phi}}|}.
\end{align}
Hence, taking $\frac{\Phi}{2\pi}\in \mathbb{Q}\cap (0,1)$ with $|\Phi-\tilde{\Phi}|<\min(\epsilon_1(\tilde{\Phi}), \epsilon_2(\tilde{\Phi}), \frac{|1-e^{-i\tilde{\Phi}}|}{2C(\tilde{\Phi})C})$, we would get
\begin{align*}
\|A(\Phi)^{-1}(A(\tilde{\Phi})-A(\Phi))\|<1.
\end{align*}
This implies that
\begin{align*}
A(\tilde{\Phi})=A(\Phi)\left(\mathrm{Id}+A(\Phi)^{-1}(A(\tilde{\Phi})-A(\Phi))\right)
\end{align*}
is invertible.
Thus, we conclude that also for irrational fluxes $\operatorname{ker}(A(\Phi))=\left\{0\right\}$ and by $\eqref{iso}$ therefore $Z(\Phi)=\left\{0\right\}$ which shows the claim. $\hfill{} \Box$


\subsection*{Proof of Lemma \ref{sur}}

We prove this Lemma by showing that there is a sufficiently sparse set of elements in $l^2(\mathcal{E}(\Lambda))$ that gets mapped under $A(\Phi)$ on the standard basis of $l^2(\mathcal{E}(\Lambda)).$

Let $\alpha_{\vec{e},(\gamma,\vec{h})}:= \varphi_{\gamma,\vec{e}} \left\lVert s_{\lambda,e} \right\rVert^2_{L^2(\vec{e})}.$ Since functions $\varphi_{\gamma}$ satisfy the continuity conditions \eqref{magopd} and form an $L^2$ orthonormal system, we obtain the standard basis vectors $\delta_{\bullet,(\gamma,\vec{h})} \in l^2(\mathcal{E}(\Lambda))$ under $A(\Phi)$
\begin{equation}
\begin{split}
\label{basis3}
(A(\Phi)\alpha_{\bullet,(\gamma,\vec{h})})(\gamma',\vec{f})&:=0, \\
(A(\Phi)\alpha_{\bullet,(\gamma,\vec{h})})(\gamma',\vec{g})&:=0, \text{ and }\\
(A(\Phi)\alpha_{\bullet,(\gamma,\vec{h})})(\gamma',\vec{h})&:=\delta_{\gamma,\gamma'}.
\end{split}
\end{equation}
\begin{figure}
\includegraphics[width=0.4\textwidth]{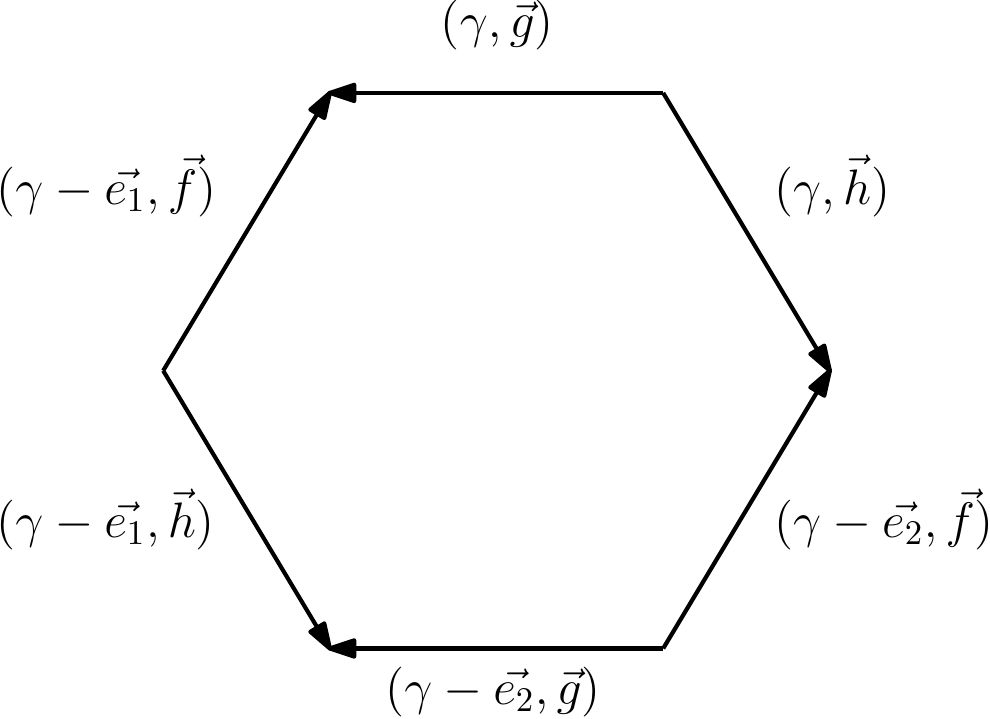}
\caption{Labelling of hexagon $\Gamma_{\gamma}$. \label{Comb}}
\end{figure}
To obtain also the remaining basis vectors, let us define $L^2$ functions $\tilde{\psi}_{(\gamma,\vec{f})}$ and  $\tilde{\psi}_{(\gamma,\vec{g})}$ supported on a single hexagon $\Gamma_{\gamma}$ as shown in Figure \ref{Comb}. The indices of $\tilde{\psi}_{(\gamma,[\vec{e}])}$ are chosen to indicate the standard basis vectors $\delta_{\bullet,(\gamma,[\vec{e}])} \in l^2(\mathcal E(\Lambda))$ in the range of $A(\Phi)$ that we will construct from those functions. To define $\tilde{\psi}_{(\gamma,\vec{f})}$ and $\tilde{\psi}_{(\gamma,\vec{g})}$, we introduce coefficients $\zeta_{\bullet,{(\gamma,\vec{f})}}$ and $\zeta_{\bullet,{(\gamma,\vec{g})}}$ such that $\tilde{\psi}_{(\gamma,\vec{f})}:=\sum_{\vec{e} \in \mathcal{E}(\Gamma_{\gamma})} \zeta_{\vec{e},(\gamma,\vec{f})} s_{\lambda,\vec{e}}$ and $\tilde{\psi}_{(\gamma,\vec{g})}:=\sum_{\vec{e} \in \mathcal{E}(\Gamma_{\gamma})} \zeta_{\vec{e},(\gamma,\vec{g})} s_{\lambda,\vec{e}},$ respectively.

We do this in such a way that all continuity conditions for $\tilde{\psi}_{(\gamma,\vec{f})}$ at the vertices of $\Gamma_{\gamma}$ are satisfied up to a single one at the (initial) vertex $i((\gamma,\vec{g}))=i((\gamma,\vec{h}))$. We define for fixed $\vec{e}=(\gamma,\vec{f})$
\begin{equation}
\begin{split}
\label{coeff}
&\zeta_{(\gamma,\vec{h}),\vec{e}}:=\frac{1}{1-e^{-i\Phi}}, \
 \zeta_{(\gamma-e_2,\vec{f}),\vec{e}}:=\frac{-e^{-i\Phi \gamma_1}}{1-e^{-i\Phi}}, \ 
\zeta_{(\gamma-e_2,\vec{g}),\vec{e}}:=\frac{e^{-i\Phi \gamma_1}}{1-e^{-i\Phi}}, 
\\ 
&\zeta_{(\gamma-e_1,\vec{h}),\vec{e}}:=\frac{-e^{-i\Phi}}{1-e^{-i\Phi}}, \ 
\zeta_{(\gamma-e_1,\vec{f}),\vec{e}}:=\frac{e^{-i\Phi }}{1-e^{-i\Phi}},\ 
\zeta_{(\gamma,\vec{g}),\vec{e}}:=\frac{-e^{-i\Phi}}{1-e^{-i\Phi}}
\end{split}
\end{equation}
and all other $\zeta_{\bullet,\vec{e}}$ are taken to be zero.
Since for $\tilde{\psi}_{(\gamma,\vec{f})}$ all but one continuity conditions are satisfied, we obtain for the first two components of \eqref{defA}
\begin{align}
(A(\Phi)\zeta_{\bullet,(\gamma,\vec{f})})(\gamma',\vec{f}):=\delta_{\gamma,\gamma'} \text{ and } (A(\Phi)\zeta_{\bullet,(\gamma,\vec{f})})(\gamma',\vec{g}):=0.
\end{align}
To ensure that we also get constant zero in the third component of \eqref{defA}, we project onto the orthogonal complement of the double hexagonal states $\psi_{(\gamma,\vec{f})}:=\widetilde{\psi}_{(\gamma,\vec{f})}-P_{DH_{\mathcal{E}(\Lambda)}(\Phi)}\widetilde{\psi}_{(\gamma,\vec{f})}$ where $P_{DH_{\mathcal{E}(\Lambda)}(\Phi)}$ is the orthogonal projection onto $DH_{\mathcal{E}(\Lambda)}(\Phi).$ Let now $\alpha_{\vec{e},(\gamma,\vec{f})}$ be such that
\begin{equation}
\psi_{(\gamma,\vec{f})}=\sum_{\vec{e} \in \mathcal{E}(\Gamma_{\gamma})} \alpha_{\vec{e},(\gamma,\vec{f})} s_{\lambda,\vec{e}},
\end{equation} 
then it follows that
\begin{align}
\label{basis1}
(A(\Phi)\alpha_{\bullet,(\gamma,\vec{f})})(\gamma',\vec{f})&:=\delta_{\gamma,\gamma'}, \nonumber\\
(A(\Phi)\alpha_{\bullet,(\gamma,\vec{f})})(\gamma',\vec{g})&:=0, \text{ and } \nonumber\\
(A(\Phi)\alpha_{\bullet,(\gamma,\vec{f})})(\gamma',\vec{h})&:=0.
\end{align}

Similarly, we choose coefficients $\zeta_{\bullet,\vec{e}}$ with $\vec{e}=(\gamma,\vec{g})$, such that the boundary conditions are satisfied up to the one at the (terminal) vertex $t((\gamma,\vec{h}))=t((\gamma-e_2,\vec{f}))$
\begin{align}
\label{coeff2}
& \zeta_{(\gamma-e_2,\vec{f}),\vec{e}}:=\frac{1}{1-e^{-i\Phi}}, \ 
\zeta_{(\gamma-e_2,\vec{g}),\vec{e}}:=\frac{-1}{1-e^{-i\Phi}}, 
\ \zeta_{(\gamma-e_1,\vec{h}),\vec{e}}:=\frac{e^{i\Phi(\gamma_1-1)}}{1-e^{-i\Phi}} \nonumber\\
& \zeta_{(\gamma-e_1,\vec{f}),\vec{e}}:=\frac{-e^{i\Phi(\gamma_1-1)}}{1-e^{-i\Phi}},\
 \zeta_{(\gamma,\vec{g}),\vec{e}}:=\frac{e^{i\Phi(\gamma_1-1)}}{1-e^{-i\Phi}}, \  
 \zeta_{(\gamma,\vec{h}),\vec{e}}:=\frac{-e^{i\Phi(\gamma_1-1)}}{1-e^{-i\Phi}}
\end{align}
and all other coefficients $\zeta_{\bullet,\vec{e}}$ equal to zero. Thus, we get for the first two components of \eqref{defA}
\begin{align}
(A(\Phi)\zeta_{\bullet,(\gamma,\vec{g})})(\gamma',\vec{f})= 0 \text{ and }
(A(\Phi)\zeta_{\bullet,(\gamma,\vec{g})})(\gamma',\vec{g})=\delta_{\gamma,\gamma'}.
\end{align}
To ensure that we also get constant zero in the third component of \eqref{defA}, we project again on the orthogonal complement of the double hexagonal states $\psi_{(\gamma,\vec{g})}:=\widetilde{\psi}_{(\gamma,\vec{g})}-P_{DH_{\mathcal{E}(\Lambda)}(\Phi)}\widetilde{\psi}_{(\gamma,\vec{g})}$. Let now $\psi_{(\gamma,\vec{g})}=\sum_{\vec{e} \in \mathcal{E}(\Gamma_{\gamma})} \alpha_{\vec{e},(\gamma,\vec{g})} s_{\lambda,\vec{e}}$, then 
\begin{equation}
\begin{split}
\label{basis2}
(A(\Phi)\alpha_{\bullet,(\gamma,\vec{g})})(\gamma',\vec{f})&:=0 \\
(A(\Phi)\alpha_{\bullet,(\gamma,\vec{g})})(\gamma',\vec{g})&:=\delta_{\gamma,\gamma'}, \text{ and}  \\
(A(\Phi)\alpha_{\bullet,(\gamma,\vec{g})})(\gamma',\vec{h})&:=0.
\end{split}
\end{equation}

Hence, we obtained in \eqref{basis3}, \eqref{basis1}, and \eqref{basis2} sequences 
\begin{equation}
\label{eq:Riesz basis}
\left\{\alpha_{\bullet,(\gamma,\vec{f})}, \alpha_{\bullet,(\gamma,\vec{g})}, \text{ and } \alpha_{\bullet,(\gamma,\vec{h})} ; \gamma \in \mathbb{Z}^2   \right\} 
\end{equation} 
in $l^2(\mathcal{E}(\Lambda))$ that get mapped under $A(\Phi)$ onto the standard unit basis of $l^2(\mathcal{E}(\Lambda)).$

To conclude surjectivity of $A(\Phi)$ from this, it suffices to show that for all $(a({\vec{e}})) \in l^2(\mathcal{E}(\Lambda))$ we can bound $u(\vec{e}):=\sum_{\vec{d}  \in \mathcal{E}(\Lambda)}  a({\vec{d}}) \ \alpha_{\vec{e},\vec{d}}$ as follows
\begin{equation}
\label{crestimate}
\left\lVert u \right\rVert^2_{l^2(\mathcal{E}(\Lambda))}  \le \frac{C^2}{\left\lvert 1-e^{-i\Phi} \right\rvert^2} \sum_{\vec{e} \in \mathcal{E}(\Lambda)} \left\lvert a({\vec{e}}) \right\rvert^2.
\end{equation}
We then define 
\begin{equation}
\begin{split}
\sigma_{\vec{e}}=\sum_{\vec{d} \in \mathcal E(\Lambda); [\vec{d}] \neq \vec{h}} a({\vec{d}}) \ \alpha_{\vec{e},\vec{d}} \text{  and  } \nu_{\vec{e}}= \sum_{\vec{d} \in \mathcal E(\Lambda); [\vec{d}] \neq \vec{h}} a({\vec{d}}) \ \zeta_{\vec{e},\vec{d}}. 
\end{split}
\end{equation}

Since $\psi_{(\gamma,\vec{f})},\psi_{(\gamma,\vec{g})}\in DH_{\mathcal{E}(\Lambda)}(\Phi)^{\perp}$ and $(\varphi_{\gamma})$ forms an orthonormal system in $DH_{\mathcal{E}(\Lambda)}(\Phi)$, to prove \eqref{crestimate} it suffices to show 
\begin{equation}
\label{crestimate2}
\left\lVert  \sigma \right\rVert^2_{l^2(\mathcal{E}(\Lambda))}  \le \frac{C^2}{\left\lvert 1-e^{-i\Phi} \right\rvert^2}  \sum_{\vec{e} \in \mathcal{E}(\Lambda);[\vec{e}] \neq \vec{h} } \left\lvert a({\vec{e}}) \right\rvert^2.
\end{equation}
Due to $\left\lVert  \sigma \right\rVert_{l^2(\mathcal{E}(\Lambda))} \le \left\lVert  \nu \right\rVert_{l^2(\mathcal{E}(\Lambda))}+ \left\lVert \sigma-\nu \right\rVert_{l^2(\mathcal{E}(\Lambda))}$ we may establish estimate \eqref{crestimate} for each term on the right-hand side of the triangle inequality, individually.

For two edges $ \vec{d}, \vec{e} \in \mathcal{E}(\Lambda)$ we define a function $M(\vec{d},\vec{e}):=1$ if there are $\gamma,\gamma' \in \mathbb Z^2$ and two hexagons $\Gamma_{\gamma}, \Gamma_{\gamma'}$ satisfying $\Gamma_{\gamma} \cap \Gamma_{\gamma'}\neq \emptyset$ such that $\vec{d} \in \Gamma_{\gamma}$ and $\vec{e} \in\Gamma_{\gamma'},$ and $M(\vec{d},\vec{e}):=0$ otherwise. Choosing $\tau_1$ such that $\sum_{\vec{d} \in \mathcal E(\Lambda); [\vec{d}] \neq \vec{h}} M(\vec{d},\vec{e})\le \tau_1$ for any $\vec{e} \in \mathcal E(\Lambda),$ then
\begin{equation}
\begin{split}
\left\lVert  \nu \right\rVert^2_{l^2(\mathcal{E}(\Lambda))}
&\le \sum_{\vec{d},\vec{e} \in \mathcal{E}(\Lambda); [\vec{d}],[\vec{e}] \neq \vec{h}}   \left\lvert a({\vec{d}}) \right\rvert \left\lvert a({\vec{e}}) \right\rvert \underbrace{\left\lVert \zeta_{\bullet,\vec{d}} \right\rVert_{l^2(\mathcal{E}(\Lambda))} \left\lVert \zeta_{\bullet,\vec{e}} \right\rVert_{l^2(\mathcal{E}(\Lambda))} }_{\le \frac{7}{\left\lvert 1-e^{-i\Phi} \right\rvert^2}} M(\vec{d},\vec{e}) \\
&\le \frac{7 \tau_1 }{\left\lvert 1-e^{-i\Phi} \right\rvert^2} \sum_{\vec{e} \in \mathcal{E}(\Lambda);[\vec{e}] \neq \vec{h}} \left\lvert a({\vec{e}}) \right\rvert^2.
\end{split}
\end{equation}
For the second term $\left\lVert \sigma-\nu \right\rVert_{l^2(\mathcal{E}(\Lambda))}$, we use that functions $\tilde{\psi}_{(\gamma,[e])}$ with $[e] \neq [h]$ are supported on hexagons $\Gamma$ and can therefore only overlap with finitely many linearly independent double hexagonal states. Thus, we define a function $N$ with $N(\vec{d}, \vec{e}):=1$ if $\vec{d},\vec{e}$ belong to two hexagons $\Gamma_{\gamma}, \Gamma_{\gamma'}$ for which there are two double hexagons $\Gamma_{1},\Gamma_{2}$ with the property that all intersections $\Gamma_{\gamma} \cap \Gamma_{1}$, $\Gamma_{1} \cap \Gamma_{2}$, $\Gamma_{2}\cap \Gamma_{\gamma'}$ are not empty. Otherwise, we set $N(\vec{d}, \vec{e}):=0.$ Choosing $\tau_2$ such that $\sum_{\vec{d} \in \mathcal E(\Lambda); [\vec{d}] \neq \vec{h}} N(\vec{d}, \vec{e})\le \tau_2$ for any $\vec{e} \in \mathcal E(\Lambda),$ then
\begin{equation}
\begin{split}
\left\lVert \sigma-\nu \right\rVert_{l^2(\mathcal{E}(\Lambda))}^2 
&=\sum_{\vec{d},\vec{e} \in \mathcal{E}(\Lambda);[\vec{d}],[\vec{e}]  \neq \vec{h}} \frac{N( \vec{d},\vec{e}) a({\vec{d}})\  \overline{a({\vec{e}})}}{\left\lVert s_{\lambda} \right\rVert_{L^2((0,1))}^2} \left\langle P_{DH_{\mathcal{E}(\Lambda)}(\Phi)}\tilde{\psi}_{\vec{d}}, P_{DH_{\mathcal{E}(\Lambda)}(\Phi)}\tilde{\psi}_{\vec{e}} \right\rangle_{L^2(\mathcal{E}(\Lambda))}  \\
&\le \sum_{\vec{d},\vec{e} \in \mathcal{E}(\Lambda);[\vec{d}],[\vec{e}]  \neq \vec{h}} \left\vert a({\vec{d} }) \right\vert \left\lvert a({\vec{e}}) \right\rvert  \left\lVert \zeta_{\bullet,\vec{d}} \right\rVert_{l^2(\mathcal{E}(\Lambda))} \left\lVert \zeta_{\bullet,\vec{e}} \right\rVert_{l^2(\mathcal{E}(\Lambda))} N( \vec{d},\vec{e})    \\
& \le \frac{7\tau_2}{\left\lvert 1-e^{-i\Phi} \right\rvert^2} \sum_{\vec{e} \in \mathcal{E}(\Lambda);[\vec{e}] \neq \vec{h}} \left\lvert a({\vec{e}}) \right\rvert^2. 
\end{split}
\end{equation}
$\hfill{} \Box$

\subsection{Absolutely continuous spectrum for rational flux quanta}\label{acsec}
\begin{lemm}
\label{rationalcase}
For $\frac{\Phi}{2\pi}=\frac{p}{q} \in \mathbb{Q},$ the spectrum of $H^B$ away from the Dirichlet spectrum is absolutely continuous and has possibly touching, but non-overlapping band structure. An interval $I \subset [-1,1]$ is a band of $Q_{\Lambda}(\Phi)$ if and only if its pre-image under $\Delta$, on each fixed band of the Hill operator, is a band of $H^B.$ 
\end{lemm}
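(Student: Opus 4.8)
The plan is to transport the finite band structure and the absolute continuity of $\sigma(Q_\Lambda(\Phi))$ — both supplied by Theorem \ref{T3} and Lemma \ref{spectrumQlambda} — through the correspondence $\lambda\mapsto\Delta(\lambda)$ of Lemma \ref{HBQLam}, using that $\Delta$ restricts to a monotone analytic homeomorphism on the interior of each Hill band. Fix $\frac{\Phi}{2\pi}=\frac{p}{q}$; the case $\Phi\in2\pi\Z$ is Theorem \ref{zero magnetic potential}, so assume $\Phi\notin2\pi\Z$, whence $\|Q_\Lambda(\Phi)\|<1$ by Lemma \ref{operatornorm}. Recall from Section \ref{Hillsec} that $\Delta'|_{\operatorname{int}(B_n)}\ne0$ and that $\sigma(H^D)$ lies at the Hill band edges $\{\Delta=\pm1\}$, see \eqref{DirichletHilledge}; hence $\Delta$ maps $\operatorname{int}(B_n)$ monotonically (orientation preserving or reversing) onto $(-1,1)$, and $\operatorname{int}(B_n)\subseteq\rho(H^D)$. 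By Theorem \ref{T3} and \eqref{QLambdaHalpha}, $\sigma(Q_\Lambda(\Phi))$ is a finite union of (possibly touching) bands with pairwise disjoint interiors, and $\|Q_\Lambda(\Phi)\|<1$ places all of them in $(-1,1)$. Therefore, for each $n$ the homeomorphism $\Delta|_{\operatorname{int}(B_n)}$ carries $\sigma(Q_\Lambda(\Phi))$ bijectively onto $\sigma(H^B)\cap\operatorname{int}(B_n)$ (Lemma \ref{HBQLam}), maps connected components to connected components, and, being monotone, preserves ``touching'' and ``non-overlapping''. Since bands from distinct Hill bands are separated by Dirichlet band edges, $\sigma(H^B)\setminus\sigma(H^D)$ is a possibly touching, non-overlapping union of bands, and $I\subseteq(-1,1)$ is a band of $Q_\Lambda(\Phi)$ iff $(\Delta|_{\operatorname{int}(B_n)})^{-1}(I)$ is a band of $H^B$ on $B_n$ — the asserted equivalence.

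\textbf{Absolute continuity.} By Lemma \ref{simpleproperties}(3), $H^B$ has no point spectrum off $\sigma(H^D)$, so it remains to exclude singular continuous spectrum there. I would argue through Krein's formula (Theorem \ref{KRF}, Lemma \ref{explicitKLambda}): for $\lambda\in\rho(H^D)$,
\begin{equation*}
(\Lambda^B-\lambda)^{-1}=(H^D-\lambda)^{-1}-\gamma(\lambda)\,(K_\Lambda(\Phi)-\Delta(\lambda))^{-1}\,s_\lambda(1)\,\gamma(\bar\lambda)^*,
\end{equation*}
where the first term is a Herglotz function with poles only at $\sigma(H^D)$, the factors $\gamma(\lambda),\gamma(\bar\lambda)^*,s_\lambda(1)$ are analytic on $\rho(H^D)$, and $K_\Lambda(\Phi)\cong Q_\Lambda(\Phi)$ has \emph{purely absolutely continuous} spectrum (Theorem \ref{T3}, Lemma \ref{spectrumQlambda}). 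Hence the Borel transforms $\zeta\mapsto\langle u,(Q_\Lambda(\Phi)-\zeta)^{-1}v\rangle$ have finite boundary values for a.e.\ real $\zeta$; pulling these back along the analytic diffeomorphism $\Delta|_{\operatorname{int}(B_n)}$, whose derivative is bounded above and below on $\operatorname{int}(B_n)$, shows that the matrix elements $\langle f,(\Lambda^B-\lambda)^{-1}f\rangle$ have finite boundary values at a.e.\ $x\in\operatorname{int}(B_n)\setminus\sigma(H^D)$, so the spectral measures carry no singular part there; together with the absence of point spectrum this gives pure absolute continuity of $H^B$ away from $\sigma(H^D)$. Alternatively, and to display the band functions: for rational flux $\Lambda^B$ commutes with the commuting pair $T^B_{(q,0)},T^B_{(0,1)}$ (since $\omega((q,0),(0,1))=q\in q\Z$ in \eqref{commrel}), hence is analytically fibered over $\mathbb{T}^2$ with fibers on the finite metric graph of $q$ cells, so its singular continuous spectrum is empty exactly as for $H$ (\cite{GeNi}); on each fiber the eigenvalue equation with $\lambda\notin\sigma(H^D)$ reduces via $c_\lambda,s_\lambda$ and Lemma \ref{explicitKLambda} to the statement that $\Delta(\lambda)$ is an eigenvalue of the $2q\times2q$ fiber matrix of $Q_\Lambda(\Phi)$, whose real-analytic eigenvalue branches $\mu_i$ are non-constant because $Q_\Lambda(\Phi)$ has no point spectrum (Lemma \ref{spectrumQlambda}); thus the band functions of $H^B$ are $(\Delta|_{\operatorname{int}(B_n)})^{-1}\circ\mu_i$, non-constant, hence purely absolutely continuous.

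\textbf{Main obstacle.} The one step that is not routine is to align the two Floquet pictures — the fibration of $\Lambda^B$ over $\mathbb{T}^2$ on the $q$-cell metric graph and that of $K_\Lambda(\Phi)\cong Q_\Lambda(\Phi)$ over $\mathbb{T}^2$ — precisely enough that the fiber eigenvalue problem for $\Lambda^B$ off $\sigma(H^D)$ becomes literally ``$\Delta(\lambda)$ is an eigenvalue of the corresponding $Q_\Lambda(\Phi)$ fiber matrix'', so that the G\'erard--Nier analytically-fibered conclusion and the monotone band correspondence both go through; this also involves turning the $k$-dependent vertex conditions on the finite metric graph into a holomorphic family of self-adjoint operators. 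Essentially all of this is already encoded in $M(\lambda,\Phi)=(K_\Lambda(\Phi)-\Delta(\lambda))/s_\lambda(1)$ and in $K_\Lambda(\Phi)\cong Q_\Lambda(\Phi)$; the additional inputs are the bookkeeping between the two fibrations and the use of Lemma \ref{operatornorm} to ensure that no band of $Q_\Lambda(\Phi)$ reaches $\pm1$, so that the pulled-back bands stay strictly inside $\operatorname{int}(B_n)$, away from the Dirichlet spectrum.
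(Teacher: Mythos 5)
Your first argument for absolute continuity has a genuine gap. Finiteness of the boundary values of a Borel transform at Lebesgue-a.e.\ point never excludes a singular continuous component: the Borel transform of any singular continuous measure (e.g.\ the Cantor measure) already has finite boundary values Lebesgue-a.e. What the boundary-value criterion actually gives is that the singular part of a spectral measure $\mu_f$ of $\Lambda^B$ is carried by the set $S_f$ where $\lim_{\epsilon\downarrow0}\Im\langle f,(\Lambda^B-x-i\epsilon)^{-1}f\rangle=+\infty$; via Krein's formula you can only conclude $S_f\cap\operatorname{int}(B_n)\subseteq\sigma(H^D)\cup\Delta^{-1}(\tilde S)$ with $\tilde S$ Lebesgue-null, and to finish you would need $\mu_f\bigl(\Delta^{-1}(\tilde S)\bigr)=0$, which does not follow from $\lvert\Delta^{-1}(\tilde S)\rvert=0$ --- every singular measure is carried by a Lebesgue-null set. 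Closing this would require a quantitative domination of $\Im\langle f,(\Lambda^B-\lambda)^{-1}f\rangle$ by the corresponding quantity for $Q_\Lambda(\Phi)$, which the complex prefactors $s_\lambda(1)$ and $\gamma(\bar\lambda)^*$ in \eqref{Kreinresolvform} do not obviously provide. So this route, as written, does not establish absence of singular continuous spectrum.

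Your alternative route, by contrast, is essentially the paper's proof, and the step you flag as the ``main obstacle'' is precisely what the paper carries out. Absence of singular continuous spectrum comes from a commuting pair of magnetic translations (the paper uses $T^B_{(0,q)},T^B_{(q,0)}$, you use $T^B_{(q,0)},T^B_{(0,1)}$; both satisfy $\Phi\,\omega(\mu,\gamma)\in2\pi\Z$) together with the analytically-fibered-operator result of \cite{GeNi}, exactly as you propose; point spectrum off $\sigma(H^D)$ is excluded by Lemma \ref{simpleproperties}. For the band correspondence, the paper aligns the two Floquet pictures by introducing discrete magnetic translations $\tau^B_\mu:=\pi(\lambda)T^B_\mu\gamma(\lambda)$, checking that these are $\lambda$-independent unitaries, that $M(\lambda,\Phi)^{-1}$ commutes with them, and that Krein's formula survives conjugation by the two Gelfand transforms, yielding $\gamma(\lambda)(k)\operatorname{ker}(M(\lambda,\Phi)(k))=\operatorname{ker}(\Lambda^B(k)-\lambda)$ fiberwise. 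This is what upgrades your set-level correspondence --- which, via Lemma \ref{HBQLam} and the monotonicity of $\Delta\vert_{\operatorname{int}(B_n)}$, only identifies connected components of the two spectra --- to a bijection of Floquet bands, which is what the statement requires when bands touch. Note also that the non-overlapping of the bands of $Q_\Lambda(\Phi)$ itself is not contained in Theorem \ref{T3} as stated; the paper imports it from Section~6 of \cite{HKL}.
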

\begin{proof}

Note that $\sigma_p^{\Phi}\backslash \sigma(H^D)=\emptyset$ is due to (3) of Lemma \ref{simpleproperties}.
The absence of singular continuous spectra is clear because $H^B$ is invariant under magnetic translations \eqref{MagTra}, some of which commute with one another in the case of rational flux quanta. In particular, $T_{0,q}^B$ and $T_{q,0}^B$ always generate a group of commutative magnetic translations due to \eqref{commrel}. Thus, standard arguments from Floquet-Bloch theory show that $H^B$ has no singular continuous spectrum \cite{GeNi}.

Now we will show the non-overlapping band structure. We recall that $\Lambda^B$, $\mathcal{T}^B$, $H^D$ all commute with magnetic translations $T^B_{\mu}$ \eqref{TBcommutesLambda} \eqref{TBcommutescalTB} \eqref{TBcommutesHD} where we assume that $\mu \in q \mathbb{Z}$. Consequently, $T_{\mu}^B$ leaves all eigenspaces of those operators invariant.

For $\lambda \in \rho(H^D)$ we define discrete magnetic translations $\tau_{\mu}^B: l^2(\mathcal{V})\rightarrow l^2(\mathcal{V})$,
\begin{equation}
\label{dismatra}
\tau_{\mu}^B:= \pi(\lambda)T_{\mu}^B \gamma(\lambda),
\end{equation}
with $\gamma$ as in Def. \ref{defgamma}.
Recall now that $\gamma(\lambda)=(\pi|_{\mathrm{ker}(\mathcal{T}^B-\lambda)})^{-1}$ maps given vertex values onto a function in $\operatorname{ker}(\mathcal{T}^B - \lambda)$ with $\mathcal{T}^B$ as in Lemma \ref{TB-Lemm}. The operator $T_{\mu}^B$ translates this function and multiplies it by a function taking values on the unit circle of $\mathbb{C}$. Hereupon, $\pi(\lambda)$ recovers the translated and phase shifted vertex values. Thus, the definition of those discrete translation operators is in fact independent of $\lambda \in \rho(H^D)$ and they form a family of unitary operators with $\left(\tau_{\mu}^B\right)^*=\tau_{-\mu}^B$, due to (\ref{TB*}). See also \cite[Proof of Lemma $4.9$]{BZ}.

Since $\Lambda^B$, $H^D$ both commute with $T_{\mu}^B$, Krein's formula \eqref{Kreinresolvform} implies that for $\lambda\in \rho(H^D)\cap \rho(\Lambda^B)$,
\begin{equation}
T^B_{\mu} \left(\gamma(\lambda)M(\lambda, \Phi)^{-1} \gamma(\overline{\lambda})^*\right) = \left(\gamma(\lambda)M(\lambda, \Phi)^{-1} \gamma(\overline{\lambda})^*\right)T^B_{\mu}.
\end{equation}
Multiplying with $\pi(\lambda)$ and $\pi(\overline{\lambda})^*$ from both sides respectively, it follows that
\begin{align}
\label{commutativityM}
\tau_{\mu}^B M(\lambda, \Phi)^{-1}
&=M(\lambda, \Phi)^{-1}\left( \gamma(\overline{\lambda})^* T^B_{\mu} \pi(\overline{\lambda})^* \right) \nonumber\\
&=M(\lambda, \Phi)^{-1}\left( \pi(\overline{\lambda}) T^B_{-\mu}\gamma(\overline{\lambda}) \right)^{*} \nonumber\\
&=M(\lambda, \Phi)^{-1} \tau^B_{\mu},
\end{align}
thus $M(\lambda, \Phi)^{-1}$ commutes with discrete magnetic translations.

To see that $\Delta$ restricts on every Hill band $B_n$ (because $\Delta\vert_{B_n}$ is one-to-one) to an isomorphism from each band of $Q_{\Lambda}(\Phi)$ to a unique band of $\Lambda^B,$ it suffices to note that the preceding calculation shows that Krein's formula \eqref{Kreinresolvform} holds true for the Floquet-Bloch transformed operators. Let $U^{\operatorname{cont}}$ be the Gelfand transform generated by translations $T_{\mu}^B$ and $U^{\operatorname{discrete}}$ the Gelfand transform generated by discrete translations $\tau_{\mu}^B,$ i.e. for $k$ in the Brioullin zone
\begin{align}
(U^{\operatorname{cont}}f)(k,x)&:=\sum_{\mu \in q\mathbb{Z}^2} \left(T_{\mu}^Bf\right)(x)e^{i\langle k,\mu \rangle} \text{ and } \nonumber\\
(U^{\operatorname{discrete}}\xi)(k,\delta)&:=\sum_{\mu \in q\mathbb{Z}^2} \left(\tau_{\mu}^B\xi\right)(\delta)e^{i\langle k,\mu \rangle}.
\end{align} 
Using \eqref{dismatra} and $\gamma(\lambda)\pi(\lambda)=\operatorname{id}_{\mathcal{V}(\Lambda)}$ we obtain from 
\begin{equation}
\tau_{\mu}^B \gamma(\overline{\lambda})^*=\left(\tau_{-\mu}^B\right)^*\gamma(\overline{\lambda})^*=\gamma(\overline{\lambda})^*T_{\mu}^B \pi(\overline{\lambda})^*\gamma(\overline{\lambda})^*=\gamma(\overline{\lambda})^*T_{\mu}^B,
\end{equation} 
where we used that $T_{\mu}^B$ preserves $\operatorname{ker}(\mathcal{T^B}-\overline{\lambda})$ and $\gamma(\overline{\lambda})^*\lvert_{\operatorname{ker}(\mathcal{T^B}-\overline{\lambda})^{\perp}}=0,$ on some fundamental domain $W_{\Lambda}^{\Phi}$
\begin{equation}
\begin{split}
-\left(U^{\operatorname{cont}}\gamma(\lambda)M(\lambda, \Phi)^{-1} \gamma(\overline{\lambda})^{*}f\right)(k,x)
&=-\gamma(\lambda)(k)\left(U^{\operatorname{discrete}}M(\lambda, \Phi)^{-1}\gamma(\overline{\lambda})^{*}f\right)(k,x) \\
&=-\gamma(\lambda)(k)M(\lambda, \Phi)(k)^{-1}\left(U^{\operatorname{discrete}}\gamma(\overline{\lambda})^{*}f\right)(k,x) \\
&=-\gamma(\lambda)(k)M(\lambda, \Phi)(k)^{-1}\gamma(\overline{\lambda})(k)^*(U^{\operatorname{cont}}f)(k,x).
\end{split}
\end{equation}
where
$\gamma(\lambda)(k) \in \mathcal{L}(l^2(W_{\Lambda}^{\Phi}),L^2(W_{\Lambda}^{\Phi}))$ and $M(\lambda, \Phi)(k) \in \mathcal{L}(l^2(W_{\Lambda}^{\Phi}))$ are the restrictions of $\gamma(\lambda)$ and $M(\lambda, \Phi)$ on $l^2(W_{\Lambda}^{\Phi})$ satisfying Floquet boundary conditions. 
Both $\Lambda^B$ and $H^D$ commute with translations $T_{\mu}^B$, and thus fiber upon conjugation by $U^{\operatorname{cont}},$ so that for $\lambda \in \rho(\Lambda^B(k)) \cap \rho(H^D)$ Krein's formula remains true for each $k$
\begin{equation}
\label{Kreinresolvform2}
(\Lambda^B(k)-\lambda)^{-1}-(H^D\vert_{L^2(W_{\Lambda}^{\Phi})}-\lambda)^{-1} = -\gamma(\lambda)(k)M(\lambda, \Phi)(k)^{-1} \gamma(\overline{\lambda})(k)^*.
\end{equation}
In particular, for $\lambda \in \rho(H^D)$
\begin{equation}
\gamma(\lambda)(k) \operatorname{ker}(M(\lambda, \Phi)(k))= \operatorname{ker}(\Lambda^B(k)-\lambda).
\end{equation}
Therefore bands of $Q_{\Lambda}(\Phi)$ are in one-to-one correspondence with bands of $\Lambda^B$, and thus also with bands of $H^B$.
That the bands of $Q_{\Lambda}(\Phi)$ do not overlap is shown in Section $6$ of \cite{HKL}. Thus, the unique correspondence among bands of $Q_{\Lambda}(\Phi)$ and $H^B$ shows that the non-overlapping of bands holds true for $H^B$ as well. 
\end{proof}
\begin{rem}
\label{closedgaps}
For $\frac{\Phi}{2\pi}= \frac{1}{2}$ the spectral bands of $Q_{\Lambda}(\Phi)$ are touching and given by \cite{HKL} 
\begin{equation}
\left[-\sqrt{\frac{2}{3}},-\sqrt{\frac{1}{3}}\right], \ \left[-\sqrt{\frac{1}{3}},0\right],\ \left[0,\sqrt{\frac{1}{3}}\right], \text{ and } \left[\sqrt{\frac{1}{3}}, \sqrt{\frac{2}{3}}\right].
\end{equation}
Thus, by Lemma \ref{rationalcase} the bands of $H^B$ on each Hill band are touching as well, see Fig. \ref{Fig:touchingbands}. Bands belonging to different Hill bands do, as a rule for $\Phi \in (0,2\pi)$, not touch by Lemma \ref{operatornorm}. \\
In the case of $\frac{\Phi}{2\pi}= \frac{1}{3}$ however, only the bands at the Dirac points touch, see also Fig. \ref{Fig:notouchingbands}. The touching at the Dirac points is always satisfied by Lemma \ref{simpleproperties}.
\end{rem}
\begin{rem}
The spectrum of $Q_{\Lambda}(\Phi)$ for rational $\frac{\Phi}{2\pi}=\frac{p}{q}$ are precisely the eigenvalues \cite{HKL} of 
\begin{equation}
\frac{1}{3}\left(\begin{matrix} 0 & \operatorname{id}_{\mathbb{C}^{q}}+e^{ik_1} J_{p,q} +e^{ik_2} K_q \\
\operatorname{id}_{\mathbb{C}^{q}}+e^{-ik_1}J_{p,q}^* + e^{-i k_2} K_{q}^*  & 0 \end{matrix}\right)
\end{equation}
for $k \in \mathbb{T}_2^*$ where $J_{p,q}:=\left(\delta_{m,n}e^{2\pi i(m-1)\frac{p}{q}}\right)_{mn}$ and $(K_q)_{mn}:=1$ if $n = (m+1) \operatorname{mod} q$ and $0$ otherwise.
\end{rem}
\begin{rem}
Similar to the Hofstadter butterflies for discrete tight binding operators, the explicit spectrum for rational flux quanta allows us to plot the spectrum of $H^B$ for different rational flux quanta in Fig. \ref{Fig:butterfly}.
\end{rem}

\begin{figure}
\centerline{\includegraphics[height=9cm]{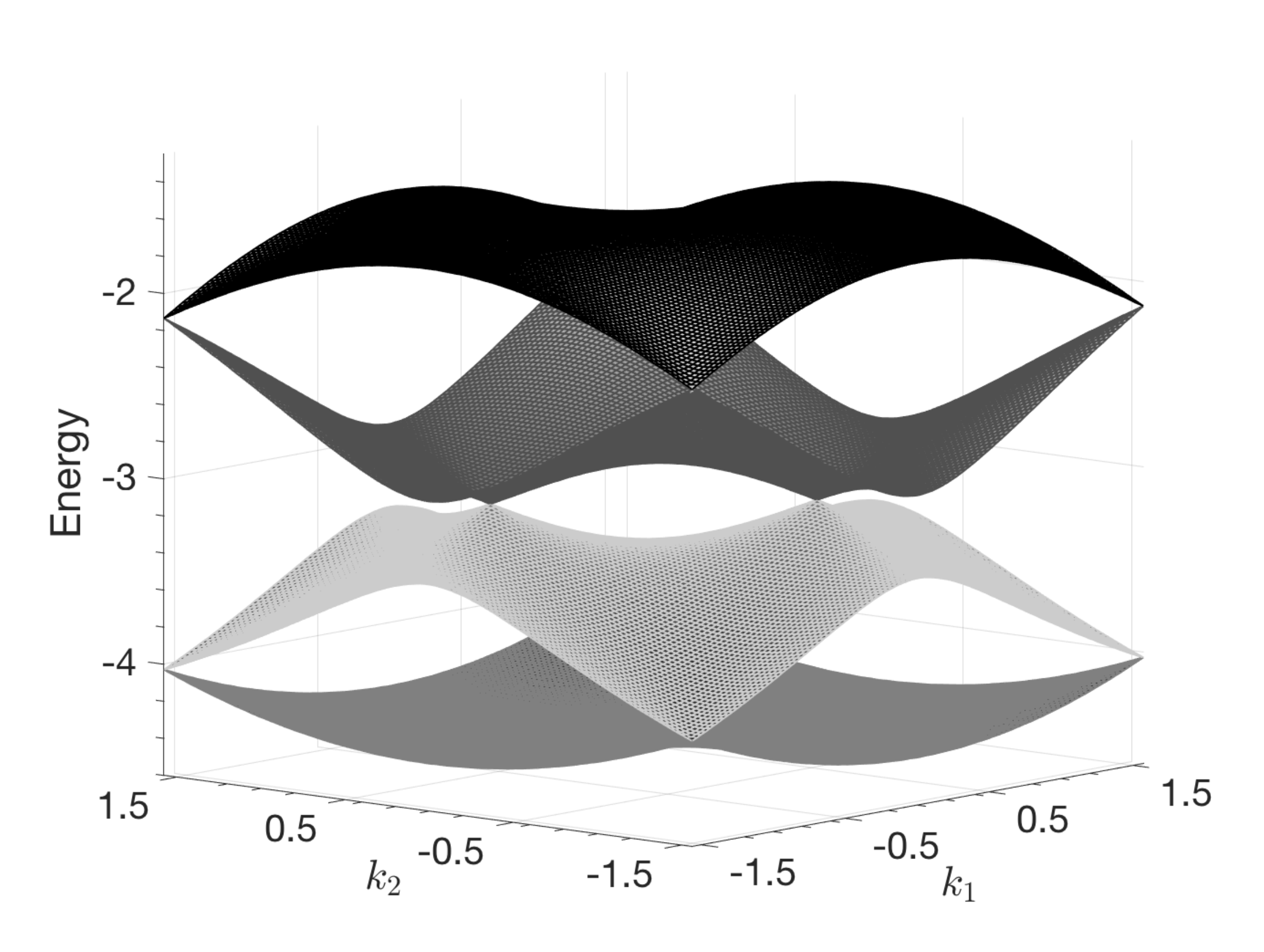}} 
\caption{Touching bands for $\frac{\Phi}{2\pi}=\frac{1}{2}$ on the first Hill band of a Schr\"odinger operator with Mathieu potential $V(t)=20\cos(2\pi t)$\label{Fig:touchingbands}. Different bands are differently colored.}
\end{figure}
\begin{figure}
\centerline{\includegraphics[height=9cm]{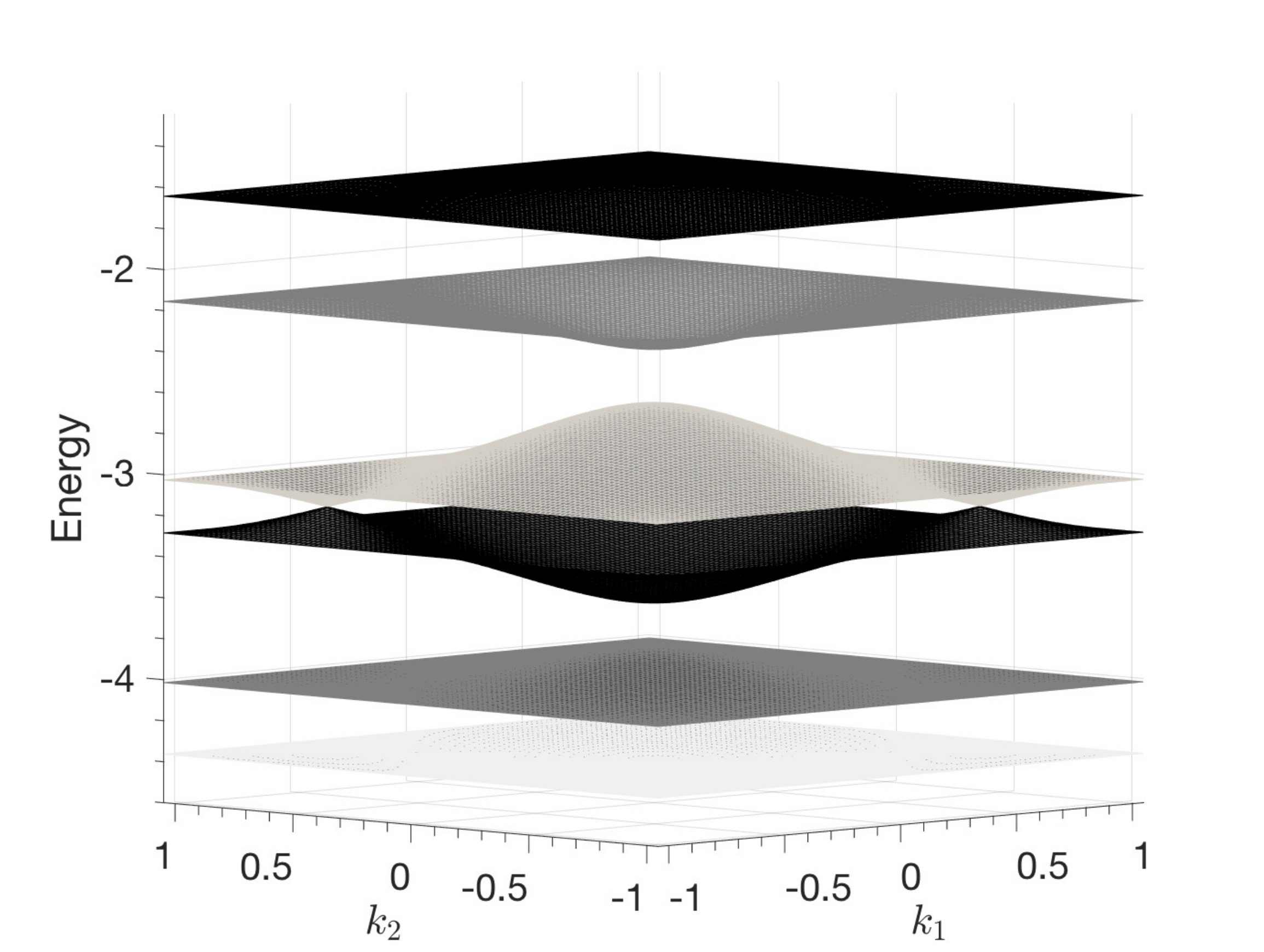}} 
\caption{Only the third and fourth band touch at the Dirac points for $\frac{\Phi}{2\pi}=\frac{1}{3}$ on the first Hill band of a Schr\"odinger operator with Mathieu potential $V(t)=20\cos(2\pi t)$.\label{Fig:notouchingbands} Different bands are differently colored.}
\end{figure}

\begin{figure}
\center\includegraphics[width=13cm]{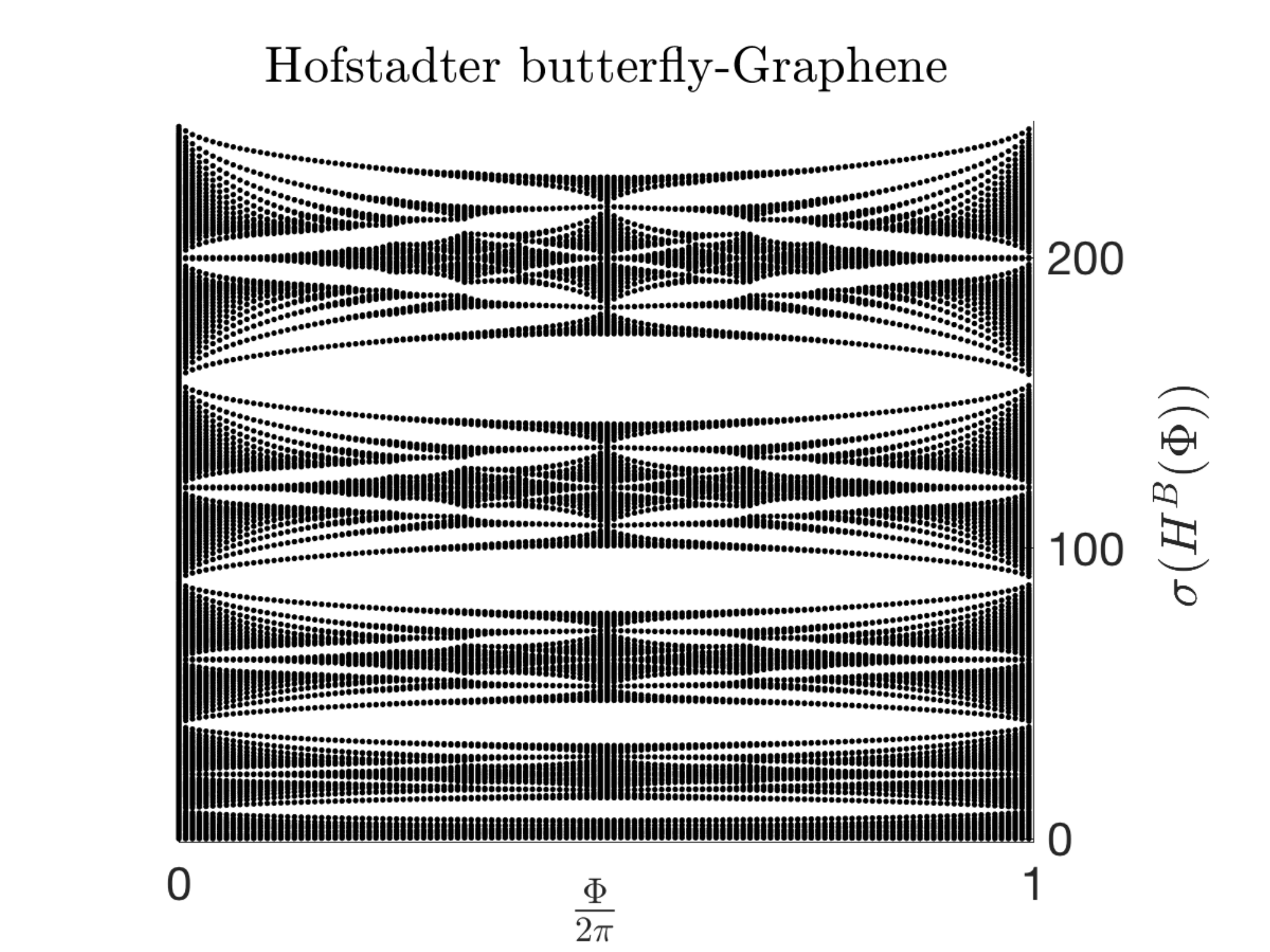} 
\caption{The Hofstadter butterfly for $H^B$ with $V=0$ on the first five Hill bands $B_k=[\pi^2(k-1)^2,\pi^2 k^2]$ for $k \in\left\{1,..,5\right\}$ and magnetic flux quanta $\frac{\Phi}{2\pi}=\frac{p}{q} \in [0,1]$ with $q\le 50$. \label{Fig:butterfly}}
\end{figure}
 
\subsection{Singular continuous Cantor spectrum for irrational flux quanta}\label{scsec}

\begin{proof}
By Lemma \ref{measure0cor}, the spectrum of $Q_{\Lambda}(\Phi)$ for irrational $\frac{\Phi}{2\pi}$ is a Cantor set of measure zero. Thus, the pullback of $\sigma(Q_{\Lambda})$ by $\Delta \vert_{\operatorname{int}(B_n)}$ is still a Cantor set of zero measure that coincides with $\sigma(H^B)\backslash \sigma(H^D).$ Therefore, the absolutely continuous spectrum of $H^B$ has to be empty.
(2) of Theorem \ref{T1} then follows from (4) of Lemma \ref{simpleproperties}.
\end{proof}


\appendix

\section{Proof of Proposition \ref{LE=0}}
The proof of this result is very similar to that for the almost Mathieu operator and the extended Harper's model. 
We will present it briefly here for completeness. Readers could refer to Theorem 3.2 (together with its proof in Appendix 2) of \cite{ajm} for a more detailed discussion.

Let $D^{\lambda}$ be defined as in \eqref{defD}, in which $v(\theta)=2\cos{2\pi \theta}$ and $c(\theta)=1+e^{-2\pi i \theta}$, hence
\begin{align}
D^{\lambda}(\theta)=\left(
\begin{matrix}
\lambda-e^{2\pi i \theta}-e^{-2\pi i \theta}\ \ &-1-e^{2\pi i (\theta-\frac{\Phi}{2\pi})}\\
1+e^{-2\pi i \theta}\ \ &0
\end{matrix}
\right).
\end{align}
Let us complexify $\theta$ and define $D^{\lambda}_{\epsilon}$ for $\epsilon\in \R$ as follows
\begin{align}
D^{\lambda}_{\epsilon}(\theta):=D^{\lambda}(\theta+i\epsilon).
\end{align}
Let 
\begin{align}
L(D_{\epsilon}^{\lambda}, \Phi):=\lim_{n\to\infty}\frac{1}{n}\int_{\T_1}\log{\|\prod_{j=n-1}^0 D^{\lambda}_{\epsilon}(\theta+j\frac{\Phi}{2\pi})\|}\ \mathrm{d}\theta,
\end{align}
be the complexified Lyapunov exponent. 
By Hardy's convexity theorem, see e.g. Theorem 1.6 in \cite{Duren}, $L(D_{\epsilon}^{\lambda}, \Phi)$ is convex in $\epsilon$.

Let 
\begin{align}
\omega(\lambda, \Phi; \epsilon):=\frac{1}{2\pi} \lim_{h\rightarrow 0_+}\frac{L(D_{\epsilon+h}^{\lambda}, \Phi)-L(D_{\epsilon}^{\lambda}, \Phi)}{h}
\end{align}
be the right-derivative of the complexified Lyapunov exponent, which has been dubbed {\it acceleration} in \cite{Global}.

By Theorem 1 of \cite{JMarxErr}, since $\det(D^{\lambda}(\theta+i\epsilon))\neq 0$ for $\epsilon\neq 0$, we have
\begin{align}\label{omegainZ}
\omega(\lambda, \Phi; \epsilon)\in \Z,\ \ \text{for}\ \epsilon\neq 0.
\end{align}
This is usually referred to as {\it quantization of acceleration}.

One can also easily compute the following asymptotic behaviour
\begin{equation}
\begin{aligned}
D^{\lambda}_{\epsilon}(\theta)=&\left(\begin{matrix}-e^{2\pi \epsilon}\ \ 0\\e^{2\pi \epsilon}\ \ \ \ 0\end{matrix}\right)+O(1),\ \ \ \epsilon\to\infty\\
D^{\lambda}_{\epsilon}(\theta)=&\left(\begin{matrix}-e^{-2\pi \epsilon}\ \ -e^{-i\Phi}e^{-2\pi\epsilon}\\0\ \ \ \ \ \ \ \ \ \ \ \ 0\end{matrix}\right)+O(1),\ \ \ \epsilon\to-\infty,
\end{aligned}
\end{equation}
hence by \eqref{omegainZ},
\begin{equation}
\begin{aligned}
\begin{cases}
L(D_{\epsilon}^{\lambda}, \Phi)=\epsilon,\ \ \ \ \ \epsilon>\epsilon_0>0,\\
L(D_{\epsilon}^{\lambda}, \Phi)=-\epsilon,\ \ \ \epsilon<-\epsilon_0.
\end{cases}
\end{aligned}
\end{equation}

Hence convexity of $L(D_{\epsilon}^{\lambda}, \Phi)$ and quantization of acceleration force either 
\begin{itemize}
\item $L(D_{0}^{\lambda}, \Phi)=0$\ \ or
\item $L(D_{0}^{\lambda}, \Phi)>0$ with $\omega(0, \Phi; \epsilon)=0$.
\end{itemize}
By Theorem 1.2 of \cite{AJSadel}, the second case is equivalent to $(\frac{\Phi}{2\pi}, D_{0}^{\lambda})$ inducing a {\it dominated splitting}.
This is equivalent to $\lambda \notin \Sigma_{\Phi}$, by \cite{MarxDom}.

Finally note that we always have
\begin{align}
L(\lambda, \Phi)= L(D_{0}^{\lambda}, \Phi)-\int_{\T_1}\log{|1+e^{-2\pi i \theta}|}\ \mathrm{d}\theta=L(D_{0}^{\lambda}, \Phi).
\end{align}
Hence $L(\lambda, \Phi)=0$ if and only if $\lambda\in \Sigma_{\Phi}$.
\qed

\section{Proof of Lemma \ref{det=tr}}
Assume $\theta=\frac{1}{2}-k_0\frac{p}{q}$.
Let $(H_{2\pi p/q, \theta})|_{[0,k-1]}$ be the restriction of $H_{2\pi p/q, \theta}$ onto interval $[0, k-1]$ with Dirichlet boundary condition.
Let $P_k(\theta)=\det{(\lambda -(H_{2\pi p/q,  \theta})|_{[0,k-1]})}$ be the determinant of this $k\times k$ matrix.
One can prove by induction (in $k$) that the following holds
\begin{align}\label{DPk}
D^{\lambda}_k(\theta)=
\left(
\begin{matrix}
P_k(\theta)\ \ &\overline{c(\theta-\frac{p}{q})}P_{k-1}(\theta+\frac{p}{q})\\
c(\theta+(k-1)\frac{p}{q})P_{k-1}(\theta)\ \ &-\overline{c(\theta-\frac{p}{q})} c(\theta+(k-1)\frac{p}{q}) P_{k-2}(\theta+\frac{p}{q})
\end{matrix}
\right).
\end{align}
Thus 
\begin{align}
\tr(D^{\lambda}_q(\theta))=P_q(\theta)+|c(\theta-\frac{p}{q})|^2 P_{q-2}(\theta+\frac{p}{q}).
\end{align}
It then suffices to note that 
\begin{align}
\begin{cases}
\tr(D^{\lambda}_q(\theta-(k_0-1)\frac{p}{q}))=\tr(D^{\lambda}_q(\theta)),\\ 
c(\theta-k_0\frac{p}{q})=0,\\
(H_{\frac{2\pi p}{q}, \theta-(k_0-1)\frac{p}{q})})|_{[0,q-1]}=M_q.
\end{cases}
\end{align}

\section{$1/2$-H\"older continuity of spectra of Jacobi matrices}
\subsubsection*{Proof of Lemma \ref{continuity}}
We will prove the following general result for quasi-periodic Jacobi matrices. Let $H_{\alpha, \theta}\in \mathcal{L}(l^2(\Z))$ be defined as
\begin{align}\label{defHgeneral}
(H_{\alpha, \theta}u)_n=c(\theta+n\alpha)u_{n+1}+\overline{c(\theta+(n-1)\alpha)}u_{n-1}+v(\theta+n\alpha)u_n.
\end{align}
Let $\sigma_{\alpha}:=\cup_{\theta\in \T_1}\sigma(H_{\alpha, \theta})$.
\begin{lemm}\label{generalcontinuity}
Let $c(\cdot), v(\cdot)\in C^1(\T_1, \C)$. There exist constants $\tilde{C}(c,v), C(c,v)>0$ such that if $\lambda \in \sigma_{\alpha}$ and $\alpha^\prime\in \T_1$ is such that $|\alpha-\alpha^\prime|<\tilde{C}(c,v)$, then there is a $\lambda^\prime\in \sigma_{\alpha^\prime}$ such that
\begin{align*}
|\lambda-\lambda^\prime| \le C(c,v)|\alpha-\ap|^{\frac{1}{2}}.
\end{align*}
\end{lemm}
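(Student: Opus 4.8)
The plan is to follow the strategy of Avron--van Mouche--Simon for quasi-periodic Schr\"odinger operators, checking along the way that the possible zeros of $c(\cdot)$ cause no trouble; this yields Lemma \ref{continuity} upon taking $c(\theta)=1+e^{-2\pi i\theta}$, $v(\theta)=2\cos 2\pi\theta\in C^1(\T_1)$ and $\alpha=\Phi/2\pi$. By symmetry of the roles of $\alpha$ and $\alpha'$ it suffices to prove the one-sided statement: for $\lambda\in\sigma_\alpha$ there is $\lambda'\in\sigma_{\alpha'}$ with $|\lambda-\lambda'|\le C(c,v)|\alpha-\alpha'|^{1/2}$. Since $\sigma_\alpha=\bigcup_\theta\sigma(H_{\alpha,\theta})$, fix $\theta_0$ with $\lambda\in\sigma(H_{\alpha,\theta_0})$ and set the target scale $N:=\lfloor|\alpha-\alpha'|^{-1/2}\rfloor$, which is $\ge 1$ once $|\alpha-\alpha'|<\tilde C(c,v)$.

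The core step is to produce, at scale $N$, a finitely supported approximate eigenvector for $H_{\alpha,\theta_0}$ at $\lambda$ with error $O(1/N)$. By Schnol's theorem for the bounded self-adjoint Jacobi matrix $H_{\alpha,\theta_0}$, for $\lambda$ in a dense (full spectral measure) subset of $\sigma(H_{\alpha,\theta_0})$ there is a nonzero polynomially bounded generalized eigenfunction $u$ with $(H_{\alpha,\theta_0}-\lambda)u=0$; we may assume $u\notin\ell^2$, the case $u\in\ell^2$ being simpler. Partition $\Z$ into consecutive blocks of length $N$; because $\sum_n|u_n|^2=\infty$ while the block masses grow only polynomially in the block index, a pigeonhole argument yields a block $B$ whose two neighbouring blocks carry mass $\le C\sum_{n\in B}|u_n|^2$. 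Let $\chi$ be a trapezoidal cutoff equal to $1$ on $B$, linear on each adjacent block, and $0$ elsewhere, so $\|\nabla\chi\|_\infty\le 1/N$ and $\operatorname{supp}\chi$ is an interval of length $3N$ (located somewhere in $\Z$). Then $(H_{\alpha,\theta_0}-\lambda)(\chi u)=[H_{\alpha,\theta_0},\chi]u$ is supported on the two ramps, and
\[
\|(H_{\alpha,\theta_0}-\lambda)(\chi u)\|\;\le\;\frac{2\|c\|_\infty}{N}\Big(\sum_{n\in\mathrm{ramps}}|u_n|^2\Big)^{1/2}\;\le\;\frac{C_1(c)}{N}\,\|\chi u\|,
\]
so $\psi:=\chi u/\|\chi u\|$ is a unit vector with $\|(H_{\alpha,\theta_0}-\lambda)\psi\|\le C_1(c)/N$.

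Next I would translate and perturb the frequency. Using the exact conjugation $S_k^{-1}H_{\alpha,\theta}S_k=H_{\alpha,\theta+k\alpha}$ (shift $S_k$), translate the window of $\psi$ to the origin: the translated unit vector $\tilde\psi$, supported in $[-3N,3N]$, satisfies $\|(H_{\alpha,\tilde\theta}-\lambda)\tilde\psi\|\le C_1(c)/N$ with $\tilde\theta:=\theta_0+k\alpha$ for the appropriate $k$. On $\operatorname{supp}\tilde\psi$ the matrix entries of $H_{\alpha,\tilde\theta}$ and $H_{\alpha',\tilde\theta}$ differ by at most $(\|c'\|_\infty+\|v'\|_\infty)\cdot 3N\cdot|\alpha-\alpha'|$ since $c,v\in C^1$, hence $\|(H_{\alpha',\tilde\theta}-H_{\alpha,\tilde\theta})\tilde\psi\|\le C_2(c,v)\,N\,|\alpha-\alpha'|$, and therefore
\[
\operatorname{dist}\big(\lambda,\sigma(H_{\alpha',\tilde\theta})\big)\;\le\;\|(H_{\alpha',\tilde\theta}-\lambda)\tilde\psi\|\;\le\;\frac{C_1(c)}{N}+C_2(c,v)\,N\,|\alpha-\alpha'|\;\le\;C(c,v)\,|\alpha-\alpha'|^{1/2}
\]
by the choice $N\approx|\alpha-\alpha'|^{-1/2}$. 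As $\sigma(H_{\alpha',\tilde\theta})\subseteq\sigma_{\alpha'}$, this gives the desired $\lambda'$ for $\lambda$ in the dense set; the general case follows since $\sigma_{\alpha'}$ is closed and bounded (approximate $\lambda$ by good $\lambda_k$ and extract a convergent subsequence of the $\lambda'_k\in\sigma_{\alpha'}$).

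The main obstacle is the quantitative construction at the prescribed scale: one needs commutator error $O(1/N)$ rather than the $O(1/\sqrt N)$ coming from a raw Weyl sequence, and precisely at the scale $N\sim|\alpha-\alpha'|^{-1/2}$; this forces the use of a genuinely polynomially bounded generalized eigenfunction together with the pigeonhole choice of truncation window (whose location becomes irrelevant after the translation step). Everything else is routine, and in particular the possible vanishing of $c(\cdot)$ is harmless here: the commutator bound uses only $\|c\|_\infty<\infty$ and the frequency-perturbation bound uses only $c,v\in C^1$ --- which is exactly where, and all that, the $C^1$ hypothesis is needed.
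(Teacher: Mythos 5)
Your proof is correct, and the translation-plus-$C^1$-perturbation step with the optimization $N\sim|\alpha-\alpha'|^{-1/2}$ is exactly what the paper does; but your route to the key intermediate object --- a unit vector supported on $O(N)$ sites with $\|(H_{\alpha,\theta}-\lambda)\psi\|\le C/N$ --- is genuinely different. The paper never invokes Schnol's theorem or generalized eigenfunctions: it starts from an $\ell^2$ Weyl vector $\phi_L$ with $\|(H_{\alpha,\theta}-\lambda)\phi_L\|\le\|\phi_L\|/L$ (available for \emph{every} $\lambda$ in the spectrum) and runs a partition-of-unity average over all window positions $j$, using the identity $\sum_j\eta_{j,L}(n)^2=a_L\sim\tfrac23 L$. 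The point is that the commutator errors summed over $j$ total only $\tfrac{8\|c\|_\infty^2}{L}\|\phi_L\|^2$, while $\sum_j\|\eta_{j,L}\phi_L\|^2=a_L\|\phi_L\|^2$, so the \emph{average} relative error is $O(1/L^2)$ and some $j$ must achieve $O(1/L)$ --- this averaging is what upgrades the naive $O(1/\sqrt L)$ commutator loss, playing the role your block pigeonhole plays. Your version buys nothing extra and costs a little: you need Schnol for Jacobi matrices, you must handle the $u\in\ell^2$ versus $u\notin\ell^2$ dichotomy, your pigeonhole (which does work --- summing the assumed inequality $m_{k-1}+m_{k+1}>Cm_k$ over $|k|\le K$ forces geometric growth of partial sums, contradicting polynomial boundedness --- though you should also absorb the one or two sites just outside the ramps that enter the commutator) needs to be written out, and you need the closing density argument using that $\sigma_{\alpha'}$ is closed. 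Both proofs use only $\|c\|_\infty$ for the commutator and $\|c'\|_\infty,\|v'\|_\infty$ for the frequency perturbation, so your observation that the zeros of $c$ are irrelevant here matches the paper. Also note the lemma is already stated one-sidedly, so your appeal to symmetry between $\alpha$ and $\alpha'$ is unnecessary.
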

Lemma \ref{continuity} follows from Lemma \ref{generalcontinuity} by taking $\Phi=2\pi \alpha$ and $\Phi^\prime=2\pi \alpha^\prime$. Lemma  \ref{generalcontinuity} is in turn the argument of \cite{AvMS} adapted to the Jacobi setting.
\subsection*{Proof of Lemma \ref{generalcontinuity}}
Let $L\geq 1$ be given. There exists $\phi_{L} \in l^2(\Z)$ and $\theta$ such that
\begin{align}
\|(H_{\alpha, \theta}-\lambda )\phi_{L}\|\leq \frac{1}{L} \|\phi_{L}\|.
\end{align}
Let $\eta_{j, L}$ be the test function centered at $j$,
\begin{align*}
\eta_{j, L}(n)=
\begin{cases}
(1-|n-j|/L),\ \ \ &|n-j|\leq L,\\
0, &|n-j|\geq L.
\end{cases}
\end{align*}
Then for large $L$,
\begin{align}\label{sumeta}
\sum_j (\eta_{j,L}(n))^2=1+\frac{(L-1)(2L-1)}{3L}\equiv a_L.
\end{align}
is independent of $n$. Clearly,
\begin{align}\label{C1}
\sum_j \|\eta_{j,L}(H_{\alpha, \theta}-\lambda )\phi_{L}\|^2=a_L \|(H_{\alpha, \theta}-\lambda)\phi_{L}\|^2\leq \frac{a_L}{L^2} \|\phi_{L}\|^2=\frac{1}{L^2} \sum_j \|\eta_{j,L}\phi_{L}\|^2.
\end{align}
Since $\|u+v\|^2\leq 2\|v\|^2+2 \|u\|^2$, by (\ref{C1}), we get
\begin{align}\label{C2}
\sum_j \|(H_{\alpha, \theta}-\lambda )\eta_{j,L}\phi_{L}\|^2\leq  &2 \sum_j \|\eta_{j,L}(H_{\alpha, \theta}-\lambda)\phi_{L}\|^2+2\sum_j \|[\eta_{j,L}, H_{\alpha, \theta}]\phi_{L}\|^2 \nonumber \\
\leq &\frac{2}{L^2} \sum_j \|\eta_{j,L}\phi_{L}\|^2+2\sum_j \|[\eta_{j,L}, H_{\alpha, \theta}]\phi_{L}\|^2,
\end{align}
where $[\eta_{j,L}, H_{\alpha, \theta}]=\eta_{j,L}H_{\alpha, \theta}-H_{\alpha, \theta}\eta_{j,L}$ is the commutator.
Note that
\begin{align*}
([\eta_{j,l}, H_{\alpha, \theta}]\phi)_n=c(\theta+n\alpha)(\eta_{j,L}(n)-&\eta_{j,L}(n+1))\phi_{n+1}
\\ +&\overline{c(\theta+(n-1)\alpha)}(\eta_{j,L}(n)-\eta_{j,L}(n-1))\phi_{n-1},
\end{align*}
which implies
\begin{align*}
\sum_{j}\|[\eta_{j,L}, H_{\alpha, \theta}]\phi_{L}\|^2\leq \frac{8\|c\|_{\infty}^2}{L}  \|\phi_{L}\|^2\leq \frac{8\|c\|_{\infty}^2}{La_L} \sum_j \|\eta_{j,L}\phi_{L}\|^2.
\end{align*}
Combining this with (\ref{C2}) and taking into account that $a_L\sim\frac{2}{3}L$, we get
\begin{align*}
\sum_j \|(H_{\alpha, \theta}-\lambda )\eta_{j,L}\phi_{L}\|^2\leq \frac{2+25\|c\|_{\infty}^2}{L^2}  \sum_j \|\eta_{j,L}\phi_{L}\|^2,
\end{align*}
for $L>L_0$. 
Hence for certain $j$, $\eta_{j,L}\phi_{L}\neq 0$ and 
\begin{align}\label{C3}
\|(H_{\alpha, \theta}-\lambda )\eta_{j,L}\phi_{L}\| \leq \frac{(2+25\|c\|_{\infty}^2)^{\frac{1}{2}}}{L} \|\eta_{j,L}\phi_{L}\|^2.
\end{align}

Given $\ap$ near $\alpha$, choose $\theta^\prime$ such that
\begin{align*}
\theta+j\alpha=\theta^\prime +j\ap.
\end{align*}
Then on $\supp (\eta_{j,L }\phi_{\epsilon})$, 
\begin{align}\label{C4}
|f(\theta+n \alpha)-f(\theta^\prime +n\ap)|\leq L\|f^\prime\|_{\infty} |\alpha-\ap|,
\end{align}
holds for $f=c, v$. 
Thus, by (\ref{C3}) and (\ref{C4}),
\begin{align*}
\|(H_{\ap, \theta^\prime}-\lambda )\eta_{j,L}\phi_{L} \|\leq C_1(c,v) \|\eta_{j,L} \phi_{L}\|,
\end{align*}
where 
\begin{align*}
C_1(c,v)=\frac{(2+25\|c\|_{\infty}^2)^{\frac{1}{2}}}{L}+(6\|c^\prime\|_{\infty}^2+3\|v^\prime\|_{\infty}^2)^{\frac{1}{2}}L|\alpha-\ap|.
\end{align*}
Finally, taking
\begin{align*}
L=C_2(c,v)|\alpha-\ap|^{-\frac{1}{2}}>L_0,
\end{align*}
we get
\begin{align*}
\|(H_{\ap, \theta^\prime}-\lambda )\eta_{j,L}\phi_{L} \| \leq C(c,v) |\alpha-\ap|^{\frac{1}{2}} \|\eta_{j,L}\phi_{L} \|.
\end{align*}
\qed

\smallsection{Acknowledgements} 
This research was partially supported by the NSF DMS1401204. Support by the EPSRC grant EP/L016516/1 for the University of Cambridge CDT, the CCA is gratefully acknowledged (S.B.).

%


\end{document}